\documentclass[a4paper,11pt,eqno]{amsart}
\usepackage{amsmath}
\usepackage{amssymb}
\usepackage{amsfonts}
\usepackage{amscd}
\usepackage{amsthm}
\usepackage{mathtools}
\usepackage{mathrsfs}
\usepackage{multirow,bigdelim}
\usepackage{MnSymbol}
\usepackage[bb=boondox,bbscaled=.95,cal=boondoxo]{mathalfa}
\usepackage[numbers]{natbib}
\usepackage{xcolor}
\usepackage{url}
\usepackage{hyperref}
\usepackage{tikz}
\usetikzlibrary{cd}
\usetikzlibrary{decorations.pathmorphing}
\usetikzlibrary{patterns}
\textwidth=450pt
\topmargin=0.1in
\oddsidemargin=2pt
\evensidemargin=2pt
\textheight = 8.8in

\newcommand{\bfA}{{\rm\bf A}}

\newcommand{\CC}{{\rm\bf C}}
\newcommand{\RR}{{\rm\bf R}}
\newcommand{\QQ}{{\rm\bf Q}}
\newcommand{\ZZ}{{\rm\bf Z}}

\newcommand{\GG}{{\rm\bf G}}

\newcommand{\EE}{{\rm\bf E}}

\newcommand{\Adeles}{{\rm\bf A}}

\newcommand{\OO}{\mathcal{O}}

\newcommand{\cF}{{\mathcal{F}}}

\DeclareMathOperator{\Ob}{\mathrm{Ob}}
\DeclareMathOperator{\Ar}{\mathrm{Ar}}
\DeclareMathOperator{\Spec}{\mathrm{Spec}}

\DeclareMathOperator{\Gm}{\mathrm{{\bf G}_m}}
\DeclareMathOperator{\GL}{\mathrm{GL}}

\DeclareMathOperator{\Oo}{\mathrm{O}}
\DeclareMathOperator{\U}{\mathrm{U}}
\DeclareMathOperator{\SO}{\mathrm{SO}}

\DeclareMathOperator{\Aut}{\mathrm{Aut}}
\DeclareMathOperator{\End}{\mathrm{End}}
\DeclareMathOperator{\Hom}{\mathrm{Hom}}

\DeclareMathOperator{\D}{{\mathcal D}}
\DeclareMathOperator{\Pp}{{\mathcal P}}

\DeclareMathOperator{\image}{\mathrm{im}}

\DeclareMathOperator{\Gal}{\mathrm {Gal}}
\DeclareMathOperator{\ind}{\mathrm{ind}}
\DeclareMathOperator{\res}{\mathrm{Res}}

\DeclareMathOperator{\ad}{\mathrm{ad}}

\DeclareMathOperator{\Lie}{\mathrm{Lie}}

\DeclareMathOperator{\rtype}{\mathrm{rtype}}
\DeclareMathOperator{\type}{\mathrm{type}}

\DeclareMathOperator{\cohType}{\mathrm{cohType}}
\DeclareMathOperator{\cohTypes}{\mathrm{cohTypes}}

\newcommand{\lieg}{{\mathfrak {g}}}
\newcommand{\lieh}{{\mathfrak {h}}}
\newcommand{\liek}{{\mathfrak {k}}}
\newcommand{\liel}{{\mathfrak {l}}}

\newcommand{\lies}{{\mathfrak {s}}}
\newcommand{\liet}{{\mathfrak {t}}}

\newcommand{\lieq}{{\mathfrak {q}}}

\newcommand{\lieu}{{\mathfrak {u}}}

\newcommand{\liez}{{\mathfrak {z}}}

\newcommand{\lieso}{{\mathfrak {so}}}

\DeclareMathOperator{\Supp}{Supp}
\DeclareMathOperator{\ann}{ann}

\DeclareMathOperator{\sgn}{sgn}

\DeclareMathOperator{\vol}{\mathrm {vol}}

\theoremstyle{plain}
\newtheorem{theorem}{Theorem}[subsection]
\newtheorem{lemma}[theorem]{Lemma}
\newtheorem{corollary}[theorem]{Corollary}
\newtheorem{proposition}[theorem]{Proposition}

\newtheorem{variant}[theorem]{Variant}

\newtheorem{definition}[theorem]{Definition}
\theoremstyle{remark}
\newtheorem{construction}[theorem]{Construction}
\newtheorem{example}[theorem]{Example}
\newtheorem{remark}[theorem]{Remark}

\bibpunct{[}{]}{,}{n}{}{,}

\setlength{\bibsep}{2pt plus 0.3ex}

\numberwithin{equation}{subsection}

\begin{document}
	
	\title[Locally algebraic representations \& integral structures on cohomology]{Locally algebraic representations and\\integral structures on\\the cohomology of arithmetic groups}

        \author{Fabian Januszewski${}^\ast$}\let\thefootnote\relax\footnotetext{${}^\ast$Institut f\"ur Mathematik, Fakult\"at EIM, Paderborn University, Warburger Str.\ 100, 33098 Paderborn, Germany; \texttt{fabian.januszewski@math.uni-paderborn.de}}

	\subjclass[2020]{Primary: 11F75; Secondary: 11F67, 11F70}
	
	\begin{abstract}
          This paper introduces the notion of locally algebraic representations and corresponding sheaves in the context of the cohomology of arithmetic groups. These representations are of relevance for the study of integral structures and special values of cohomological automorphic representations, as well as corresponding period relations. We introduce and investigate related concepts such as locally algebraic $(\lieg,K)$-modules and cohomological types of automorphic representations. Applying the recently developed theory of tdos and twisted $\mathcal D$-modules over schemes by Hayashi and the author, we establish the existence of canonical global $1/N$-integral structures on spaces of automorphic cusp forms. As an application, we define canonical periods attached to regular algebraic automorphic representations, potentially related to the action of Venkatesh's derived Hecke algebra on cuspidal cohomology.
	\end{abstract}
	
	\maketitle
	\vspace*{-1em}
	{\footnotesize
		\tableofcontents
	}
	
	\section*{Introduction}

        The cohomology of arithmetic groups plays a central role in modern number theory, with applications to the study of automorphic forms, $L$-functions, and Galois representations. Classically, one associates to every rational representation $(V_\CC,\rho_G)$ of a connected reductive group $G$ over $\QQ$ a sheaf $\widetilde{V}_\CC$ of $\CC$-vector spaces on the locally symmetric spaces
        \begin{equation}
        X_G(K_f)=G(\QQ)\backslash G(\bfA)/K_\infty S(\RR)^0 K_f
          \label{eq:introXG}
        \end{equation}
        of level $K_f\subseteq G(\bfA_f)$ associated to $G$. Here $S\subseteq G$ is the maximal $\QQ$-split torus in the center of $G$ and $K_\infty\subseteq G(\RR)$ is maximally compact.

        Arithmetically, the sheaf cohomology
        \begin{equation}
          H^\bullet(X_G(K_f);\widetilde{V}_\CC)
          \label{eq:introsheafcohomology}
        \end{equation}
        can be described in terms of the cohomological automorphic spectrum of $G(\bfA)$ (cf.\ \cite{franke1998,frankeschwermer1998,lischwermer}), and this allows for the study of rationality and integrality properties of automorphic representations and associated $L$-functions \cite{manin1972,manin1976,clozel1990,schmidt1993,hida1994,kazhdanmazurschmidt2000,mahnkopf2005,kastenschmidt2008,raghuramshahidi2008,raghuram2010,raghuramtanabe2011,januszewski2011,januszewski2014,grobnerraghuram2014,januszewski2015,raghuram2016,namikawa2016,balasubramanyamraghuram2017,januszewskirationality,sun2017,januszewskiperiods1,sun2019,jiangetal2019,djr2020,harderraghuram,prasannavenkatesh2021,januszewskiajm,januszewskiperiods2,haranamikawa}, as well as to relate automorphic representation to Galois representations \cite{clozel1990,buzzardgee2011,scholze2015,harrislantaylorthorne,venkatesh2017,venkatesh2018ICM,venkatesh2019,clozel2022,harderbook}.

        The rationality of the representation $V_\CC$ of $G$ imposes parity conditions on the archimedean component of any automorphic representation contributing to the sheaf cohomology \eqref{eq:introsheafcohomology}. From an automorphic perspective, this appears to be a non-canonical choice.

        To illustrate this, consider the symmetric square transfer of Gelbart and Jacquet of a cuspidal cohomological automorphic representation for $\GL(2)$ contributing to cohomology
        \[
        H^\bullet(X_{\GL(2)}(K_f);\widetilde{\bf1}_\CC)
        \]
        with trivial coefficients. This transfer will not contribute to
        \[
        H^\bullet(X_{\GL(3)}(K_f');\widetilde{V}_\CC)
        \]
        for any rational representation $V$ of $\GL(3)$.

        This phenomenon is related to the observation that for a general reductive group $G$, there is no canonical choice of motivic periods or Galois representation associated to an algebraic automorphic representation $\Pi$ of $G$: As Langlands observed, the $L$-functions $L(s,\Pi,r)$ associated to $\Pi$ are naturally parametrized by representations $r\colon{}^LG\to\GL_n(\CC)$ of the $L$-group. Therefore, Galois representations and periods in the motivic sense are only meaningful for a given automorphic representation $\Pi$ after fixing such a representation $r$.

        Conceptually, this raises the question of a notion of cohomological automorphic representation compatible with Langlands' Principle of Functoriality on the one hand, and the motivic notions of Galois representation and periods on the other hand. From this perspective, imposing parity conditions on the archimedean component $\Pi_\infty$ of $\Pi$ via $V_\CC$ appears not desirable.

        On a technical level, a known solution to this problem is to replace the space $X_G(K_f)$ by a finite Galois cover
        \begin{equation}
          X_{G}(K_f)_0:=G(\QQ)\backslash G(\Adeles)/S(\RR)^0K(\RR)^0K_f
          \label{eq:introXG0}
        \end{equation}
        whose covering group $\pi_0(K_\infty,K_f)$ is a factor group of the group $\pi_0(K_\infty)=\pi_0(G(\RR))$ of connected components of $G(\RR)$.

        Then for each character of $\pi_0(K_\infty,K_f)$ we may cut out a subspace of cohomology corresponding to a different parity condition at infinity. We know that $\pi_0(G(\RR))$ is an elementary abelian $2$-group. Therefore, cutting out $\pi_0(K_\infty,K_f)$-eigenspaces commutes with computing cohomology whenever $2$ is invertible on the coefficient system. However, when $2$ is not invertible, the functor of $\pi_0(K_\infty,K_f)$-invariants will no longer be exact, hence its higher left derived functors should be taken into consideration.

        This applies in particular to lattices $V_\OO\subseteq V_E$ over a ring of integers $\OO\subseteq E$ of a number field $E/\QQ$. Then in light of the Leray spectral sequence
        \begin{equation}
          H^p(\pi_0(K_\infty,K_f); H^q(X_G(K_f)_0;\widetilde{V}_\OO))\;\Rightarrow\;R^{p+q}(\Gamma(X_G(K_f)_0;-)^{\pi_0(K_\infty,K_f)})(\widetilde{V}_\OO),
          \label{eq:firstspectralsequence}
        \end{equation}
        the right derived functors of the composition of the global sections functor with the functor of $\pi_0(K_\infty,K_f))$-invariants appear to be conceptually preferable objects of study.

        The right hand side in \eqref{eq:firstspectralsequence} admits the following sheaf theoretic interpretation which is motivated by the monodromy theorem applied to the $\pi_0(K_\infty,K_f)$-cover of $X_G(K_f)$. If we furnish $V_\OO$ with a trivial action of $\pi_0(K_\infty,K_f)$, then we may define a new sheaf $\widetilde{V}_\OO^{\pi_0(K_\infty,K_f)}$ on $X_G(K_f)$ whose sections are in a suitable sense $\pi_0(K_\infty,K_f)$-invariant. The sheaf cohomology of $\widetilde{V}_\OO^{\pi_0(K_\infty,K_f)}$ agrees with the abutment of the spectral sequence \eqref{eq:firstspectralsequence} and it turns out that the construction of $\widetilde{V}_\OO^{\pi_0(K_\infty,K_f)}$ is meaningful for any action of $\pi_0(K_\infty,K_f)$ on $V_\OO$ which commutes with a given scheme theoretic action of $G$ on $V_\OO$.

        This observation naturally leads to the notion of \lq{}locally algebraic representation\rq{} of $G$ we introduce in Definition \ref{def:locallyalgebraicrepresentation}, taking into account an additional $\pi_0(K_\infty)$-action. To any locally algebraic representation $V$ of $G$ we associate a sheaf $\widetilde{V}$ (cf.\ Definition \ref{def:locallyalgebraicsheaf}). The corresponding sheaf cohomology associated to locally algebraic representations contains {\em all} cohomological automorphic representations $\Pi$ of $G$ --- the parity condition on the archimedean component $\Pi_\infty$ depending on the auxiliary action of $\pi_0(K_\infty)$ on $V_E$, which may be chosen appropriately.

        While the spectral sequence \eqref{eq:firstspectralsequence} degenerates away from $2$, the concept of locally algebraic representation and the associated sheaves seem conceptually preferable even in characteristic $0$. From the perspective of orbifolds in the language of proper \'etale Lie groupoids, locally algebraic representations parametrize a natural class of locally constant sheaves (cf.\ Definition \ref{def:locallyalgebraicorbifoldsheaf}).
        
        The notion of locally algebraic representations and their associated sheaves allows to treat all cohomological automorphic representations of $G$ at once without giving a preference to a particular parity condition which would be imposed by sticking to rational representations, as is illustrated by the symmetric square transfer example above. Furthermore, cohomology with coefficients in locally algebraic representations preserves multiplicity one properties in bottom and top degree of the cuspidal range.

        Moreover, the language of locally algebraic representations overcomes the following technical issue pertaining to parity conditions at infinity: The scheme theoretic model of $\pi_0(K_\infty)$ induced by a choice of $\QQ$-rational model $K$ of the maximal compact subgroup $K_\infty$ is finite \'etale but not constant in general. Therefore, there is no corresponding category of $(\lieg,K)$-modules over $\QQ$ taking into account all parity conditions at once (cf.\ Definition \ref{def:locallyalgebraicgKmodule}). A fortiori there are no corresponding categories of $(\lieg,K)$-modules over $\ZZ$ or $\ZZ[1/2]$. Therefore, in order to have a corresponding global theory of rational and integral structures on arbitrary algebraic automorphic representations taking into account all parity conditions, the notion of locally algebraic representations and a corresponding notion of \lq{}locally algebraic $(\lieg,K)$-module\rq{} appear indispensable.

        At first sight, {\em locally algebraic $(\lieg,K)$-modules}, defined as $(\lieg,K)$-modules with an auxiliary commuting action of the constant group scheme $\pi_\circ^G$ associated to $\pi_0(K_\infty)$, have some counter-intuitive properties: To each locally algebraic $(\lieg,K)$-module $X$ over $\CC$ we may associate a classical complex $(\lieg_\CC,K_\infty)$-module $X_\CC$ by letting $K_\infty$ act on $X_\CC$ via the \lq{}diagonal embedding\rq{}
        \[
        K_\infty\to K(\RR)\times\pi_\circ^G(\RR),
        \]
        cf.\ Definition \ref{def:locallyalgebraiccomplexmodule}. Then the functor $X\mapsto X_\CC$ is not fully faithful. What may seem like a redundancy in the definition of locally algebraic $(\lieg,K)$-module, finds its justification in the notion of a \lq{}cohomological type\rq{} for $G$, which we introduce in Definition \ref{def:cohomologicaltypesofG}.

        A {\em cohomological type} consists of an equivalence class of isomorphism classes of pairs $(A,V)$ where $A$ is a locally algebraic cohomologically induced standard module and $V$ is a corresponding locally algebraic representation with the property that the relative Lie algebra cohomology of $A\otimes V$ is well defined and non-zero.

        In Definition \ref{def:cohomologicaltypeofrepresentation} we associate to every cohomological automorphic representation $\Pi$ a cohomological type $\cohType(\Pi)$. The cohomological type encodes all irreducible locally algebraic representations $V$ for which $\Pi$ contributes to \eqref{eq:introsheafcohomology} and as such is the natural basepoint-free set parametrizing cohomologically defined periods attached to $\Pi$.

        Using Vogan--Zuckerman theory \cite{voganzuckerman1984}, cohomological types are amenable to being classified. The important case of tempered cohomological types is intimately related to global integral structures on spaces of automorphic cusp forms.

        As an application, we deduce from the half-integral models of certain cohomologically induced standard modules constructed by Hayashi and the author in \cite{hayashijanuszewski} locally algebraic half-integral models of locally algebraic cohomologically induced standard modules. Refining the author's arguments in \cite{januszewskirationality}, we prove the existence of canonical half-integral structures on automorphic representations and global spaces of cusp forms, which are naturally parametrized by representatives of tempered cohomological types.

        We construct in section \ref{sec:integralmodels} global $1/N$-integral structures on cuspidal automorphic representations, cf.\ Theorems \ref{thm:globalarithmeticmodels} and \ref{thm:gln} for $N=2d_F$, $d_F$ denoting the absolute discriminant of the totally real or CM field $F/\QQ$. In Theorems \ref{thm:globalhalfintegralstructures} and \ref{thm:globalhalfintegralstructuresKf} we prove the existence of a canonical global half-integral structure on certain spaces of automorphic cusp forms for $\GL(n)$ over a totally real or CM-field. These results generalize the rational strucures constructed by the author in \cite{januszewskirationality} and as such also extend to arbitrary reductive groups, with the same limitations as in loc.\ cit. (cf.\ Remark \ref{rmk:globalgeneralreductiveG}). As in the case of rational structures, the results are sharpest for $\GL(n)$ over a totally real or a CM field.

        We remark that localizing the $1/N$-integral structures constructed in Theorem \ref{thm:globalhalfintegralstructures} and Theorem \ref{thm:globalhalfintegralstructuresKf} to rational structures, the results are a natural extension of the results obtained in \cite{januszewskirationality} by taking into account all possible parity conditions at $\infty$ on the sheaf level over their minimal fields of definition.

        With the $1/N$-integral structures constructed in Theorem \ref{thm:globalhalfintegralstructuresKf} at hand, we associate in Theorem \ref{thm:periods} to every cohomological cuspidal automorphic representation $\Pi$ of $\GL(n)$ over a totally real or a CM field and to every element $\alpha\in\cohType(\Pi)$ in the cohomological type of $\Pi$ and every finite place $v$ of $\QQ(\Pi)$ not dividing $N$ a canonical set of periods, unique up to units in $\OO_v^\times$, or more generally double cosets for $\GL_r(\OO_v)$.

        The existence of such canonical periods raises the natural question how they are related to Venkatesh's Conjecture on cuspidal cohomology as a module over the derived Hecke algebra, cf.\ Remark \ref{rmk:venkatesh}.

        In Theorem \ref{thm:AEdecomposition} we relate cohomological types, admissible twists of locally algebraic representations and rationality properties of restrictions of locally algebraic $(\lieg,K)$-modules with a view toward period relations for automorphic $L$-functions. We explicate the situation for products of $\GL(n)$ in Theorem \ref{thm:glnrationalreducibility}, extending previous results of the author obtained in \cite{januszewskiperiods1} to the general locally algebraic case.

        The motivation for such considerations are Theorem \ref{thm:rationalperiodrelations} and Theorem \ref{thm:integralperiodrelations} which provide a general archimedean period calculation, which may serve as a blueprint for proving archimedean period relations for special values of $L$-functions associated to cohomological automorphic representations of a general connected reductive group $G$.

        Finally, in section \ref{sec:applicationstoautomorphicL} we indicate known cases, where such methods apply and give a detailed account in the case of Hilbert modular cusp forms.

        \subsection*{Outline of the paper}

        Section \ref{sec:locallyalgebraicrepresentations} is devoted to the study of locally algebraic representations and related concepts such as lattices in locally algebraic representations (Definition \ref{def:integralmodelsoflocallyalgebraicrepresentations}), associated complex representations of the Lie group $G(\RR)$ (Definition \ref{def:locallyalgebraiccomplex}), and sheaves on the spaces $X_G(K)$ and the orbifolds $\mathcal X_G(K_f)$ (Definitions \ref{def:locallyalgebraicorbifoldsheaf} and \ref{def:locallyalgebraicsheaf}).

        The various sheaves on the orbifolds and the underlying orbit spaces and their cohomologies are related by pushforward functors and corresponding Leray spectral sequences. The language of orbifolds appears to be the most appropriate setting to study the cohomology of arithmetic groups without unnecessary restrictions.

        We also briefly discuss a more general notion of locally algebraic representations taking non-archimedean aspects into account in Remark \ref{rmk:generallocallyalgebraicrepresentations}.

        Section \ref{sec:locallyalgebraicgKmodules} is devoted to locally algebraic $(\lieg,K)$-modules and their basic properties. A specialized notion is that of a {\em locally algebraic cohomologically induced standard module} which we introduce and investigate in subsections \ref{sec:locallyalgebraiccohomologicallyinducedstandardmodules}, \ref{sec:admissibleweights} and \ref{sec:relativeliealgebracohomology}. This notion is fundamental for the notion of {\em cohomological type} introduced in the following section.

        Section \ref{sec:cohomologicalrepresentations} puts the previously defined notions and established properties into the context of cohomological automorphic representations.

        We begin with an overview of the fundamental properties of the cohomology of arithmetic groups with coefficients in locally algebraic representations both from an orbifold- and a classical topological viewpoint in subsection \ref{sec:cohomologyofarithmeticgroups}.

        In subsection \ref{sec:cohomologicaltypes} we introduce the notion of {\em cohomological type}. In subsection \ref{sec:cycleandsupportofcohomologicaltype} we introduce associated notions of {\em cycle} and {\em support} of a cohomological type, and subsection \ref{sec:temperedcohomologicaltypes} is dedicated to the class of tempered cohomological types. Finally, in section \ref{sec:automorphiccohomologicaltype} we put the previous discussion into the context of cohomological automorphic representations.

        Section \ref{sec:integralmodels} is devoted to global $1/N$-integral models of automorphic representations with origin in the cohomology of arithmetic groups with coefficients in a lattice in a locally algebraic representation. We obtain results for a general reductive group $G$ (or rather classical groups) in subsection \ref{sec:automorphicintegralstructures}.  For $\GL(n)$ we obtain sharper results in subsection \ref{sec:integralstructuresforgln}. In particular, Theorem \ref{thm:cohomologicaltypesofGLnoverQ} contains a classification of tempered cohomological types for $\GL(n)$ over totally real and CM fields.

        While section \ref{sec:integralmodels} was focused on individual automorphic representations, we extend the existence results for $1/N$-integral structures to global spaces of cusp forms for $\GL(n)$ in subsection \ref{sec:canonicalintegralstructures} of section \ref{sec:canonicalintegralstructuresandperiods}.

        Subsection \ref{sec:canonicalperiods} contains our construction of canonical periods in the case of $\GL(n)$.

        We provide a blueprint of general archimedean period relations in the language of locally algebraic representations in subsection \ref{sec:specialvalues}, where we also briefly discuss some examples.

        \subsection*{Acknowledgements}

        The author is indepted to his coauthor from \cite{hayashijanuszewski} for many very valuable remarks on preliminary drafts of this paper. The author thanks Mladen Dimitrov for enlightening discussions. Very special thanks go to G\"unter Harder, who devoted a lot of his time studying integral structures on the cohomology of arithmetic groups. The author will miss the stimulating discussions he had with him. The author acknowledges support by the Deutsche Forschungsgemeinschaft (DFG, German Research Foundation) -- SFB-TRR 358/1 2023 -- 491392403 and by the Max-Planck-Institute for Mathematics in Bonn.

        \section*{Notation}
        
        We write $\bfA=\RR\times\bfA_f$ for the topological ring of ad\`eles of $\QQ$, $\bfA_f$ denoting the ring of finite ad\`eles. Adelization of linear algebraic groups is understood in the sense of A.\ Weil. In particular, if $G$ is a linear algebraic group over $\QQ$, then $G(\Adeles)$ and $G(\Adeles_f)$ denote the adelization and the finite adelization of $G$ over $\QQ$. We have an isomorphism $G(\Adeles)\cong G(\RR)\times G(\Adeles_f)$ of locally compact topological groups.

        We write $K^0$ for the connected component of the identity of a topological group $K$ and $\pi_0(K)=K/K^0$ for the group of connected components.

        If $G$ is an algebraic group, we write $G^\circ$ for the connected component of the identity. We usually assume that $G$ fits into an exact sequence
        \[
        1\to G^\circ\to G\to\pi_\circ(G)\to 1
        \]
        of group schemes (\'etale or $fppf$-sheaves) where $\pi_\circ(G)$ is assumed to be finite \'etale.

        If $X$ is a scheme over a commutative ring $A$, we write $X_A$ if we want to emphasize that we consider $X$ as a scheme over $A$. In particular, if $B$ is a (commutative) $A$-algebra, we write $X_B$ for the base change of $X_A$ to $B$.

        All groupoids we consider are small. For a groupoid $\mathcal G$, we denote by $\Ob\mathcal G$ the set of objects and by $\Ar\mathcal G$ the set of all arrows.

        \section{Locally algebraic representations}\label{sec:locallyalgebraicrepresentations}

        \subsection{Arithmetic manifolds}

        Write $S\subseteq G$ for the maximal $\QQ$-split torus in the center of $G$. For any compact open subgroup $K_f\subseteq G(\bfA_f)$ we consider as before in \eqref{eq:introXG} and \eqref{eq:introXG0} the spaces
	\begin{align}
	X_G(K_f)\;&=\;G(\QQ)\backslash G(\bfA)/S(\RR)^0 K_\infty K_f\;=\;G(\QQ)\backslash \left(G(\RR)/S(\RR)^0 K(\RR) \times G(\bfA_f)/K_f\right),\\
	X_G(K_f)_0\;&=\;G(\QQ)\backslash G(\bfA)/S(\RR)^0 K(\RR)^0 K_f.
	\end{align}
	These spaces are of finite volume and if $K_f$ is sufficiently small, i.\,e.\ if for all $g\in G(\bfA_f)$ the arithmetic groups
	\begin{equation}
		\Gamma_{K_f,g}:=G(\QQ)\cap gK_fg^{-1}
		\label{eq:arithmeticgroupforKf}
	\end{equation}
	are torsion free, then $X_G(K_f)$ and $X_G(K_f)_0$ are manifolds: $X_G(K_f)$ and $X_G(K_f)_0$ decompose into the disjoint union of finitely many locally symmetric spaces of the form
	\[
	\Gamma_{K_f,g}\backslash G(\RR)/S(\RR)^0 K(\RR),\quad\text{and}\quad
        \Gamma_{K_f,g}\backslash G(\RR)/S(\RR)^0 K(\RR)^0,
	\]
        for $g\in G(\bfA_f)$ running through a system of representatives for the finitely many connected components of the space $X_G(K_f)$ (resp.\ $X_G(K_f)_0$).

        In particular, we have for any sheaf $F$ on $X_G(K_f)$ and any sheaf $F_0$ on $X_G(K_f)_0$ isomorphisms
        \begin{align}
          H^\bullet(X_G(K_f);F)&=\bigoplus_{g}H^\bullet(\Gamma_{K_f,g}\backslash G(\RR)/S(\RR)^0 K(\RR);F|_{\Gamma_{K_f,g}\backslash G(\RR)/S(\RR)^0 K(\RR)}),\label{eq:cohomologydirectsum}\\
          H^\bullet(X_G(K_f)_0;F_0)&=\bigoplus_{g}H^\bullet(\Gamma_{K_f,g}\backslash G(\RR)^0/S(\RR)^0 K(\RR)_0;F_0|_{\Gamma_{K_f,g}\backslash G(\RR)/S(\RR)^0 K(\RR)^0}),\label{eq:cohomologydirectsum0}
        \end{align}
        which are compatible with the canonical projection $p_0\colon X_G(K_f)_0\to X_G(K_f)$ if $F=p_{0,\ast}F_0$. In fact, $p_0$ is a Galois cover and we refer to Remark \ref{rmk:galoiscover} below for a more detailed discussion.
        
        \subsection{Arithmetic orbifolds}
        
        Regardless of whether $X_G(K_f)$ may be considered as a manifold, we consider an orbifold $\mathscr X_G(K_f)$ in the following sense. We understand the notion of orbifold as a mild generalization of the notion defined in \cite{satake1956}, where orbifolds are refered to as \lq{}$V$-manifolds\rq{}, and where isotropy groups of points act effectively on the respective orbifold charts. We do not require that isotropy groups act effectively on orbifold charts, i.\,e. we do not assume that our orbifolds are reduced. We do not require orbifolds to be connected. For an introduction to orbifolds from the point of view of Lie groupoids we refer to \cite{moerdijkoverview} and Chapter 5 in \cite{moerdijkmrcun}. This perspective is relevant in our discussion of sheaves below.
        
        We define $\mathscr X_G(K_f)$ as the orbifold quotient of the space
        \[
        G(\QQ)\backslash G(\bfA)/S(\RR)^0 K_f
        \]
        by the right action of the compact but not necessarily connected Lie group $K_\infty$. Then $X_G(K_f)$ is the orbit space $|\mathscr X_G(K_f)|$ of $\mathscr X_G(K_f)$ and we have a canonical covering map
        \begin{equation}
          p_{K_f}\colon \mathscr X_G(K_f)\to X_G(K_f).
          \label{eq:orbifoldtomanifold}
        \end{equation}
        of orbifolds, where we consider $X_G(K_f)$ as an orbifold with trivial isotropy groups.

        We may think of $\mathscr X_G(K_f)$ as the topological space $X_G(K_f)$ together with an orbifold atlas consisting of orbifold charts $(\widetilde{U},G,\phi)$ with $\widetilde{U}\subseteq\RR^{\dim X_G(K_f)}$ open, $G$ a finite group acting not necessarily effectively via diffeomorphisms on $\widetilde{U}$ from the right, and $\phi\colon\widetilde{U}\to X_G(K_f)$ is continuous and $G$-invariant (i.\,e.\ $\varphi\circ g=\varphi$ for all $g\in G$) with the additional property that its image $U:=\varphi(\widetilde{U})\subseteq X_G(K_f)$ is open and via $\varphi$ homeomorphic to $\widetilde{U}/G$.

        Given an orbifold atlas, the notion of sheaf on $\mathscr X_G(K_f)$ is straightforward and gives rise to a topos. A sheaf $\cF$ on $\mathscr X_G(K_f)$ is locally, on a chart $(\widetilde{U},G,\phi)$, given by a sheaf $\cF|_{\widetilde{U}}$ of abelian groups on the topological space $\widetilde{U}$ and for every $g\in G$ an isomorphism
        \[
        \cF(\lambda_g)\colon \cF|_{\widetilde{U}}\to\lambda_g^\ast\left(\cF|_{\widetilde{U}}\right)
        \]
        where $\lambda_g\colon\widetilde{U}\to\widetilde{U}$ denotes the action of $g$ with the property that for all $g,h\in G$ the diagram
        \[
        \begin{CD}
          \cF|_{\widetilde{U}}@>{\cF(\lambda_g)}>>\lambda_g^\ast\left(\cF|_{\widetilde{U}}\right)\\
          @V{\cF(\lambda_{gh})}VV @VV{\lambda_g^\ast\cF(\lambda_h)}V\\
          (\lambda_{gh})^\ast\left(\cF|_{\widetilde{U}}\right)@=\lambda_g^\ast\lambda_h^\ast\left(\cF|_{\widetilde{U}}\right)\\
        \end{CD}
        \]
        commutes. In other words $\cF|_{\widetilde{U}}$ is a $G$-equivariant sheaf on $\widetilde{U}$.

        The category of sheaves of abelian group on $\mathscr X_G(K_f)$ is abelian and has enough injectives. Sheaf cohomology is the right derived functor of the functor of sections, which are defined as collections of sections on orbifold charts compatible with all chart embeddings and the equivariant structure.

        The Grothendieck spectral sequence for the composition $\Gamma(\mathscr X_G(K_f);-)=\Gamma(X_G(K_f);p_\ast(-))$ is the Leray spectral sequence
        \begin{equation}
          E_2^{p,q}=H^p(X_G(K_f);R^qp_\ast(\cF))\;\Longrightarrow\;H^{p+q}(\mathscr X_G(K_f);\cF).
          \label{eq:lerayspectralsequence}
        \end{equation}
        For any $x\in\mathscr X_G(K_f)$, the isotropy group $G_x$ at $x$ naturally acts on the stalk $\cF_x$ and we have a natural isomorphism
        \begin{equation}
          \left(R^qp_\ast(\cF)\right)_{p(x)}=H^q(G_x;\cF_x).
          \label{eq:orbifoldstalks}
        \end{equation}
        
        \begin{remark}\label{rmk:leraydegeneration}
          The identity \eqref{eq:orbifoldstalks} shows that whenever the orders of the isotropy groups of $\mathscr X_G(K_f)$ are invertible on the corresponding stalks, the Leray spectral sequence \eqref{eq:lerayspectralsequence} degenerates to an isomorphism
          \begin{equation}
            H^\bullet(X_G(K_f);p_\ast(\cF))\;\cong\;H^\bullet(\mathscr X_G(K_f);\cF).
          \end{equation}
          This is always the case if $\cF$ is a sheaf of vector spaces over a field of characteristic $0$.
        \end{remark}

	\subsection{Locally algebraic representations}\label{subsec:locallyalgebraicrepresentations}
	
	\begin{definition}[Locally algebraic representation]\label{def:locallyalgebraicrepresentation}
	  A {\em locally algebraic representation $V$ of $G$} over a field $E$ of characteristic $0$ is a triple $V=(V_{E}, \rho_G,\rho_{\pi_\circ^G})$ consisting of a finite-dimensional $E$-vector space $V_E$, a rational action $\rho_G:G_E\times V_E\to V_E$ of $G$ on $V_E$, and a scheme theoretic action $\rho_{\pi_\circ^G}$ of the constant group scheme $\pi_\circ^G$ on $V_E$, subject to the condition that the actions of $G$ and $\pi_\circ^G$ on $V_E$ commute. We call $V$ {\em algebraic} or {\em rational} if $\rho_{\pi_\circ^G}$ is the trivial action.

          We define morphisms of locally algebraic representations as $E$-linear maps commuting with all given actions.
	\end{definition}

        \begin{example}\label{ex:GLnoverQ}
          For $G=\GL(n)$ over $\QQ$, $\pi_0(\GL_n(\RR))$ is a finite group of order $2$. Therefore there are two characters (one-dimensional $\QQ$-rational representations) of the constant group scheme $\pi_\circ^G$: The trivial character ${\bf1}$ and the sign character ${\rm sgn}$. For representatives $g_\infty\in \GL_n(\RR)$, we have the explicit formula
          \[
            {\rm sgn}(g_\infty)=\frac{\det g_\infty}{|\det g_\infty|}.
          \]
          We conclude that we have for each absolutely irreducible rational representation $(V_\QQ,\rho_{\GL(n)})$ of $\GL(n)$ over $\QQ$ two non-isomorphic locally algebraic representations:
          \[
            V=(V_\QQ,\rho_{\GL(n)},{\bf1})\quad\text{and}\quad V'=(V_\QQ,\rho_{\GL(n)},{\rm sgn}).
          \]
          We will see in Proposition \ref{prop:absolutelyirreduciblelocallyalgebraic} below that all absolutely irreducible rational representations of $\GL(n)/\QQ$ are of this form.
        \end{example}

        The notion of locally algebraic representation of $G$ naturally extends to $K_f$ as follows.
        \begin{definition}[General locally algebraic representations]
          Fix a compact open subgroup $K_f\subseteq G(\bfA_f)$. Then a {\em general locally algebraic representation of $G$ at level $K_f$} consists of a quadruple $(V_E,\rho_G,\rho_{\pi_\circ^G},\rho_{K_f})$ where $(V_E,\rho_G)$ is a rational representation of $G$, $\rho_{\pi_\circ^G}$ is a representation of the group scheme $\pi_\circ^G$ on $V_E$, and $\rho_{K_f}\colon K_f\to\Aut_E(V_E)$ is a representation of $K_f$ with the following property:

          There exists a finite sets $S$ of primes such that $K_f=K_S\times K_f^S$ where $K_S\subseteq\prod_{p\in S}\QQ_p$ is an open subgroup and $K_f^S$ is a compact open subgroup of $\QQ\otimes_\ZZ\prod_{p\not\in S}\ZZ_p$. Moreover, $\rho_{K_f}$ is trivial on $K_f^S$ and there is an open subgroup $K_S'\subseteq K_S$ with the property that $\rho_{K_f}|_{K_S'}$ acts on $V_E$ via the restriction of a rational representation $\rho_S\colon G\to\Aut_E(V_E)$ to $K_S'$. Finally, we require that all three actions $\rho_G,\rho_{\pi_\circ^G},$ and $\rho_{K_f}$ commute pairwise.
        \end{definition}

        \begin{remark}[Non-archimedean locally algebraic representations]\label{rmk:generallocallyalgebraicrepresentations}
          This general notion of locally algebraic representation is well suited for $p$-adic coefficient systems, i.\,e.\ the case $S=\{p\}$ and $E$ a $p$-adic field, seems to be the most relevant one from a practical perspective. There is a well-known recipe to translate a locally algebraic representation of the form $(V_E,\rho_G,\rho_{\pi_\circ^G},{\bf1})$ into another of the form $(V_E,{\bf1},\rho_{\pi_\circ^G},\rho_{K_f})$, preserving the respective associated sheaves on $\mathscr X_G(K_f)$ and $X_G(K_f)$ up to natural isomorphism.

          However, there are locally algebraic representations $(V_E,{\bf1},\rho_{\pi_\circ^G},\rho_{K_f})$ in the general sense whose sheaves do not arise from locally algebraic representations $(V_E,\rho_G,\rho_{\pi_\circ^G},{\bf1})$ in the strict sense: Take $G=\GL(1)$ and let $K_f$ act via an even continuous finite order character, while keeping all other actions trivial. For an odd character one may choose $\rho_{\pi_\circ^G}$ as the sign character to obtain a non-trivial sheaf on $\mathscr X_G(K_f)$ (resp.\ $X_G(K_f)$) in this case.

          That being said, we remark that a significant part (albeit not all) of our discussion below translates with no or only minor modifications to this more general notion of locally algebraic representation. Since the necessary modifications to treat the general case are straightforward, the locally algebraic representations considered below will all be the ones defined in Definition \ref{def:locallyalgebraicrepresentation}.
        \end{remark}

        \begin{definition}[Integral models of locally algebraic representations]\label{def:integralmodelsoflocallyalgebraicrepresentations}
          If $G$ admits a (smooth) model over a subring $\OO\subseteq E$ with fraction field $E$ and if $V_\OO\subseteq V_E$ is an $\OO$-lattice, then a representation $(V_\OO,\rho_{G\times\pi_\circ^G})$ of the group scheme $G\times\pi_\circ^G$ over $\OO$ on $V_\OO$ will be called an {\em $\OO$-model} of or by abuse of language a {\em $K_f$-stable $\OO$-lattice} in a locally algebraic representation $V=(V_E,\rho_G,\rho_{\pi_\circ^G})$, if the inclusion $V_\OO\to V_E$ induces an isomorphism $E\otimes_\OO V_\OO\cong V_E$ of locally algebraic representations.
        \end{definition}

	\begin{remark}\label{rmk:locallyalgebraicviaconstantgroupscheme}
          Given a finite-dimensional vector space $V_E$, the following data are equivalent:
          \begin{itemize}
            \item[(i)] A locally algebraic rational representation $V=(V_E,\rho_G,\rho_{\pi_\circ^G})$ over $E$,
            \item[(ii)] A representation $V=(V_E,\rho_{G\times\pi_\circ^G})$ of the group scheme $G\times\pi_\circ^G$ over $E$,
            \item[(iii)] A rational representation $(V_E,\rho_G)$ of $G$ over $E$ together with an $E$-rational representation $\pi_0(K_\infty)\to\Aut_E(V_E)$ of the abstract group $\pi_0(K_\infty)$ such that the resulting actions of $G$ and $\pi_0(K_\infty)$ on $V_E$ commute.
          \end{itemize}
          We will switch freely between these three incarnations of locally algebraic representations.
	\end{remark}

        \begin{proposition}
          The category $\underline{\text{LocAlgRep}}_E(G)$ of locally algebraic representations $V$ of $G$ over $E$ is a rigid $E$-linear tensor category admitting a fiber functor $\mathcal F\colon \underline{\text{LocAlgRep}}_E(G)\to\underline{Vec}_E$ into $E$-vector spaces. In particular, $\underline{\text{LocAlgRep}}_E(G)$ is Tannakian and its fundamental group is $G\times\pi_\circ^G$.
        \end{proposition}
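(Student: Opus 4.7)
The plan is to leverage Remark \ref{rmk:locallyalgebraicviaconstantgroupscheme}, which sets up a bijection between locally algebraic representations $(V_E,\rho_G,\rho_{\pi_\circ^G})$ and representations of the affine group scheme $G\times\pi_\circ^G$ over $E$, and then import the standard Tannakian formalism for representation categories of affine group schemes. Since $\pi_\circ^G$ is finite \'etale (hence affine) over $E$ by the standing assumption on $G$, the product $G\times\pi_\circ^G$ is again an affine group scheme of finite type over $E$.

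First I would upgrade the equivalence of Remark \ref{rmk:locallyalgebraicviaconstantgroupscheme}(i)--(ii) to an equivalence of $E$-linear symmetric monoidal categories. The tensor product $V\otimes W$ of two locally algebraic representations carries the diagonal action of $G$ and the diagonal action of $\pi_\circ^G$, which together coincide with the diagonal action of $G\times\pi_\circ^G$; similarly for internal $\iHom$ and duals via contragredient actions. Thus the equivalence of Remark \ref{rmk:locallyalgebraicviaconstantgroupscheme} identifies $\underline{\text{LocAlgRep}}_E(G)$ with the category $\underline{\text{Rep}}_E(G\times\pi_\circ^G)$ of finite-dimensional algebraic representations of $G\times\pi_\circ^G$ as symmetric monoidal categories, and every object is rigid because every finite-dimensional representation of an affine group scheme admits a dual with the usual evaluation and coevaluation maps.

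Next, the forgetful functor $\mathcal F\colon\underline{\text{LocAlgRep}}_E(G)\to\underline{\text{Vec}}_E$, $V\mapsto V_E$, is $E$-linear, exact, faithful and compatible with tensor products, units and duals by construction, so it is a fiber functor in the sense of \cite{DM82}-style Tannakian reconstruction. Applying the reconstruction theorem then yields that $\underline{\text{LocAlgRep}}_E(G)$ is neutral Tannakian, with fundamental group canonically isomorphic to $\underline{\Aut}^\otimes(\mathcal F)$; under the identification with $\underline{\text{Rep}}_E(G\times\pi_\circ^G)$ and the forgetful fiber functor on that side, this group scheme is precisely $G\times\pi_\circ^G$.

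The only nontrivial step is the monoidal compatibility of the equivalence in Remark \ref{rmk:locallyalgebraicviaconstantgroupscheme}; once that is in hand, every remaining assertion is a direct appeal to the Tannakian formalism for finite type affine group schemes over a characteristic zero field, so I do not expect any genuine obstacle.
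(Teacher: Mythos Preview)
Your proposal is correct and follows precisely the same approach as the paper: the paper's proof is the single sentence ``This is clear by Remark \ref{rmk:locallyalgebraicviaconstantgroupscheme},'' and what you have written is simply a careful unpacking of why that remark immediately reduces the statement to the standard Tannakian formalism for $\underline{\mathrm{Rep}}_E(G\times\pi_\circ^G)$.
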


        \begin{proof}
          This is clear by Remark \ref{rmk:locallyalgebraicviaconstantgroupscheme}.
        \end{proof}

        \begin{proposition}[{Matsumoto \cite[Theor\`eme 1]{matsumoto}}]\label{prop:matsumoto}
          Let $G$ denote a connected reductive group over $\QQ$. Then the component group $\pi_0(G(\RR))$ is an elementary abelian $2$-group.
        \end{proposition}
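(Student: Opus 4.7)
The plan is to reduce the statement to the case of a maximal $\RR$-torus and then invoke the structure theory of tori over $\RR$.

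First, since $\pi_0(G(\RR))$ depends only on the base change $G_\RR=G\times_\QQ\RR$, I would work over $\RR$ throughout. Fix a Cartan involution $\theta$ of $G_\RR$, with associated maximal compact subgroup $K_\infty\subseteq G(\RR)$ and Cartan decomposition $G(\RR)=K_\infty\cdot\exp(\liep)$, which in particular yields $\pi_0(G(\RR))\cong\pi_0(K_\infty)$. I would then choose a $\theta$-stable maximal $\RR$-torus $T\subseteq G_\RR$ whose real points are maximally compact, i.e., a \emph{fundamental} Cartan subgroup. The crucial structural input that I would cite from the theory of real reductive groups is that $T(\RR)$ meets every connected component of $G(\RR)$; equivalently, the inclusion induces a surjection
\[
\pi_0(T(\RR))\;\twoheadrightarrow\;\pi_0(G(\RR)).
\]

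Next, for any $\RR$-torus $T$ I would invoke the classification of tori over $\RR$ by their cocharacter lattices, viewed as $\ZZ$-free $\ZZ[\Gal(\CC/\RR)]$-modules of finite rank. Since $\Gal(\CC/\RR)\cong\ZZ/2\ZZ$ and every such module decomposes as a direct sum of the three indecomposables $\ZZ$ (trivial action), $\ZZ$ (sign action), and $\ZZ[\ZZ/2\ZZ]$, the torus $T$ splits correspondingly as a product of $\Gm$, $U(1)=\ker(N\colon R_{\CC/\RR}\Gm\to\Gm)$, and $R_{\CC/\RR}\Gm$. Passing to $\RR$-points gives a product of factors $\RR^\times$, $S^1$, and $\CC^\times$, whose component groups are $\ZZ/2\ZZ$, $0$, and $0$ respectively. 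Hence $\pi_0(T(\RR))$ is an elementary abelian $2$-group.

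Combining the two steps, $\pi_0(G(\RR))$ is the image of an elementary abelian $2$-group under a group homomorphism, and therefore is itself an elementary abelian $2$-group; in particular it is abelian.

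The main obstacle is the surjectivity in the first step: constructing a fundamental maximal $\RR$-torus whose real points meet every connected component of $G(\RR)$ requires the nontrivial machinery of maximally compact Cartan subalgebras in real reductive Lie algebras, and is the only substantive input beyond the Galois-module classification of tori. Everything else—the classification itself and the explicit computation of $\pi_0$ for each of the three indecomposable factors—is routine.
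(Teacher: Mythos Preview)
The paper does not supply a proof of this proposition at all; it merely records the statement and attributes it to Matsumoto \cite[Th\'eor\`eme 1]{matsumoto}. So there is no ``paper's own proof'' to compare against.

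Your argument is correct and is in fact close in spirit to how Matsumoto proceeds: reduce to a maximal $\RR$-torus via the surjection $\pi_0(T(\RR))\twoheadrightarrow\pi_0(G(\RR))$ for a fundamental Cartan, then compute $\pi_0(T(\RR))$ from the Galois-module decomposition of the cocharacter lattice. You have correctly flagged the one genuinely nontrivial ingredient, namely that a fundamental (maximally compact) Cartan subgroup meets every connected component of $G(\RR)$; this is precisely the substance of Matsumoto's result, and everything downstream of it is routine. One small streamlining: since you already pass through $\pi_0(G(\RR))\cong\pi_0(K_\infty)$, you could alternatively argue inside the compact group $K_\infty$, where a maximal torus of $K_\infty^0$ together with representatives of $\pi_0(K_\infty)$ normalizing it does the job---but this is a cosmetic variant of the same idea.
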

	
	\begin{proposition}\label{prop:absolutelyirreduciblelocallyalgebraic}
	  Let $V=(V_E,\rho_G,\rho_{\pi_\circ^G})$ denote an irreducible locally algebraic representation of $G$ over a field $E$ of characteristic $0$. Then
          \begin{itemize}
          \item[(a)] $V$ is (absolutely) irreducible if and only if the rational representation $(V_E,\rho_G)$ is (absolutely) irreducible.
          \item[(b)] If $V$ is irreducible, $\pi_0(K_\infty)$ acts on $V_E$ via $\rho_{\pi_\circ^G}$ as scalars in $\{\pm1\}$.
          \end{itemize}
	\end{proposition}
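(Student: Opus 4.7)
My plan is to establish (b) first and derive (a) as an essentially formal consequence.

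For (b), I would argue as follows. Proposition~\ref{prop:matsumoto} tells us $\pi_0(K_\infty)$ is an elementary abelian $2$-group, so every character of $\pi_0(K_\infty)$ takes values in $\{\pm 1\}\subseteq\QQ\subseteq E$. Since $\characteristic E=0$, the group algebra $E[\pi_0(K_\infty)]$ is the product of copies of $E$ indexed by the character group $\widehat{\pi_0(K_\infty)}$, so $V_E$ splits over $E$ as a direct sum of $\pi_0(K_\infty)$-isotypic components
\[
V_E \;=\; \bigoplus_{\chi\in\widehat{\pi_0(K_\infty)}} V_E^\chi.
\]
Because $\rho_G$ and $\rho_{\pi_\circ^G}$ commute by assumption, each $V_E^\chi$ is $G$-stable, hence a locally algebraic subrepresentation of $V$. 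Irreducibility of $V$ then forces exactly one summand to be nonzero, which is precisely the statement of (b).

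With (b) in hand I would deduce (a) as follows. The forward direction is immediate, since every $G\times\pi_\circ^G$-subrepresentation is in particular a $G$-subrepresentation, so $G$-irreducibility of $(V_E,\rho_G)$ implies $V$ is irreducible as a locally algebraic representation. For the converse, assume $V$ is irreducible as a locally algebraic representation. By (b), $\pi_0(K_\infty)$ acts on $V_E$ by scalars in $\{\pm 1\}$, so every $G$-subrepresentation of $V_E$ is automatically $\pi_\circ^G$-stable and hence a locally algebraic subrepresentation of $V$; irreducibility of $V$ therefore forces $(V_E,\rho_G)$ to be irreducible. The absolutely irreducible version follows by running the same argument over $\overline{E}$, noting that the constant group scheme $\pi_\circ^G$ and the $\pm 1$-valued characters from (b) behave compatibly with base change.

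The only subtle point in this plan is that (b) must be established without assuming $G$-irreducibility of $V_E$, since (a) depends on it; in particular one cannot appeal to any Schur-type argument at this stage, because $\End_G(V_E)$ need not be a division algebra. This is why I would work directly with the $\pi_0(K_\infty)$-isotypic decomposition of $V_E$, which, crucially, exists over $E$ itself thanks to the elementary abelian $2$-group structure of $\pi_0(K_\infty)$ provided by Matsumoto. Once (b) is in place, the $G$-linear action of $\pi_0(K_\infty)$ collapses to a single scalar character and (a) is purely formal.
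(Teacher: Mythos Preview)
Your proof is correct and follows the same overall architecture as the paper: establish (b) first, then deduce (a) formally. The paper's argument for (b), however, does use Schur's Lemma --- applied to $V$ as an irreducible $G\times\pi_\circ^G$-module, not as a $G$-module. Concretely, for each $k\in\pi_0(K_\infty)$ the operator $\rho_{\pi_\circ^G}(k)$ lies in $\End_{G\times\pi_\circ^G}(V_E)$, which is a division algebra by Schur; since $(\rho_{\pi_\circ^G}(k)-1)(\rho_{\pi_\circ^G}(k)+1)=0$ and division algebras have no zero divisors, one of the factors vanishes. So your worry that ``one cannot appeal to any Schur-type argument at this stage'' is misplaced: the relevant endomorphism algebra is $\End_{G\times\pi_\circ^G}(V_E)$, not $\End_G(V_E)$, and the former \emph{is} a division algebra by hypothesis. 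Your alternative via the isotypic decomposition of $E[\pi_0(K_\infty)]$ is equally valid and perhaps more transparent, since it handles all of $\pi_0(K_\infty)$ at once rather than element by element; the two arguments are really the same observation (all eigenvalues lie in $E$) packaged differently.
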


        \begin{proof}
          We proof (b) first. Let $V$ be irreducible and let $k\in\pi_0(K_\infty)$ denote an arbitrary element. By Proposition \ref{prop:matsumoto}, $\pi_0(K_\infty)$ is abelian and therefore $k$ commutes not only with the action of $G$ von $V_E$, but also with the action of $\pi_0(K_\infty)$. Again by Proposition \ref{prop:matsumoto}, the minimal polynomial of $\rho_{\pi_\circ^G}(k)$ on $V$ is a divisor of $X^2-1$ and hence admits a root in $E$. The standard argument of Schur's Lemma then shows that $\rho_{\pi_\circ^G}(k)=\pm{\bf1}_V$. This proves (b). Statement (a) is an immediate consequence of (b).
        \end{proof}
        
        \begin{definition}[Field of rationality of a locally algebraic representation]\label{def:fieldofrationality}
          Let $V=(V_E,\rho_G,\rho_{\pi_\circ^G})$ denote a locally algebraic representation of $G$ over $E\subseteq\CC$. Put for any subfield $E'\subseteq E$
          \[
          G_{V}\;:=\;\{\sigma\in\Aut(\CC/E')\mid \CC\otimes_{\sigma,E}V_E\cong \CC\otimes_E V_E\;\text{as loc.\ alg.\ representations}\}.
          \]
          We call $E'(V):=\CC^{G_V}$ the {\em field of rationality} of $V$ over $E$.
        \end{definition}

        \begin{proposition}\label{prop:locallyalgebraicfieldofrationality}
          The field of rationality $E'(V)$ of an irreducible locally algebraic representation in the sense of Definition \ref{def:fieldofrationality} agrees with the field of rationality $E'(V_E,\rho_G)$ of the underlying rational representation $(V_E,\rho_G)$ of $G$.
        \end{proposition}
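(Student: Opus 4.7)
The plan is to show the two inclusions $E'(V_E,\rho_G) \subseteq E'(V)$ and $E'(V) \subseteq E'(V_E,\rho_G)$ via the corresponding reverse inclusions of Galois stabilizers $G_V$ and $G_{(V_E,\rho_G)}$ inside $\Aut(\CC/E')$.

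The inclusion $G_V \subseteq G_{(V_E,\rho_G)}$ is immediate from the definition, because any $\sigma$ which produces an isomorphism of locally algebraic representations $\CC\otimes_{\sigma,E}V_E \cong \CC\otimes_E V_E$ in particular produces an isomorphism of the underlying rational $G$-representations. This already gives $E'(V)\supseteq E'(V_E,\rho_G)$.

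For the converse inclusion $G_{(V_E,\rho_G)}\subseteq G_V$, I would invoke Proposition \ref{prop:absolutelyirreduciblelocallyalgebraic}(b): since $V$ is irreducible, the group scheme $\pi_\circ^G$ acts on $V_E$ through a single character $\chi\colon\pi_\circ^G \to \{\pm 1\}$. Crucially, $\chi$ takes values in $\QQ \subseteq E'$, so it is fixed by every $\sigma\in\Aut(\CC/E')$; hence the $\pi_\circ^G$-action on $\CC\otimes_{\sigma,E}V_E$ is still given by the same scalar character $\chi$ as on $\CC\otimes_E V_E$. Now let $\sigma\in G_{(V_E,\rho_G)}$ and pick any $G$-equivariant isomorphism $\phi\colon \CC\otimes_{\sigma,E}V_E \to \CC\otimes_E V_E$. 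Since $\pi_\circ^G$ acts on both source and target by the same scalar $\chi$, the map $\phi$ automatically commutes with $\rho_{\pi_\circ^G}$ and is thus a morphism of locally algebraic representations. This shows $\sigma \in G_V$.

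The main (and essentially only) substantive input is Proposition \ref{prop:absolutelyirreduciblelocallyalgebraic}(b), which pins the $\pi_\circ^G$-action down to a $\QQ$-rational scalar character and makes the argument work uniformly regardless of whether $V_E$ remains irreducible after base change to $\CC$. I do not foresee any further obstacle: the proof reduces to observing that an intrinsically $\QQ$-rational scalar action cannot contribute additional fields of definition beyond what is already forced by the underlying rational $G$-representation.
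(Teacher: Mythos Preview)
Your proof is correct and follows essentially the same approach as the paper: both arguments observe that $G_V\subseteq G_{(V_E,\rho_G)}$ is tautological, and then invoke Proposition~\ref{prop:absolutelyirreduciblelocallyalgebraic}(b) to see that the $\pi_\circ^G$-action is by $\{\pm1\}$-scalars, hence Galois-invariant, which forces the reverse inclusion. The paper's version is terser, but your added remark that any $G$-equivariant isomorphism automatically intertwines identical scalar actions is exactly what makes the argument go through.
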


        \begin{proof}
          We have $E'(V_E,\rho_G)\subseteq E'(V)$ essentially by definition. By Proposition \ref{prop:absolutelyirreduciblelocallyalgebraic} (b) the actions of $\pi_0(K_\infty)$ on $\CC\otimes_{\sigma,E}V_E$ and $\CC\otimes_E V_E$ agree for any $\sigma\in\Aut(\CC/\QQ)$. Hence $E'(V)\subseteq E'(V_E,\rho_G)$.
        \end{proof}
        
        \begin{proposition}[Field of definition of a locally algebraic representation]\label{prop:locallyalgebraicfieldofdefinition}
          Let $V=(V_E,\rho_G,\rho_{\pi_\circ^G})$ for $E\subseteq \CC$ denote an irreducible locally algebraic representation of $G$. Then for any subfield $\QQ(V)\subseteq E_0\subseteq E$ the following are equivalent:
          \begin{itemize}
          \item[(a)] $V$ is defined over $E_0$, i.\,e.\ there exists a locally algebraic representation $V_0=(V_{E_0},\rho_G^0,\rho_{\pi_\circ^G}^0)$ and an isomorphism $E\otimes_{E_0}V_{E_0}\cong V_E$ of locally algebraic representations.
          \item[(b)] The rational representation $(V_E,\rho_G)$ is defined over $E_0$.
          \end{itemize}
          If $V$ is absolutely irreducible, then in case (a) the model $V_0$ of $V$ over $E_0$ is unique up to isomorphism if it exists.
        \end{proposition}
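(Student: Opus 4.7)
My plan is to exploit Proposition \ref{prop:absolutelyirreduciblelocallyalgebraic}(b), which forces the $\pi_\circ^G$-action on an irreducible $V$ to land in the scalars $\{\pm1\}\subseteq\QQ\subseteq E_0$; this makes the descent of the $\pi_\circ^G$-action essentially automatic once the underlying $G$-representation has been descended.

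The implication (a)$\Rightarrow$(b) is immediate: forgetting the $\rho_{\pi_\circ^G}^0$ component, the underlying pair $(V_{E_0},\rho_G^0)$ is a rational $E_0$-model of $(V_E,\rho_G)$.

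For (b)$\Rightarrow$(a) I would proceed as follows. Fix an $E_0$-rational model $(V_{E_0},\rho_G^0)$ of $(V_E,\rho_G)$ together with an isomorphism $\iota\colon E\otimes_{E_0}V_{E_0}\xrightarrow{\;\sim\;}V_E$ of rational $G$-representations. By Proposition \ref{prop:absolutelyirreduciblelocallyalgebraic}(b), each element $k\in\pi_0(K_\infty)$ acts on $V_E$ via $\rho_{\pi_\circ^G}$ as a scalar $\varepsilon(k)\in\{\pm1\}$, so that $\rho_{\pi_\circ^G}$ factors through a character $\varepsilon\colon\pi_0(K_\infty)\to\{\pm1\}$. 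Since $\{\pm1\}\subseteq E_0$, I define $\rho_{\pi_\circ^G}^0$ on $V_{E_0}$ by the same scalar character $\varepsilon$. This action is $E_0$-linear, commutes with $\rho_G^0$ (scalars commute with any linear action), and the $E$-linear isomorphism $\iota$ is automatically equivariant with respect to $\rho_{\pi_\circ^G}$ on both sides because any $E$-linear map is scalar-equivariant. Thus $V_0=(V_{E_0},\rho_G^0,\rho_{\pi_\circ^G}^0)$ is a locally algebraic representation over $E_0$ and $\iota$ upgrades to an isomorphism $E\otimes_{E_0}V_0\cong V$ of locally algebraic representations.

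For uniqueness in the absolutely irreducible case, assume $V$ is absolutely irreducible and let $V_0,V_0'$ be two $E_0$-models. By (a)$\Rightarrow$(b) their underlying rational $G$-representations are two $E_0$-models of the absolutely irreducible $(V_E,\rho_G)$, hence isomorphic via some $E_0$-linear $G$-equivariant map $\varphi\colon V_{E_0}\to V_{E_0}'$; by Schur's lemma $\varphi$ is unique up to an $E_0^\times$-scalar. Since $\rho_{\pi_\circ^G}^0$ and $\rho_{\pi_\circ^G}'^0$ are both given by the character $\varepsilon\colon\pi_0(K_\infty)\to\{\pm1\}$ read off from $V$, the map $\varphi$ automatically intertwines them, so it is an isomorphism of locally algebraic representations. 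The only anticipated subtlety is ensuring that the scalar character $\varepsilon$ attached to $V_0$ really matches the one attached to $V$ after base change, but this is forced by the requirement that the isomorphism $E\otimes_{E_0}V_0\cong V$ be of locally algebraic representations; no deeper obstacle arises, because irreducibility has reduced the $\pi_\circ^G$-descent to the trivial descent of $\pm1$-scalars.
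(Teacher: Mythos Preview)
Your proof of the equivalence (a)$\Leftrightarrow$(b) is correct and matches the paper's approach exactly: both invoke Proposition~\ref{prop:absolutelyirreduciblelocallyalgebraic}(b) to see that $\rho_{\pi_\circ^G}$ acts through a character $\varepsilon\colon\pi_0(K_\infty)\to\{\pm1\}$, which descends to any subfield automatically.

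For uniqueness, your argument has a gap. You write that the underlying rational $G$-representations of $V_0$ and $V_0'$ are ``two $E_0$-models of the absolutely irreducible $(V_E,\rho_G)$, hence isomorphic via some $E_0$-linear $G$-equivariant map $\varphi$''. The existence of such a $\varphi$ over $E_0$ is precisely the nontrivial point: a priori two $E_0$-forms of the same $E$-representation need not be isomorphic over $E_0$. The paper handles this by observing that, by Schur's Lemma, $\Aut(\overline{E}\otimes_{E_0}V_{E_0})=\overline{E}^\times$, so the set of $E_0$-forms is classified by $H^1(\Gal(\overline{E}/E_0);\overline{E}^\times)$, which vanishes by Hilbert~90. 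Alternatively (and equivalently), you could argue directly that $\Hom_G(V_0,V_0')$ is one-dimensional over $E_0$ because its base change to $E$ is one-dimensional by Schur, and then any nonzero element is an isomorphism since its base change to $E$ is. Either way, this step needs to be made explicit; once it is, the remainder of your argument (that the $\pi_\circ^G$-actions automatically match because both are given by the same character $\varepsilon$) is correct and is a perfectly good alternative to applying Hilbert~90 directly to the locally algebraic automorphism group.
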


        \begin{proof}
          If $(V_E,\rho_G)$ admits a model over $E_0$, then by Proposition \ref{prop:absolutelyirreduciblelocallyalgebraic} (b), the action $\rho_{\pi_\circ^G}$ descends to $E_0$ as well. This shows the equivalence of (a) and (b).

          The uniqueness of models for absolutely irreducible representations is a consequence of Schur's Lemma combined with
          \[
          H^1(\Gal(\overline{E}/E_0);\Aut(\overline{E}\otimes_{E_0}V_{E_0}))=
          H^1(\Gal(\overline{E}/E_0);\overline{E}^\times)=0
          \]
          by Hilbert 90 for any algebraic closure $\overline{E}$ of $E$.
        \end{proof}

        \begin{corollary}\label{cor:quasisplitrationality}
          If $G$ is quasi-split, then any locally algebraic representation $V$ of $G$ is defined over its field of rationality $\QQ(V)$.
        \end{corollary}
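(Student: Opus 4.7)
The plan is to reduce the claim to a classical descent statement for rational representations of quasi-split groups. First, by Remark \ref{rmk:locallyalgebraicviaconstantgroupscheme}, a locally algebraic representation of $G$ is nothing other than a representation of the reductive group scheme $G\times\pi_\circ^G$ over the characteristic-zero field $E$, hence is completely reducible. Grouping the irreducible constituents appearing in $V\otimes_E\overline{E}$ into Galois packets, the field of rationality $\QQ(V)$ is the compositum of the fields of rationality of those packets, so I would reduce to the case of a single irreducible summand $V_i$ of $V$ and show it descends to its own $\QQ(V_i)$.

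Having reduced to the irreducible case, Proposition \ref{prop:locallyalgebraicfieldofrationality} identifies $\QQ(V)=\QQ(V_E,\rho_G)$, and Proposition \ref{prop:locallyalgebraicfieldofdefinition} reduces descent of $V$ to descent of the underlying rational $G$-representation $(V_E,\rho_G)$ to the same field. The remaining problem is therefore: for quasi-split $G$, every absolutely irreducible rational representation descends to its field of rationality.

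For this final step I would invoke highest weight theory. Quasi-splitness provides a $\QQ$-rational Borel $B\subseteq G$ containing a $\QQ$-rational maximal torus $T$. The absolute Galois group acts on $X^*(T)$ preserving dominance, and the absolutely irreducible rational representation $V(\lambda)$ of $G_{\overline{\QQ}}$ with highest weight $\lambda$ has field of rationality equal to the fixed field $E(\lambda)$ of the stabilizer of $\lambda$ in $\Gal(\overline{\QQ}/\QQ)$. The Borel--Weil realization
\[
V(\lambda)^{\vee}\;\cong\;H^{0}(G/B,\mathcal L_{\lambda})
\]
then exhibits $V(\lambda)$ over $E(\lambda)$: the flag variety $G/B$ is a $\QQ$-scheme by $\QQ$-rationality of $B$, and $\lambda$ extends to an $E(\lambda)$-rational character of $B$, producing an $E(\lambda)$-rational line bundle $\mathcal L_{\lambda}$. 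The hard point is precisely this last step: without quasi-splitness, the Borel--Weil construction is only available over the field of definition of a Borel, and the descent of $V(\lambda)$ to $\QQ(V)=E(\lambda)$ may be obstructed by a non-trivial Brauer class in $H^{2}(\Gal(\overline{\QQ}/\QQ(V)),\overline{\QQ}^{\times})$; the $\QQ$-rationality of $B$ trivializes this obstruction and is exactly where the hypothesis is consumed.
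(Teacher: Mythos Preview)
Your approach is essentially the same as the paper's: invoke Propositions~\ref{prop:locallyalgebraicfieldofrationality} and~\ref{prop:locallyalgebraicfieldofdefinition} to reduce the descent problem for the locally algebraic representation to that of the underlying rational $G$-representation, and then appeal to the classical fact that irreducible rational representations of a quasi-split group descend to their field of rationality. The paper records this last step by a reference to Borel--Tits~\cite{boreltits1965}; you instead spell out a Borel--Weil argument, which is a perfectly standard way to see it and is correct as written.

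One minor point: your opening reduction to irreducibles via ``Galois packets'' and the compositum claim is imprecise as stated (for instance, if two non-isomorphic absolutely irreducible constituents of $V_\CC$ are swapped by some $\sigma\in\Aut(\CC/\QQ)$, then $\QQ(V)$ is strictly smaller than the compositum of their individual fields of rationality, so showing each descends to its own $\QQ(V_i)$ does not immediately yield descent of $V$ to $\QQ(V)$). However, the paper's own proof directly applies Propositions~\ref{prop:locallyalgebraicfieldofrationality} and~\ref{prop:locallyalgebraicfieldofdefinition}, which are only formulated for \emph{irreducible} $V$, so the corollary is in effect being asserted and used in the irreducible case; your reduction step is therefore not actually needed, and the core of your argument matches the paper.
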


        \begin{proof}
          By Propositions \ref{prop:locallyalgebraicfieldofrationality} and \ref{prop:locallyalgebraicfieldofdefinition} it suffices to remark that for quasi-split $G$ every rational representation $(V_E,\rho_G)$ of $G$ is defined over its field of rationality (cf.~\cite{boreltits1965}).
        \end{proof}

        \begin{theorem}[Classification of irreducible locally algebraic representations]\label{thm:irreduciblelocallyalgebraicrepresentations}
          Fix a subfield $E\subseteq\CC$.
          \begin{itemize}
          \item[(a)] Any irreducible locally algebraic representation $\overline{V}_0$ of $G$ over $\CC$ admits a model over a finite extension $E_0/E(V_0)$ of its field of rationality over $E$. In particular $\overline{V}_0$ admits a model $V_0$ over a finite extension $E_0/E$.
          \item[(b)] If $V_0$ denotes an irreducible locally algebraic representation of $G$ over a finite extension $E_0/E$, then $\res_{E_0/E}V_0=V^{\oplus m}$ for any irreducible locally algebraic representation $V$ over $E$ occuring in $\res_{E_0/E}V_0$.
          \item[(c)] If $E_0=E(V_0)$ agrees with the field of rationality of $V_0$ over $E$, then $\res_{E_0/E}V_0$ is irreducible.
          \item[(d)] Every irreducible locally algebraic representation $V$ of $G$ over $E$ occurs in a restriction of scalars of an absolutely irreducible locally algebraic representation in a finite extension of $E$ as in (b).
          \end{itemize}
        \end{theorem}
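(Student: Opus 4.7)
My plan rests on Propositions \ref{prop:locallyalgebraicfieldofrationality} and \ref{prop:locallyalgebraicfieldofdefinition}, which show that both the field of rationality and the existence of a subfield model are insensitive to the auxiliary $\pi_\circ^G$-action. Every statement below therefore reduces to the analogous assertion about the underlying rational representation.

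\textbf{Part (a).} For an absolutely irreducible $\overline V_0$ over $\CC$, Schur's lemma identifies $\Aut_{G,\CC}(\overline V_0)$ with $\CC^\times$, so the Galois descent obstruction from an algebraic closure $\overline E$ of $E$ down to the field of rationality $E(V_0)$ lies in $H^2(\Gal(\overline E/E(V_0));\overline E^\times)=\mathrm{Br}(E(V_0))$. Brauer groups being torsion, this obstruction is split by a finite extension $E_0/E(V_0)$, and the required model $V_0$ descends to $E_0$.

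\textbf{Parts (b) and (c).} For (b), let $W\subseteq\res_{E_0/E}V_0$ be a nonzero irreducible $G$-subrepresentation over $E$. For each $\alpha\in E_0^\times$, multiplication by $\alpha$ is an $E$-linear, $G$-equivariant automorphism of the underlying $E$-space of $V_0$ and sends $W$ to an isomorphic subrepresentation $\alpha W$. The sum $\sum_{\alpha\in E_0}\alpha W$ is simultaneously $G$- and $E_0$-stable, hence equals $V_0$ by the $E_0$-irreducibility of $V_0$; semisimplicity then forces $\res V_0\cong W^m$. For (c), after base change to $\overline E$ one has $\overline E\otimes_E\res V_0\cong\bigoplus_{\sigma\colon E_0\hookrightarrow\overline E}V_0^\sigma$, and the hypothesis $E_0=E(V_0)$ is precisely the assertion that these summands are pairwise non-isomorphic. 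A direct calculation of $\End_{G,\overline E}(\bigoplus_\sigma V_0^\sigma)$ followed by Galois descent identifies $\End_{G,E}(\res V_0)$ with $E_0$, a field, so $\res V_0$ is irreducible.

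\textbf{Part (d) and main obstacle.} For an irreducible $V$ over $E$, I would pick any absolutely irreducible constituent $U\subseteq\overline E\otimes_EV$ and use (a) to descend $U$ to a model $V_0$ over a finite extension $E_0/E$. The inclusion $\overline E\otimes_{E_0}V_0\cong U\hookrightarrow\overline E\otimes_EV$ corresponds, via the adjunction $\Hom_{G,E}(V,\res V_0)=\Hom_{G,E_0}(E_0\otimes_EV,V_0)$ combined with flat base change of Hom, to a nonzero element of $\Hom_{G,E}(V,\res_{E_0/E}V_0)$; by the irreducibility of $V$ and part (b), $V$ is therefore a constituent of $\res V_0$. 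The most delicate point I anticipate is (c), where one must simultaneously verify that distinct embeddings $\sigma$ yield non-isomorphic $V_0^\sigma$ (the substantive content of $E_0=E(V_0)$) and that the Galois-invariant subalgebra of $\prod_\sigma\End_{G,\overline E}(V_0^\sigma)$ collapses exactly to $E_0$, which is transparent when the Schur index equals one and requires extra care otherwise.
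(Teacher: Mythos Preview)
Your proposal is correct; the overall reduction to the underlying rational representation via Propositions \ref{prop:locallyalgebraicfieldofrationality} and \ref{prop:locallyalgebraicfieldofdefinition} matches the paper's implicit strategy, but your tactics for the individual parts differ. For (a), the paper simply observes that $G$ splits over a finite extension of $E$, so highest-weight theory furnishes a model directly; your Brauer-obstruction argument is more abstract but equally valid and has the merit of not invoking the structure theory of $G$. For (b), the paper uses the adjunction $\Hom_E(\res_{E_0/E}V_0,W)\cong\Hom_{E_0}(V_0,E_0\otimes_E W)$ to show that every irreducible constituent $W$ receives $V_0$ after extension of scalars, whereas you exploit that multiplication by $E_0$ commutes with $G$, forcing any irreducible $E$-subrepresentation to generate all of $V_0$ over $E_0$; both are standard and short. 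For (c), the paper argues that any irreducible $W\subseteq\res_{E_0/E}V_0$, after base change to $\CC$, must contain one and hence (by Galois) all of the pairwise non-isomorphic conjugates $\overline V_0^\sigma$; your endomorphism-ring computation reaches the same conclusion via $\End_{G,E}(\res V_0)\cong E_0$, which is arguably cleaner. The subtlety you flag about Schur index is real: both your argument and the paper's tacitly assume absolute irreducibility of $V_0$ (the paper speaks of ``the absolutely irreducibles $\overline V_0^\sigma$''), which is harmless in context since the model produced in (a) is absolutely irreducible by construction. Part (d) is essentially identical in both treatments.
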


          \begin{proof}
            Since $G$ splits over a finite extension of $E$, statement follows by Proposition \ref{prop:locallyalgebraicfieldofdefinition}.

            \smallskip
            Assume we are in the situation of (b), i.\,e.\ $V_0$ is an irreducible locally algebraic representation over a finite extension $E_0/E$ and let $V\subseteq\res_{E_0/E}V_0$ denote an irreducible subrepresentation.
            By the universal property of restriction of scalars we find for any irreducible constituent $W$ in $\res_{E_0/E}V_0$
            \[
            \Hom_{G\times\pi_0^{\rm const}(K)}(V_0,E_0\otimes_E W)=
            \Hom_{G\times\pi_0^{\rm const}(K)}(\res_{E_0/E}V_0,W)
            \neq 0,
            \]
            i.\,e.\ $V_0$ occurs in $E_0\otimes_E W$. Therefore we have
            \[
            E_0\otimes_E\Hom_{G\times\pi_\circ^G}(V,W)=
            \Hom_{G\times\pi_\circ^G}(E_0\otimes_E V,E_0\otimes W)\neq 0.
            \]
            Hence $V$ and $W$ are isomorphic and (b) follows.

            \smallskip
            Assume that $V_0$ is an irreducible locally representation defined over its field of rationality $E_0=E(V_0)$. For any embedding $\sigma\colon E_0\to\CC$ extending the fixed embedding $E\to\CC$ we consider the Galois-conjugate locally algebraic representation 
          \[
          \overline{V}_0^{\sigma}:=\CC\otimes_{\sigma,E_0} V_0.
          \]
          By the definition of the field of rationality $E(V_0)$ two embeddings $\sigma_1,\sigma_2\colon E_0\to\CC$ extending $E\to\CC$ give rise to isomorphic locally algebraic representations $\overline{V}_0^{\sigma_1}$ and $\overline{V}_0^{\sigma_2}$ if and only if $\sigma_1=\sigma_2$ (extend $\sigma_i$ to automorphisms $\overline{\sigma}_i\colon\CC\to\CC$ and proceed from there). In particular we find that there are
          \[
          [E_0:E]=\#\Hom_{E}(E_0,\CC)
          \]
          pairwise distinct isomorphism classes of locally algebraic representations among the locally algebraic representations $\overline{V}_0^{\sigma}$, for arbitrary $\sigma\colon E_0\to\CC$.

          The decomposition
          \[
          \CC\otimes_{E_0}\res_{E_0/E}V_0=\bigoplus_{\sigma\colon E_0\to\CC}\CC\otimes_{\sigma,E_0}V_0
          =\bigoplus_{\sigma\colon E_0\to\CC}\overline{V}_0^\sigma,
          \]
          where $\sigma$ ranges over the embeddings $E_0\to\CC$ extending the fixed embedding $E\to\CC$, shows that $\res_{E_0/E}V_0$ must be irreducible: For any irreducible locally algebraic subrepresentation $W\subseteq\res_{E_0/E} V_0$ contains, after base change to $\CC$, one of the absolutely irreducibles $\overline{V}_0^\sigma$, and hence it contains all its Galois-conjugates, which implies that it contains $\CC\otimes_{E_0}\res_{E_0/E}V_0$. Hence $W$ contains $\res_{E_0/E}V_0$ again by Galois descent. This proves (c).

          Statement (d) follows by considering any irreducible constituent $\overline{V}_0\subseteq\CC\otimes_E V$ and any model of $\overline{V}_0$ over a sufficiently large finite extension $E_0/E$.
          \end{proof}

          \begin{corollary}\label{cor:quasisplitlocallyalgebraicrepresentations}
            If $G$ is quasi-split over $E$, any irreducible locally algebraic representation of $G$ over $E$ is an irreducible restriction of scalars $\res_{E_0/E}V_0$ of an absolutely irreducible locally algebraic representation $V_0$ defined over its field of rationality $E_0/E$.
          \end{corollary}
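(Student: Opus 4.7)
The plan is to assemble the corollary from the three main ingredients that immediately precede it: Theorem \ref{thm:irreduciblelocallyalgebraicrepresentations}(d) to realize any irreducible locally algebraic representation $V$ of $G$ over $E$ as a constituent of a restriction of scalars, Corollary \ref{cor:quasisplitrationality} to descend the corresponding absolutely irreducible representation to its field of rationality, and Theorem \ref{thm:irreduciblelocallyalgebraicrepresentations}(c) to conclude that the resulting restriction of scalars is irreducible, hence equal to $V$.

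First, I would fix an irreducible locally algebraic representation $V$ of $G$ over $E$. By Theorem \ref{thm:irreduciblelocallyalgebraicrepresentations}(d) there exist a finite extension $E_0'/E$ and an absolutely irreducible locally algebraic representation $V_0'$ of $G$ over $E_0'$ such that $V$ occurs in $\res_{E_0'/E} V_0'$. Next I would observe that quasi-splitness of $G$ over $E$ implies quasi-splitness of $G$ over any field extension $E_0'/E$, so Corollary \ref{cor:quasisplitrationality} applies to $V_0'$: it is defined over its field of rationality $E_0 := E(V_0') \subseteq E_0'$. Replacing $V_0'$ by its model $V_0$ over $E_0$, we do not change the underlying representation over $E_0'$, and $V$ still occurs in $\res_{E_0/E} V_0$ (note $\res_{E_0'/E}V_0' = \res_{E_0/E}(\res_{E_0'/E_0}V_0')$ and $\res_{E_0'/E_0}V_0' \cong V_0^{\oplus[E_0':E_0]}$).

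Finally I would apply Theorem \ref{thm:irreduciblelocallyalgebraicrepresentations}(c), which guarantees that $\res_{E_0/E} V_0$ is irreducible as a locally algebraic representation of $G$ over $E$ since $E_0$ is the field of rationality of $V_0$. Since $V$ is an irreducible subrepresentation of the irreducible representation $\res_{E_0/E} V_0$, we conclude that $V \cong \res_{E_0/E} V_0$, which is the required presentation.

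The proof is essentially a bookkeeping argument; the only potential subtlety is ensuring that after invoking Corollary \ref{cor:quasisplitrationality} to descend $V_0'$ to a model over $E(V_0')$, the representation $V$ indeed still embeds into the restriction of scalars of this smaller model. This reduces to the compatibility of restriction of scalars with a tower of finite extensions together with Theorem \ref{thm:irreduciblelocallyalgebraicrepresentations}(b), which controls the isotypic decomposition of a restriction of scalars. Once this is noted, the three cited results combine directly to give the statement.
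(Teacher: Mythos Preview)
Your proof is correct and follows essentially the same route as the paper, which tersely writes ``Invoke Corollary \ref{cor:quasisplitrationality} and statements (d) and (b).'' You have simply unpacked this: part (d) to realize $V$ inside a restriction of scalars, Corollary \ref{cor:quasisplitrationality} to descend the absolutely irreducible piece to its field of rationality $E_0$, and then irreducibility of $\res_{E_0/E}V_0$. The only cosmetic difference is that you cite part (c) for the final irreducibility step whereas the paper cites (b); since (c) is the sharper statement tailored to the case $E_0 = E(V_0)$, your choice is if anything cleaner, and you in any case invoke (b) when checking that $V$ still embeds after replacing $V_0'$ by its model over $E_0$.
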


          \begin{proof}
            Invoke Corollary \ref{cor:quasisplitrationality} and statements (d) and (b).
          \end{proof}

	\begin{definition}[Complex representations attached to a locally algebraic representation]\label{def:locallyalgebraiccomplex}
		For any locally algebraic representation $V=(V_{E}, \rho_G,\rho_{\pi_\circ^G})$ over a field $E$ of characteristic $0$ and any embedding $\iota\colon E\to\CC$, we consider the action $\rho_{\pi_\circ^G}$ via the canonical morphism $G(\RR)\to\pi_0(G(\RR))=\pi_0(K(\RR))=\pi_\circ^G(\RR)$ as a $\CC$-linear action $\rho_{\pi_\circ^G}^\CC$ of $G(\RR)$ on $V_\CC=\CC\otimes_{\iota,E}V_E$. Then we let $G(\RR)$ act on $V_\CC$ via $\rho_G\circ\rho_{\pi_\circ^G}^\CC$, i.\,e.\ 
		\[
		\forall g\in G(\RR),v\in V_\CC\colon\quad g\cdot v=\rho_G(g)\left(\rho_{\pi_\circ^G}^\CC(g)(v)\right).
		\]
		We call the resulting representation $(V_\CC,\rho_\CC)$ the {\em complex representation associated to the locally algebraic representation $V$.}
	\end{definition}


        \begin{remark}
          Proposition \ref{prop:absolutelyirreduciblelocallyalgebraic} shows that the complex realization $(V_\CC,\rho_\CC)$ of an absolutely irreducible locally algebraic $V$ representation of $G$ is always realized as the complexification $\rho_G$ of the underlying absolutely irreducible rational representation $(V_E,\rho_G)$ of $G$ twisted by a character of $\pi_0(G(\RR))$ or equivalently of the constant group scheme $\pi_\circ^G$.
        \end{remark}

        \begin{remark}\label{rmk:locallyalgebraicequivalence}
          On the connected component $\GG(\RR)^0$ of the identity of $G(\RR)$ we have $\rho_\CC|_{G(\RR)^0}=\rho_G$. With the observation of the previous remark one may show that the functor $V\mapsto (V_\CC,\rho_\CC)$ induces an equivalence between the category $\underline{\text{LocAlgRep}}_\CC(G)$ of complex locally algebraic representations of $G$ and the category of finite-dimensional continuous complex $G(\RR)$-representations which as $G(\RR)^0$-representations are (restrictions of) rational representations. This justifies the terminology \lq{}locally algebraic representations\rq{}.

          We remark that the latter category is equivalent to the category of finite-dimensional complex $(\lieg_\CC,K_\infty)$-modules which as $\lieg_\CC$-modules are sums of modules with integral infinitesimal characters.
        \end{remark}

        \begin{remark}
          For any real embedding $E\to\RR$ it is straightfoward to extend Definition \ref{def:locallyalgebraiccomplex} to define a real representation $G(\RR)\to\Aut_\RR(\RR\otimes_E V_E)$ to a given locally algebraic representation $V$ over $E$, whose complexification is $V_\CC$.

          In general the descent problem in the extension $\CC/\RR$ for locally algebraic representations $V$ over $\CC$ is easily seen by Proposition \ref{prop:locallyalgebraicfieldofdefinition} or Theorem \ref{thm:irreduciblelocallyalgebraicrepresentations} to be equivalent to the descent problem for the underlying complex rational representation $V_\CC$. In particular any Brauer obstruction for irreducible locally algebraic representations of $G$ over $\CC$ originates in a corresponding Brauer obstruction for the underlying irreducible rational representation. This observation extends to descent of locally algebraic representations in general finite extensions.
        \end{remark}

	\subsubsection{Lattices in locally algebraic representations}

        \begin{definition}[$K_f$-stable lattices]
	Fix a number field $E$ and a rational representation $(V_{E},\rho_G)$ of $G\otimes_{\QQ} E$. For a positive integer $N$ and a $\OO_E[1/N]$-submodule $V_{\OO_E[1/N]}\subset V_E$. Let $K_f$ be a compact open subgroup $K_f\subseteq G(\bfA_f)$. For each $g\in G(\bfA_f)$ we write
	\[
	gV_{\OO_E[1/N]}\coloneqq 
	g\left(\widehat{\ZZ}\otimes_{\ZZ} V_{\OO_E[1/N]}\right)\cap V_{E},
	\]
	where the intersection takes place in the $G(\bfA_f)$-module $\bfA_f\otimes_\QQ V_{E}$ and $g$ acts on $\bfA_f\otimes_\QQ V_E$ by $\rho_G$. We say that $V_{\OO_E[1/N]}$ is {\em $K_f$-stable} if for all $k\in K_f$:
	\[
	kV_{\OO_E[1/N]}\subseteq V_{\OO_E[1/N]}.
	\]
        \end{definition}

        \begin{remark}
          If $V_{\OO_E[1/N]}$ is $K_f$-stable, then for every multiple $N'$ of $N$ the lattice $\ZZ[1/N']\otimes_{\ZZ[1/N]} V_{\OO_E[1/N]}$ is $K_f$-stable as well. In particular, existence of $K_f$-stable lattices reduces to the case $N=1$.
        \end{remark}
        
        We establish the general existence of $K_f$-stable lattices in the following
	
	\begin{example}[Existence of $K_f$-stable lattices]
		Fix any $\OO_E$-lattice $W\subseteq V_{E}$ and consider the continuous map
		\[
		K_f\times (\bfA\otimes_\QQ V_{E})\to \bfA\otimes_\QQ V_{E},\quad (k,v)\mapsto kv.
		\]
		Since $K_f$ and $\bfA\otimes_\ZZ W$ are both compact open, its image $K_fW_\bfA$ is compact open and
		\begin{equation}
			V_\OO:=K_fW_{\bfA}\cap V_{E}
			\label{eq:latticeconstruction}
		\end{equation}
		is a $K_f$ stable lattice by construction. 
	\end{example}
	
	\begin{example}[Scheme theoretic actions and lattices]\label{ex:schemeactiononlattice}
		If $G$ admits a smooth model $G_{\ZZ[1/N]}$ over a localization $\ZZ[1/N]$ for some non-zero integer $N$, then any $K_f$-stable $\OO[1/N]$-lattice $L\subseteq V_E$ generates an $\OO[1/N]$-model $V_{\OO[1/N]}$ of $V_E$ as a representation of the group scheme $G_{\ZZ[1/N]}$, cf.\ \cite{hayashi2018} Proposition 2.1.6. Moreover, whenever
		\begin{equation}
			K_f\subseteq G\left(\widehat{\ZZ}[1/N]\right),
			\label{eq:KfinGZhat}
		\end{equation}
		then $V_{\OO[1/N]}$ is $K_f$-stable.
	\end{example}
	
	\begin{example}[Smooth models of reductive groups]\label{ex:smoothgroups}
		Let $E$ be a number field, and $G$ be a split reductive group scheme over $\OO_E$. Then $\res_{\OO_E/\ZZ} G$ is a smooth affine group scheme over $\ZZ$ by \cite{boschetal} Section 7.6, Proposition 5. In particular, $\res_{\OO_E/\ZZ} G$ is flat over $\ZZ$. We also have a natural identification $(\res_{\OO_E/\ZZ} G)\otimes_{\ZZ}\QQ\cong \res_{E/\QQ} (G\otimes_{\OO_E} E)$.
	\end{example}

	\begin{remark}[Extension to lattices]
	  Now assume $V=(V_E,\rho_G,\rho_{\pi_\circ^G})$ is a locally algebraic representation of $G$ over a number field $E$. Let $W\subseteq V_E$ denote a $K_f$-stable lattice over $\OO_E[1/N]$. We consider $\pi_\circ^G$ as a constant group scheme over $\Spec\OO_E[1/N]$. By Proposition \ref{prop:absolutelyirreduciblelocallyalgebraic} we know that \'etale locally on $\Spec E$, $\pi_\circ^G$ acts on $V_E$ via characters and therefore \'etale locally on $\Spec\OO_E[1/N]$, $W$ is stable under the action $\rho_{\pi_\circ^G}$ and we conclude that $\pi_\circ^G$ acts on $W$ scheme-theoretically. In particular, in the context of Example \ref{ex:schemeactiononlattice}, $G\times\pi_\circ^G$ acts on $W$ scheme-theoretically.
        \end{remark}
	
	\subsection{Sheaves associated to locally algebraic representations}

        \subsubsection{Orbifold sheaves associated to locally algebraic representations}

        \begin{construction}
        In order to define for a locally algebraic representation $V=(V_E,\rho_G,\rho_{\pi_\circ^G})$ a sheaf $\widetilde{V}_E$ on the orbifold $\mathscr X_G(K_f)$, we introduce some notation. The orbifold $\mathscr X_G(K_f)$ corresponds to a proper \'etale Lie groupoid $\mathcal G(K_f)$. Up to Morita equivalence, we are free to choose $\mathcal G(K_f)$ as
        \begin{align*}
          \Ob\mathcal G(K_f)&:=G(\QQ)\backslash G(\bfA)/S(\RR)^0K_f,\\
          \Ar\mathcal G(K_f)&:=G(\QQ)\backslash G(\bfA)/S(\RR)^0K_f\times K_\infty,
        \end{align*}
        together with the structural morphisms
        \begin{align}
          s\colon\; &\Ar\,\mathcal G(K_f)\to{\rm Ob}\,\mathcal G(K_f), &&(x,k)\mapsto x,\label{eq:groupoidsource}\\
          t\colon\; &\Ar\,\mathcal G(K_f)\to{\rm Ob}\,\mathcal G(K_f), &&(x,k)\mapsto xk,\label{eq:groupoidtarget}
        \end{align}
        defining source and target of arrows\footnote{Remark that $\Ar\mathcal G(K_f)\cong K_\infty\times G(\QQ)\backslash G(\bfA)/S(\RR)^0K_f$ via $(x,k)\mapsto (k^{-1},x)$ if the reader prefers a left action groupoid construction.}.
        
	For a locally algebraic representation $(V_E,\rho_G,\rho_{\pi_\circ^G})$ we define a sheaf $\widetilde{V}_E$ of $E$-vector spaces on $\mathscr X_G(K_f)$ as follows. Consider $V_E$ as a discrete topological space and the associated vector bundle
        \[
        \mathcal E(V_E):=G(\QQ)\backslash (V_E\times G(\bfA)/S(\RR)^0K_f)
        \]
        with projection
        \begin{align}
          \pi_V^{K_f}\colon\;\mathcal E(V_E)
          &\to
          \Ob\mathcal G(K_f),\label{eq:bundleprojection}\\
          G(\QQ) (v,x)&\mapsto G(\QQ)x.\nonumber
        \end{align}
        Put
        \begin{align*}
          \mu_V^{K_f}\colon\;\mathcal E(V_E)\times_{\Ob\mathcal G(K_f)}\Ar\mathcal G(K_f)&\to \mathcal E(V_E),\\
          ((v,xk),(x,k))&\mapsto (\rho_{\pi_\circ^G}(\pi_0(k^{-1}))(v),x),
        \end{align*}
        where the fiber product is understood with respect to the projection $\pi_V^{K_f}$ and the target map \eqref{eq:groupoidtarget}. The pair $(\pi_V^{K_f},\mu_V^{K_f})$ defines a $\mathcal G(K_f)$-space with fiberwise $E$-linear action, i.\,e.\ we obtain a vector bundle $\mathcal E(V_E)$ on $\mathscr X_G(K_f)$. The projection $\pi$ is a local diffeomorphism, i.\,e.\ $\mathcal E(V_E)$ is \'etale over $X_G(K_f)$ and the following definition is meaningful.
        \end{construction}
        
        \begin{definition}[Sheaf associated to a locally algebraic representation on $\mathscr X_G(K_f)$]\label{def:locallyalgebraicorbifoldsheaf}
	  Given a locally algebraic representation $(V_E,\rho_G,\rho_{\pi_\circ^G})$ we define $\widetilde{\mathcal V}_E$ as the $K_\infty$-equivariant sheaf of $E$-vector spaces on $\Ob\mathcal G(K_f)$ whose sections are the continuous sections of the projection \eqref{eq:bundleprojection}.
        \end{definition}

        \begin{remark}[Explicit $K_\infty$-equivariant sections]
          Let $U\subseteq\Ob\mathcal G(K_f)$ be an open set. For any section $s\colon U\to\mathcal E(V_E)$ and any $u=G(\QQ)gS(\RR)^0K_f\in\overline{U}$ with $g\in G(\bfA)$ we have
          \[
          s(u)=G(\QQ)(v_u,g_uS(\RR)^0K_f),\quad v_u\in V_E,\,g_u\in G(\bfA).
          \]
          The relation $\pi_V^{K_f}\circ s={\bf1}_{U}$ implies that $g_u=\gamma_u^{-1} g$ for $\gamma_u\in G(\QQ)$. Therefore we may rewrite
          \[
          s(u)=G(\QQ)(\gamma_uv_u,gS(\RR)^0K_f).
          \]
          Observe that $G(\QQ)$ acts faithfully on $G(\bfA)/S(\RR)^0K_f$. Therefore continuity of $s$ shows that $gS(\RR)^0K_f\mapsto \gamma_uv_u$ defines a locally constant function on the set $\overline{U}\subseteq G(\bfA)/S(\RR)^0K_f$ of the cosets $gS(\RR)^0K_f$ above $U$. I.\,e.\ we obtain a natural isomorphism between the $E$-vector space of sections $s$ on $\overline{U}$ and the space of maps
          \[
          f\colon \overline{U}\to V_E,
          \]
          which are locally constant and $G(\QQ)$-equivariant: $f(\gamma x)=\rho_G(\gamma)f(x)$ for all $\gamma\in G(\QQ)$, $x\in\overline{U}$.

          Note that $K_\infty$ naturally acts on $f$ via
          \[
          (kf)(x)=\rho_{\pi_\circ^G}(k)f(xk),\quad k\in K_\infty,\,x\in\overline{U},
          \]
          which corresponds to the action of $\Ar\mathcal G(K_f)$ on $\mathcal E(V_E)$. In particular, we associated to $\mathcal E(V_E)$ a $K_\infty$-equivariant sheaf as desired.
        \end{remark}
        
        \begin{remark}[Sections on orbifold charts]
          The $K_\infty$-equivariant sheaf $\widetilde{\mathcal V}_E$ descends to an equivariant sheaf on orbifold charts as follows. By construction, $\mathscr X_G(K_f)$ is the orbifold quotient of the space $\Ob\mathcal G(K_f)$ by the action of $K_\infty$, i.\,e.\ we have the canonical projection map
	  \[
	  p\colon \Ob\mathcal G(K_f)\to X_G(K_f),
	  \]
          sending objects of the groupoid to $K_\infty$-orbits.

          For a connected simply connected orbifold chart $(\widetilde{U},G,\varphi)$ mapping onto the open set $U=\varphi(\widetilde{U})\subseteq X_G(K_f)=\widetilde{U}/G$, we have a canonical identification
          \[
          p^{-1}(U)/K_\infty=\widetilde{U}/G,
          \]
          of topological spaces. We may assume that there is an $\widetilde{x}\in\widetilde{U}$ with stabilizer $G_{\widetilde{x}}=G$. Put $x:=\varphi(x)$ and let $\overline{x}\in p^{-1}(U)$ denote a preimage.

          Moreover we may assume that we have a slice representation
          \[
          K_\infty\times_{K_{\infty,\overline{x}}}\overline{V}\to p^{-1}(U)
          \]
          with $\overline{V}$ a linear $K_{\infty,\overline{x}}$-representation over $\RR$. Now $K_{\infty,\overline{x}}\cong G_{\widetilde{x}}$ and we obtain a tautological slice representation
          \[
          \overline{V}=G\times_{G_{\widetilde{x}}}\overline{V}\to \widetilde{U}.
          \]
          Considering the action groupoid $\mathcal H$ with $\Ob\mathcal H=\widetilde{U}$ and $\Ar\mathcal H=\widetilde{U}\times G$ the above datum gives rise to a morphism $\phi\colon\mathcal H\to\mathcal G$ of groupoids with
          \[
          \Ob\phi\colon\Ob\mathcal H=\widetilde{U}\cong\overline{V}\to p^{-1}(U)\subseteq\Ob\mathcal G,
          \]
          and
          \[
          \Ar\phi\colon
          \Ar\mathcal H=\widetilde{U}\times G\cong\overline{V}\times K_{\infty,\widetilde{x}}\to p^{-1}(U)\times K_\infty\subseteq\Ar\mathcal G.
          \]
          Pulling back the $\mathcal G$-space $\mathcal E(V_E)$ along $\phi$ to an $\mathcal H$-space $\phi^*\mathcal E(V_E)$, the associated $G$-equivariant sheaf on $\widetilde{U}$ is the sheaf $\widetilde{\mathcal V}_E|_{\widetilde{U}}$ on the orbifold chart $\widetilde{U}$.
        \end{remark}

        \subsubsection{Topological sheaves associated to locally algebraic representations}

        \begin{definition}[Sheaf associated to a locally algebraic representation on $X_G(K_f)$]\label{def:locallyalgebraicsheaf}
	  Given a locally algebraic representation $(V_E,\rho_G,\rho_{\pi_\circ^G})$ we define the sheaf $\widetilde{V}_E$ of $E$-vector spaces on $X_G(K_f)$ as the pushforward $p_{K_f,\ast}\widetilde{\mathcal V}_E$ under the projection \eqref{eq:orbifoldtomanifold}.
        \end{definition}

        \begin{remark}[Explicit description of sections]
          Denote by          
	  \[
	  \pi\colon G(\bfA)/S(\RR)^0 K(\RR)^0 K_f\to X_G(K_f),
	  \]
	  the canonical projection. For $U\subseteq X_G(K_f)$ open we have
	  \[
	  \Gamma(U;\widetilde{V}_E)=
	  \{s\colon\pi^{-1}(U)\to V_E\mid s\;\text{locally constant and}\;
	  \]
	  \[\forall \gamma\in G(\QQ),k\in K(\RR)\,u\in\pi^{-1}(U):\;f(\gamma u k)=\rho_G(\gamma)\rho_{\pi_\circ^G}(\pi_0(k))f(u)\}.
	  \]
        \end{remark}

        \begin{remark}
          The sheaf cohomologies of $\widetilde{\mathcal V}_E$ and $\widetilde{V}_E$ are canonically isomorphic by the degenerating Leray spectral sequence \eqref{eq:lerayspectralsequence} (cf.\ Remark \ref{rmk:leraydegeneration}). This does not apply to the sheaf cohomology of associated to lattices in the next section.
        \end{remark}

        \subsubsection{Sheaves associated to locally algebraic lattices}
        
        \begin{definition}[Sheaf associated to a lattice in a locally algebraic representation]\label{def:locallyalgebraicintegralsheaf}
	  For any $K_f$-stable lattice $V_{\OO[1/N]}\subseteq V_E$ we define a subsheaf $\widetilde{\mathcal V}_{\OO[1/N]}\subseteq\widetilde{\mathcal V}_E$ (resp.\ $\widetilde{V}_{\OO[1/N]}\subseteq\widetilde{V}_E$) of $\OO[1/N]$-modules whose sections $s$ are characterized by the condition that the germs $s_x$ at all points $x\in|\mathscr X_G(K_f)|=X_G(K_f)$ which in turn is represented by an element $g\in G(\bfA)$ be contained in the submodule $g_fV_{\OO[1/N]}\subseteq V_E$.
        \end{definition}

        \begin{remark}
        We remark that the isomorphism $\widetilde{V}_{\OO[1/N],x}\to g_fV_{\OO[1/N]}$ identifying the stalk at $x$ with the submodule $g_fV_{\OO[1/N]}\subseteq V_E$ depends on the chosen representative $g$. Nonetheless, if $h$ is another representative of the double coset $x$, we may write $h=\gamma g s_\infty k_\infty k_f$ with $\gamma\in G(\QQ)$, $s_\infty\in S(\RR)^0$, $k_\infty\in K(\RR)$ and $k_f\in K_f$. The relation $h_f=\gamma g_fk_f$ shows that we have a commutative square
	\[
	\begin{CD}
		h_fV_\OO @>>> V_E\\
		@V{\cong}VV @VV{\rho_G(\gamma)\rho_{\pi_\circ^G}(\pi_0(k_\infty))}V\\
		g_fV_\OO @>>> V_E
	\end{CD}
	\]
        Therefore our characterization via stalks is independent of the chosen representative $g\in G(\bfA)$.
        \end{remark}

        \begin{remark}
          We have $p_{K_f,\ast}\widetilde{\mathcal V}_{\OO[1/N]}\cong\widetilde{V}_{\OO[1/N]}$ canonically. The sheaf cohomologies of $\widetilde{\mathcal V}_{\OO[1/N]}$ and $\widetilde{V}_{\OO[1/N]}$ are related via the Leray spectral sequence \eqref{eq:lerayspectralsequence}.
        \end{remark}

        \begin{remark}[A canonical Galois cover]\label{rmk:galoiscover}
          Put
          \begin{equation}
            \Gamma_0:=\image\left(G(\QQ)\cap K_f\to\pi_0(G(\RR))\right)
            \label{eq:Kfcomponents}
          \end{equation}
          and consider $\Gamma_0$ as a subgroup of $\pi_0(K_\infty)$. Passing to the factor group we obtain the corresponding relative component group
          \begin{equation}
            \pi_0(K_\infty,K_f):=\pi_0(K_\infty)/\image\left(G(\QQ)\cap K_f\to\pi_0(G(\RR))\right).
            \label{eq:relativepi0}
          \end{equation}
          We have the Galois cover
	  \[
	  p_0\colon X_G(K_f)_0\to X_G(K_f),
	  \]
	  whose covering group is the relative component group $\pi_0(K_\infty,K_f)$.

          On $X_G(K_f)_0$ we may define likewise a sheaf $\widetilde{V}_E$ associated to the rational representation $(V_E,\rho_G)$ omitting $\rho_{\pi_\circ^G}$. By Proposition \ref{prop:matsumoto}, $E$ contains the values of all characters $\chi$ of $\pi_0(K_\infty,K_f)$, hence
	  \begin{equation}
	    p_{0,\ast}(\widetilde{V}_E)\;=\;\bigoplus_{\chi\in\pi_0(K_\infty,K_f)^\vee}\widetilde{V}_{E,\chi},
	    \label{eq:VEdecomposition}
	  \end{equation}
	  where $\chi$ runs through the Pontryagin dual $\pi_0(K_\infty,K_f)^\vee$ of $\pi_0(K_\infty,K_f)$ and $\widetilde{V}_{E,\chi}$ is the sheaf on $X_G(K_f)$ associated to the locally algebraic representation
          \[
          V_\chi\;:=\;(V_E,\rho_G,\chi\circ(\pi_0(K_\infty)\to\pi_0(K_\infty,K_f))),
          \]
          cf.\ Definition \ref{def:locallyalgebraicsheaf}. Moreover, whenever $|\pi_0(K_\infty,K_f)|$ is invertible in $\OO[1/N]$ we have likewise
	  \begin{equation}
	    p_{0,\ast}(\widetilde{V}_{\OO[1/N]})\;=\;\bigoplus_{\chi\in\pi_0(K_\infty,K_f)^\vee}\widetilde{V}_{\OO[1/N],\chi}.
	    \label{eq:OEdecomposition}
	  \end{equation}
          Here the $K_f$-stable lattice $V_{\OO[1/N],\chi}\subseteq V_{E,\chi}$ is independent of $\chi$ (omitting the $\pi_0(K_\infty)$-action). In general, the right hand side is a subsheaf of the left hand side and we have a corresponding spectral sequence \eqref{eq:chispectralsequence} in sheaf cohomology (cf.\ section \ref{sec:cohomologicalrepresentations} below).
        \end{remark}

        \begin{remark}
          Similarly we may define an orbifold cover $\mathscr X_G(K_f)_0\to\mathscr X_G(K_f)$ and obtain orbifold analogues of the decompositions \eqref{eq:VEdecomposition} and \eqref{eq:OEdecomposition}. Such decompositions then are compatible with the Leray spectral sequence \eqref{eq:lerayspectralsequence}. Likewise, the spectral sequence \eqref{eq:chispectralsequence} admits an orbifold analogue, again comaptibe the Leray spectral sequence \eqref{eq:lerayspectralsequence}.
        \end{remark}
                
	\begin{remark}[Non-triviality considerations]\label{rmk:nontriviality}
          The sheaf $\widetilde{V}_E$ on the cover $X_G(K_f)_0$ is non-trivial if $V_E\neq 0$ and the center $Z(\Gamma)$ of $\Gamma=G(\QQ)\cap K_f$ acts trivially on $V_E$. However, the latter condition is too strong. In fact, a sufficient and necessary condition is
          \begin{equation}
            Z^0(\gamma):=Z(\gamma)\cap\ker\left(\Gamma\to\pi_0(G(\RR))\right)\;\subseteq\;\ker\rho_G.
            \label{eq:centralcondition}
          \end{equation}
          Combining this observation with \eqref{eq:VEdecomposition} and the fact that every local system $\widetilde{V}_E$ on $X_G(K_f)$ associated to a locally algebraic representation $V$ occurs as a subsheaf of $p_{0,\ast}(\widetilde{V}_E)$ by the adjointness of $p_{0,\ast}$ and $p_0^\ast$, we see that the sheaf $\widetilde{V}_E$ associated to the locally algebraic representation $V$ on $X_G(K_f)$ is non-trivial if and only if $V_E\neq 0$, condition \eqref{eq:centralcondition} is satisfied and the action $\rho_{\pi_\circ^G}$ on $V_E$ factors through $\pi_0(K_\infty,K_f)$.

          Regardless of the invertibility of the order of $\pi_0(K_\infty,K_f)$ in $\OO[1/N]$ the sheaf $\widetilde{V}_{\OO[1/N]}$ is non-trivial if and only if $\widetilde{V}_E$ is so. Hence the same criterion applies.
        \end{remark}

        \begin{remark}
          The condition on $\rho_{\pi_\circ^G}$ is empty if
	  \begin{equation}
	    K_f\cap G(\QQ)\subseteq G(\QQ)\cap G(\RR)^0.
	    \label{eq:Kfsign}
	  \end{equation}            
	\end{remark}

        \section{Locally algebraic $(\lieg,K)$-modules}\label{sec:locallyalgebraicgKmodules}

        In order to describe the automorphic representations contributing to the cohomology of the sheaves $\widetilde{V}_E$ associated to locally algebraic representations $V$ of $G$, we are naturally led to an extension of the notion of locally algebraic representations first to $(\lieg,K)$-modules and subsequently to cohomologically induced standard modules. These refinements of the classical theory are necessary to identify the appropriate corresponding rational structures on automorphic representations which will then enable us to refine our constructions to integral structures.

	\subsection{Locally algebraic $(\lieg,K)$-modules}

        In this section we consider the reductive pair $(\lieg,K)$ over $\QQ$ associated to $G$.
        
	\begin{definition}[Locally algebraic $(\lieg,K)$-module]\label{def:locallyalgebraicgKmodule}
	  A {\em locally algebraic $(\lieg,K)$-module} over a field $E$ of characteristic $0$ is a pair $X=(X_E,\rho_{\pi_\circ^G})$ consisting of a $(\lieg,K)$-module $X$ over $E$ in the sense of \cite{januszewskirationality} and an $E$-linear action $\rho_{\pi_\circ^G}$ of $\pi_0(K(\RR))$ on $X_E$ which commutes with the two actions of $K$ and $\lieg$ on $X$ given by the $(\lieg,K)$-module structure on $X_E$.

          We say that $X$ is {\em algebraic} or {\em rational} if $\rho_{\pi_\circ^G}={\bf1}$ is trivial.

          Morphisms of locally algebraic $(\lieg,K)$-modules are $E$-linear homorphisms of $(\lieg,K)$-modules which preserve the additional $\pi_0(K(\RR))$-actions.
	\end{definition}

        \begin{remark}
          Once again we may consider the action of $\pi_0(K(\RR))$ as a scheme theoretic action of the constant group scheme $\pi_\circ^G$ on $X$ in the sense of a cohomodule action of the Hopf algebra underlying $\pi_\circ^G$. From this scheme-theoretic standpoint, letting $\pi_\circ^G$ act trivially on $\lieg$, we may therefore consider a locally algebraic $(\lieg,K)$-module as a $(\lieg,K\times\pi_\circ^G)$-module and vice versa. The latter point of view will be a natural choice when considering integral models of locally algebraic $(\lieg,K)$-modules.
        \end{remark}

        \begin{example}
          Every locally algebraic rational representation $V=(V_E,\rho_G,\rho_{\pi_\circ^G})$ of $G$ gives rise to a finite-dimensional locally algebraic $(\lieg,K)$-module $X=(V_E,\rho_{\pi_\circ^G})$ and $X$ is algebraic if $V$ is algebraic but the converse is not true in general: Not every finite-dimensional $(\lieg,K)$-module comes from a rational representation. Nonetheless the functor $V\mapsto(V_E,\rho_{\pi_\circ^G})$ from locally algebraic rational representations to locally algebraic $(\lieg,K)$-modules is fully faithful for modules over fields of characteristic $0$.
        \end{example}
	
	\begin{remark}\label{rmk:absolutelyirreduciblelocallyalgebraicgKmodule}
          We remark that Propositions \ref{prop:locallyalgebraicfieldofrationality} and \ref{prop:locallyalgebraicfieldofdefinition} have natural analogs for locally algebraic $(\lieg,K)$-modules with identical proofs. In particular the absolute irreducibility of $A_\lieq(\lambda)_E$, Schur's Lemma implies that $\pi_0(K(\RR))$ acts via scalars in $\{\pm1\}$ and that descent problems for locally algebraic $(\lieg,K)$-modules are equivalent to the corresponding descent problems for algebraic $(\lieg,K)$-modules over fields of characteristic $0$ or more generally as long as $2$ is invertible on the base in an appropriate ring setting.
	\end{remark}

        The analogue of Theorem \ref{thm:irreduciblelocallyalgebraicrepresentations} is

        \begin{theorem}[Irreducibility of restrictions of scalars of locally algebraic $(\lieg,K)$-modules]\label{thm:irreduciblelocallyalgebraicgKmodules}
          Let $E$ be an arbitrary field of characteristic $0$. Let $X=(X_E,\rho_{\pi_\circ^G})$ denote a locally algebraic $(\lieg,K)$-module over $E$ whose base change to an algebraic closure $\overline{E}/E$ is of finite length and contains an absolutely irreducible locally algebraic submodule $\overline{X}_0\subseteq \overline{E}\otimes_E X_E$ which is assumed to be defined over its field of rationality $E_0:=E(\overline{X}_0)$. Let $X_0$ denote a model of $\overline{X}_0$ over $E_0$. Then $E_0/E$ is finite and $X$ is isomorphic to the restriction of scalars $\res_{E_0/E}X_0$.
        \end{theorem}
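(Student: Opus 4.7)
The plan is to transpose the Galois-descent argument used for parts (b)--(c) of Theorem \ref{thm:irreduciblelocallyalgebraicrepresentations} from rational representations of $G$ to the present setting of locally algebraic $(\lieg,K)$-modules. Both the finiteness of $[E_0:E]$ and the identification $X \cong \res_{E_0/E} X_0$ are Galois-theoretic in nature: the $\lieg$-, $K$- and $\pi_\circ^G$-actions are all defined over $\QQ \subseteq E$, hence $\Gal(\overline{E}/E)$-equivariant after base change, and descend automatically together with the underlying $E$-vector-space structure.

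First, I would deduce finiteness of $[E_0:E]$ from the finite-length hypothesis on $\overline{E}\otimes_E X_E$. The natural action of $\Gal(\overline{E}/E)$ through the second factor permutes the finitely many isomorphism classes of absolutely irreducible locally algebraic constituents of $\overline{E}\otimes_E X_E$. By the $(\lieg,K)$-analogue of Definition \ref{def:fieldofrationality} recorded in Remark \ref{rmk:absolutelyirreduciblelocallyalgebraicgKmodule}, the stabilizer of the class $[\overline{X}_0]$ is precisely $\Gal(\overline{E}/E_0)$, whence the orbit has cardinality $[E_0:E]$ and must therefore be finite.

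Next I would construct the isomorphism. For each embedding $\sigma\in\Hom_E(E_0,\overline{E})$ put $\overline{X}_0^\sigma := \overline{E}\otimes_{\sigma,E_0}X_0$. Because $E_0$ is the field of rationality of $\overline{X}_0$, distinct embeddings yield pairwise non-isomorphic Galois conjugates, and their sum inside $\overline{E}\otimes_E X_E$ is direct:
\[
Y \;:=\; \bigoplus_{\sigma\colon E_0\hookrightarrow\overline{E}} \overline{X}_0^\sigma \;=\; \overline{E}\otimes_E \res_{E_0/E}X_0.
\]
The subspace $Y$ is $\Gal(\overline{E}/E)$-stable by construction, so Galois descent applied to the category of locally algebraic $(\lieg,K)$-modules produces an $E$-subobject $Y_E\subseteq X_E$ with $\overline{E}\otimes_E Y_E = Y$ and $Y_E \cong \res_{E_0/E}X_0$. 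The equality $Y_E = X_E$ then follows from the (implicit) irreducibility of $X$: a proper $E$-subobject would base-change to a proper $\Gal(\overline{E}/E)$-stable $\overline{E}$-subobject already containing $\overline{X}_0$ together with its whole Galois orbit, contradicting the descent-theoretic bijection between $E$-subobjects of $X_E$ and $\Gal(\overline{E}/E)$-stable $\overline{E}$-subobjects of $\overline{E}\otimes_E X_E$.

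The one point that deserves care is justifying Galois descent with the full $(\lieg,K\times\pi_\circ^G)$-structure, rather than only for plain $\overline{E}$-vector spaces. This is ultimately routine because $\lieg$, $K$, and $\pi_\circ^G$ are defined over $\QQ$, so their actions on $\overline{E}\otimes_E X_E$ are manifestly $\Gal(\overline{E}/E)$-equivariant and restrict to $Y_E$. Unlike in Proposition \ref{prop:locallyalgebraicfieldofdefinition}, no Hilbert~90 argument is required, because the model $X_0$ is prescribed by hypothesis; one descends a concrete submodule rather than classifying Galois twists of an abstract isomorphism class.
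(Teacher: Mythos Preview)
Your proposal is correct and follows essentially the same Galois-descent strategy as the paper, which simply refers back to the proof of Theorem~\ref{thm:irreduciblelocallyalgebraicrepresentations} (parts (b)--(d)) after noting that finiteness of $[E_0:E]$ follows from the finite-length hypothesis; your argument for finiteness via the Galois orbit on isomorphism classes of constituents is exactly this observation. Your explicit flagging of the implicit irreducibility hypothesis on $X$ is apt: both the paper's argument and yours require it, and the conclusion is false without it. The only cosmetic difference is packaging---the paper first proves $\res_{E_0/E}X_0$ is irreducible and then produces a nonzero map to $X$ via the adjunction $\Hom_E(\res_{E_0/E}X_0,X)\cong\Hom_{E_0}(X_0,E_0\otimes_E X)$, whereas you build $\res_{E_0/E}X_0$ directly inside $X$ by descending the Galois orbit of $\overline{X}_0$; one minor point worth tightening in your version is that Galois-stability of $Y$ uses that the embedding $\overline{X}_0\hookrightarrow\overline{E}\otimes_E X_E$ can be taken to be defined over $E_0$ (which follows since $\Hom$ commutes with the base change $\overline{E}/E_0$), so that $\Gal(\overline{E}/E_0)$ fixes $\overline{X}_0$ as a submodule and not merely up to isomorphism.
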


        \begin{proof}
          The proof proceeds along the same lines as the proof of Theorem \ref{thm:irreduciblelocallyalgebraicrepresentations} with the following additional observations: The field of rationality of $\overline{X}_0$ is finite over $E$ because all Galois-conjugates of $\overline{X}_0$ are submodules of $\overline{E}\otimes_E X_E$, which is a module of finite length. The rest of the proof proceeds as in the case of locally algebraic representations.
        \end{proof}

	\begin{definition}[Complex $(\lieg,K)$-module attached to a locally algebraic $(\lieg,K)$-module]\label{def:locallyalgebraiccomplexmodule}
	  For any locally algebraic $(\lieg,K)$-module $X=(X_E,\rho_{\pi_\circ^G})$ over a field $E$ of characteristic $0$ and any embedding $\iota\colon E\to\CC$, we twist the underlying $(\lieg_\CC,K_\infty)$-module structure over $\CC$ on $X_\CC:=\CC\otimes{\iota,E} X_E$ by the action of $K$ induced by $\rho_{\pi_\circ^G}$, i.\,e.\ the $K(\RR)$-action is defined via pullback along the canonical \lq{}diagonal\rq{} morphism
          \[
          K(\RR)\to K(\RR)\times \pi_\circ^G(\RR).
          \]
          This defines a $\CC$-linear action $\rho_{K,\CC}$ of $K(\RR)$ on $X_\CC$ and we call the resulting $(\lieg_\CC,K_\infty)$-module $X_\CC$ the {\em complex $(\lieg,K)$-module associated to the locally algebraic $(\lieg,K)$-module $X$.}
	\end{definition}

        \begin{remark}
           Explicitly, the new action of $K(\RR)$ on $X$ via $\rho_{K,\CC}$ is given by
	   \[
	   \forall k\in K(\RR),v\in X_\CC\colon\quad k\ast v=\rho_{\pi_\circ^G}(k)(k\cdot v),
	   \]
           where we wrote $k\ast v$ for the new action and $k\cdot v$ for the action of $K(\RR)$ on $X$ as a $(\lieg,K)$-module.

           We swiftly verify that the actions of $\lieg_\CC$ and $K(\RR)$ on $X$ are compatible: For any $g\in\lieg_\CC$, any $k\in K(\RR)$ and any $v\in X_\CC$ we have
           \begin{align*}
           k\ast(g\cdot v)&=
           \rho_{\pi_\circ^G}(k)(k\cdot (g\cdot v))\\&=
           \rho_{\pi_\circ^G}(k)(\ad(k)(g)\cdot(k\cdot v))\\&=
           \ad(k)(g)\cdot\rho_{\pi_\circ^G}(k)(k\cdot v)\\&=
           \ad(k)(g)\cdot(k\ast v).
           \end{align*}
           For any $k\in K(\RR)^0$ we have $k\ast v=k\cdot v$, which implies that the derivative of the new action agrees with the given action of $\Lie_\CC(K(\RR))\subseteq\lieg_\CC$ on $X_\CC$.
        \end{remark}

        \begin{remark}
	  The notion of complexification of locally algebraic representations $V$ from Definition \ref{def:locallyalgebraiccomplex} is compatible with the notion of complexification of locally algebraic $(\lieg,K)$-modules in Definition \ref{def:locallyalgebraiccomplexmodule}: The action of $\lieg_\CC$ on $V_\CC$ is the derivative of the action $\rho_\CC:G(\RR)\to\Aut_\CC(V_\CC)$ and
          \[
          \rho_{\pi_\circ^G,\CC}=\rho_\CC|_{K(\RR)}.
          \]
          Note that contrary to the case of locally algebraic representations the functor sending complex locally algebraic $(\lieg_\CC,K_\CC)$-modules to $(\lieg_\CC,K_\infty)$-modules is faithful but not full in general since non-isomorphic complex locally algebraic $(\lieg,K)$-modules may have isomorphic complex realizations: An example is the compact pair $(0,\pm1)$.
        \end{remark}
        
	\subsection{Locally algebraic cohomologically induced standard modules}\label{sec:locallyalgebraiccohomologicallyinducedstandardmodules}

        Let $V=(V_E,\rho_G,\rho_{\pi_\circ^G})$ denote an absolutely irreducible locally algebraic representation of $G$ over $E=\CC$ and put
        \begin{equation}
          \lambda=H^0(\bar{\lieu}',V_\CC)^\vee,
          \label{eq:lambdaforV}
        \end{equation}
        where $V_\CC$ denotes the complex realization of $V$ in the sense of Definition \ref{def:locallyalgebraiccomplex}. Then the corresponding cohomologically induced (standard) module is
        \begin{equation}
        A_{\lieq'}(\lambda)_\CC=
        R^uI^{\lieg_\CC,K_\CC}_{\lieq',L'\cap K_\CC}
        \left(\lambda\otimes_\CC \wedge^{\mathrm{top}} \lieg_\CC/\bar{\lieq}'\right),
        \label{eq:cohomologicalarchimedeancomponent}
        \end{equation}
        We refer to \cite{voganzuckerman1984} and Theorem 5.6 therein for a characterization of such modules in terms of non-vanishing relative Lie algebra cohomology. A reformulation in the language we use below is Theorem 6.4.2 of \cite{hayashijanuszewski}. In section 6.2 of the same work Hayashi and the author discuss the construction of models of cohomologically induced standard modules $A_{\lieq'}(\lambda)_\CC$ from the perspective of a scheme theoretic theory of $\mathcal D$-modules. This perspective will be of relevance for the construction of $1/N$-integral structures.
	
	\begin{definition}[Locally algebraic cohomologically induced standard module]\label{def:locallyalgebraicstandardmodule}
	  An {\em (absolutely irreducible) locally algebraic cohomologically induced standard module} for $(\lieg,K)$ over a field $E\subseteq\CC$ is an absolutely irreducible locally algebraic $(\lieg,K)$-module $X$ over $E$ which is an $E$-model of a complex cohomologically induced standard module $A=(A_{\lieq'}(\lambda)_\CC,\rho_{\pi_\circ^G})$ whose underlying $(\lieg_\CC,K_\infty)$-module $A_{\lieq'}(\lambda)_\CC$ is given by \eqref{eq:cohomologicalarchimedeancomponent} where the character $\lambda$ of the associated Levi pair $(\liel_\CC',L'\cap K(\RR))$ to the $\theta$-stable parabolic $\lieq'$ is associated to an extremal weight space of a an absolutely irreducible $E$-rational representation $(V_E,\rho_E)$ of $G$ via \eqref{eq:lambdaforV}.

          We call the locally algebraic representation $V=(V_E,\rho_E,\rho_{\pi_\circ^G})$ with the same $\pi_0(K(\RR))$-action $\rho_{\pi_\circ^G}$ as on $X$ the {\em weight} of $X$.

	  A {\em locally algebraic (cohomologically induced) standard module} for $(\lieg,K)$ is an irreducible locally algebraic $(\lieg,K)$-module $X$ which occurs as a subquotient in the restriction of scalars $\res_{E_0/E}A$ of an absolutely irreducible locally algebraic standard module $X_0$ over a finite extension $E_0/E$.

          We call an irreducible $E$-rational representation $V$ occuring in the restriction of scalars $\res_{E_0/E}V_0$ of the weight $V_0$ of $X_0$ over $E_0$ the {\em weight} of $X$ if the relative Lie algebra cohomology of $X$ with coefficients in $V$ is non-zero.
	\end{definition}

        \begin{remark}
          By absolute irreducibility, $\pi_0(K(\RR))$ acts on $X$ via scalars $\pm1$. This action naturally extens to an action of $\pi_0(K(\RR))$ on $V_E$.
        \end{remark}

        \begin{remark}\label{rmk:locallyalgebraicstandardmodulesarecohomologicallyinduced}
          For an absolutely irreducible locally algebraic standard module $A=(A_{\lieq'}(\lambda)_E,\rho_{\pi_\circ^G})$ of weight $V=(V_E,\rho_E,\rho_{\pi_\circ^G})$ the weight $\lambda$ in \eqref{eq:lambdaforV} is by definition defined via the complexification $V_\CC$ of the rational representation $(V_E,\rho_E)$. If we use instead the complexification of $V$ to define a weight $\lambda'$, then the module defined by the right hand side in \eqref{eq:cohomologicalarchimedeancomponent} for $\lambda'$ realizes the complexification of $A$ as a cohomologically induced standard module.
        \end{remark}
	
	\begin{remark}[Cohomological interpretation of the weight]\label{rmk:cohomologyoflocallyalgebraicstandardmodules}
          By the previous Remark \ref{rmk:locallyalgebraicstandardmodulesarecohomologicallyinduced}, the associated complex $(\lieg,K)$-module $A_{\lieq'}(\lambda)_\CC$ for the locally algebraic standard module $(A_{\lieq'}(\lambda),\rho_{\pi_\circ^G})$ in the sense of Definition \ref{def:locallyalgebraiccomplexmodule} has the property that
	  \[
	  H^\bullet(\lieg_\CC,S(\RR)^0 K(\RR); A_{\lieq'}(\lambda)_\CC\otimes V_\CC)\;\neq\;0.
	  \]
          where $V_\CC$ is the complexification of the weight $V$ of $G$ in the sense of Definitions \ref{def:locallyalgebraiccomplex}, cf.\ Theorem 5.6 in \cite{voganzuckerman1984} (also summarized in Theorem 6.4.2 in \cite{hayashijanuszewski}).
	\end{remark}

        \begin{remark}
          If $V_E$ is of regular highest weight, then Corollary 6.4.14 in \cite{hayashijanuszewski} shows that $A_{\lieq'}(\lambda)$ is uniquely determined by $V$ if the flag variety of $G_\CC$ contains a unique closed $K_\CC$-orbit.
        \end{remark}
	
	\begin{remark}[Restriction of scalars of locally algebraic standard modules defined over their field of rationality]
	  We emphasize that if $E/F$ is a finite extension of fields of characteristic $0$ and if $(A_{\lieq'}(\lambda)_E,\rho_{\pi_\circ^G})$ is an locally algebraic standard module over $E$, with field of rationality $E$ with respect to the extension $E/F$, then $\res_{E/F}A_{\lieq'}(\lambda)_E$ is an irreducible $(\lieg,K\times\pi_\circ^G)$-module over $F$, i.\,e.\ a locally algebraic $(\lieg,K)$-module in the sense of Definition \ref{def:locallyalgebraicstandardmodule}.
	\end{remark}

        \subsection{Admissibile weights}\label{sec:admissibleweights}

        \begin{definition}[Admissible weights and admissible twists]\label{def:admissibleweights}
          Let $A=(A_{\lieq'}(\lambda)_E,\rho_{\pi_\circ^G})$ denote a locally algebraic standard module of weight $V=(V_E,\rho_G,\rho_{\pi_\circ^G})$. We call a locally algebraic representation $V'=(V_E',\rho_G',\rho_{\pi_\circ^G}')$ {\em admissible} for $A$, if there is an isomorphism $\varphi\colon V_E'\to V_E$ of rational representations of $G$ and a $\QQ$-rational character $\chi\colon\pi_\circ(K)\to\Gm$ of the finite \'etale group scheme $\pi_\circ(K)$ with the property that $\rho_{\pi_\circ^G}'=(\varphi\circ\rho_{\pi_\circ^G})\otimes\chi^{\rm const}$ if we consider $\chi$ as a character $\chi^{\rm const}$ of $\pi_\circ^G$ (or equivalently as a character of $\pi_0(K(\RR))$). We call $A':=(A_\lieq'(\lambda)_E,\rho_{\pi_\circ^G}')$ an {\em admissible twist} of $A$.
        \end{definition}

        \begin{example}\label{ex:admissiblecharacters}
          Take $G=\res_{F/\QQ}\GL_n$ for a totally real number field $F/\QQ$. Then $\pi_\circ(K)=\res_{F/\QQ}\mu_2$ where $\mu_2$ denotes the multipliciative group scheme of square roots of unity. The only two $\QQ$-rational characters of $\pi_\circ(K)$ are the trivial character ${\bf1}$ and the sign character ${\rm sgn}_F\colon\pi_\circ(K)\to\GL_1$ which is given by the composition
          \[
            {\rm sgn}_F={\rm sgn}_\QQ\circ N_{F/\QQ}
          \]
          and ${\rm sgn}_\QQ\colon\mu_2\to\GL_1$ denotes the nontrivial character of $\mu_2$ over $\QQ$. We remark that both ${\bf1}$ and ${\rm sgn}_F$ descend to characters of $\ZZ[1/2]$.

          Therefore, the admissible weights of a locally algebraic standard module $A=(A_{\lieq'}(\lambda)_E,\rho_{\pi_\circ^G})$ of weight $V$ are given by $V_+:=(V_E,\rho_G,\rho_{\pi_\circ^G})$ and $V_-:=(V_E,\rho_G,\rho_{\pi_\circ^G}\otimes{\rm sgn}_F)$.
        \end{example}

        \subsection{Relative Lie algebra cohomology with locally algebraic coefficients}\label{sec:relativeliealgebracohomology}

        Recall that $S\subseteq G$ denotes the maximal $\QQ$-split torus in the center of $G$. Its Lie algebra is denoted by $\lies=\Lie(S)$. The Lie algebra ${}^0\lieg=Lie({}^0G)$ is complementary to $\lies$ in $\lieg$, i.\,e.\ we have $\lieg=\lies\oplus{}^0\lieg$ as Lie algebras. Then on the category of $(\lieg,K)$-modules we have a notion of relative $(\lieg,\lies,K)$-cohomology theory $H^\bullet(\lieg,\lies,K;-)$ defines as the right derived functor of the functor of $({}^0\lieg,K)$-invariants.

        Over a field $E$ of characteristic $0$ the cohomology $H^\bullet(\lieg,\lies,K;X)$ is computed by the complex
        \begin{equation}
          C^\bullet(\lieg,\lies,K;X)=\Hom_K\left(\bigwedge^\bullet(\lieg/\lies+\liek),X\right).
          \label{eq:relativeliealgebracohomologycomplex}
        \end{equation}
        
        \begin{definition}[Relative Lie algebra cohomology for locally algebraic standard modules]\label{def:locallyalgebraicrelativeLiealgebracohomology}
          Let $A=(A_{\lieq'}(\lambda)_E,\rho_{\pi_\circ^G})$ denote a locally algebraic standard module of weight $V=(V_E,\rho_G,\rho_{\pi_\circ^G})$. Then for every admissible weight $V'=(V_E,\rho_G,\rho_{\pi_\circ^G}\otimes\chi^{\rm const})$ we define relative Lie algebra cohomology of $A$ with coefficients in $V'$ as follows:
          \begin{equation}
            H^\bullet(\lieg,\lies,K;A\otimes V')_E:=
            H^\bullet(\lieg,\lies,K;A_{\lieq'}(\lambda)_E\otimes V_E\otimes\chi)
          \end{equation}
          where on the right hand side we consider $\chi$ as a character of $K$ over $E$ via the pullback along $K\to\pi_\circ(K)$ as usual, i.\,e.\ $A_{\lieq'}(\lambda)_E\otimes V_E\otimes\chi$ carries a natural $(\lieg,K)$-module structure over $E$ depending on $\chi$.

          Moreover, if for an arbitrary locally algebraic standard module $V'$ over $E$ we have
          \[
            H^q(\lieg_\CC,S(\RR)^0K(\RR);A_\CC\otimes V_\CC')=0
          \]
          in a cohomological degree $q$, define
          \[
            H^\bullet(\lieg,\lies,K;A\otimes V')_E:=0.
          \]

          We adopt the same definition over rings $\OO[1/N]$ using the integral analogue of the complex \eqref{eq:relativeliealgebracohomologycomplex} whenever the involved group schemes are \'etale over $\OO[1/N]$ and $\OO[1/N]$-integral models of the characters, locally algebraic $(\lieg,K)$-modules and locally algebraic representations are given.
        \end{definition}

	\begin{proposition}[Complex realization of relative Lie algebra cohomology]\label{prop:complexliealgebracohomology}
          Let $A=(A_{\lieq'}(\lambda)_E,\rho_{\pi_\circ^G})$ denote a locally algebraic standard module of weight $V=(V_E,\rho_G,\rho_{\pi_\circ^G})$ for $E\subseteq\CC$. Then for every admissible weight $V'=(V_E,\rho_G,\rho_{\pi_\circ^G}\otimes\chi^{\rm const})$ we have a canonical monomorphism
          \begin{equation}
            H^\bullet(\lieg,\lies,K;A\otimes V')_E\to
            H^\bullet(\lieg_\CC,S(\RR)^0K(\RR);A_{\lieq'}(\lambda)_\CC\otimes V_\CC')
            \label{eq:Erationalstructureonrelativeliealgebracohomology}
          \end{equation}
          into relative Lie algebra cohomology of the complex realizations of $A$ and $V'$.

          The monomorphism \eqref{eq:Erationalstructureonrelativeliealgebracohomology} defines a canonical $E$-rational stucture on the $(\lieg_\CC,S(\RR)^0K(\RR))$-cohomology of $A_{\lieq'}(\lambda)_\CC\otimes V_\CC'$.
        \end{proposition}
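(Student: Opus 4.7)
The plan is to identify the Koszul-type complexes computing both sides of \eqref{eq:Erationalstructureonrelativeliealgebracohomology} after base change along $\iota\colon E\to\CC$, and then invoke faithful flatness of $\CC/E$ to obtain the asserted monomorphism. Unravelling Definition \ref{def:locallyalgebraicrelativeLiealgebracohomology}, the left-hand side is the cohomology of
\[
C^\bullet(\lieg,\lies,K;A\otimes V')_E\;=\;\Hom_K\!\left(\wedge^\bullet(\lieg/\lies+\liek),\;A_{\lieq'}(\lambda)_E\otimes V_E\otimes\chi\right),
\]
where the $K$-action on the coefficient module is the standard one obtained from the $(\lieg,K)$-module structure on $A_{\lieq'}(\lambda)_E$, the rational action on $V_E$, and the pullback of $\chi$ along $K\to\pi_\circ(K)$. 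Because $K$ is reductive over the characteristic zero field $E$ and $\wedge^\bullet(\lieg/\lies+\liek)$ is finite-dimensional, the functor of $K$-invariants is exact and commutes with base change, yielding a canonical identification of $\CC\otimes_E C^\bullet(\lieg,\lies,K;A\otimes V')_E$ with the analogous Koszul complex over $\CC$ whose coefficients are $A_{\lieq'}(\lambda)_\CC\otimes V_\CC\otimes\chi_\CC$ with the standard (untwisted) $K_\CC$-action.

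The central step is to match this complex with the one computing the right-hand side of \eqref{eq:Erationalstructureonrelativeliealgebracohomology}. By Definition \ref{def:locallyalgebraiccomplexmodule}, the complex realization $A_\CC$ differs from the bare base change of $A$ by twisting the $K(\RR)$-action with $\rho_{\pi_\circ^G}$, and $V_\CC'$ differs from the bare base change of $V_E$ by twisting with $\rho_{\pi_\circ^G}\otimes\chi^{\mathrm{const}}$. On the tensor product $A_\CC\otimes V_\CC'$ the two twists multiply to the combined character $\rho_{\pi_\circ^G}^{\otimes 2}\otimes\chi^{\mathrm{const}}$. By Proposition \ref{prop:absolutelyirreduciblelocallyalgebraic}(b) together with Remark \ref{rmk:absolutelyirreduciblelocallyalgebraicgKmodule}, $\rho_{\pi_\circ^G}$ takes values in $\{\pm1\}$, so its square is trivial and the combined twist simplifies to $\chi$. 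Hence $A_\CC\otimes V_\CC'$, equipped with its complex-realization $K(\RR)$-action, is canonically isomorphic as a $(\lieg_\CC,K(\RR))$-module to $A_{\lieq'}(\lambda)_\CC\otimes V_\CC\otimes\chi$ with its standard action. Since the $K(\RR)$-action arises from an algebraic $K_\CC$-action, $K(\RR)$-invariants agree with $K_\CC$-invariants, and the two Koszul complexes coincide.

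Putting the two identifications together produces a canonical isomorphism
\[
\CC\otimes_E H^\bullet(\lieg,\lies,K;A\otimes V')_E\;\cong\;H^\bullet(\lieg_\CC,S(\RR)^0K(\RR);A_{\lieq'}(\lambda)_\CC\otimes V_\CC'),
\]
and the map in \eqref{eq:Erationalstructureonrelativeliealgebracohomology} factors as the natural map $H^\bullet(\lieg,\lies,K;A\otimes V')_E\to\CC\otimes_E H^\bullet(\lieg,\lies,K;A\otimes V')_E$ composed with this isomorphism. Faithful flatness of $\CC/E$ makes the natural map injective, giving the monomorphism. The image is then an $E$-form whose $\CC$-span recovers the entire complex cohomology, providing the asserted canonical $E$-rational structure. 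The vanishing clause in Definition \ref{def:locallyalgebraicrelativeLiealgebracohomology} is automatically consistent, since the displayed isomorphism forces the $E$-side to vanish in a given degree whenever the $\CC$-side does.

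The main obstacle is the careful bookkeeping of the double twist in the middle step: one must verify not only that the two sign twists cancel numerically, but also that the resulting isomorphism between the two $(\lieg_\CC,K(\RR))$-module structures on $A_\CC\otimes V_\CC'$ is canonical and intertwines the $\lieg_\CC$-action. The latter is immediate because $\rho_{\pi_\circ^G}$ acts by scalars on each tensor factor and therefore commutes with the $\lieg_\CC$-action; canonicity follows from the intrinsic definition of the complex realization in Definition \ref{def:locallyalgebraiccomplexmodule}, which does not depend on auxiliary trivializations.
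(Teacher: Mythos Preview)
Your proof is correct and follows essentially the same approach as the paper's own proof: both reduce to the observation that $\CC\otimes_E(A_{\lieq'}(\lambda)_E\otimes V_E\otimes\chi)\cong A_{\lieq'}(\lambda)_\CC\otimes V_\CC'$ as $(\lieg_\CC,K(\RR))$-modules because the two $\rho_{\pi_\circ^G}$-twists in the complex realizations square to the trivial character and hence cancel, leaving only $\chi$. The paper states this cancellation in a single sentence, whereas you spell out the bookkeeping explicitly via Proposition~\ref{prop:absolutelyirreduciblelocallyalgebraic}(b) and Remark~\ref{rmk:absolutelyirreduciblelocallyalgebraicgKmodule}, and you also make the base-change and faithful-flatness steps explicit; but the underlying argument is identical.
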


        \begin{proof}
          In light of the complex \eqref{eq:relativeliealgebracohomologycomplex} it suffices to observe that we have a canonical isomorphism
          \[
          \CC\otimes_E\left(A_{\lieq'}(\lambda)_E\otimes V_E\otimes\chi\right)
          \cong
          A_{\lieq'}(\lambda)_\CC\otimes V_\CC',
          \]
          since the contributions of $\rho_{\pi_\circ^G}$ to the two complexifications on the right hand side cancel.
        \end{proof}

        \begin{remark}[Local algebraicity and rationality properties]
          The finite \'etale group scheme $\pi_\circ(K)$ is not constant in general. Therefore, the naive definition of locally algebraic representations as finite-dimensional $(\lieg,K)$-modules which as $(\lieg,K^\circ)$-modules are rational representations yields a notion of rationality which in general is not compatible with the desired notion of rationality on automorphic representations. Consequently the underlying rational structure on $(\lieg,K)$-modules only allows to study those automorphic representations which occur in \eqref{eq:cohomologyofVE} with coefficients in a rational representation $V$ and potentially of its admissible twists in the sense of Definition \ref{def:admissibleweights}.

          From an automorphic perspective, the correct notion of rationality is that induced by the sheaves $\widetilde{V}_E$ associated to locally algebraic representations $V=(V_E,\rho_G,\rho_{\pi_\circ^G})$ of $G$ and therefore locally algebraic standard modules exhibit the correct corresponding rational structure on the level of $(\lieg,K)$-modules, which a fortiori allows us to consider the appropriate integral structures.
          
          We will discuss the link between cohomological automorphic representations and locally algebraic representations in the following section.
	\end{remark}

        \section{Cohomological automorphic representations}\label{sec:cohomologicalrepresentations}

        In this section we discuss the relation between automorphic representations and the cohomology of arithmetic groups with coefficients in a locally algebraic representation. This naturally leads to the notion of a \lq{}cohomological type\rq{} which we introduce in section \ref{sec:cohomologicaltypes} below. In section \ref{sec:cycleandsupportofcohomologicaltype} we associate to every cohomological type an algebraic cycle and a corresponding support. We classify the important class of tempered cohomological types in section \ref{sec:temperedcohomologicaltypes}. In section \ref{sec:automorphiccohomologicaltype} we associate to every cohomological automorphic representation a cohomological type.

	\subsection{Cohomology of arithmetic groups}\label{sec:cohomologyofarithmeticgroups}

	We keep the notation from the previous sections.
	
        \subsubsection{Cohomology with locally algebraic coefficients}

        Fix a field $E$ of characteristic $0$ and a locally algebraic representation $V$ of $G$. We consider the sheaf cohomology
	\begin{equation}
	  H^\bullet(X_G(K_f);\widetilde{V}_E).
          \label{eq:cohomologyofVE}
	\end{equation}

        \begin{remark}[Relation to group cohomology]
        If $K_f$ is sufficiently small in the sense that all arithmetic groups $\Gamma_{K_f,g}$ occuring in the direct sum decomposition \eqref{eq:cohomologydirectsum} are torsion free, we have a canonical isomorphism
        \[
        H^\bullet(\Gamma_{K_f,g}\backslash G(\RR)/S(\RR)^0 K(\RR);\widetilde{V}_E|_{\Gamma_{K_f,g}\backslash G(\RR)/S(\RR)^0 K(\RR)})\cong H^\bullet(\Gamma_{K_f,g};V_E)
        \]
        Hence, \eqref{eq:cohomologyofVE} has an explicit description in terms of the cohomology of the groups $\Gamma_{K_f,g}$ in this case.

        Without assumptions on $K_f$, the Leray spectral sequence \eqref{eq:lerayspectralsequence} degenerates (cf.\ Remark \ref{rmk:leraydegeneration}), which shows that \eqref{eq:cohomologyofVE} always admits an explicit description via the cohomology of the groups $\Gamma_{K_f,g}$.
        \end{remark}

        Each direct summand in \eqref{eq:cohomologydirectsum} and \eqref{eq:cohomologydirectsum0} is finite-dimensional due to the existence of the Borel--Serre compactification \cite{borelserre1973}. The finite failure of strong approximation shows that $X_G(K_f)$ and $X_G(K_f)_0$ have only finitely many connected components (cf.\ \cite{borel1963}), hence the above sheaf cohomology \eqref{eq:cohomologyofVE} is finite-dimensional.
	
	Moreover, the decomposition \eqref{eq:VEdecomposition} descends to a decomposition
	\begin{equation}
	  H^\bullet(X_G(K_f)_0;\widetilde{V}_E)\;=\;
	  \bigoplus_{\chi\in\pi_0(K_\infty,K_f)^\vee}H^\bullet(X_G(K_f);\widetilde{V}_{E,\chi}).
	  \label{eq:cohomologydecomposition}
	\end{equation}
	Here $\pi_0(K_\infty)$ acts canonically on the left hand side via $\pi_0(K_\infty,K_f)$ and each $\chi$-eigenspace corresponds to the corresponding summand on the right hand side. By \eqref{eq:OEdecomposition} this decomposition extends to coefficients in any $\OO[1/N]$ lattice whenever $|\pi_0(K_\infty,K_f)|\in\OO[1/N]^\times$.

        \begin{remark}
          In general, if the order $\#\pi_0(K_\infty)$, or more specifically $\#\pi_0(K_\infty,K_f)$, is not invertible in $\OO[1/N]^\times$, we have for each $\chi$ in \eqref{eq:cohomologydecomposition} a Grothendieck spectral sequence
          \begin{equation}
          H^{p}\left(\pi_0(K_\infty); \chi^{-1}\otimes H^{q}(X_G(K_f)_0;\widetilde{V}_{\OO[1/N]})\right)
          \;\Longrightarrow\;
          H^{p+q}(X_G(K_f);\widetilde{V}_{\OO[1/N],\chi}).
          \label{eq:chispectralsequence}
          \end{equation}
        \end{remark}

        \subsubsection{Hecke action on the cohomology of arithmetic groups}
	
	The set of double cosets $K_fgK_f$ for $g\in G(\bfA_f)$ is an $E$-basis of the Hecke algebra $\mathcal H_E(K_f)$ of level $K_f$ over $E$. Identifying elements of $\mathcal H_E(K_f)$ with bi-$K_f$-invariant locally constant functions
	\[
	f\colon G(\bfA_f)\to E
	\]
	with compact supports, the algebra structure on $\mathcal H_E(K_f)$ is that of a convolution algebra for the right invariant Haar measure on $G(\bfA_f)$ normalized in such way that $\vol(K_f)=1$.
	
	On any $G(\bfA_f)$-representation $M$ over $E$ on which $K_f$ acts trivially we have a canonical action of $\mathcal H_E(K_f)$ given explicitly by
	\[
	K_fgK_f\cdot m=\sum_{i=1}^r g_i\cdot m,
	\]
	whenever $g\in G(\bfA_f)$, $m\in M$ and
	\begin{equation}
		K_fgK_f=\bigsqcup_{i=1}^r g_iK_f.
		\label{eq:Kfdoublecosetdecomposition}
	\end{equation}
	Since group cohomology is the right derived functor of the functor $\Gamma_{K_f,g}$-invariants, the Hecke algebra for $\Gamma_{K_f,g}$ acts on group cohomology with values in $V_E$ and $V_{E,\chi}$ canonically, compatibly with the decomposition \eqref{eq:VEdecomposition}. This action extends to an action of $\mathcal H_E(K_f)$ on the sheaf cohomologies $H^\bullet(\mathscr X_G(K_f);\widetilde{V}_{E,\chi})$ and  $H^\bullet(X_G(K_f);\widetilde{V}_{E,\chi})$ for the orbifold $\mathscr X_G(K_f)$ and its underlying orbit space $X_G(K_f)$ respectively. Likewise, we hhave Hecke actions on the sheaf cohomology of the Galois covers $H^\bullet(\mathscr X_G(K_f)_0;\widetilde{V}_{E})$ and  $H^\bullet(X_G(K_f)_0;\widetilde{V}_{E})$. All these actions are compatible with the Leray spectral sequence \eqref{eq:lerayspectralsequence} as well as with the decomposition \eqref{eq:cohomologydecomposition}.

        They also naturally extend to cohomology of compact support as well as to the boundary $\partial X_G(K_f)$ of the Borel--Serre compactification, observing that the sheaf $\widetilde{V}_E$ naturally extends to $\partial X_G(K_f)$. In particular we have a Hecke equivariant fundamental long exact sequence
        \begin{equation}
          \cdots
          \to H_{\rm c}^\bullet(X_G(K_f);\widetilde{V}_{E})
          \to H^\bullet(X_G(K_f);\widetilde{V}_{E})
          \to H^\bullet(\partial X_G(K_f);\widetilde{V}_{E})
          \to H_{\rm c}^{\bullet+1}(X_G(K_f);\widetilde{V}_{E})
          \to\cdots
        \end{equation}
        and likewise in the orbifold case, which, by the degeneration of the Leray spectral sequence, over fields of characteristic $0$ agrees with the case of classical topological spaces (cf.\ Remark \ref{rmk:leraydegeneration}).

        These actions on sheaf cohomology have the following intrinsic description. To keep the notation simple, we fix from now on a locally algebraic representation $V=(V_E,\rho_G,\rho_{\pi_\circ^G})$ of $G$ and leave the elementary translation of the following to sheaf cohomology of the covering spaces $X_G(K_f)_0$ and the respective orbifolds to the reader.

        To emphasize that the sheaf $\widetilde{V}_{E}$ associated to $V$ (cf.\ Definition \ref{def:locallyalgebraicsheaf}) lives on $X_G(K_f)$, we write $\widetilde{V}_{E,K_f}$. Then for any $g\in G(\bfA_f)$ and any open $U\subseteq X_G(K_f)$, right translation induces a canonical isomorphism
	\[
	t_g\colon\Gamma(U;\widetilde{V}_{E,K_f})\to
	\Gamma(Ug;\widetilde{V}_{E,g^{-1}K_fg}),\quad s\mapsto s((-)\cdot g).
	\]
	To be concise, first compute the $g$-right translation of the pull back of $s$ on $G(\bfA)/S(\RR)^0 K(\RR)^0$ and push the result down to $X_G(g^{-1}K_fg)$.
	
	For the coset representatives $g_i$ on the right and side of \eqref{eq:Kfdoublecosetdecomposition} we consider the compact open $\bigcap_{j=1}^r g_j^{-1}K_fg_j$ and for each $1\leq i\leq r$ the corresponding projection map
	\[
	p_i\colon X_G\left(\bigcap_{j=1}^r g_j^{-1}K_fg_j\right)\to X_G(g_i^{-1}K_fg_i).
	\]
	Then we obtain for any section $s\in\Gamma(U;\widetilde{V}_E)$ a section
	\begin{equation}
		\sum_{i=1}^r t_{g_i}(s)\circ p_i\in
		\Gamma\left(\bigcap_{j=1}^rp_j^{-1}(Ug_j);\widetilde{V}_E\right).
		\label{eq:heckesection}
	\end{equation}
	This section is easily seen to be $K_f$-invariant: For any $h\in p^{-1}\left(\bigcap_{j=1}^rp_j^{-1}(Ug_j)\right)$ and any $k\in K_f$ we have
	\begin{align*}
		\left(\sum_{i=1}^r t_{g_i}(s)\circ p_i\right)(hk)
		&=\sum_{i=1}^r s(hkg_i)\\
		&=\sum_{i=1}^r s(hg_i)\\
		&=\left(\sum_{i=1}^r t_{g_i}(s)\circ p_i\right)(h).
	\end{align*}
	Moreover, since $U\subseteq X_G(K_f)$, its preimage $q^{-1}(U)$ under the projection
	\[
	q\colon G(\bfA)/S(\RR)^0 K(\RR)^0 \to X_G(K_f)
	\]
	is right $K_f$-invariant and by \eqref{eq:Kfdoublecosetdecomposition} any element $h\in q^{-1}(U)g K_f=q^{-1}(U)K_fgK_f$ may be written as
	\[
	h=ug_jk
	\]
	for some index $1\leq j\leq r$, $u\in q^{-1}(U)$ and $k\in K_f$.
	
	Now replacing $u$ by some $uk'$ for $k'\in K_f$ yields another representation
	\[
	h=u'g_{\ell}k''
	\]
	for some new index $1\leq\ell\leq r$, $u'\in q^{-1}(U)$ and $k''\in K_f$. Since all possible indices may be realized this way, we see that
	\[
	q^{-1}(U)g\subseteq \bigcup_{k\in K_f}\left(\bigcap_{j=1}^rq^{-1}(U)g_j\right)k.
	\]
	Hence the section \eqref{eq:heckesection} descends to a unique section
	\[
	K_fgK_f\cdot s\in\Gamma(Ug;\widetilde{V}_E),
	\]
	which describes the Hecke action on sections of the local system $\widetilde{V}_E$. Extending this action to a $\Gamma(X_G(K_f);-)$-acyclic resolution of $\widetilde{V}_E$ (an injective resolution for example), we obtain a canonical action of the Hecke algebra $\mathcal H_E(K_f)$ on $H^\bullet(X_G(K_f);\widetilde{V}_E)$.

        \subsection{Cohomological types}\label{sec:cohomologicaltypes}

        Put
        \[
          {}^0G:=\ker_{\chi\in X_\QQ(G)}\chi^2,
        \]
        which is the kernel of the squares of $\QQ$-rational characters of $G$. Then
        \[
        G(\RR)=S(\RR)^0\cdot{}^0G(\RR)\quad\text{and}\quad S(\RR)^0\cap{}^0G(\RR)=1.
        \]
        The closed subgroup ${}^0G(\RR)$ contains every arithmetic subgroup of $G(\QQ)$ as well as every compact subgroup of $G(\RR)$ (cf.\ \cite{borelserre1973}, Proposition 1.2). Moreover $[G(\RR),G(\RR)]=[G,G](\RR)\subseteq {}^0 G(\RR)$ and for any arithmetic subgroup $\Gamma\subseteq G(\QQ)$ the quotient space $\Gamma\backslash{}^0 G(\RR)$ is of finite invariant volume and in particular the abelian topological group ${}^0G(\RR)/\Gamma\cdot [G,G](\RR)$ is a compact torus (cf.\ loc.\ cit.). As before we write ${}^0\lieg_\CC$ for the complexified Lie algebra of ${}^0G$.

        Recall that a representation of $G(\RR)$ is called Casselman--Wallach if it is Fr\'echet, smooth, of moderate growth and its underlying Harish-Chandra module is admissible and finitely generated. The category of Casselman--Wallach representations of $G(\RR)$ is known to be equivalent to the category of finite length $(\lieg_\CC,K_\infty)$-modules (cf.\ \cite{casselman1989,bernsteinkroetz2014}).

        \begin{definition}[Essentially unitarizable representations]
          We call an irreducible Casselman--Wallach representation $V$ of $G(\RR)$ {\em essentially unitarizable} if its restriction to ${}^0G(\RR)$ is unitarizable. An essentially unitarizable $V$ is tempered if its restriction to $({}^0\lieg_\CC,K_\infty)$ is tempered. Likewise, an irreducible $(\lieg_\CC,K_\infty)$-module $X$ is called {\em essentially (infinitesimally) unitarizable}, if it is infinitesimally unitarizable as $({}^0\lieg_\CC,K_\infty)$-module.
        \end{definition}

        \begin{definition}[Equivalence of pairs]\label{def:equivalenceofpairs}
          Consider pairs $(A,V)$ where $V$ is an irreducible locally algebraic representation of $G$ and $A$ is a locally algebraic standard module of weight $V$, both defined over $E$ with the following property: For every embedding $\sigma\colon E\to\CC$ the Casselman--Wallach globalization of the complexification $A_\CC^\sigma$ of $A$ is essentially unitarizable.

          Two pairs $(A,V)$ and $(A',V')$ are {\em isomorphic}, if $A$ and $A'$ are isomorphic as locally algebraic $(\lieg,K)$-modules over $E$ and $V$ and $V'$ are isomorphic as locally algebraic representations of $G$ over $E$. Two isomorphism classes of pairs $(A,V)$ and $(A',V')$ over $E$ are {\em equivalent}, if $A_\CC\cong A_\CC'$ as $(\lieg_\CC,K_\infty)$-modules.
        \end{definition}

        \begin{definition}[Cohomological types]\label{def:cohomologicaltypesofG}
          A {\em cohomological type} for $G$ over a number field $E/\QQ$ is an equivalence class of isomorphism classes of pairs $(A,V)$ in the sense of Definition \ref{def:equivalenceofpairs}.

          The set of all cohomological types for $G$ over $E$ is denoted by $\cohTypes_E(G)$. For $\sigma\in\Aut(\CC/\QQ)$ we denote by $(A,V)^\sigma:=(A^\sigma,V^\sigma)$ where
          \[
          A^\sigma:=\sigma^{-1}(E)\otimes_{\sigma^{-1},E}A,\quad\text{and}\quad
          V^\sigma:=\sigma^{-1}(E)\otimes_{\sigma^{-1},E}V,
          \]
          the $\sigma$-conjugate pair. This defines an action of $\Aut(\CC/\QQ)$ on cohomological types.
        \end{definition}

        \begin{remark}\label{rmk:unitarizability}
          The unitarizability condition on $A_\CC^\sigma$ may be reformulated as an essential (conjugate) self-duality condition on $V$: If $A_\CC^\sigma=A_\lieq(\lambda)$ and if $\vartheta\colon\lieg_\CC\to\lieg_\CC$ denotes Cartan involution associated to $K(\RR)$, then the unitarizability condition is equivalent to
          \begin{equation}
            \vartheta(\lambda)=\lambda,
            \label{eq:lambdacartan}
          \end{equation}
          under the assumption that $\lambda$ is an extremal weight of $V_\CC$.
        \end{remark}

        For any cohomological type ${\mathrm t}\in\cohTypes_E(G)$ we have for all $[(A,V)]_{\cong}\in\rm{t}$:
        \[
        H^\bullet(\lieg_\CC,S(\RR)^0K(\RR);A_\CC\otimes V_\CC)\neq0.
        \]
        The converse is also true.
        
        \begin{proposition}\label{prop:cohomologicaltypes}
          The elements of of a cohomological type $\rm{t}\in\cohTypes_E(G)$ are in bijection with the isomorphism classes of locally algebraic representations  $V'$ of $G$ over $E$ with the property that
          \[
          H^\bullet(\lieg_\CC,S(\RR)^0K(\RR);A_\CC\otimes V_\CC')\neq0
          \]
          for a (resp.\ any) representative $[(A,V)]_{\cong}\in\rm{t}$.
        \end{proposition}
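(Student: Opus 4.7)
The plan is to realize the bijection via the forgetful map
\[
\Phi\colon\mathrm{t}\to\{\text{iso.~classes of admissible }V'\},\quad
[(A,V)]_\cong\mapsto[V]_\cong,
\]
and then check well-definedness, injectivity, and surjectivity in turn. Well-definedness and the \lq{}resp.\ any\rq{} clause are immediate: any two representatives of $\mathrm{t}$ have isomorphic $A_\CC$ by Definition \ref{def:equivalenceofpairs}, so the nonvanishing of $H^\bullet(\lieg_\CC,S(\RR)^0K(\RR);A_\CC\otimes V'_\CC)$ depends only on $\mathrm{t}$ and $V'$. That $V$ itself gives nonvanishing cohomology against $A$ is Remark \ref{rmk:cohomologyoflocallyalgebraicstandardmodules}, refined to an $E$-rational statement by Proposition \ref{prop:complexliealgebracohomology}.

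For injectivity, suppose $[(A,V)]_\cong,[(A',V')]_\cong\in\mathrm{t}$ with $V\cong V'$ over $E$. The defining equivalence of $\mathrm{t}$ yields $A_\CC\cong A'_\CC$ as $(\lieg_\CC,K_\infty)$-modules. By absolute irreducibility and Schur's lemma (Remark \ref{rmk:absolutelyirreduciblelocallyalgebraicgKmodule}), $\pi_\circ^G$ acts on each of $A,A'$ by scalars in $\{\pm 1\}$, and this action is forced by the $\pi_\circ^G$-action on the weight. Hence $V\cong V'$ transports to matching $\pi_\circ^G$-structures on $A$ and $A'$, upgrading the complex isomorphism to one of $(\lieg_\CC,K_\CC\times\pi_\circ^G)$-modules. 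The $(\lieg,K)$-module analogue of Proposition \ref{prop:locallyalgebraicfieldofdefinition} (noted in Remark \ref{rmk:absolutelyirreduciblelocallyalgebraicgKmodule}) then descends the isomorphism to $E$ via Hilbert 90, producing $(A,V)\cong(A',V')$ as pairs.

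For surjectivity, given $V'$ over $E$ with $H^\bullet(\lieg_\CC,S(\RR)^0K(\RR);A_\CC\otimes V'_\CC)\neq 0$ for a (hence any) representative $(A,V)\in\mathrm{t}$, the Vogan--Zuckerman classification of irreducible $(\lieg_\CC,K_\infty)$-modules with non-vanishing relative Lie algebra cohomology (Theorem 5.6 of \cite{voganzuckerman1984}, cf.\ Theorem 6.4.2 of \cite{hayashijanuszewski}) furnishes a $\theta$-stable parabolic $\lieq'$ with $A_\CC\cong A_{\lieq'}(\lambda')_\CC$ and $\lambda'=H^0(\bar{\lieu}',V'_\CC)^\vee$ as in \eqref{eq:lambdaforV}. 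The cohomologically induced standard module $A_{\lieq'}(\lambda')$ descends to an $E$-rational $(\lieg,K)$-module via the $\mathcal D$-module construction of \cite{hayashijanuszewski} Section 6.2, starting from the $E$-rational datum of $V'$. Equipping this module with the $\pi_\circ^G$-action transported from $V'$ yields a locally algebraic standard module $A'$ over $E$ of weight $V'$ satisfying $A'_\CC\cong A_\CC$, so that $[(A',V')]_\cong\in\mathrm{t}$ and $\Phi([(A',V')]_\cong)=[V']_\cong$.

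The main obstacle is the surjectivity step: one must invoke Vogan--Zuckerman to locate the right $\theta$-stable parabolic $\lieq'$ for the given $V'$, and then ensure that the cohomologically induced module descends from $\CC$ to a locally algebraic standard module over $E$ with the prescribed $\pi_\circ^G$-action. Both the $\CC$-linear classification and the $E$-rational descent via the $\mathcal D$-module framework are supplied by \cite{hayashijanuszewski}, which is the essential input beyond the formal manipulations carried out in the other two steps.
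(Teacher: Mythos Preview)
Your proof is correct and follows the same approach as the paper, which simply cites Proposition \ref{prop:complexliealgebracohomology}, Remark \ref{rmk:cohomologyoflocallyalgebraicstandardmodules}, and Vogan--Zuckerman's Theorem 5.6. You supply the details the paper omits: the explicit forgetful map $\Phi$, the injectivity argument via matching $\pi_\circ^G$-actions together with the Hilbert 90 descent noted in Remark \ref{rmk:absolutelyirreduciblelocallyalgebraicgKmodule}, and the surjectivity argument appealing to the $\mathcal D$-module realization in \cite{hayashijanuszewski}. The only place you go beyond what the paper states is the descent of $A_{\lieq'}(\lambda')$ to $E$ in the surjectivity step; the paper leaves this implicit in its one-line citation, and your invocation of \cite{hayashijanuszewski} \S6.2 is a reasonable way to fill that gap, though strictly speaking one should verify that the relevant $K$-orbit and line bundle data are $E$-rational (this is where the hypothesis that $V'$ is already defined over $E$ enters).
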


          \begin{proof}
            Following the discussion in Remark \ref{rmk:cohomologyoflocallyalgebraicstandardmodules}, this is a consequence of Proposition \ref{prop:complexliealgebracohomology} and Theorem 5.6 in \cite{voganzuckerman1984}, see also Theorem 6.4.2 in \cite{hayashijanuszewski}.
          \end{proof}

          \subsection{Cycle and support of a cohomological type}\label{sec:cycleandsupportofcohomologicaltype}

          Recall the notion of the finite \'etale scheme of {\em relative parabolic types} $\rtype(G,K)$ from \cite{hayashijanuszewski}, Definition 5.2.6 and Corollary 5.2.19: $\rtype(G,K)$ is defined as the (\'etale sheaf) quotient of the moduli scheme of $K$-stable parabolic subgroups of $G$ relative to $K$ (cf.\ Definition 5.2.4 of loc.\ cit.) by the action of $K$. Observe that $\Pp_{G}^{K\mathrm{-st}}$ is a closed smooth subscheme of the moduli scheme $\Pp_{G}$ of all parabolic subgroups of $G$ (cf.\ Corollary 5.2.19). Moreover, there is a forgetful map $gt\colon\rtype(G,K)\to\type(G)$ into the \'etale scheme of parabolic types of $G$, induced by the composition $\Pp_{G}^{K\mathrm{-st}}\to\Pp_{G}\to\type(G)$.

          We adopt the following notation of section 5 in \cite{hayashijanuszewski}. Over each relative parabolic type $x\in\rtype(G_E,K_E)$ write $\Pp_{G,x}^{K\mathrm{-st}}$ for the corresponding fiber of the canonical projection $rt\colon\Pp_{G}^{K\mathrm{-st}}\to\rtype(G,K)$. Likewise, we have a fiber $\Pp_{G,gt(x)}$ above $gt(g)\in\type(G)$ and a canonical monomorphism $i_x\colon \Pp_{G,x}^{K\mathrm{-st}}\to\Pp_{G,gt(x)}$.

          Write
          \[
          Z_E(G):=Z_0\left(\rtype(G_E,K_E)\times\Spec Z(\lieg_E)\times \Spec E[\pi_\circ^G(E)]\right)
          \]
          for the group of algebraic $0$-cycles on the $E$-scheme $\rtype(G_E,K_E)\times\Spec Z(\lieg_E)\times\Spec E[\pi_\circ^G(E)]$. Let ${\rm t}\in\cohTypes_E(G)$ be a cohomological type. We define the {\em cycle} and the {\em (geometric) support} of $\rm t$ respectively as an effective $0$-cycle and its support in the following definition.

        \begin{definition}[Cycle and geometric support of a cohomologial type]\label{def:geometricsupport}
          Let ${\rm t}\in\cohTypes_E(G)$ be a cohomological type. Choose an arbitrary representative $[(A,V)]_\cong\in{\rm t}$. For any $x\in\rtype(G_\CC,K_\CC)$ choose a $Q_x\in\Pp_{G,x}^{K\mathrm{-st}}(\CC)$ with Lie algebra $\lieq_{x}=\liel_x\oplus\lieu_x$ and $\lambda$ denoting an irreducible constituent of the $(\lieq_x,L_x\cap K_\CC)$-action on $H^0(\overline{\lieu}_x;V_\CC)^\vee$, occuring with multiplicity $m_{x,\lambda}^V>0$. Denote by $m_{x,\lambda}^A\geq 0$ the multiplicity of $A_{\lieq_x}(\lambda)$ in $A_\CC$.
          
          Denote by $\rho_{\pi_\circ^G}$ the action of $\pi_\circ^G$ on $A$ and by $\CC[\rho_{\pi_\circ^G}]$ its extension to the group ring. Let $\ann_{Z(\lieg_\CC)}A_{\lieq_x}(\lambda)$ dnote the anniliator of $A_{\lieq_x}(\lambda)$ as a $Z(\lieg_\CC)$-module.

          The {\em cycle} associated to the cohomological type ${\rm t}$ is defined as
          \begin{equation}
            c({\rm t}):=
            \sum_{[(A,V)]_\cong\in{\rm t}}
            \sum_{x\in\rtype(G_\CC,K_\CC)}
            \sum_{\lambda}
            m_{x,\lambda}^V\cdot m_{x,\lambda}^A [x\times V(\ann_{Z(\lieg_\CC)}A_{\lieq_x}(\lambda))\times V(\ker\CC[\rho_{\pi_\circ^G}])],
          \end{equation}
          and the {\em (geometric) support} of ${\rm t}$ is defined as
          \begin{equation}
            \Supp{\rm t}:=\Supp c({\rm t})=
            \bigcup_{[(A,V)]_\cong\in{\rm t}}
            \bigcup_{\begin{subarray}cx,\lambda\colon\\ m_{x,\lambda}^A>0\end{subarray}}
            x\times V(\ann_{Z(\lieg_\CC)}A_{\lieq_x}(\lambda))\times V(\ker\CC[\rho_{\pi_\circ^G}]).
          \end{equation}
        \end{definition}

        \begin{remark}
          Note that the multiplicities $m_{x,\lambda}$ and $n_{x,\lambda}$ are independent of the choice of $Q_x$, but depend on the relative parabolic type $x$.
        \end{remark}

        \begin{remark}
          Theorem \ref{thm:irreduciblelocallyalgebraicrepresentations} and its Corollary \ref{cor:quasisplitlocallyalgebraicrepresentations} show that of $G$ is quasi-split, then $m_{x,\lambda}^V=1$ for all irreducible locally algebraic representations $V$ of $G$ over $E$.
        \end{remark}
        
        \begin{remark}
          Along the same lines, if the hypotheses of Theorem \ref{thm:irreduciblelocallyalgebraicgKmodules} are satisfied, then $m_{x,\lambda}^A=1$. The notion of admissibile models of $K(\RR)$ introduced in section 6 of \cite{januszewskirationality} provides a practical framework to show that the hypotheses of Theorem \ref{thm:irreduciblelocallyalgebraicgKmodules} are verified for certain models in the category of locally algebraic $(\lieg,K)$-modules. A more general approach is provided by combining the theory developed in \cite{hayashijanuszewski} with that of \cite{hayashilinebdl}.
        \end{remark}
        
        \begin{proposition}
          The cycle $c({\rm t})$ for ${\rm t}\in\cohTypes_E(G)$, which in Definition \ref{def:geometricsupport} is a priori defined over $\CC$, is well defined as a cycle over $E$ and $\Supp{\rm t}$ is a $0$-dimensional $E$-subscheme of $\rtype(G_E,K_E)\times\Spec Z(\lieg_E)\times\Spec E[\pi_\circ^G(E)]$.
        \end{proposition}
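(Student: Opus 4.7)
The plan is to break the proposition into two independent assertions and handle each in turn. The first is Galois descent: I would show that the a priori $\CC$-valued cycle $c(\mathrm{t})$ is invariant under $\Aut(\CC/E)$, whence it descends to a $0$-cycle on the $E$-scheme $\rtype(G_E,K_E)\times\Spec Z(\lieg_E)\times\Spec E[\pi_\circ^G(E)]$. The second is a dimension count: I would show that each summand of the support is a product of three $0$-dimensional schemes, so the (finite) union $\Supp\mathrm{t}$ is again $0$-dimensional.

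For the descent, I would fix $\sigma\in\Aut(\CC/E)$ and trace its action through the formula of Definition \ref{def:geometricsupport}. Since every pair $(A,V)$ in $\mathrm{t}$ is defined over $E$, the outer sum over isomorphism classes $[(A,V)]_\cong\in\mathrm{t}$ is pointwise fixed by $\sigma$. For the inner sum, $\rtype(G,K)$ is a finite \'etale $\QQ$-scheme by Section 5 of \cite{hayashijanuszewski}, so $\sigma$ permutes the $\CC$-points $x$ within their Galois orbits over closed points of $\rtype(G_E,K_E)$. Choosing a representative $Q_x$ with Levi decomposition $\lieq_x=\liel_x\oplus\lieu_x$, the $\sigma$-conjugate $Q_x^\sigma$ represents $x^\sigma$. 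Since both $A$ and $V$ are defined over $E$, their complexifications carry compatible semilinear $\sigma$-actions, and $\sigma$ transports the decomposition $H^0(\bar\lieu_x;V_\CC)^\vee=\bigoplus_\lambda\lambda^{\oplus m_{x,\lambda}^V}$ to the corresponding decomposition over $x^\sigma$, yielding $m_{x,\lambda}^V=m_{x^\sigma,\lambda^\sigma}^V$. An analogous argument applied to $A_\CC$ gives $m_{x,\lambda}^A=m_{x^\sigma,\lambda^\sigma}^A$. The annihilator $\ann_{Z(\lieg_\CC)}A_{\lieq_x}(\lambda)$ is sent to $\ann_{Z(\lieg_\CC)}A_{\lieq_{x^\sigma}}(\lambda^\sigma)$ because $Z(\lieg)$ is defined over $\QQ$, while $\ker\CC[\rho_{\pi_\circ^G}]$ is $\sigma$-stable because $\pi_\circ^G$ is a constant group scheme and $\rho_{\pi_\circ^G}$ is $E$-rational. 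Consequently $\sigma$ merely permutes the summands without changing multiplicities, so $c(\mathrm{t})$ is $\Aut(\CC/E)$-invariant and descends to $E$ by Galois descent for $0$-cycles on finite-type $E$-schemes.

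For $0$-dimensionality of $\Supp\mathrm{t}$, I would argue factor by factor. First, $\rtype(G_E,K_E)$ is finite \'etale over $E$, so each $x$ contributes a closed point. Second, $\Spec E[\pi_\circ^G(E)]$ is itself finite over $E$ because $\pi_\circ^G$ is a finite constant group, and hence $V(\ker\CC[\rho_{\pi_\circ^G}])$ is a finite subscheme. Third, $A_{\lieq_x}(\lambda)$ is a cohomologically induced standard module of finite length admitting a single infinitesimal character; therefore the $Z(\lieg_\CC)$-action on it factors through a quotient of finite $\CC$-dimension, which forces $\ann_{Z(\lieg_\CC)}A_{\lieq_x}(\lambda)$ to be an ideal of finite codimension and $V(\ann_{Z(\lieg_\CC)}A_{\lieq_x}(\lambda))$ to be $0$-dimensional. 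A product of three $0$-dimensional schemes is $0$-dimensional, so each summand and the finite union are as well.

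The principal obstacle I anticipate is verifying with precision the Galois compatibility of the labels $\lambda$: one must confirm that the assignment $(x,V_\CC)\mapsto\{(\lambda,m_{x,\lambda}^V)\}$ intertwines the $\Aut(\CC/E)$-action on the base $\rtype(G_\CC,K_\CC)$ with the natural action on highest-$\bar\lieu_x$-weight data, and likewise for the $A$-side. The cleanest remedy is to work \'etale-locally on $\rtype(G_E,K_E)$, over which $\Pp_G^{K\mathrm{-st}}$ together with its Levi factorization admits compatible $E$-forms; this reduces the compatibility to the rationality results for irreducible locally algebraic representations and $(\lieg,K)$-modules established in Theorems \ref{thm:irreduciblelocallyalgebraicrepresentations} and \ref{thm:irreduciblelocallyalgebraicgKmodules}, applied to the relevant Levi pairs.
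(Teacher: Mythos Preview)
Your overall strategy --- Galois descent via invariance of the multiplicities $m_{x,\lambda}^V$ and $m_{x,\lambda}^A$ under $\Aut(\CC/E)$ --- is exactly the paper's, and your treatment of the $V$-side, of the annihilator and kernel factors, and of $0$-dimensionality (which the paper leaves implicit) is fine.

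The gap is on the $A$-side. Your ``analogous argument'' for $m_{x,\lambda}^A=m_{x^\sigma,\lambda^\sigma}^A$ is not actually analogous: for $V$ you used that $H^0(\bar\lieu_x;V_\CC)^\vee$ is a functor of the pair $(\lieq_x,V_\CC)$, so its $\sigma$-twist computes the same object over $x^\sigma$. For $A$ the multiplicity is $\dim\Hom_{(\lieg_\CC,K(\RR))}(A_{\lieq_x}(\lambda),A_\CC)$, and transporting this by $\sigma$ requires the identity
\[
\bigl(A_{\lieq_x}(\lambda)\bigr)^\sigma\;\cong\;A_{\lieq_{x^\sigma}}(\lambda^\sigma),
\]
i.e.\ that cohomological induction commutes with base change along $\sigma$. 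This does not follow from semilinearity of the $\sigma$-action on $A_\CC$ alone. The paper supplies it via the twisted $\mathcal D$-module realization $A_{\lieq_x}(\lambda)\cong\Gamma(\Pp_{G_\CC,gt(x)},(i_x)_+\mathcal L)$ of Theorem~6.2.1 in \cite{hayashijanuszewski}, combined with the base change results for $(i_x)_+$ and $\Gamma$ (Proposition~1.3.25 and Theorem~3.9.2 of loc.\ cit.), which yield $\Gamma((i_{x^\sigma})_+\mathcal L^\sigma)\cong\Gamma((i_x)_+\mathcal L)^\sigma$ directly; the computation $m_{x^\sigma,\lambda^\sigma}^A=\dim\Hom(A_{\lieq_x}(\lambda)^\sigma,A_\CC)=\dim\Hom(A_{\lieq_x}(\lambda),A_\CC^{\sigma^{-1}})=m_{x,\lambda}^A$ then follows.

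Your proposed remedy via \'etale-local $E$-forms and Theorems~\ref{thm:irreduciblelocallyalgebraicrepresentations} and~\ref{thm:irreduciblelocallyalgebraicgKmodules} does not close this gap: those results concern descent of a given irreducible module to its field of rationality, not compatibility of the cohomological induction \emph{functor} with field automorphisms. What you need instead is either the $\mathcal D$-module base change the paper uses, an independent base-change statement for Zuckerman functors over fields of characteristic $0$ (available e.g.\ in the framework of \cite{januszewskirationality}), or a characterization argument via Vogan--Zuckerman data (minimal $K$-type and infinitesimal character) that pins down $A_{\lieq_x}(\lambda)^\sigma$ uniquely as $A_{\lieq_{x^\sigma}}(\lambda^\sigma)$.
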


        \begin{proof}
          By Galois descent, it suffices to observe that $m_{x,\lambda}^V$ and $m_{x,\lambda}^A$ are Galois invariant. Let $\sigma\in\Aut(\CC/E)$ be arbitrary. In order to see this, we use the realization of cohomologically induced standard modules via twisted $\mathcal D$-modules in \cite{hayashijanuszewski}.

          In the notation of Theorem 6.2.1 of \cite{hayashijanuszewski} we have
          \[
          A_{\lieq_{x^\sigma}}(\lambda^\sigma)
          \cong \Gamma(\Pp_{G_\CC,tg(x^\sigma)},(i_{x^\sigma})_+\mathcal L^\sigma),
          \]
          where $\mathcal L$ denotes the $K_\CC$-equivariant line bundle induced by $G\times^{Q_x}\lambda$ on $\Pp_{G_\CC,tg(x)}$. Now by Proposition 1.3.25 combined with the Base Change Theorem for higher direct images (cf.\ Theorem 3.9.2) in \cite{hayashijanuszewski}, we have
          \[
          \Gamma(\Pp_{G_\CC,tg(x^\sigma)},(i_{x^\sigma})_+\mathcal L^\sigma)=
          \Gamma(\Pp_{G_\CC,tg(x)},(i_{x})_+\mathcal L)^\sigma.
          \]
          Another application of Theorem 6.2.1 of loc.\ cit.\ identifies the right hand side with $A_{\lieq_{x}}(\lambda)^\sigma$. The identity
          \begin{align*}
            m_{x^\sigma,\lambda^\sigma}^A
            &=\dim\Hom(A_{\lieq_{x^\sigma}}(\lambda^\sigma),A_\CC)\\
            &=\dim\Hom(A_{\lieq_{x}}(\lambda)^\sigma,A_\CC)\\
            &=\dim\Hom(A_{\lieq_{x}}(\lambda),A_\CC^{\sigma^{-1}})\\
            &=\dim\Hom(A_{\lieq_{x}}(\lambda),A_\CC)\\
            &=m_{x,\lambda}^A
          \end{align*}
          proves the claim for $m_{x,\lambda}^A$. Invoking the Borel--Bott--Weil Theorem, the Galois invariance of $m_{x,\lambda}^V$ follows along the same lines.
        \end{proof}

        \begin{corollary}\label{cor:canonicalcyclefactorization}
          As an element of
          \[
          Z_E(G):=Z_0\left(\rtype(G_E,K_E)\right)\times Z_0\left(\Spec Z(\lieg_E)\right)\times Z_0\left(\Spec E[\pi_\circ^G(E)]\right),
          \]
          the cycle of a cohomological type ${\rm t}\in\cohTypes_E(G)$ is given as
          \begin{equation}
          c({\rm t})=
          \left(
          \sum_{x\in\rtype(G_\CC,K_\CC)}
          \sum_{\lambda}
          m_{x,\lambda}^V\cdot m_{x,\lambda}^A[x]
          ,\;
          [V(\ann_{Z(\lieg)}A)],
          \;
          \sum_{\begin{subarray}c\chi\colon\pi_\circ(K)\to\GL_1\\(A\otimes\chi)_\CC\cong A_\CC\end{subarray}} [V(\ker E[\rho_{\pi_\circ^G}\otimes\chi^{\rm const}])]\right),
          \end{equation}
          where $(A,V)$ is any fixed chosen representative of ${\rm t}$ and on the right hand side $\chi\colon\pi_\circ(K)\to\GL_1$ runs over the $E$-rational characters of $\pi_\circ(K)$ with the property that the complex realization the $\chi$-twisted locally algebraic cohomologically induced standard module obtained from $A$ is isomorphic to $A_\CC$.
        \end{corollary}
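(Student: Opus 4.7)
The plan is to rewrite the sum defining $c(\mathrm{t})$ so that each of the three coordinates of the product cycle depends on only one of the summation variables, and then to factor externally.

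First, I would fix a representative $(A, V) \in \mathrm{t}$ and parametrize the remaining isomorphism classes in $\mathrm{t}$ by admissible twists. By Definition \ref{def:equivalenceofpairs}, any $(A', V') \in \mathrm{t}$ satisfies $A'_\CC \cong A_\CC$. Invoking the analogue of Proposition \ref{prop:locallyalgebraicfieldofdefinition} for locally algebraic $(\lieg, K)$-modules (Remark \ref{rmk:absolutelyirreduciblelocallyalgebraicgKmodule}), the underlying rational $(\lieg, K)$-modules of $A$ and $A'$ are isomorphic over $E$, and Schur's lemma applied to the absolutely irreducible $(\lieg, K)$-module shows that the two commuting $\pi_\circ^G$-actions differ by a unique $E$-rational character $\chi \colon \pi_\circ(K) \to \GL_1$, so $(A', V') \cong (A \otimes \chi, V \otimes \chi)$ as locally algebraic pairs over $E$. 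The condition $(A', V') \in \mathrm{t}$, namely $(A \otimes \chi)_\CC \cong A_\CC$, is precisely the defining condition of the set of characters appearing on the right hand side of the corollary.

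Next, I would analyze each of the three factors of a summand under the twist by $\chi$. The multiplicities $m_{x,\lambda}^A$ and $m_{x,\lambda}^V$ depend only on $A_\CC$ (equivalently, on the underlying rational $(\lieg, K)$-module of $A$) and on the rational representation $(V_E, \rho_G)$, both of which are preserved under admissible twists; hence the inner $(x, \lambda)$-sum is invariant along $\mathrm{t}$. For any $(x, \lambda)$ with $m_{x,\lambda}^A > 0$, the module $A_{\lieq_x}(\lambda)$ is a direct summand of the irreducible $A_\CC$ and therefore shares its infinitesimal character, so $\ann_{Z(\lieg_\CC)} A_{\lieq_x}(\lambda) = \ann_{Z(\lieg_\CC)} A_\CC$; this descends to $\ann_{Z(\lieg)} A$ by Galois descent, independently of $x$, $\lambda$, and of the twist $\chi$. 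Finally, by construction $\rho_{\pi_\circ^G}^{A \otimes \chi} = \rho_{\pi_\circ^G}^A \otimes \chi^{\mathrm{const}}$, so the third coordinate becomes $V(\ker E[\rho_{\pi_\circ^G} \otimes \chi^{\mathrm{const}}])$ after the twist.

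Substituting these observations into the defining sum for $c(\mathrm{t})$ and interchanging summations, the $(x, \lambda)$-sum and the middle factor pull out as constants in the first and second coordinates respectively, while the sum over $[(A', V')]_\cong \in \mathrm{t}$ collapses to the sum over admissible characters $\chi$ in the third coordinate, producing the claimed triple in $Z_E(G)$ via the natural identification of a cycle of an external product with the corresponding tuple. The main obstacle is the parametrization step, which hinges on the rigidity of the $\pi_\circ^G$-action on an absolutely irreducible locally algebraic $(\lieg, K)$-module: one must verify that two commuting $\pi_\circ^G$-structures on isomorphic rational $(\lieg, K)$-modules over $E$ differ by a unique $E$-rational character, where absolute irreducibility (rather than mere irreducibility over $E$) is essential for the Schur argument. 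Once this bookkeeping is settled, the remaining coordinate-wise verifications are routine.
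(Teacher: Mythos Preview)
Your proof is correct and follows the same approach as the paper's, which simply states that the result ``follows from the definition of the equivalence relation on isomorphism classes of pairs $(A,V)$ and the observation that the data for all three entries vary independently.'' You have supplied the details behind this one-liner: the explicit parametrization of $\mathrm{t}$ by twisting characters, and the verification that the first two coordinates are constant along $\mathrm{t}$ while the third is precisely indexed by these characters.

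One small caveat: you assert that $A_\CC$ is irreducible in order to conclude $\ann_{Z(\lieg_\CC)} A_{\lieq_x}(\lambda) = \ann_{Z(\lieg_\CC)} A_\CC$. In the generality of Definition~\ref{def:locallyalgebraicstandardmodule}, the locally algebraic standard module $A$ is only assumed irreducible over $E$, so $A_\CC$ may decompose as a direct sum of Galois conjugates. The conclusion still holds, since the constituents $A_{\lieq_x}(\lambda)$ with $m_{x,\lambda}^A > 0$ then have Galois-conjugate infinitesimal characters, all lying over the single closed point $V(\ann_{Z(\lieg)} A) \in \Spec Z(\lieg_E)$; but the argument should be phrased in terms of this descent rather than irreducibility of $A_\CC$. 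A related subtlety is that the twisting character you obtain from Schur's lemma is a priori a character of the constant group scheme $\pi_\circ^G$ rather than of the \'etale scheme $\pi_\circ(K)$; the statement of the corollary uses the latter, so strictly speaking one should check (or note) that the relevant characters descend, though this wrinkle is already present in the paper's formulation.
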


        \begin{proof}
          This follows from the definition of the equivalence relation in isomorphism classes of pairs $(A,V)$ and the observation that the data for all three entries vary independently.
        \end{proof}

        \begin{remark}
          If $(A,V)$ is a representative of ${\rm t}$, then $\ann_{Z(\lieg)} A=\ann_{Z(\lieg)} V^\vee$ and the latter only depends on the underlying rational representation of $G$.
        \end{remark}

        \begin{proposition}\label{prop:supportclassification}
          For two cohomological types ${\rm t},{\rm t'}\in\cohTypes_E(G)$ we have
          \[
          {\rm t}={\rm t'}\quad\Leftrightarrow\quad \Supp{\rm t}\cap\Supp{\rm t}'\neq\emptyset.
          \]
          In particular the map
          \begin{equation}
            c\colon\cohTypes_E(G)\to Z_E(G),\quad {\rm t}\mapsto c({\rm t})
          \end{equation}
          is injective.
        \end{proposition}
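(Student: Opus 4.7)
One direction is immediate: any representative $(A,V) \in {\rm t}$ has $A \neq 0$, hence $m^A_{x,\lambda} > 0$ for some $(x,\lambda)$, so $\Supp{\rm t} \neq \emptyset$, and the case ${\rm t} = {\rm t}'$ gives $\Supp{\rm t} \cap \Supp{\rm t}' = \Supp{\rm t} \neq \emptyset$. For the converse, I fix a common point $(x,z,c) \in \Supp{\rm t} \cap \Supp{\rm t}'$ together with representatives $(A,V) \in {\rm t}$, $(A',V') \in {\rm t}'$, and extremal weights $\lambda$ of $V_\CC$ and $\lambda'$ of $V_\CC'$ that contribute this point to the respective supports. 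The plan is to extract a structural constraint from each of the three coordinates, combine them to produce an isomorphism $A_\CC \cong A_\CC'$ of $(\lieg_\CC,K_\infty)$-modules, and then invoke Definition \ref{def:equivalenceofpairs} to conclude ${\rm t} = {\rm t}'$.

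First, the common relative parabolic type provides a $\theta$-stable parabolic $\lieq_x$, unique up to $K_\CC$-conjugacy, such that $A_{\lieq_x}(\lambda)$ is an irreducible constituent of $A_\CC$ and $A_{\lieq_x}(\lambda')$ is an irreducible constituent of $A_\CC'$. Second, the common closed point $z \in \Spec Z(\lieg_\CC)$ asserts that these two constituents share the same infinitesimal character; by the Vogan--Zuckerman classification (Theorem 5.6 of \cite{voganzuckerman1984}, in the form of Theorem 6.4.2 of \cite{hayashijanuszewski}) this forces $A_{\lieq_x}(\lambda) \cong A_{\lieq_x}(\lambda')$ as $(\lieg_\CC, K_\infty)$-modules. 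Third, the common point $c$ in $\Spec E[\pi_\circ^G(E)]$ matches the two $\pi_\circ^G$-actions up to an admissible twist in the sense of Definition \ref{def:admissibleweights}, so that the diagonal $K_\infty$-structures of Definition \ref{def:locallyalgebraiccomplexmodule} used to form $A_\CC$ and $A_\CC'$ are compatible with the shared constituent. Since $A$ and $A'$ are irreducible locally algebraic $(\lieg, K)$-modules over $E$, Theorem \ref{thm:irreduciblelocallyalgebraicgKmodules} exhibits them as restrictions of scalars from their fields of rationality, and over $\CC$ each complexification decomposes as a direct sum of Galois conjugates of a single absolutely irreducible locally algebraic module. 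A shared constituent then forces these Galois orbits to coincide, whence $A_\CC \cong A_\CC'$, yielding ${\rm t} = {\rm t}'$. Injectivity of $c$ follows at once: $c({\rm t}) = c({\rm t}')$ implies $\Supp{\rm t} = \Supp{\rm t}' \neq \emptyset$, and the first part concludes.

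The main obstacle is the middle step: deducing $A_{\lieq_x}(\lambda) \cong A_{\lieq_x}(\lambda')$ from the common parabolic type and common infinitesimal character. A priori $V$ and $V'$ are distinct, so $\lambda$ and $\lambda'$ arise as extremal weights of different representations, and one must verify that the classification of cohomologically induced standard modules by $(\lieq_x, \text{infinitesimal character})$ is strong enough to force isomorphism, including at possibly singular or boundary parameters. The good range case follows directly from Vogan--Zuckerman; the general case requires the base change compatibility of the twisted $\mathcal D$-module construction of \cite{hayashijanuszewski} to propagate the isomorphism across the relevant fiber of the moduli scheme of $K$-stable parabolics. A secondary subtlety is the bookkeeping of admissible twists in the third coordinate, which is handled by the decomposition established in Corollary \ref{cor:canonicalcyclefactorization}.
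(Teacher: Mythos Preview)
Your proposal is essentially correct and reaches the same conclusion by the same mechanism, but you process the three coordinates in a suboptimal order, which is what creates the ``main obstacle'' you flag. The paper's proof reverses your steps 1 and 2: it first observes that the second and third coordinates $(z,c)$ alone already determine the locally algebraic representation $V$ up to isomorphism. Indeed, the annihilator $\ann_{Z(\lieg)} A = \ann_{Z(\lieg)} V^\vee$ determines the underlying irreducible rational representation $(V_E,\rho_G)$ (irreducible finite-dimensional representations being classified by their infinitesimal characters, together with Theorem~\ref{thm:irreduciblelocallyalgebraicrepresentations} to handle the non-absolutely-irreducible case over $E$), and by Corollary~\ref{cor:canonicalcyclefactorization} each point $c$ in the third factor singles out a specific $\rho_{\pi_\circ^G}\otimes\chi^{\rm const}$ for which the $\chi$-twisted pair is still a representative of ${\rm t}$. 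One may therefore assume outright that $V = V'$ as locally algebraic representations. With this in hand, the common $x$-coordinate yields the \emph{same} $A_{\lieq_x}(\lambda)$ (same $\lambda$, not merely same infinitesimal character) as a constituent of both $A_\CC$ and $A_\CC'$, and the conclusion follows by irreducibility over $E$ as you describe. Your route through Vogan--Zuckerman to match $A_{\lieq_x}(\lambda)$ with $A_{\lieq_x}(\lambda')$ for a priori distinct $\lambda,\lambda'$ is therefore unnecessary: the obstacle you identify dissolves once you extract $V = V'$ from $(z,c)$ first.
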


        \begin{proof}
          By Proposition \ref{prop:locallyalgebraicfieldofdefinition} combined with Theorem \ref{thm:irreduciblelocallyalgebraicrepresentations} we conclude with Remark \ref{rmk:locallyalgebraicequivalence} that the maximal ideal $\ann_{Z(\lieg)}A$ together with $\rho_{\pi_\circ^G}$ supporting ${\rm t}$ in the sense of Corollary \ref{cor:canonicalcyclefactorization} characterizes the isomorphism class of the locally algebraic representation $V$ over $E$ for which $[(A,V)]_\cong\in{\rm t}$ for some $A$ uniquely (we do not assume $A$ to be given beforehand).
          
          Therefore, we may assume without loss of generality that $\Supp{\rm t}$ and $\Supp{\rm t}'$ have representatives $(A,V)$ and $(A',V')$ with $V=V'$, i.\,e.
          \[
          \ann_{Z(\lieg)}A=\ann_{Z(\lieg)}A'\quad\text{and}\quad \rho_{\pi_\circ^G}=\rho_{\pi_\circ^G}'.
          \]
          If $\Supp{\rm t}\cap\Supp{\rm t}'\neq\emptyset$, then this implies in light of Corollary \ref{cor:canonicalcyclefactorization} that there exists an $x\in\rtype(G_\CC,K_\CC)$ contributing both to $\Supp{\rm t}$ and $\Supp{\rm t}'$. This shows that $A_{\lieq_x}(\lambda)$ occurs both in $A_\CC$ and $A_\CC'$, which implies that the underlying algebraic standard modules over $E$ are isomorphic up to admissible twists. Hence $A_\CC=A_\CC'$ and ${\rm t}={\rm t}'$ follows as claimed.
        \end{proof}
        
        \begin{definition}\label{def:finecohomologicaltypes}
          Let $(V,\rho_G)$ denote an irreducible rational representation of $G$ over $E$. We say that $V$ {\em is the rational weight} of a cohomological type ${\mathrm t}\in\cohTypes_E(G)$, if
          \begin{equation}
            \Supp{\rm t}\subseteq
            \rtype(G_E,K_E)\times \{[V(\ann V)]\}\times Z_0(\Spec E[\pi_\circ^G(E)])\}.
            \label{eq:cohomologicaltypeVsupportcondition}
          \end{equation}
          We denote by
          \[
            \cohTypes_E(G, V)\subseteq
            \cohTypes_E(G)
          \]
          the subset of cohomological types of $G$ of rational weight $V$.

          For any closed subscheme $\xi\in\rtype(G_E,K_E)$ we write
          \[
            \cohTypes_E^{\bullet}(G, \xi)\subseteq
            \cohTypes_E^{\bullet}(G)
          \]
          for the subset of cohomological types ${\rm t}$, whose support satisfies
          \begin{equation}
            \Supp{\rm t}\subseteq\left(\rtype(G,K)\times \xi\times \Spec E[\pi_\circ^G(E)]\right).
            \label{eq:cohomologicaltypesupportcondition}
          \end{equation}
          By abuse of language we say that ${\rm t}$ {\em has (geometric) support in} $\xi$ if \eqref{eq:cohomologicaltypesupportcondition} is satisfied.

          Finally, given $V$ and $\xi$, put
          \[
            \cohTypes_E(G, \xi, V):=\cohTypes_E^{\bullet}(G, \xi)\cap\cohTypes_E(G, V).
          \]
        \end{definition}

        \begin{remark}
          Condition \eqref{eq:cohomologicaltypeVsupportcondition} is equivalent to the existence of an isomorphism class $[(A',V')]_{\cong}\in\rm{t}$ with $V'\cong(V,\rho_G,\rho_{\pi_\circ^G})$ as locally algebraic representations for some $\rho_{\pi_\circ^G}\colon\pi_\circ^G\to\End_E(V)$. In light of the definition of the geometric support in Definition \ref{def:geometricsupport}, condition \eqref{eq:cohomologicaltypesupportcondition} may be rephrased as a similar explicit condition on $A'$.
        \end{remark}
        
        \subsection{Tempered cohomological types}\label{sec:temperedcohomologicaltypes}

          Analogously to the moduli scheme of $K$-stable parabolic subgroups of $G$ we have a moduli scheme $\mathcal{B}_{G}^{K-\mathrm{st}}$ of $K$-stable Borel subgroups of $G$ and a corresponding \'etale scheme $\rtype_\emptyset(G,K)\subseteq\rtype(G,K)$ of relative Borel types, which is the (\'etale sheaf) quotient of $\mathcal{B}_{G}^{K-\mathrm{st}}$ by the action of $K$.

          \begin{definition}[Tempered cohomological types]
          A cohomological type ${\mathrm t}\in\cohTypes_E(G)$ is {\em tempered} if there exists an isomorphism class $[(A,V)]_{\cong}\in\rm{t}$ with the following property: There exists an embedding $E\to\CC$ with the property that the smooth Fr\'echet globalization of the $(\lieg_\CC,K_\infty)$-module $A$ is essentially unitarizable and tempered. We denote by
          \[
            \cohTypes_E^{\rm temp}(G)\subseteq
            \cohTypes_E(G)
          \]
          the subset of tempered cohomological types of $G$. Define the subsets
          \[
            \cohTypes_E^{\rm temp}(G,V),\,\cohTypes_E^{\rm temp}(G,\xi),\,\cohTypes_E^{\rm temp}(G,\xi,V),
          \]
          as the corresponding intersections with the subsets defined in Definition \ref{def:finecohomologicaltypes}.
        \end{definition}

        We have the following classification result.

        \begin{theorem}[Classification of tempered cohomological types]\label{thm:temperedcohomologicaltypes}
          For a cohomological type ${\mathrm t}\in\cohTypes_E(G)$ the following are equivalent:
          \begin{enumerate}
	    \renewcommand{\labelenumi}{(\roman{enumi})}
              \item $\rm t$ is tempered.
              \item $\rm t$ has support in $\rtype_\emptyset(G_E,K_E)$, i.\,e.\ 
                \[
                {\rm t}\in\cohTypes_E(G,\rtype_\emptyset(G_E,K_E)).
                \]
          \end{enumerate}
        \end{theorem}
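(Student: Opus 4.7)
The plan is to reduce the equivalence to the classical Vogan--Zuckerman criterion for temperedness of cohomologically induced standard modules: for a $\theta$-stable parabolic subalgebra $\lieq \subseteq \lieg_\CC$ with Levi factor $L$ and a character $\lambda$ arising from an extremal weight of an irreducible finite-dimensional rational representation, the module $A_\lieq(\lambda)_\CC$ is essentially tempered as a $(\lieg_\CC,K_\infty)$-module if and only if $\lieq$ is a $\theta$-stable Borel subalgebra, equivalently, $L$ is a fundamental Cartan of $\lieg_\CC$.

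First I would fix a representative $[(A,V)]_\cong \in {\rm t}$ and an embedding $E \to \CC$. By Definition \ref{def:geometricsupport} and Corollary \ref{cor:canonicalcyclefactorization}, the projection of $\Supp{\rm t}$ to $\rtype(G_E,K_E)_\CC$ is precisely the set of relative parabolic types $x$ for which some $A_{\lieq_x}(\lambda)$ occurs in $A_\CC$ with positive multiplicity, $\lambda$ of the prescribed extremal-weight form attached to $V_\CC$. Since $A$ is irreducible over $E$, Theorem \ref{thm:irreduciblelocallyalgebraicgKmodules} together with Galois descent shows that $A_\CC$ is a direct sum of absolutely irreducible cohomologically induced standard modules $A_{\lieq_x}(\lambda)_\CC$ indexed (up to Galois conjugation) by the relative parabolic types in the support.

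For the direction (i) $\Rightarrow$ (ii), temperedness of the underlying $(\lieg_\CC,K_\infty)$-module is both inherited by and detected on direct summands, so every $A_{\lieq_x}(\lambda)_\CC$ occurring in $A_\CC$ must be tempered. By the Vogan--Zuckerman criterion, this forces each $\lieq_x$ to be a $\theta$-stable Borel subalgebra, i.e.\ $x \in \rtype_\emptyset(G_\CC,K_\CC)$. Since temperedness is stable under the Galois action on $A_\CC$ and $\rtype_\emptyset(G,K) \subseteq \rtype(G,K)$ is an $E$-rational closed and open subscheme, this Galois-equivariantly descends to the inclusion $\Supp{\rm t} \subseteq \rtype_\emptyset(G_E,K_E) \times \Spec Z(\lieg_E) \times \Spec E[\pi_\circ^G(E)]$, which is (ii). For the converse (ii) $\Rightarrow$ (i), if the support lies in $\rtype_\emptyset(G_E,K_E)$, then by Corollary \ref{cor:canonicalcyclefactorization} every summand $A_{\lieq_x}(\lambda)_\CC$ of $A_\CC$ is cohomologically induced from a $\theta$-stable Borel subalgebra and hence is tempered by Vogan--Zuckerman; a finite direct sum of tempered modules is tempered, so ${\rm t}$ is tempered.

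The main technical point is confirming that the Vogan--Zuckerman temperedness criterion applies uniformly to all extremal-weight parameters $\lambda$ arising from absolutely irreducible rational representations of $G$, so that temperedness of $A_{\lieq_x}(\lambda)_\CC$ really depends only on the relative parabolic type $x$ and not on the specific $\lambda$ in this range. The refinement by $\rho_{\pi_\circ^G}$ is harmless: by Proposition \ref{prop:absolutelyirreduciblelocallyalgebraic}(b), $\pi_0(K_\infty)$ acts by scalars in $\{\pm 1\}$, so the passage from $A$ to its complex realization via Definition \ref{def:locallyalgebraiccomplexmodule} twists the $K(\RR)$-action by a finite-order character, which does not affect growth of matrix coefficients and hence preserves temperedness.
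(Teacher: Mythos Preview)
Your proof is correct and follows essentially the same approach as the paper: reduce to the classical fact that a cohomologically induced standard module $A_\lieq(\lambda)_\CC$ is essentially tempered if and only if $\lieq$ is a $\theta$-stable Borel, then translate this into the support condition via the cycle machinery of Corollary~\ref{cor:canonicalcyclefactorization}. The paper's proof is more compressed and attributes the temperedness criterion jointly to Knapp--Zuckerman's classification of tempered representations and the Vogan--Zuckerman classification (rather than to Vogan--Zuckerman alone), and it invokes Proposition~\ref{prop:supportclassification} in place of your explicit Galois-descent argument, but these are differences of exposition rather than of substance.
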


        \begin{proof}
          The classification of tempered representations in Theorem 14.2 in \cite{knappzuckerman1982} together with the Vogan--Zuckerman classification of cohomological $(\lieg,K)$-modules and its reformulation in terms of the Langlands classification in \cite{voganzuckerman1984} shows that a cohomologically induced standard module $A_\CC$ over $\CC$ is tempered if and only if it is cohomologically induced from a $\theta$-stable Borel subalgebra $\lieq_\CC\subseteq\lieg_\CC$. In light of Corollary \ref{cor:canonicalcyclefactorization} and Proposition \ref{prop:supportclassification} this shows the equivalence of (i) and (ii).
        \end{proof}

        \begin{corollary}\label{cor:temperedcohomologicaltypes}
          We have
          \[
          \cohTypes_E^{\rm temp}(G)=
          \cohTypes_E(G,\rtype_\emptyset(G_E,K_E))
          \]
          and we have a partition
          \begin{equation}
            \cohTypes_E^{\rm temp}(G)=
            \bigsqcup_{\begin{subarray}cx\in\rtype_\emptyset(G,K)(E)\\V\end{subarray}}\cohTypes_E(G,x,V),
            \label{eq:cohtypespartition}
          \end{equation}
          where $V$ ranges over representatives of all irreducible rational representations of $G$ and we consider $x\in\rtype_\emptyset(G,K)(E)$ as a closed points of the finite \'etale scheme $\rtype_\emptyset(G_E,K_E)$ over $E$.
        \end{corollary}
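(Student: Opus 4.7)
The first equality is a direct reformulation of Theorem \ref{thm:temperedcohomologicaltypes}: by Definition \ref{def:finecohomologicaltypes}, the right-hand side consists of those ${\rm t} \in {\rm cohTypes}_E(G)$ whose relative parabolic type support lies in $\rtype_\emptyset(G_E,K_E)$, which by part (ii) of the theorem is precisely ${\rm cohTypes}_E^{\rm temp}(G)$.

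For the partition, the plan is to show that every tempered ${\rm t}$ belongs to a unique summand ${\rm cohTypes}_E(G,x,V)$. Given a tempered type ${\rm t}$, I would fix a representative $[(A,V)]_\cong \in {\rm t}$ and extract the pair $(x,V)$ canonically. Following the argument of Proposition \ref{prop:supportclassification}, the data consisting of $\ann_{Z(\lieg)}A$ together with $\rho_{\pi_\circ^G}$ already determines the isomorphism class of the underlying locally algebraic representation $V$; hence the rational weight is an invariant of ${\rm t}$, and this yields the second coordinate canonically. For the first coordinate $x$, Theorem \ref{thm:temperedcohomologicaltypes} shows that $A_\CC$ is cohomologically induced from a $\theta$-stable Borel subalgebra $\lieb_\CC$, and the Vogan--Zuckerman classification (cf.\ Theorem 2.5 of \cite{voganzuckerman1984}) implies that the $K_\CC$-conjugacy class of such $\lieb_\CC$ is uniquely determined by $A_\CC$. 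This $K_\CC$-orbit defines a single $\CC$-point of $\rtype_\emptyset(G_\CC,K_\CC)$, and its Galois orbit is a closed $E$-point $x \in \rtype_\emptyset(G,K)(E)$. By construction ${\rm t} \in {\rm cohTypes}_E(G,x,V)$.

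Disjointness of the decomposition is then immediate: if ${\rm t}$ were contained in both ${\rm cohTypes}_E(G,x_1,V_1)$ and ${\rm cohTypes}_E(G,x_2,V_2)$, the uniqueness argument above would force $(x_1,V_1)=(x_2,V_2)$. Alternatively, one may invoke Proposition \ref{prop:supportclassification} directly: a common ${\rm t}$ in both sets would produce intersecting supports, forcing coincidence via Corollary \ref{cor:canonicalcyclefactorization}. The reverse inclusion of each summand ${\rm cohTypes}_E(G,x,V)$ into ${\rm cohTypes}_E^{\rm temp}(G)$ follows from the first equality, since $x \in \rtype_\emptyset(G,K)(E)$.

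The principal technical input is the uniqueness of the Vogan--Zuckerman Borel for a tempered cohomologically induced standard module. This is a classical consequence of the interplay between the cohomological induction picture and the Langlands classification for tempered representations developed in \cite{voganzuckerman1984}, and it is the only non-formal step; all other assertions follow by assembling results already established in the preceding sections.
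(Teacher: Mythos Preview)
Your proof is correct and follows essentially the same approach as the paper. Both derive the first equality directly from Theorem~\ref{thm:temperedcohomologicaltypes} and obtain the partition from the Vogan--Zuckerman input that distinct $\theta$-stable Borel types yield non-isomorphic cohomologically induced standard modules; you spell out in more detail how the pair $(x,V)$ is extracted from a representative (and offer Proposition~\ref{prop:supportclassification} as an alternate route to disjointness), whereas the paper compresses this into a single sentence citing Theorem~5.6 of \cite{voganzuckerman1984} together with the non-isomorphism observation.
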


        \begin{proof}
          Theorem \ref{thm:temperedcohomologicaltypes} together with Theorem 5.6 in \cite{voganzuckerman1984} and the observation that for
          \[
          x,x'\in\rtype_\emptyset(G,K)(E)
          \]
          $x\neq x'$ the all corresponding cohomologically induced standard modules are non-isomorphic, shows the claim.
        \end{proof}

        \begin{proposition}[{Li-Schwermer, \cite{lischwermer}}]\label{prop:lischwermer}
          For any irreducible rational representation $(V,\rho_G)$ of $G$ over $E$ which over a splitting field of $G$ contains an absolutely irreducible representation which has a regular extremal weight, we have
          \begin{equation}
          \cohTypes_E(G,V)=
          \cohTypes_E^{\rm temp}(G,V)=
          \cohTypes_E(G,\rtype_\emptyset(G_E,K_E),V).
          \label{eq:lischwermer}
          \end{equation}
        \end{proposition}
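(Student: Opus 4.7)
The plan is to reduce the three-fold equality to the single assertion that regularity of an extremal weight of $V$ forces the $\theta$-stable parabolic in any Vogan--Zuckerman presentation of the complexification to be a Borel. The second equality in \eqref{eq:lischwermer} is already furnished by Theorem \ref{thm:temperedcohomologicaltypes}, so only the first equality requires argument; and since tempered cohomological types form a subset of all cohomological types of rational weight $V$ by definition, only the inclusion ${\rm cohType}_E(G,V)\subseteq{\rm cohType}_E^{\rm temp}(G,V)$ remains.

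To establish this inclusion I would fix ${\rm t}\in{\rm cohType}_E(G,V)$ and pick any representative $[(A,V')]_\cong\in{\rm t}$ with $V'$ having underlying rational representation $V$. By Corollary \ref{cor:canonicalcyclefactorization} the support of ${\rm t}$ in the $\rtype(G_E,K_E)$-factor is encoded by the relative parabolic types $x\in\rtype(G_\CC,K_\CC)$ contributing to $A_\CC$. For each such $x$, choose $Q_x\in\Pp^{K\mathrm{-st}}_{G,x}(\CC)$ with Lie algebra $\lieq_x=\liel_x\oplus\lieu_x$ and write $A_{\lieq_x}(\lambda)\subseteq A_\CC$ for the corresponding cohomologically induced standard summand, with $\lambda$ an irreducible constituent of $H^0(\bar\lieu_x;V_\CC)^\vee$ as in Definition \ref{def:geometricsupport}. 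By Theorem \ref{thm:temperedcohomologicaltypes}, it suffices to prove $x\in\rtype_\emptyset(G_\CC,K_\CC)$, i.e.\ that $\lieq_x$ is a $\theta$-stable Borel subalgebra.

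For this I would invoke the Vogan--Zuckerman characterization of cohomological $(\lieg_\CC,K_\infty)$-modules (Theorem 5.6 of \cite{voganzuckerman1984}, reformulated in Theorem 6.4.2 of \cite{hayashijanuszewski}): non-vanishing of $H^\bullet(\lieg_\CC,S(\RR)^0K(\RR);A_{\lieq_x}(\lambda)\otimes V_\CC)$ forces $\lambda+\rho(\lieu_x)$ to coincide with the highest weight of an absolutely irreducible constituent of $V_\CC$ relative to some positive system compatible with $\lieq_x$. The hypothesis that $V$ contains an absolutely irreducible representation with a regular extremal weight then implies that the Levi $\liel_x$ cannot support a non-trivial root system: any root of $\liel_x$ would produce a pair of extremal weights of the constituent that are permuted by the corresponding Weyl reflection, contradicting regularity. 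Hence $\liel_x$ is a Cartan subalgebra and $\lieq_x$ is a Borel, which is precisely the Li--Schwermer observation in \cite{lischwermer}.

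The main obstacle is a careful translation of the Li--Schwermer argument from the classical $(\lieg_\CC,K_\infty)$-setting to the locally algebraic framework. However, because the $\pi_\circ^G$-component of the support is decoupled from the $\rtype(G,K)$-component by Corollary \ref{cor:canonicalcyclefactorization}, and because by Theorem \ref{thm:irreduciblelocallyalgebraicrepresentations} the passage to a splitting field preserves extremal weights up to Galois conjugation (hence preserves regularity), the regularity hypothesis descends faithfully to each absolutely irreducible constituent of $V_\CC$ and the classical argument applies verbatim to the complexification. The remaining inclusion then follows by combining this with Theorem \ref{thm:temperedcohomologicaltypes}.
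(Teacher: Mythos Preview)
Your proposal is correct and follows essentially the same approach as the paper: both reduce to the Li--Schwermer observation that a regular extremal weight cannot be a one-dimensional character of a proper Levi (hence $\lieq_x$ must be a Borel), and both deduce the remaining equality from the tempered characterization (the paper cites Corollary \ref{cor:temperedcohomologicaltypes}, you cite the equivalent Theorem \ref{thm:temperedcohomologicaltypes}). Two minor points of precision: in the Vogan--Zuckerman match there is no $\rho(\lieu_x)$-shift---$\lambda$ itself is the extremal weight of $V_\CC$ as in \eqref{eq:lambdaforV}---and the contradiction is that $\lambda$ would be \emph{fixed} by the reflection in a root of $\liel_x$ (being a character of the Levi), hence lie on a wall, rather than being ``permuted''.
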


        \begin{proof}
          Assume first that $V$ is an absolutely irreducible rational representation $V$ with a regular extremal weight. Then any extremal weight of $V$ is regular and the argument of the proof of Proposition 4.2 in \cite{lischwermer} shows that only for Borel subalgebras $\lieq_\CC\subseteq\lieg_\CC$ we may consider such an extremal weight as a character of $\lieq_\CC$. Therefore, by the definition of cohomological types (cf.\ Definition \ref{def:cohomologicaltypesofG}), $\cohTypes_E(G,V)=\cohTypes_E(G,\rtype_\emptyset(G_E,K_E),V)$ follows. The case of a general irreducible but not necessarily absolutely irreducible $V$ reduces to this case. The second identity in \eqref{eq:lischwermer} follows by Corollary \ref{cor:temperedcohomologicaltypes}.
        \end{proof}

	\subsection{Cohomological automorphic representations}\label{sec:automorphiccohomologicaltype}
	
	Let $\pi$ denote an automorphic representation of $G(\bfA)$. Then $\pi=\pi_\infty\otimes\pi_f$ for admissible representations $\pi_\infty$ of $G(\RR)$ and $\pi_f$ of $G(\bfA_f)$. We write $\pi_\infty^{(K_\infty)}$ for the underlying Harish-Chandra module of $K_\infty=K(\RR)$-finite vectors in $\pi_\infty$. For each compact open $K_f\subseteq G(\bfA_f)$ as before, the space $\pi_f^{K_f}$ of $K_f$-invariants is canonically an $\mathcal H_{K_f}(\CC)$-module. Proposition \ref{prop:cohomologicaltypes} motivates the following

        \begin{definition}[Cohomological automorphic representations]\label{def:cohomologicalrepresentation}
          Let $V=(V_E,\rho_G,\rho_{\pi_\circ^G})$ denote a locally algebraic representation of $G$ for $E\subseteq \CC$. We call a automorphic representation $\pi$ of $G(\Adeles)$ {\em cohomological of weight $V$} if there is a embedding
          \begin{equation}
            \pi_\infty^{(K_\infty)}\to A_\CC
            \label{eq:cohomologicalcondition}
          \end{equation}
          of $(\lieg_\CC,K_\infty)$-modules where $A$ is the complex realization of a locally algebraic cohomologically induced standard module $A=(A_E,\rho_{\pi_\circ^G})$ of weight $V$ over $E$ in the sense of Definition \ref{def:locallyalgebraicstandardmodule}.
        \end{definition}

        \begin{definition}[Cohomological type of a cohomological automorphic representation]\label{def:cohomologicaltypeofrepresentation}
          Let $\pi$ denote a cohomological automorphic representation of $G(\bfA)$ in the sense of definition \ref{def:cohomologicalrepresentation}. The cohomological type
          \[
            \cohType_E(\pi):=\left\{[(A,V)]_\cong\mid[(A,V)]_\cong\;\textrm{with}\;\pi_\infty^{(K_\infty)}\subseteq A_\CC\right\}\in\cohType_E(G)
            \]
            is called the {\em cohomological type} of $\pi$ over $E$.
        \end{definition}

        \begin{remark}
          If $\pi_\infty^{(K_\infty)}$ is not defined over $E$, then $\cohType_E(\pi)=\{[(A,V)]_\cong\}$ with $A_\CC$ a finite direct sum of absolutely irreducible cohomologically induced standard modules. Note that the cardinality of $\cohType_E(\pi)$ is independent of $E$ (cf.\ Proposition \ref{prop:locallyalgebraicfieldofdefinition} and Remark \ref{rmk:absolutelyirreduciblelocallyalgebraicgKmodule}).
        \end{remark}

        \begin{example}
          We refer to Theorem \ref{thm:cohomologicaltypesofGLn} below for the classification of cohomological types of regular algebraic cuspidal automorphic representations of $G=\res_{F/\QQ}\GL_n$ for $F$ totally real or CM over their respective fields of definition.
        \end{example}

        \begin{theorem}[{Hochschild--Mostov \cite[Theorem 6.1]{hochschildmostov}}]
          We have
          \begin{flalign*}
	  &H^\bullet(\lieg_\CC,S(\RR)^0 K(\RR);\pi_\infty\otimes V_\CC)\\
	  &=H^\bullet(\lieg_\CC,S(\RR)^0 K(\RR);\pi_\infty^{(K_\infty)}\otimes V_\CC)\\
	  &=H^\bullet(\lieg_\CC,\lies_\CC, K(\RR);\pi_\infty^{(K_\infty)}\otimes V_\CC),
          \end{flalign*}
          canonically.
        \end{theorem}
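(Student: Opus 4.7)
The plan is to establish the two asserted equalities separately, in each case by direct inspection of the Chevalley--Eilenberg cochain complexes. I would treat the passage from $\pi_\infty$ to $\pi_\infty^{(K_\infty)}$ and the passage between the Lie-group and Lie-algebra formulations of $S(\RR)^0 K(\RR)$ as independent steps.

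For the first equality I would compute both sides using the cochain complex
\[
C^p(M)\;:=\;\Hom_{S(\RR)^0 K(\RR)}\!\left(\bigwedge\nolimits^p(\lieg_\CC/(\lies+\liek)_\CC),\, M\right),
\]
applied to $M=\pi_\infty\otimes V_\CC$ and to $M=\pi_\infty^{(K_\infty)}\otimes V_\CC$ respectively. The exterior power is a finite-dimensional $K(\RR)$-module, so every continuous $K(\RR)$-equivariant homomorphism from it into $\pi_\infty\otimes V_\CC$ has image inside the $K_\infty$-finite subspace; since $V_\CC$ is finite-dimensional, that subspace equals $\pi_\infty^{(K_\infty)}\otimes V_\CC$. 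Hence the tautological inclusion of cochain complexes is an equality term by term, giving the first asserted identity.

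For the second equality I would use that $S(\RR)^0$ is a connected abelian Lie group with Lie algebra $\lies$. The cochains computing $H^\bullet(\lieg_\CC, S(\RR)^0 K(\RR); W)$ for $W:=\pi_\infty^{(K_\infty)}\otimes V_\CC$ sit as the $S(\RR)^0$-invariants inside the complex $\Hom_{K(\RR)}(\bigwedge^\bullet(\lieg_\CC/(\lies+\liek)_\CC), W)$ computing the right-hand side. Connectedness of $S(\RR)^0$ replaces $S(\RR)^0$-invariance by $\lies$-invariance at the Lie algebra level; since $\lies$ is central in $\lieg$, its adjoint action on $\bigwedge^p(\lieg_\CC/(\lies+\liek)_\CC)$ is trivial, so the extra invariance amounts to the image of each cochain in $W$ being $\lies$-invariant. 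Decomposing $W$ into generalised $\lies$-character eigenspaces, only the trivial character can contribute: on any other eigenspace $W'$ the relative cohomology $H^\bullet(\lieg_\CC, \lies_\CC, K(\RR); W')$ vanishes, by Wigner's lemma applied to the action of a suitable element of $U(\lies)$ on the Chevalley--Eilenberg complex (this action commutes with the differential because $\lies$ is central). Hence adding or dropping the $S(\RR)^0$-invariance alters neither the complex nor its cohomology.

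The only mathematical input beyond bookkeeping is the vanishing statement invoked in the second step, and this is where I would expect the main (albeit classical) obstacle to lie. It follows from the acyclicity of the Koszul-type complex for the abelian Lie algebra $\lies$ twisted by a non-trivial character, combined with the splitting $\lieg=\lies\oplus{}^0\lieg$. The theorem of Hochschild--Mostov cited in the statement packages precisely these observations in a more general framework.
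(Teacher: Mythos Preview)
The paper does not prove this statement; it simply quotes it as a theorem of Hochschild--Mostow and moves on to the corollary. So there is no in-paper argument to compare against, and your sketch is being judged on its own merits.

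Your treatment of the first equality is the standard one and is correct: the domain $\bigwedge^p(\lieg_\CC/(\lies+\liek)_\CC)$ is a finite-dimensional $K(\RR)$-module, so any $K(\RR)$-equivariant map into $\pi_\infty\otimes V_\CC$ already lands in the $K_\infty$-finite part.

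Your argument for the second equality has a genuine gap. After correctly reducing to the inclusion
\[
\Hom_{K(\RR)}\!\left(\textstyle\bigwedge^{\bullet}({}^0\lieg_\CC/\liek_\CC),\,W^{\lies}\right)\;\hookrightarrow\;\Hom_{K(\RR)}\!\left(\textstyle\bigwedge^{\bullet}({}^0\lieg_\CC/\liek_\CC),\,W\right),
\]
you assert that for a nontrivial $\lies$-eigenspace $W'$ the cohomology $H^\bullet(\lieg_\CC,\lies_\CC,K(\RR);W')$ vanishes by Wigner's lemma. But in the paper's conventions this cohomology is $({}^0\lieg,K)$-cohomology, computed by the right-hand complex above; an element $Z\in\lies$ acts on that complex simply as the scalar $\chi(Z)$ through $W'$, and a complex on which a fixed operator acts as a nonzero scalar has no reason to be acyclic. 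Wigner's lemma needs two \emph{different} actions of the same central element to compare, and here there is only one. Your Koszul remark shows that $H^\bullet(\lies;W')=0$, which via the splitting $\lieg=\lies\oplus{}^0\lieg$ yields vanishing of $(\lieg,K)$-cohomology, not of $({}^0\lieg,K)$-cohomology. A minimal counterexample: for $G=\GL_1$ one has ${}^0\lieg=0$ and $K(\RR)=\{\pm1\}$; a one-dimensional $W'$ with nontrivial $\lies$-character and trivial $K$-action has $H^0({}^0\lieg,K;W')=W'\neq 0$ while $H^0(\lieg,S(\RR)^0K(\RR);W')=0$.

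What actually makes the second equality hold in the paper's setting is that $\lies$ acts \emph{trivially} on $\pi_\infty^{(K_\infty)}\otimes V_\CC$ whenever $\pi$ is cohomological of weight $V$: nonvanishing of $H^\bullet(\lieg_\CC,S(\RR)^0K(\RR);A_\CC\otimes V_\CC)$ for the associated standard module $A$ forces $(A_\CC\otimes V_\CC)^{\lies}=A_\CC\otimes V_\CC$, and this passes to the submodule $\pi_\infty^{(K_\infty)}\otimes V_\CC$. Under that hypothesis the inclusion of complexes above is an equality and no vanishing argument is needed. You should replace the Wigner/Koszul paragraph by this observation, or else note explicitly that the identity can fail without the cohomological hypothesis on $\pi$.
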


        Together with the observation in Remark \ref{rmk:cohomologyoflocallyalgebraicstandardmodules} we deduce
        \begin{corollary}\label{cor:cohomological}
          If $\pi$ is cohomological of weight $V$, then
          \begin{equation}
	    H^\bullet(\lieg_\CC,S(\RR)^0 K(\RR);\pi\otimes V_\CC)\neq 0.
            \label{eq:relativeliealgebracohomologynonvanishing}
          \end{equation}
          In particular, if $\pi$ is cuspidal, then the canonical $G(\Adeles_f)$-equivariant monomorphism
          \begin{flalign}
            &H^\bullet(\lieg_\CC,S(\RR)^0 K(\RR);\pi_\infty^{(K_\infty)}\otimes V_\CC)\otimes\pi_f\nonumber\\
            &\cong
            H^\bullet(\lieg_\CC,S(\RR)^0 K(\RR);\pi\otimes V_\CC)\label{eq:cohomologicalembedding}\\
            &\to
            \varinjlim_{K_f}H^\bullet(X_G(K_f); \widetilde{V}_\CC).\nonumber
          \end{flalign}
          realizes $\pi_f$ in the cohomology of arithmetic groups.
        \end{corollary}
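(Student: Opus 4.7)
The strategy is to combine the Hochschild--Mostov theorem just cited with the definition of cohomological weight (Definition \ref{def:cohomologicalrepresentation}) and the Vogan--Zuckerman non-vanishing recalled in Remark \ref{rmk:cohomologyoflocallyalgebraicstandardmodules}. For the cuspidal embedding I will invoke the classical realization of cuspidal cohomology via the de Rham complex on the space of cuspidal automorphic forms.

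\textbf{Step 1 (non-vanishing).} By Definition \ref{def:cohomologicalrepresentation} there is an embedding $\pi_\infty^{(K_\infty)}\hookrightarrow A_\CC$ of $(\lieg_\CC,K_\infty)$-modules, where $A_\CC$ is the complex realization of a locally algebraic cohomologically induced standard module of weight $V$. By Theorem \ref{thm:irreduciblelocallyalgebraicgKmodules} together with Remark \ref{rmk:locallyalgebraicstandardmodulesarecohomologicallyinduced}, $A_\CC$ decomposes as a direct sum of absolutely irreducible summands of the form $A_{\lieq'}(\lambda)_\CC$. Since $\pi_\infty^{(K_\infty)}$ is irreducible, it is isomorphic to one such summand, and Remark \ref{rmk:cohomologyoflocallyalgebraicstandardmodules} guarantees that each such summand has non-vanishing $(\lieg_\CC,S(\RR)^0K(\RR))$-cohomology against $V_\CC$. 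Applying the Hochschild--Mostov identification then yields \eqref{eq:relativeliealgebracohomologynonvanishing}.

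\textbf{Step 2 (factoring out $\pi_f$).} Abstractly $\pi\cong\pi_\infty\otimes\pi_f$ as admissible representations, and $(\lieg_\CC,K(\RR))$ acts trivially on the second factor. Using the standard complex
\[
C^\bullet(\lieg_\CC,\lies_\CC,K(\RR);\pi\otimes V_\CC)=
\Hom_{K(\RR)}\bigl(\textstyle\bigwedge^\bullet(\lieg_\CC/\lies_\CC+\liek_\CC),\,\pi_\infty^{(K_\infty)}\otimes V_\CC\bigr)\otimes \pi_f,
\]
the tensor product with $\pi_f$ commutes with taking cohomology, and combined with Hochschild--Mostov this produces the first isomorphism in \eqref{eq:cohomologicalembedding}.

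\textbf{Step 3 (realization in sheaf cohomology).} Cuspidality of $\pi$ provides a canonical $(\lieg_\CC,K_\infty)\times G(\Adeles_f)$-equivariant embedding $\pi\hookrightarrow\mathcal A_{\rm cusp}(G)$ into the space of cuspidal automorphic forms, which is a semisimple $(\lieg_\CC,K_\infty)\times G(\Adeles_f)$-module (Borel--Harder--Franke). Applying $H^\bullet(\lieg_\CC,S(\RR)^0K(\RR);-\otimes V_\CC)$ and passing to the inductive limit over compact open $K_f\subseteq G(\Adeles_f)$ of $K_f$-invariants yields, via the de Rham interpretation of sheaf cohomology on the (orbifolds associated to the) spaces $X_G(K_f)$, a $G(\Adeles_f)$-equivariant arrow into $\varinjlim_{K_f}H^\bullet(X_G(K_f);\widetilde{V}_\CC)$. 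Injectivity on the cuspidal part is classical: cuspidal cohomology is a direct summand of full cohomology by the Borel--Franke decomposition.

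\textbf{Main obstacle.} Steps 1 and 2 are essentially formal. The delicate point is Step 3: the de Rham comparison must be carried out with respect to the locally algebraic sheaf $\widetilde{V}_\CC$ of Definition \ref{def:locallyalgebraicsheaf}, which incorporates the auxiliary action $\rho_{\pi_\circ^G}$. One therefore has to verify that the twist built into the complexification $V_\CC$ (Definition \ref{def:locallyalgebraiccomplex}) matches the action of $K_\infty$ coming from right translation on automorphic forms, so that the de Rham complex on $X_G(K_f)$ with coefficients in $\widetilde{V}_\CC$ indeed computes $C^\bullet(\lieg_\CC,\lies_\CC,K(\RR);\mathcal A(G)^{K_f}\otimes V_\CC)$; this is a direct unwinding of Definitions \ref{def:locallyalgebraicsheaf} and \ref{def:locallyalgebraiccomplex}, using the orbifold-to-manifold comparison of Remark \ref{rmk:leraydegeneration} which holds in characteristic zero.
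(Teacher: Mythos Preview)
Your proposal is correct and follows essentially the same approach as the paper. The paper's own justification is the single sentence preceding the corollary (``Together with the observation in Remark~\ref{rmk:cohomologyoflocallyalgebraicstandardmodules} we deduce''), treating both the non-vanishing and the cuspidal embedding as immediate consequences of Hochschild--Mostov plus standard facts; you have simply unpacked these standard facts (the decomposition of $A_\CC$, the factoring of $\pi_f$, and the de~Rham comparison for the locally algebraic sheaf) in more detail than the paper chose to.
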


        \begin{remark}
          If $\pi$ is cohomological of weight $V$ and if $\widetilde{V}_E$ denotes the sheaf associated to $V$, then for some (resp.\ all) sufficiently small compact open $K_f$ for which $\pi_f^{K_f}\neq 0$, the Hecke module $\pi_f^{K_f}$ occurs in $H^\bullet(X_G(K_f);\widetilde{V}_E)$.

        However, in general an automorphic representation $\pi$ is {\em not} determined by the isomorphism class of its finite part $\pi_f$. There may exist automorphic representations $\pi'$ of $G(\bfA)$ not isomorphic to $\pi$ but isomorphic to $\pi$ at all finite places, i.\,e.\ with the property $\pi_f\cong\pi_f'$.

        Therefore, for an automorphic representation $\pi$ being cohomological with coefficients in the locally algebraic representation $V$ implies that, but in general is not equivalent to, saying that $\pi_f$ occurs in the cohomology of $\widetilde{V}_E$ for $E$ sufficiently large.

        For groups with sufficiently strong multiplicity one properties such as $\GL_n$, this equivalence still holds because $\pi=\pi'$ is automatic: If $G=\res_{F/\QQ}\GL_n$, then for not necessarily cuspidal but isobaric automorphic representations $\pi_1,\pi_2$ of $G(\bfA)$ the condition $0\neq\pi_{1,f}^{K_f}\cong\pi_{2,f}^{K_f}$ implies $\pi_1\cong\pi_2$ by \cite{jacquetshalika1981,jacquetshalika1981.2}. In particular, in the case of $\GL_n$, $\pi_{\infty}$ is uniquely determined by $\pi_f^{K_f}$ whenever this $\mathcal H_{K_f}(\CC)$-module is non-zero.

        Therefore it is not surprising that our results in the case $\GL_n$ are more complete than for general $G$. Consequently we put special emphasis on the case of the general linear group and discuss it in more detail below.
        \end{remark}

        \begin{remark}[Refined rationality considerations]
	  Various subspaces of $H^\bullet(X_G(K_f);\widetilde{V})$ may be defined: inner cohomology via the distinguished triangle associated to restriction to the boundary of Borel--Serre compactification, cuspidal cohomology which corresponds to the sum of the canonical images of the inclusion \eqref{eq:cohomologicalembedding} for all $\pi'$ cuspidal cohomological of weight $V$. More general subspaces and subquotients may be defined via the support of constant terms. However, except for inner cohomology, it is not clear in general if these subspaces are rational over a field of definition of $V$. For discussions of related rationality questions we refer to \cite{clozel1990} in the case of $\GL_n$ and to section 8 in \cite{januszewskirationality} for general reductive $G$.
	
	Nonetheless, for every cohomological automorphic representation $\pi$ of $G$, the finite part $\pi_f$ admits a model over a number field. The archimedean component $\pi_\infty$ also admits a model over a number field: The underlying irreducible locally algebraic $(\mathfrak{g},K)$-module admits a model over a number field. Overall, we obtain a model of the irreducible $(\mathfrak{g},K\times\pi_\circ^G)\times G(\bfA_f)$-module inside $\pi$ over a number field, which for certain groups $G$ is strictly larger than the field of rationality of $\pi$ due to the existence of Brauer obstructions.
        \end{remark}

        \section{Integral models of automorphic representations}\label{sec:integralmodels}

        In this section we build on the results of the previous sections to construct global $1/N$-integral structures on automorphic representation and on global spaces of automorphic cusp forms. We keep the notation and the setting from the previous sections. In particular, as in section \ref{sec:cohomologicalrepresentations}, $G$ is a connected linear reductive group over $\QQ$ and $V=(V_E,\rho_G,\rho_{\pi_\circ^G})$ denotes a locally algebraic representation over $E$ and $V_{\OO[1/N]}\subseteq E$ is a $K_f$-stable $\OO[1/N]$-lattice in $V$.

	\subsection{Integral structures on the cohomology of arithmetic groups}\label{sec:integralcohomology}
	
	We already observed that
	\[
	\widetilde{V}_{\OO[1/N]}(U)=\{s\in\Gamma(U;\widetilde{V}_{E})\mid s(g_\infty S(\RR)^0 K(\RR) , g_fK_f)\in g_fV_{\OO[1/N]}\}
	\]
	is a subsheaf of $\OO[1/N]$-modules of $\widetilde{V}_{E}$. Each Hecke operator $K_fgK_f$ for $g,h\in G(\bfA_f)$ sends sections of $\widetilde{hV}_{\OO[1/N]}$ to sections of $\widetilde{ghV}_{\OO[1/N]}$. In particular, we obtain an $\OO[1/N]$-linear map
	\[
	K_fgK_f\cdot(-)\colon H^\bullet(X_G(hK_fh^{-1});\widetilde{hV}_{\OO[1/N]})\to
	H^\bullet(X_G(ghK_fh^{-1}g^{-1});\widetilde{ghV}_{\OO[1/N]}).
	\]
	Depending on the context, there are two ways to obtain an action of a suitable $1/N$-integral Hecke algebra on $H^\bullet(X_G(K_f);\widetilde{V}_{\OO[1/N]})$.

        \smallskip
	Firstly, one may consider the algebra $\mathcal H(K_f; V_{\OO[1/N]})$ of all operators of the form
	\[
	\sum_{i=1}^\ell a_i\cdot K_f g_i K_f\in\mathcal H_E(K_f)
	\]
	where $a_i\in E$, $g_i\in G(\bfA_f)$ with the property that
	\[
	\sum_{i=1}^\ell a_i\cdot g_iV_{\OO[1/N]}\subseteq V_{\OO[1/N]}.
	\]
	This algebra then acts on $H^\bullet(X_G(K_f);\widetilde{V}_{\OO[1/N]})$ and satisfies
	\[
	E\otimes_{\OO[1/N]} \mathcal H(K_f;\widetilde{V}_{\OO[1/N]})=\mathcal H_E(K_f).
	\]
	
        \smallskip
	The second approach is of relevance in the case of $p$-adic coefficients, i.\,e.\ consider for a place $v$ lying above a rational prime $p$ the valuation ring $\OO_{(v)}=\varinjlim_{v\nmid N}\OO[1/N]\subseteq E$ where $N$ runs through the non-zero elements of $\OO$ not divisble by $v$. Remark that our previous statements over $\OO[1/N]$ naturally extend to statements over $\OO_{(v)}$ and even to its $v$-adic completion. In the case of a discrete valuation ring $\OO_{(v)}$ there are specific choices of lattices $V_{\OO_{(v)}}$: Assume $G$ is quasi-split over $E$ and fix a Borel $B\subseteq G$ with Levi decomposition $B=TU$ (or more generally a parabolic subgroup, possibly dropping the assumption that $G$ is quasi-split). If $V_E$ is absolutely irreducible, then $V_{E}$ is generated by a $B$-highest weight vector $v_0\in V_{E}$ of weight $\lambda$ say. The vector $v_0$ generates a lattice $V_{\OO_{(v)}}$ with the property that for any $B$-dominant coroot $\alpha:\Gm\to T$ and any prime element $\varpi$ in $\OO_{(v)}$,
	\[
	\lambda^\vee(\varpi)\cdot\rho_V(\alpha(\varpi))(V_{\OO_{(v)}})\subseteq V_{\OO_{(v)}}.
	\]
	Here $\lambda^\vee$ denotes the highest weight of the contragredient of $V$. Then if $p$ is the residual characteristic of $\varpi$ and if $K_f=K_f^{(p)}\times I_p$ with $I_p\subseteq G(\ZZ_p)$ modulo $p$ equal to a subgroup in $B(\ZZ/p\ZZ)$ containing $U(\ZZ/p\ZZ)$, then the normalized Hecke operator
	$\lambda^\vee(p)K_f\alpha(p)K_f$ canonically acts on $H^\bullet(X_G(K_f);\widetilde{V}_{\OO_{(v)}})$. In fact, this operator is contained in the previously constructed integral Hecke algebra $\mathcal H(K_f;V_{\OO_{(v)}})$ when we extend the above construction to $\OO_{(v)}$.

        \smallskip
	Back in the general situation, assume $\pi$ is cohomological of weight $V=(V_E,\rho_G,\rho_{\pi_\circ^G})$ where $E/\QQ$ is a number field and $v$ is a place above $p$. By enlarging $E$ if necessary, we may assume that $\pi_f$ is defined over $E$ as well. Fix the canonical embedding
	\[
	i^{K_f}\colon \pi_f^{K_f}\to H^\bullet(X_G(K_f);\widetilde{V}_{\CC}),
	\]
	and intersect its image with the image of $H^\bullet(X_G(K_f);\widetilde{V}_{\OO[1/N]})$ (or over $\OO_{(v)}$ respectively) in
        \[
        \CC\otimes_E H^\bullet(X_G(K_f);\widetilde{V}_{E})=H^\bullet(X_G(K_f);\widetilde{V}_{\CC})
        \]
        to obtain a model $\pi^{K_f}_{f,\OO[1/N]}$ of $\pi_f^{K_f}$ over $\OO[1/N]$ as $\mathcal H(K_f;V_{\OO[1/N]})$-module, which comes with an isomorphism
	\[
	\varphi_{f}^{K_f}\colon\CC\otimes_{\OO[1/N]}\pi^{K_f}_{f,\OO[1/N]}\to\pi_f^{K_f}.
	\]
	Letting $K_f$ run through a neighborhood basis of the identity in $G(\Adeles_f)$ and choosing $i^{K_f}$ compatibly, we obtain with
	\[
	\pi_{f,\OO[1/N]}:=\varinjlim_{K_f}\pi^{K_f}_{f,\OO[1/N]}
	\]
	and
	\begin{equation}
		\varphi_{f}:=\varinjlim_{K_f}\varphi_{f}^{K_f}\colon\CC\otimes_{\OO[1/N]}\pi_{f,\OO[1/N]}\to\pi_f
		\label{eq:varphif}
	\end{equation}
	a model of the $G(\Adeles_f)$-module $\pi_f$ over $\OO[1/N]$ in the following sense: $\pi_{f,\OO[1/N]}$ admits a canonical action of the Hecke algebra
	\[
	\mathcal H(V_{\OO[1/N]}):=\varinjlim_{K_f}\mathcal H(K_f;V_{\OO[1/N]})
	\]
	which is an $\OO[1/N]$-model of the Hecke algebra $\mathcal H(G(\Adeles_f))$ of locally constant $\CC$-valued functions on $G(\Adeles_f)$ with compact supports. Therefore, $\varphi_{f}$ in \eqref{eq:varphif} is an isomorphism of $G(\Adeles_f)$-modules.

        \begin{remark}
        We emphasize that the $\OO[1/N]$-Hecke algebra constructed commutes with change of coeffients in the sense that whenever $N$ is invertible in $\OO[1/N']$, then
        \[
	\mathcal H(V_{\OO[1/N']})=\OO[1/N']\otimes_{\OO[1/N]}\mathcal H(V_{\OO[1/N]}).
        \]
        \end{remark}

        The previous remark justifies the following

	\begin{definition}[Hecke algebra for arbitrary coefficients]
	For any flat $\OO$-algebra $A$ put
	\[
	\mathcal H(V_A):=A\otimes_{\OO}\mathcal H(V_{\OO}).
	\]
        \end{definition}

        \begin{remark}
          For the purpose at hand, we do not consider torsion coefficients.
        \end{remark}

	\subsection{Integral structures on automorphic representations}\label{sec:automorphicintegralstructures}

        Assume we have an $1/N$-integral model $\pi_{f,\OO[1/N]}$ of $\pi_f$ as constructed in the previous section. We augment this model with a $1/N$-integral model of $\pi_\infty$ for suitable $N$ to obtain a global $1/N$-integral model as follows. We assume that the automorphic representation $\pi$ is cohomological of weight $V=(V_E,\rho_G,\rho_{\pi_\circ^G})$ (cf.\ Definition \ref{def:cohomologicalrepresentation}) which we may and do assume to be defined over the number field $E/\QQ$ with respect to a fixed embedding $E\to\CC$ as well. Recall that we have a the complex representation $(V_\CC,\rho_\CC)$ of the Lie group $G(\RR)$ associated to $V$ from Definition \ref{def:locallyalgebraiccomplex} as well as a sheaf $\widetilde{V}_E$ of $E$-vector spaces on $X_G(K_f)$ for any compact open subgroup $K_f\subseteq G(\Adeles_f)$.

        \subsubsection{Construction of $1/2$-integral models}
        
	For the sake of concreteness, assume that the $\QQ$-groups $G$ and $K$ are the base changes of standard $\ZZ[1/2]$-forms $G_{\ZZ[1/2]}$ and $K_{\ZZ[1/2]}$ associated to the symmetric pair $(G(\RR),K(\RR))$ as in Corollary 6.2.18 in \cite{hayashijanuszewski} and that the data necessary to realize $\pi_\infty^{(K_\infty)}$ as a cohomologically induced standard module $A_{\lieq'}(\lambda)$ as in \eqref{eq:cohomologicalarchimedeancomponent} satisfy the hypotheses of the corollary. Then $\pi_\infty^{(K_\infty)}$ admits a model over $\ZZ[1/2]$, i.\,e.\ we have a $(\lieg_{\ZZ[1/2]},K_{\ZZ[1/2]})$-module $\pi_{\infty,\ZZ[1/2]}$ together with an isomorphism
	\[
	\varphi_\infty\colon\CC\otimes_{\ZZ[1/2]}\pi_{\infty,\ZZ[1/2]}\to\pi_\infty^{(K_\infty)}
	\]
	of Harish-Chandra modules.
	
	Assume that $\pi_f$ is defined over a number field $E/\QQ$ and write $\OO\subseteq E$ for its ring of integers as before. Recall that we constructed in \eqref{eq:varphif} an $\OO$-model $\pi_{f,\OO}$ of the finite part $\pi_f$ of $\pi$. Put
	\[
	\pi_{\OO[1/2]}:=\pi_{\infty,\ZZ[1/2]}\otimes_{\ZZ}\pi_{f,\OO},
	\]
	which admits a canonical structure of $(\lieg_{\OO[1/2]},K_{\OO[1/2]}\times\pi_\circ^G)\times\mathcal H(V_{\OO[1/2]})$-module and its complexification in the sense of Definition \ref{def:locallyalgebraiccomplexmodule} comes with an isomorphism
	\begin{equation}
		\varphi:=\varphi_\infty\otimes\varphi_f\colon\CC\otimes_{\OO[1/2]}\pi_{\OO[1/2]}\to\pi^{(K_\infty)}
		\label{eq:varphiglobal}
	\end{equation}
	of complex $(\lieg_{\CC},K(\RR))\times G(\Adeles_f)$-modules.
	
	This proves
	\begin{theorem}\label{thm:globalarithmeticmodels}
		Let $(G,K)$ be the symmetric pair consisting of the standard $\ZZ\left[1/2\right]$-form of of a classical connected Lie group and its maximal compact subgroup attached to the involution $\theta$ of $G$ in \cite{hayashilinebdl} 3.3. Assume that $\pi$ is an automorphic representation of $G(\Adeles)$ which is cohomological of weight $V=(V_E,\rho_G,\rho_{\pi_\circ^G})$ such that $\pi_f$ is defined over a number field $E/\QQ$. Assume furthermore that the cohomological induction data for $\pi_\infty^{(K_\infty)}$ satisfy the hypotheses in Corollary 6.2.18 in \cite{hayashijanuszewski}. Then $\pi$ admits a model $(\pi_{\OO[1/2]},\varphi)$ over $\OO[1/2]$ in the above sense:
		
		$\pi_{\OO[1/2]}$ is a $(\lieg_{\OO[1/2]},K_{\OO[1/2]}\times\pi_\circ^G)\times \mathcal H(V_{\OO[1/2]})$-module and its complexification comes with an isomorphism $\varphi$ as in \eqref{eq:varphiglobal} identifying $\CC\otimes_{\OO[1/2]}\pi_{\OO[1/2]}$ with the irreducible $(\lieg_\CC,K_\infty)\times G(\Adeles_f)$-module $\pi^{(K_\infty)}\subseteq\pi$ of $K_\infty$-finite vectors in $\pi$.
	\end{theorem}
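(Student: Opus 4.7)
The plan is to assemble the two components—archimedean and finite—which have already been prepared in the preceding discussion, and verify that their tensor product has the required structure. The whole proof is essentially an assembly, with all substantive work having been accomplished elsewhere.

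First, I would invoke Corollary 6.2.18 of \cite{hayashijanuszewski} to obtain the integral archimedean model. Under the stated hypotheses on the classical pair $(G, K)$ and on the cohomological induction data, the locally algebraic $(\lieg, K)$-module underlying $\pi_\infty^{(K_\infty)}$, realized via the cohomologically induced standard module $A_{\lieq'}(\lambda)$ together with its $\pi_\circ^G$-action $\rho_{\pi_\circ^G}$, descends to a $(\lieg_{\ZZ[1/2]}, K_{\ZZ[1/2]} \times \pi_\circ^G)$-module $\pi_{\infty, \ZZ[1/2]}$ equipped with an isomorphism $\varphi_\infty\colon \CC \otimes_{\ZZ[1/2]} \pi_{\infty, \ZZ[1/2]} \to \pi_\infty^{(K_\infty)}$ of complex Harish-Chandra modules in the sense of Definition \ref{def:locallyalgebraiccomplexmodule}.

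Second, I would recall the construction of $\pi_{f, \OO}$ from subsection \ref{sec:integralcohomology}. For each sufficiently small compact open $K_f \subseteq G(\bfA_f)$ with $\pi_f^{K_f} \neq 0$, fix compatibly the canonical embeddings $i^{K_f}\colon \pi_f^{K_f} \to H^\bullet(X_G(K_f); \widetilde{V}_\CC)$. Intersecting the image of $\pi_f^{K_f}$ inside $\CC \otimes_E H^\bullet(X_G(K_f); \widetilde{V}_E)$ with the image of $H^\bullet(X_G(K_f); \widetilde{V}_\OO)$ yields an $\mathcal H(K_f; V_\OO)$-stable $\OO$-lattice $\pi_{f, \OO}^{K_f}$, and passing to the direct limit over $K_f$ produces the $\mathcal H(V_\OO)$-module $\pi_{f, \OO}$ together with the isomorphism $\varphi_f$ of \eqref{eq:varphif}. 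Now set
\[
\pi_{\OO[1/2]} \;:=\; \pi_{\infty, \ZZ[1/2]} \otimes_{\ZZ} \pi_{f, \OO}.
\]
The two actions commute tautologically: $\pi_\circ^G$ and $(\lieg, K)$ act on the first tensor factor and trivially on the second, while $\mathcal H(V_{\OO[1/2]})$ acts on the second factor and trivially on the first. Thus $\pi_{\OO[1/2]}$ carries a canonical $(\lieg_{\OO[1/2]}, K_{\OO[1/2]} \times \pi_\circ^G) \times \mathcal H(V_{\OO[1/2]})$-module structure, and $\varphi := \varphi_\infty \otimes \varphi_f$ provides the desired isomorphism \eqref{eq:varphiglobal} after base change to $\CC$. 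Irreducibility of $\CC \otimes_{\OO[1/2]} \pi_{\OO[1/2]}$ as a $(\lieg_\CC, K_\infty) \times G(\bfA_f)$-module follows from the irreducibility of $\pi_\infty^{(K_\infty)}$ and of $\pi_f$ as modules over the corresponding factors.

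The only nontrivial ingredient—and hence the true main obstacle subsumed into the hypotheses—is the descent of $\pi_{\infty, \ZZ[1/2]}$ supplied by \cite{hayashijanuszewski}: the construction there uses the scheme-theoretic theory of twisted $\mathcal D$-modules on $K$-stable flag varieties, and the restriction to classical pairs together with the inversion of $2$ is precisely what is needed so that the $\pi_\circ^G$-action (which by Proposition \ref{prop:absolutelyirreduciblelocallyalgebraic} takes values in $\{\pm 1\}$) descends to the integral model. Once this input is granted, the remaining verifications—compatibility of the Hecke and $(\lieg, K \times \pi_\circ^G)$-actions, behaviour of $\varphi$ under the two complexifications of Definitions \ref{def:locallyalgebraiccomplex} and \ref{def:locallyalgebraiccomplexmodule}, and compatibility of the direct limit with changing level—are routine.
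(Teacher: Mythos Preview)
Your proposal is correct and follows essentially the same approach as the paper: the paper's proof is precisely the construction immediately preceding the theorem statement, which invokes Corollary~6.2.18 of \cite{hayashijanuszewski} for the archimedean $\ZZ[1/2]$-model $\pi_{\infty,\ZZ[1/2]}$, recalls the finite $\OO$-model $\pi_{f,\OO}$ from \eqref{eq:varphif}, and tensors them together. You have in fact been slightly more explicit than the paper about where the $\pi_\circ^G$-action lives and why the actions commute, but the argument is the same assembly.
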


	\subsection{Integral structures on automorphic representations of $\GL_n$}\label{sec:integralstructuresforgln}

        In the case of $\GL_n$, the availability of sufficiently strong multiplicity one allows us to extend the result from Theorem \ref{thm:globalarithmeticmodels} to show that for $\GL_n$ over a totally real or a CM field, the field of rationality of $\pi_f$ is a global field of definition. In section \ref{sec:canonicalintegralstructures} below, we construct canonical $1/N$-integral structures on global spaces of cusp forms.

        \subsubsection{Algebraic representations of $\GL_n$}
        
	In the case of $\GL_n$ over a number field $F$ the class of so-called algebraic cuspidal automorphic representations which conjecturally correspond to irreducible motives over was characterized by Clozel in \cite{clozel1990}. The subclass of regular algebraic cuspidal representations denotes those $\pi$ with regular infinitesimal character and agrees with the class of cohomological cuspidal automorphic representations in the sense of Definition \ref{def:cohomologicalrepresentation} for $G=\res_{F/\QQ}\GL_n$. In this case $\pi_\infty$ is uniquely determined by the weight $V$ of $\pi$. We refer to section 3 of \cite{clozel1990} for a detailed discussion of $\pi_\infty$ and its realtive Lie algebra cohomology in terms of the archimedean Langlands classification for $\GL_n$. For technical reasons we restrict our discussion to number fields $F$ which are totally real or a CM field.

        We identify automorphic representation of $\GL_n(\Adeles_F)$ with automorphic representations of $G(\Adeles)$. All properties and objects such as weights of cohomological automorphic representations are understood with respect to the reductive group $G$ over $\QQ$.

        \begin{proposition}[Effect of finite order twists]\label{prop:finiteorderhecketwists}
          Assume $F$ is a number field which is totally real or CM. Consider a regular algebraic cuspidal automorphic representation $\pi$ of $\GL_n(\bfA_F)$ of weight $V=(V_E,\rho_G,\rho_{\pi_\circ^G})$ and let $\chi:F^\times\backslash\Adeles_F^\times\to\CC^\times$ denote a finite order Hecke character. Then we may consider the archimedean component $\chi_\infty$ of $\chi$ as a character of $\pi_0(K(\RR))$ and the twisted representation $\pi\otimes\chi$ will be cohomological of weight $(V_E,\rho_G,\rho_{\pi_\circ^G}\otimes\chi_\infty^{-1})$.
        \end{proposition}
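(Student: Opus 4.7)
The plan is to exhibit, from the given locally algebraic cohomologically induced standard module realizing $\pi_\infty^{(K_\infty)}$, a $\chi_\infty^{-1}$-twisted version that realizes the archimedean component of $\pi\otimes\chi$ and is of the claimed weight.

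First I would verify that $\chi_\infty^2=\mathbf{1}$ and identify $\chi_\infty$ with a character of $\pi_0(K(\RR))$. Since $\chi$ is of finite order and the identity component of $F_\infty^\times=\prod_{v\mid\infty}F_v^\times$ is divisible, $\chi_\infty$ factors through $\pi_0(F_\infty^\times)$. For $F$ CM this group is trivial, so $\chi_\infty=\mathbf{1}$; for $F$ totally real it equals $\prod_{v\mid\infty}\{\pm1\}$, so $\chi_\infty$ has order dividing $2$. In either case $\chi_\infty^2=\mathbf{1}$. The determinant provides a canonical isomorphism $\pi_0(F_\infty^\times)\cong\pi_0(G(\RR))=\pi_0(K(\RR))$ through which $\chi_\infty$ becomes a character of $\pi_0(K(\RR))$.

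Next I would construct the candidate. By Definition \ref{def:cohomologicalrepresentation} there is an embedding $\pi_\infty^{(K_\infty)}\hookrightarrow A_\CC$ of $(\lieg_\CC,K_\infty)$-modules, where $A=(A_E,\rho_{\pi_\circ^G})$ is a locally algebraic cohomologically induced standard module of weight $V$. Define
\[
A'\;:=\;(A_E,\,\rho_{\pi_\circ^G}\otimes\chi_\infty^{-1}),
\]
keeping the same underlying $(\lieg,K)$-module. A direct inspection of Definition \ref{def:locallyalgebraiccomplexmodule} shows that $A'_\CC\cong A_\CC\otimes\chi_\infty^{-1}$ as $(\lieg_\CC,K_\infty)$-modules. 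Since $(\pi\otimes\chi)_\infty^{(K_\infty)}=\pi_\infty^{(K_\infty)}\otimes\chi_\infty$, tensoring the given embedding by $\chi_\infty$ and invoking $\chi_\infty^2=\mathbf{1}$ yields
\[
(\pi\otimes\chi)_\infty^{(K_\infty)}\;\hookrightarrow\;A_\CC\otimes\chi_\infty\;=\;A_\CC\otimes\chi_\infty^{-1}\;=\;A'_\CC.
\]

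Finally I would confirm that $A'$ is of weight $V'=(V_E,\rho_G,\rho_{\pi_\circ^G}\otimes\chi_\infty^{-1})$ in the sense of Definition \ref{def:locallyalgebraicstandardmodule}. Since $V$ and $V'$ share the underlying rational representation $(V_E,\rho_G)$, the extremal weight $\lambda$ attached via \eqref{eq:lambdaforV} is the same, so the underlying $(\lieg_\CC,K_\infty)$-module $A_{\lieq'}(\lambda)_\CC$ of $A'$ matches that of $A$; combined with the prescribed $\pi_\circ^G$-action on $A'$, this identifies $A'$ as a locally algebraic cohomologically induced standard module of weight $V'$. Cuspidality and regular algebraicity of $\pi\otimes\chi$ follow from standard Hecke-character twisting, with $d\chi_\infty=0$ guaranteeing that the infinitesimal character is preserved. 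The crucial step is the essential use of $\chi_\infty^2=\mathbf{1}$ to identify $A_\CC\otimes\chi_\infty$ with $A'_\CC=A_\CC\otimes\chi_\infty^{-1}$; without the restriction to totally real or CM fields, one would be forced to twist the underlying $(\lieg,K)$-module $A_E$ by $\chi_\infty^2$ to reconcile the two complex realizations, yielding a more delicate statement.
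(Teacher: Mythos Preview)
Your argument is correct and is precisely the elementary calculation the paper alludes to: the paper's own proof consists only of the sentence ``This is an elementary calculation,'' and you have carefully unpacked it by tracking how the $\pi_\circ^G$-action on $A$ and $V$ changes under the $\chi_\infty$-twist and using $\chi_\infty^2=\mathbf{1}$ to identify $A_\CC\otimes\chi_\infty$ with $A'_\CC$. Your discussion of why $\chi_\infty$ descends to $\pi_0(K(\RR))$ and your closing remark on the role of the totally real/CM hypothesis go slightly beyond what the paper records, but are accurate and helpful.
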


        \begin{proof}
          This is an elementary calculation.
        \end{proof}
        
        \begin{proposition}[Twists with Hecke characters of type $A_0$]
          Let $\chi:F^\times\backslash\Adeles_F^\times\to\CC^\times$ denote an algebraic Hecke character in the sense of A.\ Weil. This is equivalent to saying that there is a rational character $\chi^{\rm rat}\colon\res_{F/\QQ}\GL_1\to\GL_1$ and a finite order Hecke character $\chi^{\rm fin}\colon (\RR\otimes_\QQ F)^\times\to\CC^\times$ with the property that $\chi_\infty=\chi^{\rm rat}\otimes\chi^{\rm fin}$. Then if as before $\pi$ is a regular algebraic cuspidal automorphic representation of $\GL_n(\bfA_F)$ of weight $(V_E,\rho_G,\rho_{\pi_\circ^G})$, then $\pi\otimes\chi$ is a regular algebraic cuspidal automorphic representation of weight $(V_E,\rho_G\otimes(\chi_\infty^{\rm rat})^{-1},\rho_{\pi_\circ^G}\otimes(\chi_\infty^{\rm fin})^{-1})$.
        \end{proposition}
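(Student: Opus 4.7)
The plan is to reduce the statement to two easier special cases: the purely finite-order twist, covered by Proposition \ref{prop:finiteorderhecketwists}, and the purely rational algebraic twist. Writing $\pi \otimes \chi = (\pi \otimes \chi^{\rm rat}) \otimes \chi^{\rm fin}$ and invoking Proposition \ref{prop:finiteorderhecketwists} for the outer factor reduces matters to verifying that $\pi \otimes \chi^{\rm rat}$ is a regular algebraic cuspidal automorphic representation of weight $(V_E, \rho_G \otimes (\chi_\infty^{\rm rat})^{-1}, \rho_{\pi_\circ^G})$.

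That $\pi \otimes \chi^{\rm rat}$ remains regular algebraic and cuspidal is standard: twisting by an algebraic Hecke character preserves Clozel's algebraicity condition since the infinitesimal character is shifted by an integral weight coming from the differential of $\chi^{\rm rat}$, and cuspidality is preserved under any idele class character twist. To identify the cohomological weight, I would apply the non-vanishing criterion of Corollary \ref{cor:cohomological}. For a candidate weight $W$, the requirement
\[
H^\bullet(\lieg_\CC, S(\RR)^0 K(\RR); (\pi \otimes \chi^{\rm rat})_\infty \otimes W_\CC) \neq 0
\]
rewrites, using $(\pi \otimes \chi^{\rm rat})_\infty = \pi_\infty \otimes \chi^{\rm rat}_\infty$, as
\[
H^\bullet(\lieg_\CC, S(\RR)^0 K(\RR); \pi_\infty \otimes \chi^{\rm rat}_\infty \otimes W_\CC) \neq 0,
\]
which combined with the hypothesis that $\pi$ is cohomological of weight $V$ forces $\chi^{\rm rat}_\infty \otimes W_\CC \cong V_\CC$ as complex realizations. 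Since $\chi^{\rm rat}$ descends from an algebraic character of $G$ obtained by composing $\chi^{\rm rat}\colon \res_{F/\QQ}\GL_1 \to \GL_1$ with the determinant, the twist $\rho_G \otimes (\chi_\infty^{\rm rat})^{-1}$ is a genuine rational representation of $G$ over $E$, while the $\pi_\circ^G$-action is unaffected, as desired. Combining both stages then yields the claimed weight for $\pi \otimes \chi$.

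The main obstacle is ensuring the clean separation between the two twists: the rational part must only modify the $G$-action $\rho_G$, while the finite-order part must only modify the $\pi_\circ^G$-action $\rho_{\pi_\circ^G}$. This rests on choosing the factorization $\chi_\infty = \chi^{\rm rat} \otimes \chi^{\rm fin}$ so that $\chi^{\rm fin}$ absorbs precisely the residual finite-order $\pi_0$-content not already captured by an algebraic character of $G$, together with the categorical decomposition of locally algebraic representations as $G \times \pi_\circ^G$-modules recorded in Remark \ref{rmk:locallyalgebraicviaconstantgroupscheme}. Once this factorization is fixed, the rest of the verification is a direct bookkeeping argument on the level of complex realizations of locally algebraic representations.
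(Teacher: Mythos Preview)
Your argument is correct and supplies exactly the elementary calculation the paper alludes to; the paper's own proof is the single sentence ``This is the same elementary calculation as above,'' referring back to Proposition~\ref{prop:finiteorderhecketwists}. One notational caveat: the factorization $\pi\otimes\chi=(\pi\otimes\chi^{\rm rat})\otimes\chi^{\rm fin}$ in your first line is not literally a decomposition of global automorphic representations, since neither $\chi^{\rm rat}$ nor $\chi^{\rm fin}$ as defined in the statement is a global Hecke character (the former is an algebraic character of $\res_{F/\QQ}\GL_1$, the latter only a character of $(\RR\otimes_\QQ F)^\times$); the decomposition and the appeal to Proposition~\ref{prop:finiteorderhecketwists} should be read at the level of $\pi_\infty\otimes\chi_\infty$, which is where the weight is determined anyway and where the remainder of your argument already operates.
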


        \begin{proof}
          This is is the same elementary calculation as above.
        \end{proof}

        \begin{remark}
          This shows that the notion of rationality induced by the natural rational structures on locally algebraic representations induces rational structures on cohomological automorphic representations compatibly with the rational structures on Hecke characters and locally algebraic coefficient systems.
        \end{remark}
        
        \begin{remark}
          If $F$ is CM, then locally algebraic representations of $G$ coincide with rational representations. If $F$ is totally real, then Proposition \ref{prop:matsumoto} together with weak approximation shows that we always find a Hecke character
          \[
          \chi\colon F^\times\backslash\bfA_F^\times\to\CC^\times
          \]
          of order at most $2$ such that a given regular algebraic cuspidal automorphic representation $\pi$ of $\GL_n(\Adeles_F)$ of weight $V$ twisted by $\chi$ becomes a regular algebraic cuspidal automorphic representation $\pi\otimes\chi$ whose weight is a rational representation. We refer to Lemme 3.14 in \cite{clozel1990} for a $(\lieg_\CC,K_\infty)$-cohomological interpretation of this observation.

          Conversely the same argument shows that if $\pi$ is of weight $V=(V_E,\rho_G,\rho_{\pi_\circ^G})$, then any locally algebraic representation of the form $V'=(V_E,\rho_G,\rho_{\pi_\circ^G}')$ with $\rho_{\pi_\circ^G}'=\rho_{\pi_\circ^G}\otimes\chi$ for an arbitrary character $\chi$ of $\pi_0(K_\infty)$ is the weight of a regular algebraic cuspidal automorphic representation $\pi'$.
        \end{remark}

        \subsubsection{Cohomological types of $\GL_n$}

        \begin{theorem}[Classification of tempered cohomological types of $\GL_n$, absolutely irreducible case]\label{thm:cohomologicaltypesofGLn}
          Let assume $F/\QQ$ is totally real and let $\pi$ denote a regular algebraic cuspidal automorphic representation of $G=\res_{F/\QQ}\GL_n$ of weight $(V,\rho_G,\rho_{\pi_\circ^G})$ over $\QQ(\pi)$. Then $V$ is essentially self-dual in the sense that there exists a $\QQ$-rational character $\eta\colon G\to\GL_1$ with the property that $V^\vee\cong V\otimes\eta$. Moreover, the cohomological type $\cohType(\pi)$ is tempered and we have
          \begin{equation}
            \cohType(\pi)=
            \begin{cases}
              \{[((\pi_\infty^{(K_\infty)},\rho_{\pi_\circ^G}\otimes\chi),V\otimes\chi)]_\cong\mid \chi\colon\pi_\circ^G\to\mu_2\},&n\;\text{even},\\
              \{[((\pi_\infty^{(K_\infty)}\otimes\rho_{\pi_\circ^G}^{-1},\rho_{\pi_\circ^G}),V)]_\cong\},&n\;\text{odd}.
            \end{cases}
          \end{equation}
          In particular, $\cohType(\pi)$ is a $\Hom(\pi_\circ^G,\Gm)$-torsor over the small \'etale site over $\Spec\CC$ for $n$ even and a singleton for $n$ odd.

          If $F/\QQ$ is CM, then with the same notation as above, $V$ is essentially conjugate self-dual in the sense that there exists a $\QQ$-rational character $\eta\colon G\to\GL_1$ with the property that $\overline{V}^\vee\cong V\otimes\eta$. Moreover, the cohomological type $\cohType(\pi)$ is tempered and is the singleton
          \begin{equation}
            \cohType(\pi)=\{[(\pi_\infty^{(K_\infty)},V)]_\cong\}.
          \end{equation}
        \end{theorem}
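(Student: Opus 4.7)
The plan is to combine three inputs: Clozel's purity theorem for regular algebraic cuspidal automorphic representations of $\GL_n$ from \cite{clozel1990}; the classical fact that such representations are essentially tempered at infinity; and the framework of Section~\ref{sec:cohomologicaltypes}, notably Theorem~\ref{thm:temperedcohomologicaltypes} and Proposition~\ref{prop:cohomologicaltypes}, combined with the local Vogan--Zuckerman classification of tempered cohomologically induced standard modules for $\GL_n(\RR)$ and $\GL_n(\CC)$.

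I would first establish essential (conjugate) self-duality of $V$. Clozel's purity theorem \cite{clozel1990} ensures that the infinitesimal character of $\pi_\infty$ is pure of a common integer weight $w$ at every archimedean place. In the totally real case this translates, via the Tannakian description of locally algebraic representations in Remark~\ref{rmk:locallyalgebraicviaconstantgroupscheme}, into a palindromic condition on the highest weight of each absolutely irreducible tensor factor of $V$, yielding a $\QQ$-rational character $\eta\colon G\to\GL_1$ with $V^\vee\cong V\otimes\eta$; in the CM case, purity at the pairs of conjugate complex places, combined with complex conjugation swapping those places, produces $\overline V^\vee\cong V\otimes\eta$. Temperedness of ${\rm cohType}(\pi)$ then follows because regular algebraic cuspidal representations of $\GL_n(\Adeles_F)$ have essentially tempered archimedean components, so $\pi_\infty^{(K_\infty)}$ is the complex realization of a tempered cohomologically induced standard module cohomologically induced from a $\theta$-stable Borel; Theorem~\ref{thm:temperedcohomologicaltypes} then places ${\rm cohType}(\pi)$ inside ${\rm cohType}_E(G,\rtype_\emptyset(G_E,K_E))$.

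The enumeration of ${\rm cohType}(\pi)$ reduces, by Proposition~\ref{prop:cohomologicaltypes}, to determining the compatible locally algebraic weights $V'$ for $\pi$. This is a local analysis at each archimedean place of $F$: at a complex place (in the CM case) $\U(n)$ is connected, so the local component group is trivial, and no additional twists arise. At a real place, the tempered cohomological representation $\pi_v$ is parabolically induced from a Levi which is $\GL_2(\RR)^{n/2}$ for $n$ even and $\GL_2(\RR)^{(n-1)/2}\times\GL_1(\RR)$ for $n$ odd. A direct computation with $O(n)$-types, central characters, and Mackey theory establishes the dichotomy $\pi_v\cong\pi_v\otimes(\sgn\circ\det)$ for $n$ even (the twist restricts trivially to each $\GL_2$-factor since $\det(zI)=z^n$ with $n$ even, and the two-dimensional $O(2)$-types are self-dual under $\sgn$) versus $\pi_v\not\cong\pi_v\otimes(\sgn\circ\det)$ for $n$ odd (the twist modifies the character on the $\GL_1$-factor).

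Translating this dichotomy through Definition~\ref{def:locallyalgebraiccomplexmodule}, the complex realization of a pair $((\pi_\infty^{(K_\infty)},\rho_{\pi_\circ^G}\otimes\chi),V\otimes\chi)$ coincides with $\pi_\infty^{(K_\infty)}$ for every $\chi\in\Hom(\pi_\circ^G,\mu_2)$ in the $n$ even case, yielding the $\Hom(\pi_\circ^G,\mu_2)$-torsor structure, while in the $n$ odd case the rigidity of $\pi_v$ under sign twists forces a unique compatible isomorphism class of pairs. The main technical obstacle is the verification of this local dichotomy, specifically tracking the Mackey-theoretic comparison through the parabolic induction and reconciling the two places at which $\pi_\circ^G$-actions appear in a pair $(A,V)$, namely on $A$ itself and via the twisting formula entering the complex realization.
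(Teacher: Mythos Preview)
Your approach matches the paper's: both invoke Clozel's Lemme de puret\'e \cite[Lemme~4.9]{clozel1990} for essential (conjugate) self-duality and temperedness, and both reduce the enumeration of ${\rm cohType}(\pi)$ to deciding whether $\pi_\infty^{(K_\infty)}\otimes\chi\cong\pi_\infty^{(K_\infty)}$ for each $\chi\in\Hom(\pi_\circ^G,\mu_2)$. The paper settles this dichotomy structurally, observing that $\Oo(n)(\RR)=\SO(n)(\RR)\times\{\pm I\}$ is a direct product for $n$ odd (so sign twists genuinely change $K_\infty$-types) but a non-trivial semi-direct product for $n$ even (so they do not), and alternatively via closed $K$-orbits on the flag variety; you argue instead through the parabolic induction description of $\pi_v$. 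These are two standard routes to the same fact, and the paper's phrasing via the product structure of $K(\RR)$ is exactly the group-theoretic explanation behind the Mackey computation you sketch.

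One point needs correcting: your parenthetical ``the twist restricts trivially to each $\GL_2$-factor since $\det(zI)=z^n$ with $n$ even'' is wrong as stated. The character $\sgn\circ\det$ on $\GL_n(\RR)$ restricts to each block-diagonal $\GL_2$-factor as $\sgn\circ\det_{\GL_2}$, which is \emph{non}trivial. The correct reason the twist is absorbed is that every relative discrete series $D$ of $\GL_2(\RR)$ satisfies $D\otimes\sgn\cong D$: one has $D=\ind_{\GL_2(\RR)^0}^{\GL_2(\RR)}D^+$ for a holomorphic piece $D^+$, and $\sgn\circ\det$ is trivial on the identity component $\GL_2(\RR)^0$. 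Your second clause about two-dimensional $\Oo(2)$-types is the $K$-type shadow of this; keep that, drop the $\det(zI)$ remark, and the argument goes through.
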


        \begin{remark}
          In the totally real case, $\Hom(\pi_\circ^G,\GL_1)\cong\mu_2^{[F:\QQ]}$ and $\Hom(\pi_\circ^G,\GL_1)=1$ in the CM case.
        \end{remark}
        
        \begin{remark}
          For $F$ totally real and $n$ odd or $F$ CM and $n$ arbitrary $\cohTypes_\CC^{\rm temp}(G,V)$ is a $\Hom(\pi_\circ^G,\Gm)$-torsor. 
        \end{remark}

        \begin{proof}
          The claimed statements are implicit in the proof Lemme 3.14 and Lemme 4.9 (Lemme de puret\'e) in \cite{clozel1990}. For the convenience of the reader, we remark that admitting that $\cohType(\pi)\subseteq\cohTypes_{\QQ(\pi)}^{\rm temp}(G)$, the observed patterns have the following alternative explanations. It is easy to see via cohomological induction or by computing the structure of closed $K$-orbits on the flag variety, that since $K(\RR)$ is a non-trivial semi-direct product of $K(\RR)^0$ and $\mu_2^{[F:\QQ]}$ in the first case, for $n$ even, we have $(\pi_\infty^{(K_\infty)}\otimes\chi)_\CC\cong\pi_\infty^{(K_\infty)}$ for any character $\chi\colon\pi_\circ^G\to\mu_2$. If $n$ is odd, then $K(\RR)=K(\RR)^0\times\mu_2^{[F:\QQ]}$ and the $\chi$-twists of $\pi_\infty^{(K_\infty)}$ are all pairwise non-isomorphic. In the CM case, all locally algebraic representations are rational representations, and the closed $K(\RR)$-orbit on the flag variety is unique.

          As to the explicit description, we remark that specializing Definition \ref{def:locallyalgebraicstandardmodule} to the case of a locally algebraic cohomologically induced standard module of weight $(V,\rho_G,\rho_{\pi_\circ^G}\otimes\chi)$ shows that the action of $\pi_\circ^G$ on the underlying $(\lieg,K)$-module must be given by $\rho_{\pi_\circ^G}\otimes\chi$ (which may be considered as a character by Proposition \ref{prop:absolutelyirreduciblelocallyalgebraic}). Together with the previous considerations this shows that the claimed description of the cohomological type of $\pi$ is correct in all cases.
        \end{proof}

        \begin{theorem}[Classification of tempered cohomological types of $\GL_n$ over $\QQ$]\label{thm:cohomologicaltypesofGLnoverQ}
          Let $(V,\rho_G)$ denote an irreducible essentially self-dual rational representation of $G=\res_{F/\QQ}\GL_n$ for $F$ totally real defined over $\QQ$. Then $V$ is the restriction of scalars of an absolutely irreducible essentially self-dual representation $V_0$ of $G$ inside the extension $\QQ(V_0)/\QQ$. As such $V_0$ is unique up to conjugation by automorphisms of the normal hull of $\QQ(V_0)/\QQ$. Moreover, for any character $\rho_{\pi_\circ^G}\colon\pi_\circ^G\to\mu_2$ we have
          \begin{equation}
            \cohTypes_\QQ^{\rm temp}(G,(V,\rho_G,\rho_{\pi_\circ^G}))=
            \begin{cases}
              \{\{[((A(V)_\QQ,\rho_{\pi_\circ^G}\otimes\chi),(V,\rho_G,\rho_{\pi_\circ^G}\otimes\chi))]_\cong\mid \chi\colon\pi_\circ^G\to\mu_2\}\},&n\;\text{even},\\
              \{\{[((A(V)_\QQ,\rho_{\pi_\circ^G}),(V,\rho_G,\rho_{\pi_\circ^G}))]_\cong\}\},&n\;\text{odd}.
            \end{cases}
          \end{equation}
          Here $A(V)_\QQ=\res_{\QQ(V_0)/\QQ}A(V_0)_{\QQ(V_0)}$ and $A(V_0)_{\QQ(V_0)}=\QQ(V_0)\otimes_{\OO(V_0)[1/2d_E]}$ in the notation of Definitions 6.3.7 and 6.3.8 in \cite{hayashijanuszewski}.

          Likewise, if $F/\QQ$ is CM, then with the same notation as above, and $V$ is an irreducible essentially self-dual rational representation of $G$ over $\QQ$, then $V=\res_{\QQ(V_0)/\QQ}V_0$ for an absolutely irreducible essentially conjugate self-dual rational representation $V_0$ of $G$, unique up to conjugation by automorphisms of the normal hull of $\QQ(V_0)/\QQ$. Moreover,
          \begin{equation}
            \cohTypes_\QQ^{\rm temp}(G,(V,\rho_G,{\bf1}))=
              \{\{[((A(V)_\QQ,{\bf1}),(V,\rho_G,{\bf1}))]_\cong\}\},
          \end{equation}
          with $A(V)_\QQ$ analoguously defined as in the real case by invoking Definition 6.3.11 in \cite{hayashijanuszewski}.
        \end{theorem}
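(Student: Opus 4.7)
The plan is to deduce this from the absolutely irreducible classification in Theorem \ref{thm:cohomologicaltypesofGLn} by descent from $\overline{\QQ}$ to $\QQ$, using the restriction-of-scalars machinery of Theorems \ref{thm:irreduciblelocallyalgebraicrepresentations} and \ref{thm:irreduciblelocallyalgebraicgKmodules}.

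First, since $G=\res_{F/\QQ}\GL_n$ is quasi-split, Corollary \ref{cor:quasisplitlocallyalgebraicrepresentations} lets me write $V=\res_{\QQ(V_0)/\QQ}V_0$ for an absolutely irreducible rational representation $V_0$ defined over its field of rationality $\QQ(V_0)$. Uniqueness up to conjugation by automorphisms of the normal hull follows because any two such choices base change to the same multiset of Galois conjugates over $\overline{\QQ}$, so Schur's lemma together with Hilbert 90 identifies them up to the required conjugation. Essential (conjugate) self-duality of $V_0$ descends from the hypothesis on $V$ by applying $\res_{\QQ(V_0)/\QQ}$ to the defining isomorphism $V^\vee\cong V\otimes\eta$, matching Galois-conjugate irreducible constituents, and invoking Schur's lemma.

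For the cohomological type computation, I decompose
\[
V_\CC=\bigoplus_{\sigma\colon\QQ(V_0)\to\CC}V_0^\sigma,
\]
and likewise $A(V)_\CC=\bigoplus_\sigma A(V_0)^\sigma$. Theorem \ref{thm:cohomologicaltypesofGLn} applied to each $\sigma$-conjugate yields the tempered cohomological types of the $V_0^\sigma$ over $\CC$. Theorem \ref{thm:irreduciblelocallyalgebraicgKmodules} then forces every isomorphism class in ${\rm cohType}_\QQ^{\rm temp}(G,V)$ to arise as $\res_{\QQ(V_0)/\QQ}(A_0,V_0')$ for some absolutely irreducible pair over $\QQ(V_0)$. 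The constructions of Definitions 6.3.7, 6.3.8 and 6.3.11 of \cite{hayashijanuszewski} supply the $\QQ(V_0)$-rational model $A(V_0)_{\QQ(V_0)}$, whose restriction of scalars gives the claimed $A(V)_\QQ$.

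The shape of the set of cohomological types is then read off via the equivalence relation of Definition \ref{def:equivalenceofpairs}. In the totally real even case, Theorem \ref{thm:cohomologicaltypesofGLn} shows that all $\mu_2$-twists $\rho_{\pi_\circ^G}\otimes\chi$ give isomorphic complex realizations, so after restriction of scalars they remain complex-equivalent and collapse into a single cohomological type, namely the displayed union over $\chi$. In the totally real odd case and in the CM case, distinct $\chi$-twists give complex-inequivalent realizations, so the equivalence class reduces to a singleton. The main obstacle will be to carefully verify that descent from $\QQ(V_0)$ to $\QQ$ preserves the precise enumeration of isomorphism classes: one must check that absolutely irreducible pairs over $\QQ(V_0)$ lying in a single Galois orbit descend to a single class over $\QQ$, while pairs in different Galois orbits remain distinct, and that the resulting $\pi_\circ^G$-twists continue to parametrize the cohomological type as in the absolutely irreducible case.
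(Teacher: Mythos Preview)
Your approach is essentially the same as the paper's: both reduce to the absolutely irreducible classification (Theorem \ref{thm:cohomologicaltypesofGLn}) via quasi-splitness and Corollary \ref{cor:quasisplitlocallyalgebraicrepresentations} for $V$, and via Theorem \ref{thm:irreduciblelocallyalgebraicgKmodules} for the $(\lieg,K)$-module side. The one point to tighten is that applying Theorem \ref{thm:irreduciblelocallyalgebraicgKmodules} requires knowing that $A(V_0)$ is actually defined over its field of rationality $\QQ(V_0)$; the paper secures this by citing Theorems 6.3.6 and 6.3.10 of \cite{hayashijanuszewski} (not just the Definitions you invoke), and with that in hand the ``main obstacle'' you flag at the end is already discharged.
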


        \begin{proof}
          Since $G$ is quasi-split in the case at hand, Theorem \ref{thm:irreduciblelocallyalgebraicrepresentations} combined with Corollary \ref{cor:quasisplitlocallyalgebraicrepresentations} applied $V$ shows the claims about $V$ and $V_0$.

          Theorems 6.3.6 and 6.3.10 in \cite{hayashijanuszewski} show that in all cases $A(V_0)_\QQ$ is defined over its field of rationality. Therefore \ref{thm:irreduciblelocallyalgebraicgKmodules} is applicable and shows that $A(V)_\QQ$ is irreducible and gives rise to the unique irreducible algebraic $(\lieg,K)$-module for which in light of the previously established Theorem \ref{thm:cohomologicaltypesofGLn} the claimed statements are true.
        \end{proof}

        \begin{remark}\label{rmk:cyclesforGLn}
          Theorem \ref{thm:cohomologicaltypesofGLnoverQ} shows that in the case at hand we have $m_{x,\lambda}^{A(V_0)_\CC}=m_{x,\lambda}^{V_{0,\CC}}=1$ (or $0$) for the multicities in the definition of the cycle of any of the cohomological types occuring in Definition \ref{def:geometricsupport}. These multiplicities are closely related to multiplicites of cuspidal automorphic representations in cohomology with coefficients in $V$, which we will exploit in the proof of Theorem \ref{thm:globalhalfintegralstructures} on global $1/N$-rational structures on spaces of cusp forms below. See also Proposition \ref{prop:decompositionbycohomologicaltypes}.
        \end{remark}
        
        \subsubsection{Construction of global models}
        
	Assume that $F/\QQ$ is a number field which is totally real or a CM field. Write $\OO_F$ for its ring of integers. Put $G=\res_{\OO_F[1/2]/\ZZ[1/2]}\GL_n$. Recall that $G$ is smooth over $\ZZ[1/2]$ (cf.\ Example \ref{ex:smoothgroups}). Put $K=\res_{\OO_F[1/2d_F]/\ZZ[1/2d_F]}\Oo(n)$ for $F$ is totally real. If $F$ is imaginary CM with maximal totally real subfield $F^+\subseteq F$ we put $K=\res_{\OO_{F^+}[1/2d_{F}]/\ZZ[1/2d_F]}\U(n)$. Here $\Oo(n)$ and $\U(n)$ are the standard models associated to the standard diagonal quadratic forms and $d_F$ denotes the absolute discriminant of $F/\QQ$. Then in all cases $K$ is a smooth group scheme over $\ZZ[1/2d_F]$.
	
	\begin{theorem}\label{thm:gln}
	  Assume that $\pi$ is a regular algebraic cuspidal automorphic representation of $\GL_n$ over a number field $F$ which is totally real or a CM field. Assume $\pi$ is of cohomological type ${\rm t}=\cohType(\pi)$. Fix a representative $\alpha=[(A(V),V)]_\cong\in{\rm t}$ and write $E=\QQ(V)$ for the field of rationality of the locally algebraic representation $V$ ($\QQ(V)$ is independent of the choice of representative $(A(V),V)$). If $\OO\subseteq \QQ(\pi)$ denotes the ring of integers in the field of rationality $\QQ(\pi)$ of $\pi_f$. Then $\pi$ admits an $\OO[1/N]$-model $(\pi_{\OO[1/N]},\varphi)$ for $N=2d_F$.
          
	  Moreover, if $V_{\OO[1/N]}\subseteq V_{\QQ(\pi)}$ is a lattice, the $\OO[1/N]$-model $(\pi_{\OO[1/N]},\varphi)$, as $(\lieg_{\OO[1/N]},K_{\OO[1/N]}\times\pi_\circ^G)\times\mathcal H(V_{\OO[1/N]})$-submodule of $\pi^{(K_\infty)}$ via $\varphi$ (cf.\ \eqref{eq:varphiglobal}), is canonical in the sense that it depends on the choice of lattice $V_{\OO[1/N]}$ but does not depend on the choice of the $\theta$-stable parabolic subalgebra $\lieq'$ from \eqref{eq:cohomologicalarchimedeancomponent} inside its Weyl group orbit.
	\end{theorem}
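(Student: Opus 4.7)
The plan is to combine the construction of the finite-part integral model from section \ref{sec:integralcohomology} with the half-integral model of the archimedean Harish-Chandra module supplied by Theorem \ref{thm:cohomologicaltypesofGLnoverQ} and the results of \cite{hayashijanuszewski}. Concretely, first I would fix a $K_f$-stable $\OO[1/N]$-lattice $V_{\OO[1/N]}\subseteq V_{\QQ(\pi)}$; the construction of section \ref{sec:integralcohomology} together with the chosen embedding $\pi_f^{K_f}\hookrightarrow H^\bullet(X_G(K_f);\widetilde{V}_\CC)$ produces an $\OO[1/N]$-submodule $\pi_{f,\OO[1/N]}^{K_f}$ of $\pi_f^{K_f}$, and passing to the direct limit over compact open $K_f$ yields an $\OO[1/N]$-model $(\pi_{f,\OO[1/N]},\varphi_f)$ as a module over the integral Hecke algebra $\mathcal H(V_{\OO[1/N]})$.

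For the archimedean component, I would invoke Theorem \ref{thm:cohomologicaltypesofGLnoverQ}: the representative $\alpha=[(A(V),V)]_\cong\in{\rm t}$ is realized via $A(V)_\QQ=\res_{\QQ(V_0)/\QQ}A(V_0)_{\QQ(V_0)}$, and by Theorems 6.3.6 and 6.3.10 of \cite{hayashijanuszewski}, $A(V_0)$ descends to a locally algebraic $(\lieg,K\times\pi_\circ^G)$-module $A(V_0)_{\OO(V_0)[1/2d_F]}$ over $\OO(V_0)[1/2d_F]$. Restricting scalars and tensoring with $\OO[1/N]$ over $\OO(V_0)[1/2d_F]$ (permissible since $N=2d_F$ is invertible there) gives a $(\lieg_{\OO[1/N]},K_{\OO[1/N]}\times\pi_\circ^G)$-module $\pi_{\infty,\OO[1/N]}$ equipped with an isomorphism $\varphi_\infty\colon \CC\otimes\pi_{\infty,\OO[1/N]}\to\pi_\infty^{(K_\infty)}$ (after an appropriate finite-order twist to match $\rho_{\pi_\circ^G}$ with the parity of $\pi_\infty$ via Proposition \ref{prop:finiteorderhecketwists}). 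Setting $\pi_{\OO[1/N]}:=\pi_{\infty,\OO[1/N]}\otimes_{\OO[1/N]}\pi_{f,\OO[1/N]}$ and $\varphi:=\varphi_\infty\otimes\varphi_f$ gives the desired global $\OO[1/N]$-model, analogously to Theorem \ref{thm:globalarithmeticmodels}.

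For the canonicity statement, the essential input is that the complex Harish-Chandra module $\pi_\infty^{(K_\infty)}$ is already determined, up to unique isomorphism once a representative $\alpha\in{\rm t}$ is fixed, by the classification in Theorem \ref{thm:cohomologicaltypesofGLnoverQ}, which asserts that $A(V)_\QQ$ is defined over its field of rationality and irreducible. Any two choices of $\theta$-stable parabolic $\lieq'$ within a fixed Weyl group orbit yield cohomologically induced standard modules $A_{\lieq'}(\lambda)_\CC$ that are abstractly isomorphic, and by Theorem \ref{thm:irreduciblelocallyalgebraicgKmodules} combined with the multiplicity statement recorded in Remark \ref{rmk:cyclesforGLn} ($m_{x,\lambda}^{A}=1$), the isomorphism is unique up to scalars; this scalar ambiguity is pinned down by the requirement that the resulting submodule of $\pi^{(K_\infty)}$ be generated, under $\varphi$, by the image of a lattice constructed functorially from $V_{\OO[1/N]}$ in the $\mathcal D$-module-theoretic construction of \cite{hayashijanuszewski} section 6.2. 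Global multiplicity one for $\GL_n$ \cite{jacquetshalika1981,jacquetshalika1981.2} ensures that the embedding $\pi_f^{K_f}\hookrightarrow H^\bullet(X_G(K_f);\widetilde{V}_\CC)$ is unique up to scalars on each $K_f$-isotypic component, compatibly with the tensor decomposition $\pi=\pi_\infty\otimes\pi_f$; hence the $\OO[1/N]$-submodule cut out of $\pi^{(K_\infty)}$ depends only on the lattice $V_{\OO[1/N]}$.

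The main obstacle, I expect, will be verifying the independence from the choice of $\lieq'$ within its Weyl group orbit in an integrally meaningful way: over $\CC$ any two resulting standard modules are abstractly isomorphic, but to pass this to an equality of $\OO[1/N]$-lattices inside the fixed ambient space $\pi^{(K_\infty)}$ one must show that the canonical $\mathcal D$-module constructions of \cite{hayashijanuszewski} for different parabolics in the same Weyl orbit yield the same $K$-equivariant sheaf, up to a canonical (not just abstract) isomorphism over $\OO(V_0)[1/2d_F]$. This reduces, via the Base Change Theorem 3.9.2 and Proposition 1.3.25 of loc.~cit., to checking the compatibility of the pushforward constructions along the two realization maps $\Pp^{K-{\rm st}}_{G,x}\hookrightarrow\Pp_{G,gt(x)}$ for $x$ ranging in the $K$-orbit; this Galois/Weyl-descent compatibility is precisely the integral refinement of the fact that $m_{x,\lambda}^{A(V_0)_\CC}=1$ recorded in Remark \ref{rmk:cyclesforGLn}.
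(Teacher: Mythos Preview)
Your overall strategy matches the paper's: build $\pi_{f,\OO[1/N]}$ via section~\ref{sec:integralcohomology}, build the archimedean lattice via the \cite{hayashijanuszewski} models, and tensor. However, you skip two verification steps that the paper treats as essential, and your canonicity argument is over-engineered.

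First, you never establish that the hypotheses of Theorems~6.3.6 and~6.3.10 in \cite{hayashijanuszewski} are met. Those theorems require the weight $\lambda$ to satisfy the self-duality conditions of Definitions~6.3.7, 6.3.8 (totally real case) or~6.3.11 (CM case), i.e.\ that $V$ is essentially (conjugate) self-dual. The paper obtains this from Clozel's Lemme de puret\'e \cite[Lemme~4.9]{clozel1990}, which shows $\pi_\infty$ is essentially tempered; without it you have no reason to believe the archimedean integral model exists. Relatedly, your invocation of Theorem~\ref{thm:cohomologicaltypesofGLnoverQ} presupposes that ${\rm cohType}(\pi)$ is tempered, which again needs the Lemme de puret\'e. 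Second, the construction in section~\ref{sec:integralcohomology} produces a model of $\pi_f$ over a field of \emph{definition} of $\pi_f$, and you need that this coincides with the field of rationality $\QQ(\pi)$; the paper cites \cite[Proposition~3.1]{clozel1990} for this. Your detour through Proposition~\ref{prop:finiteorderhecketwists} is also off: no Hecke-character twist of $\pi$ is needed, only the observation that $(A(V_E)_{\OO[1/N]},\rho_{\pi_\circ^G})$ is a locally algebraic module whose complexification (Definition~\ref{def:locallyalgebraiccomplexmodule}) is $\pi_\infty^{(K_\infty)}$.

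Finally, your canonicity paragraph is more elaborate than necessary. The paper's definition $\pi_{\OO[1/N]}:=A(V_{\QQ(\pi)})_{\OO[1/N]}\otimes_\OO\pi_{f,\OO}$ uses the intrinsic model $A(V)_{\OO[1/N]}$ of \cite{hayashijanuszewski}, which is constructed from $V$ (equivalently from the weight $\lambda$) and not from a particular $\lieq'$ in its Weyl orbit; independence from $\lieq'$ is therefore built into the construction rather than something to be verified after the fact via multiplicity arguments.
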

	
	\begin{proof}
          Observe that since $G$ is quasi-split, $V$ is defined over its field of rationality (cf.\ Corollary \ref{cor:quasisplitrationality}). Moreover, Clozel has shown in \cite[Proposition 3.1]{clozel1990} that $\pi_f$ is defined over its field of rationality $\QQ(\pi)$ as well. Recall that the number fields $E=\QQ(V)$ as defined as a subfield of $\CC$, then $\QQ(V)\subseteq\QQ(\pi)$ holds automatically since due to multiplicity one for $\GL_n$, $\pi_f$ determines $\pi_\infty$ uniquely by \cite{jacquetshalika1981,jacquetshalika1981.2}.

          Clozel showed in his Lemme de puret\'e \cite[Lemme 4.9]{clozel1990} that $\pi_\infty$ is essentially tempered. In particulr, if $F$ is totally real, then $V_E^\vee\otimes V_E\otimes\eta$ for a $\QQ$-rational character $\eta:G\to\GL_1$ and consequently the weight $\lambda$ of $V_E$ on the torus $\res_{F/\QQ} H$ defined by \eqref{eq:lambdaforV} satisfies condition of Definitions 6.3.7 and 6.3.8 in \cite{hayashijanuszewski}. If $F$ is a CM field, then $V$ is essentially conjugate self-dual over $\QQ$, i.\,e.\ $\overline{V}^\vee\cong V\otimes\eta$ again for a $\QQ$-rational character $\eta:G\to\QQ$. Hence Definition 6.3.11 in \cite{hayashijanuszewski} is applicable in this case.

          In both cases Hayashi and the author constructed a $(\lieg,K)$-module $A(V_E)_{\OO(V)\left[1/N\right]}$ over $\OO[1/N]$ since the ring $\OO(\vec{w}\lambda)$ in section 6.3 \cite{hayashijanuszewski} agrees with $\OO$ in the notation of the current section.

          In Theorems 6.3.6 and 6.3.10 it is shown that $A(V)_{\OO(V)\left[1/N\right]}$ is a projective $\OO[1/N]$-model of the complex algebraic standard module $A_\lieq(\lambda)_\CC$ of which $\pi_\infty^{(K_\infty)}$ is a $\rho_{\pi_\circ^G}$-twist. Hence $(\QQ(\pi)\otimes_{\OO[1/N]} A(V_E)_{\OO[1/2d_E]},\rho_{\pi_\circ^G})$ is a locally algebraic standard module in the sense of Definition \ref{def:locallyalgebraicstandardmodule} whose complex realization in the sense of Definition \ref{def:locallyalgebraiccomplexmodule} is the $(\lieg_\CC,K_\infty)$-module $\pi_\infty^{(K_\infty)}$ underlying $\pi_\infty$.
          
          To conclude the proof, observe that by construction,
          \begin{equation}
            \pi_{\OO[1/N]}:=
            A(V_{\QQ(\pi)})_{\OO[1/N]}\otimes_{\OO}\pi_{f,\OO}
            \label{eq:piOforGLn}
          \end{equation}
          is a $(\lieg,K\times\pi_\circ^G)\times G(\Adeles_f)$-module over $\OO[1/N]$ and as such is an $\OO[1/N]$-model of the complex $(\lieg_\CC,K_\infty)\times G(\Adeles_f)$-module $\pi^{(K_\infty)}$ with all desired properties.
	\end{proof}
	
	\begin{remark}
		In the case of $\GL_n$ over $F=\QQ$ global $\ZZ[1/2]$-models were first constructed by Harder, cf.\ \cite{harderraghuram}.
	\end{remark}
        
	\begin{remark}\label{rmk:fieldrestriction}
		The restriction on the ground field $F$ may be relaxed at the same cost as in \cite{januszewskirationality} that the field where we find global models may be a finite extension of the field of rationality $\QQ(\pi)$ for in the general situation $\QQ$-rational models of the maximal compact subgroup in $G(\RR)$ need not exist.
	\end{remark}

      \section{Canonical integral structures and canonical periods}\label{sec:canonicalintegralstructuresandperiods}
	
	\subsection{Canonical $1/N$-integral structures on spaces of automorphic cusp forms}\label{sec:canonicalintegralstructures}
	
	In the setting of the previous section, i.\,e.\ $G$ and $K$ both explicit smooth $\ZZ[1/2]$-models of $\GL_n(\RR\otimes_\QQ F)$ and $K(\RR)$ for $F/\QQ$ totally real or a CM field, we extend Theorem \ref{thm:gln} to global spaces of cusp forms as follows. Again we fix an absolutely irreducible locally algebraic representation $V=(V_E,\rho_G,\rho_{\pi_\circ^G})$ of $G$ defined over its field of rationality $E=\QQ(V)$ as before (cf.\ Corollary \ref{cor:quasisplitrationality}). Recall that by Proposition \ref{prop:locallyalgebraicfieldofrationality} $\QQ(V)$ agrees with the field of rationality of the underlying rational representation $(V_E,\rho_G)$. Again we write $d_E$ for the absolute discriminant of $\QQ(V)/\QQ$.
	
	\subsubsection{The $\QQ$-rational local system}

        The following construction is motivated by Theorem \ref{thm:cohomologicaltypesofGLnoverQ}, see also Remark \ref{rmk:cyclesforGLn}.

        By Theorem \ref{thm:irreduciblelocallyalgebraicrepresentations} the locally algebraic representation $V$ occurs in a unique locally algebraic representation $W=(W_\QQ,\rho^W_G,\sigma^W_K)$ over $\QQ$, i.\,e.\ $V$ occurs in $\CC\otimes_\QQ W=(\CC\otimes_\QQ W_\QQ,1\otimes\rho_G^W,1\otimes\rho_{\pi_\circ^G}^W)$, necessarily with multiplicity one, where $W$ is defined as follows:
	\[
	W:=\res_{\QQ(V)/\QQ}V:=(\res_{\QQ(V)/\QQ}V_{\QQ(V)},\res_{\QQ(V)/\QQ}\rho_G,\res_{\QQ(V)/\QQ}\rho_{\pi_\circ^G}).
	\]
	Restriction of scalars here simply forgets the $\QQ(V)$-vector space structure along the inclusion of fields $\QQ\to\QQ(V)$.
	
	Extending scalars to $\CC$ yields a decomposition
	\[
	\CC\otimes W=\bigoplus_{\sigma\colon \QQ(V)\to\CC}\CC\otimes_{\sigma,\QQ(V)}V_{\QQ(V)},
	\]
	with pairwise inequivalent irreducible complex locally algebraic representations
	\[
	V_\CC^\sigma:=\CC\otimes_{\sigma,\QQ(V)}V_{\QQ(V)},
	\]
        which by abuse of notation we also identify with their respective associated representations of $G(\RR)$ in the sense of Definition \ref{def:locallyalgebraiccomplex}, which are pairwise inequivalent as well (cf.\ Remark \ref{rmk:locallyalgebraicequivalence}).
       
	\subsubsection{Normalized lattices}
	
	As before, for a given compact open $K_f\subseteq G(\bfA_f)$ with $K_f\subseteq G(\widehat{\ZZ})$, we fix a $K_f$-stable lattice $V_{\OO_{E}[1/2d_E]}\subseteq V_{E}$, on which $G_{\OO_{E}[1/2d_E]}$ acts scheme-theoretically, \cite{hayashi2018} Proposition 2.1.6.
	
	In order to normalize $V_{\OO_{E}[1/2d_E]}$, it is sufficient to normalize $V_{\OO_{E}[1/2d_E]}$ as a sublattice of the underlying rational representation $(V_E,\rho_G)$ via the following
	\begin{lemma}\label{lem:VOnormalization}
	  Assume that $(V_E,\rho_G)$ is an essentially self-dual ($F$ totally real) resp.\ essentially conjugate-self-dual ($F$ a CM field) absolutely irreducible rational representation of $G$. As before, put $N=2d_F$ and write $A(V_E)_{\OO_E[1/N]}$ for the $1/N$-integral standard module from the proof of Theorem \ref{thm:gln}.

          For every $K_f$-stable $\OO_{E}[1/N]$-lattice $V_{\OO_{E}[1/N]}\subseteq V_{E}$, on which $G_{\OO_{E}[1/N]}$ acts scheme-theoretically, there exists a fractional $\OO_{E}[1/N]$-ideal $\mathfrak{a}\subseteq E$ with the property that the $K_f$-stable lattice
	  \begin{equation}
	    \mathfrak{a}\cdot V_{\OO_{E}[1/N]}
	    \subseteq V_{E}
	  \end{equation}
	  satisfies the condition that for the bottom degree $t_0$ for $G$ the image of
	  \[
	  H^{t_0}(\lieg,\lies, K;
	  A(V_E)_{\OO_{E}[1/N]}\otimes \mathfrak{a}\cdot V_{\OO_{E}[1/N]})
	  \]
	  in
	  \[
	  H^{t_0}(\lieg,\lies, K;
	  A(V_E)_{E}\otimes  V_{\QQ(V)})
	  \]
	  is free of rank $1$ over $\OO_{E}[1/N]$.
		
	  Moreover if $\mathfrak{a}'$ is another fractional ideal with the same property, then $\mathfrak{a}'=\alpha\cdot\mathfrak{a}$ for some $\alpha\in\QQ(V)^\times$.
	\end{lemma}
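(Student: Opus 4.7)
The plan is to reduce the statement to a standard existence-and-uniqueness property for fractional ideals inside a one-dimensional $E$-vector space. First I would establish that the bottom-degree $E$-rational relative Lie algebra cohomology
\[
H^{t_0}(\lieg,\lies,K;\,A(V_E)_E\otimes V_E)
\]
is one-dimensional over $E=\QQ(V)$. By Proposition \ref{prop:complexliealgebracohomology}, it embeds as an $E$-rational structure into the complex cohomology; so it suffices to verify this over $\CC$. For an essentially (conjugate-)self-dual absolutely irreducible $V_E$, the Vogan--Zuckerman description of the cohomology of $A_{\lieq'}(\lambda)_\CC\otimes V_\CC$ combined with the tempered classification and the multiplicity-one observations of Theorem \ref{thm:cohomologicaltypesofGLnoverQ} and Remark \ref{rmk:cyclesforGLn} yields dimension one in bottom degree.

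Next I would argue that over $\OO_E[1/N]$ with $N=2d_F$, the integral Chevalley--Eilenberg type complex of Definition \ref{def:locallyalgebraicrelativeLiealgebracohomology} computing the relative Lie algebra cohomology with coefficients in $A(V_E)_{\OO_E[1/N]}\otimes V_{\OO_E[1/N]}$ consists of finitely generated $\OO_E[1/N]$-modules. This relies on three inputs: the projective integral models for $A(V_E)_{\OO_E[1/N]}$ constructed in Theorems 6.3.6 and 6.3.10 of \cite{hayashijanuszewski}, the scheme-theoretic action of $G_{\OO_E[1/N]}$ on the finitely generated lattice $V_{\OO_E[1/N]}$, and the smoothness of the group scheme $K$ over $\OO_E[1/N]$ (which holds since $2d_F$ is inverted). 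Consequently
\[
M \;:=\; H^{t_0}(\lieg,\lies,K;\,A(V_E)_{\OO_E[1/N]}\otimes V_{\OO_E[1/N]})
\]
is a finitely generated $\OO_E[1/N]$-module, and its canonical image $\mathfrak{b}$ under the base change map to the one-dimensional $E$-cohomology is a finitely generated torsion-free $\OO_E[1/N]$-submodule of $E$, hence a non-zero fractional $\OO_E[1/N]$-ideal.

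For existence I would then set $\mathfrak{a}:=\mathfrak{b}^{-1}$. Since relative Lie algebra cohomology is $\OO_E[1/N]$-linear in the coefficient module, scaling $V_{\OO_E[1/N]}$ by $\mathfrak{a}$ scales $\mathfrak{b}$ by $\mathfrak{a}$, yielding the image $\mathfrak{a}\cdot\mathfrak{b}=\OO_E[1/N]$, which is free of rank one as required. For uniqueness, if $\mathfrak{a}'$ is another fractional ideal with this property, then both $\mathfrak{a}\mathfrak{b}$ and $\mathfrak{a}'\mathfrak{b}$ are principal, hence so is $\mathfrak{a}'\mathfrak{a}^{-1}$, so that $\mathfrak{a}'=\alpha\cdot\mathfrak{a}$ for some $\alpha\in E^\times=\QQ(V)^\times$.

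The main technical obstacle is the integrality step: checking that the integral Chevalley--Eilenberg complex really produces a finitely generated $\OO_E[1/N]$-module in bottom degree and that the base change map to $E$-coefficients is compatible in the expected way. This should follow from the $K$-admissibility and projectivity of the integral model $A(V_E)_{\OO_E[1/N]}$ established in \cite{hayashijanuszewski}, together with flatness of $E$ over $\OO_E[1/N]$; once these ingredients are in place, the remainder of the proof is a formal manipulation of fractional ideals in the Dedekind domain $\OO_E[1/N]$.
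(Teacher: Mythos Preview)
Your proposal is correct and follows essentially the same route as the paper: reduce to the one-dimensionality of the $E$-rational cohomology in bottom degree, show that the image of the integral cohomology is a finitely generated (hence fractional-ideal) submodule of this line, and then normalize by the inverse ideal. The only cosmetic difference is that the paper dispatches the finite-generation step by a direct citation of Variant~6.4.19 in \cite{hayashijanuszewski}, whereas you unpack the same point via admissibility and projectivity of $A(V_E)_{\OO_E[1/N]}$; both arguments land in the same place.
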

	
	\begin{proof}
		Observe first that for any embedding $E=\QQ(V)\to\CC$, since $(\lieg,\lies, K)$-cohomology commutes with arbitrary base change,
		\[
		\CC\otimes_E H^{t_0}(\lieg,\lies, K;
		A(V_E)_{E}\otimes V_{E})\cong
		H^{t_0}(\lieg_{\CC},\lies, K(\RR);
		A(V_\CC)\otimes V_{\CC})
		\]
		is a $\CC$-vector space of dimension $1$, which shows that $(\lieg,\lies, K)$-cohomology in degree $t_0$ over $E$ is of dimension $1$ as well.
		
		Any fractional ideal $\mathfrak{a}\subseteq E$ may be considered as a model of the trivial rational representation of $G$. Therefore $G$ acts on
		\[
		\mathfrak{a}\otimes V_{\OO_{E}[1/N]}\cong\mathfrak{a}\cdot V_{\OO_{E}[1/N]}
		\]
		scheme theoretically. Moreover, by the triviality of the action on $\mathfrak{a}$,
		\begin{align*}
			&H^{t_0}(\lieg,\lies, K;
			A(V_E)_{\OO_{E}[1/N]}\otimes \mathfrak{a}\cdot V_{\OO_{E}[1/N]})\\
			=\;&\mathfrak{a}\cdot H^{t_0}(\lieg,\lies, K;
			A(V_E)_{\OO_{E}[1/N]}\otimes V_{\OO_{E}[1/N]}).
		\end{align*}
		Since $\OO_{E}[1/N]$ is a Dedekind domain it therefore suffices to prove that the image of the canonical arrow
		\begin{align*}
			&H^{t_0}(\lieg,\lies, K;
			A(V_E)_{\OO_{E}[1/N]}\otimes \mathfrak{a}\cdot V_{\OO_{E}[1/N]})
			\to
			H^{t_0}(\lieg,\lies, K;
			A(V_E)_{E}\otimes  V_{E})
		\end{align*}
		is a finitely generated $\OO_{E}[1/N]$-module, because the codomain is of dimension $1$ over $E$.
		
		This in turn is a consequence of Variant 6.4.19 in \cite{hayashijanuszewski} applied to $A(V_E)_{\OO_{E}[1/N]}$.
	\end{proof}

        By Theorem \ref{thm:cohomologicaltypesofGLn} there is a unique tempered cohomological type ${\rm t}\in\cohTypes_E^{\rm temp}(G,(V_G,\rho_G))$ associated to $V$ and $V$ defines a unique representative $\alpha=[(A(V),V)]_\cong\in{\rm t}$, which in turn characterizes $V$.
        
	We henceforth assume that $V_{\OO_{E}[1/N]}$ is normalized according to Lemma \ref{lem:VOnormalization} in the sense that in bottom degree $(\lieg,\lies, K)$-cohomology over $\OO_{E}[1/N]$ maps to a free $\OO_{E}[1/N]$-module
	\[
	L_{\OO_{E}[1/N]}^{\alpha}\subseteq H^{t_0}(\lieg,\lies, K; A(V)\otimes  V)_E,
	\]
        where the right hand side denotes relative Lie algebra cohomology in the sense of Definition \ref{def:locallyalgebraicrelativeLiealgebracohomology} of the locally algebraic standard module $A(V):=(A(V_E),\rho_{\pi_\circ^G})$ of weight $V=(V_E,\rho_G,\rho_{\pi_\circ^G})$ over $E$.
        
	We fix an isomorphism
	\begin{equation}
		\vartheta_{\OO_{E}[1/N]}^{\alpha}\colon L_{\OO_{E}[1/N]}^{\alpha}\to\OO_{E}[1/N].
		\label{eq:varthetachoice}
	\end{equation}
	Since
	\[
	E\otimes L_{\OO_{E}[1/N]}^{\alpha}=H^{t_0}(\lieg,\lies, K; A(V)\otimes  V)_E
	\]
	canonically, and since $(\lieg,\lies, K)$-cohomology commutes with arbitrary base change, $\vartheta_{\OO_{E}[1/N]}^{\alpha}$ induces for every embedding $\sigma:E\to\CC$ an isomorphism
	\[
	\vartheta_\CC^{\alpha^\sigma}\colon H^{t_0}(\lieg,S(\RR)^0 K(\RR); A(V_\CC)\otimes  V_{\CC}^\sigma)\to\CC.
	\]
        Here $A(V_\CC)$ agrees with the complexification of $A(V)$ in the sense of Definition \ref{def:locallyalgebraiccomplexmodule} (see also Remark \ref{rmk:locallyalgebraicstandardmodulesarecohomologicallyinduced}).
        
	The choice of $\vartheta_{\OO_{E}[1/N]}^{\alpha}$ is unique up to units in $\OO_{E}[1/N]$. Therefore $\vartheta_\CC^{\alpha^\sigma}$ is unique up to units in $\sigma\left(\OO_{E}[1/N]^\times\right)$ and these isomorphisms are chosen coherently for all $\alpha\in{\rm t}$ and all ${\rm t}\in\cohTypes_E^{\rm temp}(G)$ at once in the sense that we impose the relation
        \begin{equation}
          \forall\sigma\colon E\to\CC,\tau\in\Aut(\CC/\QQ):
          \quad
	  \vartheta_{\OO_{E^{\sigma\tau}}[1/N]}^{\alpha^{\sigma\tau}}=
	  \left(\vartheta_{\OO_{E^\sigma}[1/N]}^{\alpha^{\sigma}}\right)^\tau.
	  \label{eq:varthetacoherence}
        \end{equation}
        Here $\alpha^{\sigma}$ denote the base change of $\alpha$ along the isomorphism $E\to E^\sigma$ induced by the embedding $\sigma\colon E\to\CC$.
	
	We restrict $V_{\OO_{E}[1/N]}$ to a $K_f$-stable $\ZZ[1/N]$-lattice
	\[
	W_{\ZZ[1/N]}=\res_{\OO_{E}/\ZZ}V_{\OO_{E}[1/N]}
	\]
	in $W_\QQ$. Once again, $W_{\ZZ[1/N]}$ is $V_{\OO_{E}[1/N]}$ considered as $\ZZ[1/N]$-module and we may and do assume that $G\times\pi_\circ^G$ acts on $W_{\ZZ[1/N]}$ scheme-theoretically.
	
	Likewise, the integral Hecke algebra $\mathcal H(V_{\OO_{E}[1/N]})$ gives rise to a $\ZZ[1/N]$-algebra
	\[
	\mathcal H(W_{\ZZ[1/N]}):=\res_{\OO_{E}/\ZZ}\mathcal H(V_{\OO_{E}[1/N]}).
	\]
	
	\subsubsection{Spaces of automorphic cusp forms}
	
	Write $\omega_{V^\sigma}$ for the dual infinitesimal character of the complexification $V_\CC$ of $V$ and denote by
	\[
	\omega_W=\bigoplus_{\sigma\colon E\to\CC}\omega_{V_{\CC}^\sigma}
	\]
	the direct sum of the infinitesimal characters occuring in the dual of $W_\CC$.
	
	We consider the space
	\begin{flalign}
		&L_0^2(\GL_n(F)Z(\RR)^0 \backslash{}\GL_n(\Adeles_F);\omega_W)^\infty
		\;=\label{eq:l2cuspforms}\\
	&\bigoplus_{\sigma:E\to\CC}
	        L_0^2(\GL_n(F)Z(\RR)^0 \backslash{}\GL_n(\Adeles_F);\omega^\sigma)^\infty,\nonumber     
	\end{flalign}
	of smooth $L^2$-automorphic cusp forms of $G(\Adeles)$ which are $Z(\lieg)$-finite and as such are sums of cusp forms with infinitesimal characters $\omega^\sigma$ for some embedding $\sigma:E\to\CC$. This space can be characterized as the space of $Z(\lieg)$-finite cusp forms annihilated by the intersection $\mathfrak{m}_W$ of the maximal ideals $\mathfrak{m}_{\omega^\sigma}\subseteq Z(\lieg)$ associated with the infinitesimal characters $\omega^\sigma$ for all embeddings $\sigma:E\to\CC$. We remark that the ideal $\mathfrak{m}_W$ is defined over $\QQ$.
	
	Moreover, \eqref{eq:l2cuspforms} admits a canonical action of the complex Hecke algebra $\mathcal H(W_{\CC})$.
	
	In Theorem A in \cite{januszewskirationality} the second named author showed that the $(\lieg_\QQ,K_\QQ)\times G(\Adeles_f)$-module of $K_\infty$-finite vectors in \eqref{eq:l2cuspforms} admits a canonical global $\QQ$-structure if $V$ is algebraic. In this section we extend the result of Theorem A to $\ZZ[1/N]$-integral structures and locally algebraic representations $V$.
	
	While the proof in loc.\ cit.\ may be translated to the situation at hand, such a construction of a global integral structure would not faithfully reflect the delicate canonical integral structure on cohomology. To take this into account, we modify the construction as follows.
	
	\subsubsection{The canonical monomorphism}
	
	We recall first the relevant Eichler--Shimura isomorphism. For any sufficiently small compact open $K_f\subseteq \GL_n(\Adeles_{F,f})$ write
	\begin{equation}
	L_0^2(\GL_n(F)Z(\RR)^0 \backslash{}\GL_n(\Adeles_F)/K_f;\omega^\sigma)^{\infty,(K_\infty)}\subseteq
	L_0^2(\GL_n(F)Z(\RR)^0 \backslash{}\GL_n(\Adeles_F);\omega^\sigma)^\infty
        \label{eq:omegasigmaautomorphicforms}
	\end{equation}
	for the subspace of $K_f$-invariant smooth $K_\infty$-finite vectors.

        \begin{remark}
          We emphasize that for any given $\omega^\sigma$ there exist up to isomorphism finitely many locally algebraic representations $V$ such that the dual of $V^\sigma$ has infinitesimal character $\omega^\sigma$: The action $\rho_{\pi_\circ^G}$ of $\pi_\circ^G$ on $V$ is varies independently of the infinitesimal character. Moreover, it is fixed under all automorphisms of $\CC$ (cf.\ Proposition \ref{prop:absolutelyirreduciblelocallyalgebraic}).
        \end{remark}

        Note that the infinitesimal character $\omega^\sigma$ is defined over $\QQ(V^\sigma)=E^\sigma$ and put for any extension $F/E^\sigma$
        \[
        {\rm locAlg}_F(\omega^\sigma):=\{V'\;\text{loc.\ alg.\ rep.\ over $F$ with infin.\ char.}\;\omega^{\sigma,\vee}\}/\cong
        \]
        for the finite set of isomorphism classes of locally algebraic representations over $F$ whose dual has infinitesimal character $\omega^\sigma$. Note that the elements of ${\rm locAlg}_{E^\sigma}(\omega^\sigma)$ are in bijection with the locally algebraic representations $(V_E^\sigma,\rho_G^\sigma,\rho_{\pi_\circ^G}')$ for varying $\pi_\circ^G$-action $\rho_{\pi_\circ^G}'$. In particular, ${\rm locAlg}_{E^\sigma}(\omega^\sigma)$ is a $\Hom(\pi_\circ^G,\Gm)$-torsor on the small \'etale site over $\Spec E^\sigma$.

        To streamline notation and emphasize coherence conditions, we introduce the abbreviation
        \[
        {\rm tcT}_E:=\cohTypes^{\rm temp}_E(G,(V_E,\rho_G))
        \]
        for the set of tempered cohomological types of weight $(V_E,\rho_G)$. Then we have for each embedding $\sigma\colon E\to\CC$
        \[
        \cohTypes^{\rm temp}_{E^\sigma}(G,(V_E^\sigma,\rho_G^\sigma))={\rm tcT}_E^\sigma=\{{\rm t}^\sigma\mid {\rm t}\in {\rm tcT}_E\},
        \]
        where $V_E^\sigma$ denotes the base change of $V_E$ along the isomorphism $E\to E^\sigma$ induced by $\sigma$. Likewise, ${\rm t}^\sigma$ is the base change of the cohomological type ${\rm t}$. We have for all automorphisms $\tau\in\Aut(\CC/\QQ)$
        \[
        \left({\rm t}^{\sigma}\right)^\tau=
        {\rm t}^{\sigma\tau},
        \]
        where $\tau$ acts element-wisely on the set of the left hand side (cf.\ Definition \ref{def:cohomologicaltypesofG}). This action is compatible with complexified cohomological types, i.\,e.\ we have
        \[
        {\rm tcT}_\CC=\cohTypes^{\rm temp}_\CC(G,(V_E,\rho_G)),
        \]
        and for every embedding $\sigma\colon E\to\CC$
        \[
        {\rm tcT}_\CC^\sigma=\cohTypes^{\rm temp}_\CC(G,(V_E^\sigma,\rho_G^\sigma)),
        \]
        compatibly with the above $\Aut(\CC/\QQ)$-action, i.\,e.\ complexification of ${\rm t}\in{\rm tcT}_E^\sigma$ is understood with respect to the embedding $E^\sigma\to\CC$ induced by $\sigma\colon E\to\CC$. We remark that complexification induces bijections of sets of tempered cohomological types.
        
        \begin{proposition}\label{prop:decompositionbycohomologicaltypes}
        With the notation as above, for each $\sigma\colon E\to\CC$ the map
        \begin{equation}
          \bigsqcup_{{\rm t}\in{\rm tcT}_E^\sigma}{\rm t}\to{\rm locAlg}_{E^\sigma}(\omega^\sigma),\quad [(A',V')]_\cong\mapsto [V']_\cong
          \label{eq:cohomologicaltypestolocallyalgebraicrepresentations}
        \end{equation}
        is an isomorphism of $\Hom(\pi_\circ^G,\Gm)$-torsors on the small \'etale site over $\Spec E^\sigma$. Moreover,
        \[
	\bigoplus_{{\rm t}\in{\rm tcT}_E^\sigma}\bigoplus\limits_{\begin{subarray}c\pi:\\\cohType(\pi)={\rm t}\end{subarray}} \pi^{(K_\infty)}=
	L_0^2(\GL_n(F)Z(\RR)^0 \backslash{}\GL_n(\Adeles_F);\omega^\sigma)^{\infty,(K_\infty)},
	\]
        where the left hand side runs over all cuspidal automorphic representations of $G(\Adeles)$ with infinitesimal character $\omega^\sigma$, partitioned into cohomological types.

        If we put for every ${\rm t}\in{\rm tcT}_E^\sigma$
        \[
        L_0^2(\GL_n(F)Z(\RR)^0 \backslash{}\GL_n(\Adeles_F);\omega^\sigma)^{\infty}_{{\rm t}}:=
        \left(\widehat{\bigoplus\limits_{\begin{subarray}c\pi:\\\cohType(\pi)={\rm t}\end{subarray}}} \pi\right)^\infty,
        \]
        then
        \begin{equation}
          L_0^2(\GL_n(F)Z(\RR)^0 \backslash{}\GL_n(\Adeles_F);\omega^\sigma)^\infty=
          \widehat{\bigoplus_{{\rm t}\in{\rm tcT}_E^\sigma}}
          L_0^2(\GL_n(F)Z(\RR)^0 \backslash{}\GL_n(\Adeles_F);\omega^\sigma)^\infty_{{\rm t}}.
          \label{eq:l02spacecohomologicaltypedecomposition}
        \end{equation}
        \end{proposition}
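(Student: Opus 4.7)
The plan is to handle the three assertions separately, relying on the classification of tempered cohomological types of $\GL_n$ established in Theorem \ref{thm:cohomologicaltypesofGLn} (respectively Theorem \ref{thm:cohomologicaltypesofGLnoverQ}) together with Clozel's Lemme de puret\'e for the cuspidal part. First I would verify that the map \eqref{eq:cohomologicaltypestolocallyalgebraicrepresentations} is well defined: for $[(A',V')]_\cong\in{\rm t}$ the weight $V'$ has underlying rational representation $(V_E^\sigma,\rho_G^\sigma)$ (since ${\rm t}\in{\rm tcT}_E^\sigma$), so by the construction of $\omega^\sigma$ its dual has infinitesimal character $\omega^\sigma$, i.\,e.\ $V'\in{\rm locAlg}_{E^\sigma}(\omega^\sigma)$. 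Surjectivity follows from Proposition \ref{prop:lischwermer}: since any $V'\in{\rm locAlg}_{E^\sigma}(\omega^\sigma)$ has regular extremal weight (because its rational part $(V_E^\sigma,\rho_G^\sigma)$ does, as it appears as the weight of a cohomological cuspidal representation), there exists a locally algebraic cohomologically induced standard module $A'$ of weight $V'$ with $H^\bullet(\lieg_\CC,\lies_\CC,K;A_\CC'\otimes V_\CC')\neq 0$, and by Theorem \ref{thm:temperedcohomologicaltypes} the resulting cohomological type $[(A',V')]$ is tempered.

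For injectivity I would invoke Theorem \ref{thm:cohomologicaltypesofGLn}: in each of the totally real and CM cases the set ${\rm cohType}(\pi)$ is described explicitly and its map to locally algebraic representations $V\otimes\chi$ is a bijection. Equivalently, in the notation of Corollary \ref{cor:canonicalcyclefactorization} the factor $V(\ker E[\rho_{\pi_\circ^G}])$ distinguishes the representative within a fixed ${\rm t}$, so two pairs in $\bigsqcup_{\rm t}{\rm t}$ mapping to the same $V'$ must come from the same tempered cohomological type and agree. The $\Hom(\pi_\circ^G,\Gm)$-torsor structure on the right is the obvious twist action on $\rho_{\pi_\circ^G}'$, while on the left it is the admissible twist action of Definition \ref{def:admissibleweights}; the map \eqref{eq:cohomologicaltypestolocallyalgebraicrepresentations} is manifestly equivariant for these actions, and both sides are nonempty, so it is a torsor isomorphism on the small \'etale site over $\Spec E^\sigma$ (the \'etale qualifier is needed because $\pi_\circ^G$ may fail to be constant over $E^\sigma$, e.\,g.\ in the CM case).

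For the second decomposition I would combine two inputs. First, the classical decomposition of the cuspidal $L^2$-spectrum yields a Hilbert direct sum of irreducible cuspidal $\pi$; restricting to the $\omega^\sigma$-eigenspace of $Z(\lieg)$ picks out exactly those $\pi$ with infinitesimal character $\omega^\sigma$. Second, Clozel's Lemme de puret\'e \cite[Lemme~4.9]{clozel1990} ensures that every such regular algebraic cuspidal $\pi$ of $\GL_n$ over a totally real or CM field has essentially tempered $\pi_\infty$, so ${\rm cohType}(\pi)\in{\rm tcT}_E^\sigma$ by Theorem \ref{thm:temperedcohomologicaltypes} and Definition \ref{def:cohomologicaltypeofrepresentation}. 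Partitioning the Gelfand decomposition by cohomological type (which is well-defined on representatives and insensitive to the isomorphism class of $\pi^{(K_\infty)}$) yields the asserted decomposition of $K_\infty$-finite vectors.

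The smooth version \eqref{eq:l02spacecohomologicaltypedecomposition} follows from the $K_\infty$-finite statement by taking Hilbert closures of the finite partial sums and then smooth vectors: the Casselman--Wallach globalization of each $\pi^{(K_\infty)}$ recovers $\pi^\infty$, and the topological direct sum decomposition is compatible with taking smooth vectors. The main obstacle I anticipate is the careful bookkeeping for the torsor statement in the first part — specifically checking that the $\Hom(\pi_\circ^G,\Gm)$-torsor structure descends correctly to the \'etale site over $\Spec E^\sigma$ in the case where $E^\sigma$ does not split $\pi_\circ^G$; this amounts to verifying that the bijection on $\CC$-points is Galois-equivariant, which reduces to the Galois equivariance of the cycle $c({\rm t})$ established in the proof of Proposition \ref{prop:supportclassification}.
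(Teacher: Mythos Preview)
Your proof is correct and follows essentially the same approach as the paper's, though you are considerably more explicit: the paper calls surjectivity ``tautological'' (rather than routing through Proposition~\ref{prop:lischwermer}), invokes Theorem~\ref{thm:cohomologicaltypesofGLnoverQ} for injectivity, and disposes of the cuspidal decomposition in one sentence by observing that any cuspidal $\pi$ with infinitesimal character $\omega^\sigma$ is automatically cohomological and then deferring to the bijection.

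One small correction to your torsor paragraph: $\pi_\circ^G$ is by definition the \emph{constant} group scheme associated to the abstract group $\pi_0(K(\RR))$ (cf.\ Definition~\ref{def:locallyalgebraicrepresentation} and Remark~\ref{rmk:locallyalgebraicviaconstantgroupscheme}), so it is always constant over any base; in the CM case it is moreover trivial since $U(n)$ is connected. You may be thinking of $\pi_\circ(K)$, the \'etale component scheme of the algebraic group $K$, which indeed need not be constant---but that object does not appear in the torsor statement here. The torsor assertion therefore reduces to the elementary fact that an equivariant bijection between two nonempty sets with free transitive action of the finite abelian group $\Hom(\pi_0(K_\infty),\mu_2)$ is an isomorphism; no genuine \'etale descent is required.
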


        \begin{proof}
          The map \eqref{eq:cohomologicaltypestolocallyalgebraicrepresentations} is tautologically surjective. The injectivity follows from the classification of tempered cohomological types over $\CC$ in Theorem \ref{thm:cohomologicaltypesofGLnoverQ} for the cases at hand.

          A cuspidal representation $\pi$ of $\GL_n(\Adeles_F)$ with infinitesimal character $\omega^\sigma$ is automatically cohomological of weight $V\in{\rm locAlg}_E(\omega^\sigma)$ whenever $E$ is sufficiently large (i.\,e.\ $\QQ(V)\subseteq E$). Moreover, refering to Theorem \ref{thm:cohomologicaltypesofGLnoverQ} once more time shows that the remaining statements are formal consequences of the bijectivity of \eqref{eq:cohomologicaltypestolocallyalgebraicrepresentations}.
        \end{proof}
        
        For any ${\rm t}\in{\rm tcT}_E^\sigma$ and any representative $[(A^{({\rm t})},V^{({\rm t})})]_\cong\in{\rm t}$ we have $V^{({\rm t})}\in {\rm locAlg}_{E^\sigma}(\omega^\sigma)$ and the canonical inclusion
	\begin{flalign*}
	&H^q\left(\lieg_\CC,S(\RR)^0 K(\RR);
	L_0^2(\GL_n(F)Z(\RR)^0 \backslash{}\GL_n(\Adeles_F)/K_f;\omega^\sigma)_{\rm t}^{\infty,(K_\infty)}
        \otimes V_{\CC}^{({\rm t})}
        \right)
	\\
	&\to
        H^q\left(\lieg_\CC,S(\RR)^0 K(\RR);
	L_0^2(\GL_n(F)Z(\RR)^0 \backslash{}\GL_n(\Adeles_F)/K_f;\omega^\sigma)^{\infty,(K_\infty)}
        \otimes V_{\CC}^{({\rm t})}
        \right)
	\end{flalign*}
        is an isomorphism and we have a canonical Eichler--Shimura isomorphism
	\begin{flalign*}
	&H^q\left(\lieg_\CC,S(\RR)^0 K(\RR);
	L_0^2(\GL_n(F)Z(\RR)^0 \backslash{}\GL_n(\Adeles_F)/K_f;\omega^\sigma)^{\infty,(K_\infty)}
        \otimes V_{\CC}^{({\rm t})}\right)\\
        &\cong
        H^q\left(\lieg_\CC,S(\RR)^0 K(\RR);
	L_0^2(\GL_n(F)Z(\RR)^0 \backslash{}\GL_n(\Adeles_F)/K_f;\omega^\sigma)_{\rm t}^{\infty,(K_\infty)}
        \otimes V_{\CC}^{({\rm t})}\right)
	\\
	&\to H^q_{\rm cusp}(\GL_n(F)\backslash{}\GL_n(\Adeles_F)/S(\RR)^0 K(\RR)K_f;\widetilde{V}_{\CC}^{({\rm t})}).
	\end{flalign*}
        Here $\widetilde{V}_{\CC}^{({\rm t})}$ denotes the sheaf associated to the complexification of $V^{({\rm t})}$ along the embedding $E^\sigma\to\CC$ induced by the embedding $\sigma\colon E\to\CC$. We refer to \cite{franke1998} and \cite{januszewskirationality}, section 8 for a discussion of the general Eichler--Shimura isomorphism and related rationality considerations.

        Note that the normalized $K_f$-stable lattice $V_{\OO_{E}[1/N]}\subseteq V_E$ (cf.\ Lemma \ref{lem:VOnormalization}) induces a $K_f$-stable lattice $V_{\OO_{E^\sigma}[1/N]}^{({\rm t})}$ in $V^{({\rm t})}$ via a fixed identification of the underlying rational representations of $V^\sigma$ and $V^{({\rm t})}$. Denote by
	\[
	H_{\rm cusp}^q\left(X_G(K_f);\widetilde{V}_{\OO_{E^\sigma}[1/N]}^{({\rm t})}\right):=
	\image\left(
	H^q\left(X_G(K_f);\widetilde{V}_{\OO_{E^\sigma}[1/N]}^{({\rm t})}\right)
	\to H_{\rm cusp}^q\left(X_G(K_f);\widetilde{V}_{\CC}^{({\rm t})}\right)\right),
	\]
	the image of cohomology with $1/N$-integral coefficients in cuspidal cohomology. Recall that cuspidal cohomology is an $E$-rational subspace of cohomology.

	As before, $t_0$ denotes the bottom degree of the cuspidal range for $G$, i.\,e.\ $t_0$ is the minimal degree where cuspidal cohomology is non-zero for some $K_f$. Consider the $\OO_E[1/N]$-model $A(V_E)_{\OO_{E}[1/N]}$ from the proof of Theorem \ref{thm:gln}. We may assume without loss of generality that $A^{({\rm t})}=A_E^\sigma$ as $(\lieg_{E^\sigma},K_{E^\sigma})$-modules. Via this identification we define $A(V^{({\rm t})})_{\OO_{E}[1/N]}$ as the lattice $A(V_E)_{\OO_{E}[1/N]}^\sigma$ inside $A^{({\rm t})}$ and consider the $(\lieg_{\OO_{E^\sigma}[1/N]},K_{\OO_{E^\sigma}[1/N]}\times\pi_\circ^G)\times\mathcal H(V_{\OO_{E^\sigma}[1/N]}^{({\rm t})})$-module
	\[
	\mathcal A(V_{\OO_{E^\sigma}[1/N]}^{({\rm t})}):=
	A(V^{({\rm t})})_{\OO_{E^\sigma}[1/N]}
        \otimes_{\OO_{E^\sigma}[1/N]}
	\varinjlim_{K_f}H_{\rm cusp}^{t_0}\left(X_G(K_f);\widetilde{V}_{\OO_{E^\sigma}[1/N]}^{({\rm t})}\right).
	\]
        Write
        \[
        W^{({\rm t})}:=\res_{E^\sigma/\QQ}V^{({\rm t})},
        \]
        for the unique irreducible locally algebraic $G$-representation which over $E$ contains $V^{({\rm t})}$. Then $W^{({\rm t})}$ is independent of the embedding $\sigma\colon E\to\CC$ (up to unique isomorphism) and contains the $K_f$-stable $\ZZ[1/N]$-lattice
        \[
        W_{\ZZ[1/N]}^{({\rm t})}:=\res_{\OO_{E^\sigma}/\ZZ}V_{\OO_{E^\sigma}[1/N]}^{({\rm t})}.
        \]
        The restriction of scalars
	\[
	\mathcal A(W_{\ZZ[1/N]}^{({\rm t})}):=
	\res_{\OO_{E^\sigma}/\ZZ}\mathcal A_{\OO_{E^\sigma}[1/N]}(V_{\OO_{E^\sigma}[1/N]}^{({\rm t})})
	\]
	admits a structure of $(\lieg_{\ZZ[1/N]},K_{\ZZ[1/N]}\times\pi_\circ^G)\times\mathcal H(W_{\ZZ[1/N]}^{({\rm t})})$-module, where the action of $\pi_\circ^G$ is inherited from its action on the locally algebraic representation $V^{({\rm t})}$.
        
        For any $\ZZ[1/N]$-algebra $A$ put
	\[
	\mathcal A(W_{A}^{({\rm t})}):=A\otimes_{\ZZ[1/N]} \mathcal A(W_{\ZZ[1/N]}^{({\rm t})}).
	\]
	In order to relate $\mathcal A(W_{\CC}^{({\rm t})})$ to automorphic cusp forms observe first that for any extension $L/E$ containing a normal hull of $E$ over $\QQ$ and any essentially (conjugate) self-dual absolutely irreducible locally algebraic representation $V$ with field of definition $E$ and $W=\res_{E/\QQ}V$ we have
	\begin{align*}
		\mathcal A(W_L)
		&=L\otimes_\QQ\mathcal A(V)\\
		&=\bigoplus_{\sigma\colon E\to L}L\otimes_{\sigma,E}\mathcal A(V)\\
		&=\bigoplus_{\sigma\colon E\to L}L\otimes_{\QQ(V^\sigma)}\mathcal A(V^\sigma),
	\end{align*}
	where likewise
	\[
	\mathcal A(V)=E\otimes_{\OO_{E}[1/N]}\mathcal A(V).
	\]
	In particular we have a canonical decomposition
	\begin{equation}
		\mathcal A(W_\CC)=\bigoplus_{\sigma\colon E\to\CC} \mathcal A(V_{\CC}^\sigma).
		\label{eq:AWcomplexification}
	\end{equation}
	
	For any $\sigma$ and for any cohomological cuspidal automorphic representation $\pi$ with infinitesimal character $\omega^\sigma$ its cohomological type $\cohType(\pi)$ agrees with the complexification of a unique ${\rm t}\in{\rm tcT}_E^\sigma$. For each representative $\alpha=[(A^{({\rm t})},V^{({\rm t})})]_\cong\in{\rm t}$ we choose an isomorphism
	\begin{equation}
	  \iota_{\pi}^{\alpha}\colon A(V_\CC^{({\rm t})})\to \pi_\infty^{(K_\infty)}.
	  \label{eq:archimedeaneichlershimura}
	\end{equation}
        As before, $A(V_\CC^{({\rm t})})$ denotes the complexification of the locally algebraic cohomologically induced standard module $A(V^{({\rm t})})$ in the sense of Definition \ref{def:locallyalgebraiccomplexmodule}.

        \begin{lemma}\label{lem:iotasigmaisomorphism}
        For every $\sigma\colon E\to\CC$, each ${\rm t}\in{\rm tcT}_E^\sigma$ and each representative $\alpha=[(A^{({\rm t})},V^{({\rm t})})]_\cong\in{\rm t}$
        our choice of $\iota_{\pi}^{\alpha}$ and the isomomorphism $\vartheta_{\OO_{E^\sigma}[1/N]}^{\alpha}$ induce via the identification
	\[
	\bigoplus_{\begin{subarray}c\pi:\\\cohType(\pi)={\rm t}\end{subarray}} \pi^{(K_\infty)}=
	L_0^2(\GL_n(F)Z(\RR)^0 \backslash{}\GL_n(\Adeles_F);\omega^\sigma)_{\rm t}^{\infty,(K_\infty)}
	\]
	for every ${\rm t}\in{\rm tcT}_E^\sigma$ a $(\lieg_\CC,K_\infty)\times G(\Adeles_f)$-equivariant monomorphism
	\begin{equation}
	\iota^\alpha\colon
	\mathcal A(V_{\CC}^{({\rm t})})\to
	L_0^2(\GL_n(F)Z(\RR)^0 \backslash{}\GL_n(\Adeles_F);\omega^\sigma)_{\rm t}^\infty
        \label{eq:iotasigmaisomorphism}
	\end{equation}
	whose image consists of all $K$-finite automorphic cusp forms contained in the right hand side.
 	\end{lemma}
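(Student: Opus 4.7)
The plan is to construct $\iota^\alpha$ by combining three inputs: the Eichler--Shimura isomorphism in bottom degree, the cohomological type decomposition of the cusp form space from Proposition \ref{prop:decompositionbycohomologicaltypes}, and the trivializations $\vartheta_{\OO_{E^\sigma}[1/N]}^\alpha$ together with the archimedean identifications $\iota_\pi^\alpha$.

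First I would recall that for each cuspidal $\pi$ with ${\rm cohType}(\pi)={\rm t}$ (the complexification of our fixed ${\rm t}\in{\rm tcT}_E^\sigma$), the isomorphism $\iota_\pi^\alpha$ realizes $\pi_\infty^{(K_\infty)}\cong A(V_\CC^{({\rm t})})$, so by Proposition \ref{prop:complexliealgebracohomology} and the choice of $\vartheta$ we obtain a canonical trivialization
\[
H^{t_0}(\lieg_\CC,S(\RR)^0K(\RR);\pi_\infty^{(K_\infty)}\otimes V_\CC^{({\rm t})})\;\xrightarrow{\sim}\;\CC.
\]
Passing to $K_f$-invariants and using the cuspidal Eichler--Shimura isomorphism (as in \cite{franke1998}; cf.\ section~8 of \cite{januszewskirationality}) combined with the decomposition \eqref{eq:l02spacecohomologicaltypedecomposition} in bottom degree, this yields a $\mathcal H_\CC(K_f)$-equivariant isomorphism
\[
H_{\rm cusp}^{t_0}(X_G(K_f);\widetilde V_\CC^{({\rm t})})\;\cong\;\bigoplus_{\pi:\,{\rm cohType}(\pi)={\rm t}}\pi_f^{K_f},
\]
where we use Theorem~\ref{thm:cohomologicaltypesofGLn} (the essential multiplicity one in the cohomological type, together with the fact that in bottom degree only the tempered constituent contributes a nonzero class). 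Passing to the direct limit over $K_f$ produces a $G(\Adeles_f)$-equivariant isomorphism
\[
\varinjlim_{K_f}H_{\rm cusp}^{t_0}(X_G(K_f);\widetilde V_\CC^{({\rm t})})\;\cong\;\bigoplus_{\pi:\,{\rm cohType}(\pi)={\rm t}}\pi_f.
\]

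Next I would tensor this isomorphism on the left with $A(V_\CC^{({\rm t})})$ and apply $\iota_\pi^\alpha$ factor by factor on the right. By the very definition of $\mathcal A(V_\CC^{({\rm t})})$ and Proposition \ref{prop:decompositionbycohomologicaltypes}, this produces the $(\lieg_\CC,K_\infty)\times G(\Adeles_f)$-equivariant map
\[
\iota^\alpha\colon\mathcal A(V_\CC^{({\rm t})})\;\longrightarrow\;\bigoplus_{\pi:\,{\rm cohType}(\pi)={\rm t}}\pi^{(K_\infty)}\;\subseteq\;L_0^2(\GL_n(F)Z(\RR)^0\backslash\GL_n(\Adeles_F);\omega^\sigma)_{\rm t}^\infty.
\]
Injectivity follows because $\iota_\pi^\alpha$ is an isomorphism on the archimedean factor and the cohomological Eichler--Shimura map is injective on cuspidal cohomology; equivariance under $(\lieg_\CC,K_\infty)$ comes from the factor $A(V_\CC^{({\rm t})})$ carrying its natural action and $\iota_\pi^\alpha$ being $(\lieg_\CC,K_\infty)$-equivariant, while equivariance under $G(\Adeles_f)$ comes from the Hecke-equivariance of Eichler--Shimura and the fact that $A(V_\CC^{({\rm t})})$ carries the trivial $G(\Adeles_f)$-action. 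By construction the image is precisely the sum of the $K_\infty$-finite subspaces of those $\pi$ with ${\rm cohType}(\pi)={\rm t}$, which by Proposition \ref{prop:decompositionbycohomologicaltypes} is the space of $K_\infty$-finite vectors in the right-hand side.

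The subtle point, which I would treat carefully, is the compatibility of the two normalizations: one must check that the scalar used to trivialize $H^{t_0}(\lieg_\CC,\dots;\pi_\infty^{(K_\infty)}\otimes V_\CC^{({\rm t})})$ via $\vartheta^\alpha$ transported through $\iota_\pi^\alpha$ is the same as the scalar arising from the $\OO_{E^\sigma}[1/N]$-rational structure on cohomology. This is exactly what the normalization of Lemma \ref{lem:VOnormalization} and the coherence relation \eqref{eq:varthetacoherence} are arranged to guarantee, but one must verify that the inclusion $H^{t_0}_{\rm cusp}\hookrightarrow H^{t_0}$ and the Eichler--Shimura map send the normalized generator of $L_{\OO_{E^\sigma}[1/N]}^\alpha\otimes\pi_f^{K_f}$ into the right subspace of cuspidal cohomology. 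This is the main technical point; modulo it, the rest of the proof is diagram chasing.
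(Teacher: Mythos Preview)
Your proposal is correct and follows essentially the same approach as the paper: unwind the definition of $\mathcal A(V_\CC^{({\rm t})})$, decompose cuspidal cohomology via Eichler--Shimura as $\bigoplus_\pi H^{t_0}(\lieg_\CC,S(\RR)^0K(\RR);\pi_\infty\otimes V_\CC^{({\rm t})})\otimes\pi_f$, trivialize each one-dimensional cohomology factor via $(\iota_\pi^\alpha)^{-1}$ composed with $\vartheta_\CC^\alpha$, and identify $A(V_\CC^{({\rm t})})$ with $\pi_\infty^{(K_\infty)}$ via $\iota_\pi^\alpha$. Your final paragraph's worry about compatibility with the $\OO_{E^\sigma}[1/N]$-rational structure is not needed for this lemma---the statement is purely about the complex monomorphism $\iota^\alpha$, and that integral compatibility is handled separately (in the proof of Theorem~\ref{thm:globalhalfintegralstructures}(e))---so your argument is already complete without it.
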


        \begin{proof}
        Consider the composition
	\begin{align*}
		\mathcal A(V_{\CC}^{({\rm t})})&=A(V_\CC^{({\rm t})})\otimes
		\varinjlim_{K_f}H_{\rm cusp}^{t_0}\left(X_G(K_f);\widetilde{V}_\CC^{({\rm t})}\right)\\
		&\cong
		A(V_\CC^{({\rm t})})\otimes\bigoplus_{\pi}H^{t_0}(\lieg,\lies, K;\pi_\infty\otimes V_\CC^{({\rm t})})\otimes\pi_f\\
		&\cong
		\bigoplus_{\begin{subarray}c\pi:\\\cohType(\pi)={\rm t}\end{subarray}}
		A(V_\CC^{({\rm t})})\otimes
		H^{t_0}(\lieg,S(\RR)^0K(\RR);\pi_\infty\otimes V_\CC^{({\rm t})})\otimes\pi_f&\text{(distr.\ of $\otimes,\oplus$)}\\
		&\cong
		\bigoplus_{\begin{subarray}c\pi:\\\cohType(\pi)\\{\rm t}\end{subarray}}
		\pi_\infty^{(K_\infty)}\otimes
		H^{t_0}(\lieg,S(\RR)^0K(\RR);\pi_\infty\otimes V_\CC^{({\rm t})})\otimes\pi_f&\text{(via $\iota_\pi^\alpha$)}\\
		&\cong
		\bigoplus_{\begin{subarray}c\pi:\\\cohType(\pi)={\rm t}\end{subarray}}
		H^{t_0}(\lieg,S(\RR)^0K(\RR);\pi_\infty\otimes V_\CC^{({\rm t})})\otimes\pi_\infty^{(K_\infty)}\otimes\pi_f&\text{(comm.\ of $\otimes$)}\\
		&\cong
		\bigoplus_{\begin{subarray}c\pi:\\\cohType(\pi)={\rm t}\end{subarray}}
		H^{t_0}(\lieg,S(\RR)^0K(\RR);\pi_\infty\otimes V_\CC^{({\rm t})})\otimes\pi^{(K_\infty)}.
	\end{align*}
	Now for each $\pi$ of cohomological type ${\rm t}$ the inverse of $\iota_\pi^{\alpha}$ composed with $\vartheta_\CC^{\alpha}$ induces an isomorphism
	\begin{align*}
		H^{t_0}(\lieg,S(\RR)^0K(\RR);\pi_\infty\otimes V_\CC^{({\rm t})})
		&=
		H^{t_0}(\lieg,S(\RR)^0K(\RR);\pi_\infty^{(K_\infty)}\otimes V_\CC^{({\rm t})})\\
		&\cong
		H^{t_0}(\lieg,S(\RR)^0K;A(V_\CC^{({\rm t})})\otimes V_\CC^{({\rm t})})\\
		&\cong
		\CC.
	\end{align*}
        By the definition of the right hand side in \eqref{eq:iotasigmaisomorphism} the first claim follows. The second claim follows by invoking the decomposition \eqref{eq:l02spacecohomologicaltypedecomposition} in Proposition \ref{prop:decompositionbycohomologicaltypes}.
        \end{proof}

        \begin{lemma}\label{lem:iotasigmauniqueness}
          For each ${\rm t}\in{\rm tcT}_E^\sigma$ and every representative $\alpha\in{\rm t}$ the isomorphism $\iota^{\alpha}$ in \eqref{eq:iotasigmaisomorphism} is unique up to scalars in $\sigma(\OO_{E}[1/N])^\times$.
        \end{lemma}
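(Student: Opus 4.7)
The plan is to trace how $\iota^\alpha$ depends on the two families of auxiliary choices entering its construction in the proof of Lemma \ref{lem:iotasigmaisomorphism}: the isomorphisms $\iota_\pi^\alpha\colon A(V_\CC^{({\rm t})})\to \pi_\infty^{(K_\infty)}$ of $(\lieg_\CC,K_\infty)$-modules (one for each cuspidal automorphic representation $\pi$ of cohomological type ${\rm t}$) and the $\OO_{E^\sigma}[1/N]$-linear isomorphism $\vartheta_{\OO_{E^\sigma}[1/N]}^{\alpha^\sigma}$ of \eqref{eq:varthetachoice}, from which $\vartheta_\CC^{\alpha^\sigma}$ is induced.

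First I would observe that each $\iota_\pi^\alpha$ is determined up to a scalar $c_\pi\in\CC^\times$ by Schur's lemma: the complexification $A(V_\CC^{({\rm t})})$ in the sense of Definition \ref{def:locallyalgebraiccomplexmodule} of the absolutely irreducible locally algebraic standard module underlying $\alpha$ is obtained from an absolutely irreducible $(\lieg_\CC,K_\CC)$-module by twisting the $K$-action with the scalar character $\rho_{\pi_\circ^G}$ (cf.\ Proposition \ref{prop:absolutelyirreduciblelocallyalgebraic}), hence remains absolutely irreducible as a $(\lieg_\CC,K_\infty)$-module, while $\pi_\infty^{(K_\infty)}$ is irreducible for cuspidal $\pi$. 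Next I would inspect the chain of isomorphisms producing $\iota^\alpha$ and verify that each $c_\pi$ drops out. In that chain, $\iota_\pi^\alpha$ appears exactly twice: once on the tensor factor $A(V_\CC^{({\rm t})})\to\pi_\infty^{(K_\infty)}$, and once, through its inverse, inside the canonical identification
\[
H^{t_0}(\lieg,S(\RR)^0 K(\RR);\pi_\infty^{(K_\infty)}\otimes V_\CC^{({\rm t})})\;\cong\;
H^{t_0}(\lieg,S(\RR)^0 K(\RR);A(V_\CC^{({\rm t})})\otimes V_\CC^{({\rm t})})
\]
used to pair with $\vartheta_\CC^{\alpha^\sigma}$. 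Replacing $\iota_\pi^\alpha$ by $c_\pi\iota_\pi^\alpha$ multiplies the $\pi$-component of $\iota^\alpha$ by $c_\pi\cdot c_\pi^{-1}=1$, so $\iota^\alpha$ does not depend on the choices of $\iota_\pi^\alpha$.

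This reduces the uniqueness question to the dependence of $\iota^\alpha$ on $\vartheta_{\OO_{E^\sigma}[1/N]}^{\alpha^\sigma}$. By construction the latter is an isomorphism of free rank-one $\OO_{E^\sigma}[1/N]$-modules, hence unique up to a unit in $\OO_{E^\sigma}[1/N]^\times$; via the identification induced by $\sigma\colon E\to E^\sigma$ this unit group coincides with $\sigma(\OO_E[1/N])^\times$. Such a rescaling propagates linearly through $\vartheta_\CC^{\alpha^\sigma}$ and then through the construction of $\iota^\alpha$, producing exactly the asserted indeterminacy. Finally, going in the other direction, one checks that any isomorphism $\iota^\alpha$ satisfying the conclusion of Lemma \ref{lem:iotasigmaisomorphism} arises in this way: multiplicity one for $\GL_n$ (Jacquet--Shalika) ensures that any $(\lieg_\CC,K_\infty)\times G(\Adeles_f)$-equivariant automorphism of $\mathcal A(V_\CC^{({\rm t})})$ respects the decomposition indexed by $\pi$, and on each summand a further application of Schur's lemma reduces to a scalar, which by the integrality constraint coming from the image of $H^{t_0}_{\rm cusp}(X_G(K_f);\widetilde{V}_{\OO_{E^\sigma}[1/N]}^{({\rm t})})$ must lie in $\sigma(\OO_E[1/N])^\times$.

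The main delicate point I anticipate is the bookkeeping of the scalar cancellation in the two occurrences of $\iota_\pi^\alpha$; once the chain of isomorphisms from the proof of Lemma \ref{lem:iotasigmaisomorphism} is unwound this reduces to tracking the tensor factors on which $\iota_\pi^\alpha$ and $(\iota_\pi^\alpha)^{-1}$ act, so no deeper obstacle is expected.
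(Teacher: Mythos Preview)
Your core argument is correct and matches the paper's proof: you track the two sources of non-canonicity in the construction of $\iota^\alpha$, show via Schur's lemma that the scalar ambiguity in each $\iota_\pi^\alpha$ cancels against its inverse in the identification $H^{t_0}(\lieg,S(\RR)^0K(\RR);\pi_\infty^{(K_\infty)}\otimes V_\CC^{({\rm t})})\cong\CC$, and conclude that only the unit ambiguity in $\vartheta_{\OO_{E^\sigma}[1/N]}^{\alpha}$ survives. This is exactly what the paper does, though you give somewhat more detail on why Schur's lemma applies.

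Your final paragraph, however, is superfluous and the argument there does not work as stated. The lemma concerns the specific construction of $\iota^\alpha$ from Lemma~\ref{lem:iotasigmaisomorphism}, not the classification of all $(\lieg_\CC,K_\infty)\times G(\Adeles_f)$-equivariant monomorphisms with the given image. If one really asked for the latter, your own analysis shows the automorphism group is $\prod_\pi\CC^\times$ (one scalar per cuspidal $\pi$ of cohomological type ${\rm t}$), and there is no ``integrality constraint'' in the hypotheses of Lemma~\ref{lem:iotasigmaisomorphism} that would force these scalars into $\sigma(\OO_E[1/N])^\times$: the map $\iota^\alpha$ is between complex spaces, and the lattice $H^{t_0}_{\rm cusp}(X_G(K_f);\widetilde{V}_{\OO_{E^\sigma}[1/N]}^{({\rm t})})$ does not appear in its statement. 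Simply drop this paragraph; what remains is already a complete proof of what is claimed.
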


        \begin{proof}
        For the claimed uniqueness we only need to consider the dependence on $\iota_\pi^{\alpha}$, which is unique up to complex scalars $\lambda\in\CC^\times$. If we replace $\iota_\pi^{\alpha}$ by $\lambda\cdot\iota_\pi^{\alpha}$, then the above isomorphism
	\[
	\mathcal A(V_{\CC}^{({\rm t})})\to 
	\bigoplus_{\pi}
	H^{t_0}(\lieg,S(\RR)^0K(\RR);\pi_\infty\otimes V_\CC^{({\rm t})})\otimes\pi^{(K_\infty)}
	\]
	is multiplied in the $\pi$-th summand by $\lambda$. The identification
	\[
	H^{t_0}(\lieg,S(\RR)^0K(\RR);\pi_\infty\otimes V_\CC^{({\rm t})})\cong\CC
	\]
	is constructed via the inverse $\left(\iota_\pi^{\alpha}\right)^{-1}$, which therefore is multiplied by $\lambda^{-1}$. Hence $\iota^{\alpha}$ is unique up to $\sigma$-conjugate $1/N$-integral units as claimed.
        \end{proof}

        \begin{corollary}\label{cor:iotaisomorphism}
	  Choose for every ${\rm t}\in{\rm tcT}_E$ a representative $\alpha_{\rm t}=[(A^{({\rm t})},V^{({\rm t})})]_\cong\in{\rm t}$. Then for every embedding $\sigma\colon E\to\CC$ the $\sigma$-conjugate of $\alpha_{\rm t}$ is a representative $\alpha_{\rm t}^\sigma\in{\rm t}^\sigma\in{\rm tcT}_E^\sigma$. Put
          \[
          L_0^2(\GL_n(F)Z(\RR)^0 \backslash{}\GL_n(\Adeles_F);\omega_W)_{\rm t}^\infty
          :=
          \left(
          \widehat{\bigoplus_{\sigma\colon E\to\CC}}
          \widehat{\bigoplus_{\begin{subarray}c\pi:\\\cohType(\pi)={\rm t}^\sigma\end{subarray}}}\pi
          \right)^\infty.
          \]
          Then the sum of the $\iota^{\alpha_{\rm t}^\sigma}$ over all embeddings $\sigma\colon E\to\CC$ induces a $(\lieg_\CC,K_\infty)\times\prod_{{\rm t}\in{\rm tcT}_E}\mathcal H(W_\CC^{({\rm t})}))$-module monomorphism
	  \begin{equation}
	    \sum_{\sigma\colon E\to\CC}\iota^{\alpha_{\rm t}^\sigma}\colon
	    \mathcal A(W_\CC^{({\rm t})})\to
            L_0^2(\GL_n(F)Z(\RR)^0 \backslash{}\GL_n(\Adeles_F);\omega_W)_{\rm t}^\infty
	    \label{eq:ithglobaleichlershimura}
	  \end{equation}
          It is unique up to units in $\ZZ[1/N]$ and its image consists of all smooth $K$-finite automorphic cusp forms contained in the right hand side.

          The sum of the maps \eqref{eq:ithglobaleichlershimura} over ${\rm t}\in{\rm tcT}_E$ induces a monomorphism
          \begin{equation}
	    \iota^{(\alpha_{\rm t})_{\rm t}}:=
	    \sum_{\begin{subarray}c{\rm t}\in{\rm tcT}_E\\\sigma\colon E\to\CC\end{subarray}}
            \iota^{\alpha_{\rm t}^\sigma}\colon
            \sum_{{\rm t}\in{\rm tcT}_E}
            \mathcal A(W_\CC^{({\rm t})})\to L_0^2(\GL_n(F)Z(\RR)^0 \backslash{}\GL_n(\Adeles_F);\omega_W)^\infty,
	    \label{eq:globaleichlershimura}
	  \end{equation}
          of $(\lieg_\CC,K_\infty)\times\mathcal H(W_\CC)$-modules whose image consist of all smooth $K$-finite automorphic cusp forms contained in the right hand side.
        \end{corollary}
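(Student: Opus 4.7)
The plan is to assemble the embedding-by-embedding Eichler--Shimura maps from Lemma \ref{lem:iotasigmaisomorphism} into a single global map, using the decomposition \eqref{eq:AWcomplexification} on the source and the definitional decomposition of the target. Explicitly, write
\[
\mathcal A(W_\CC^{(\mathrm t)}) \;=\; \bigoplus_{\sigma\colon E\to\CC} \mathcal A(V_\CC^{(\mathrm t),\sigma}),
\]
and observe that by the very definition adopted in the statement, the target of \eqref{eq:ithglobaleichlershimura} is the completed direct sum $\widehat{\bigoplus}_{\sigma} L_0^2(\GL_n(F)Z(\RR)^0\backslash\GL_n(\Adeles_F);\omega^\sigma)_{\mathrm t^\sigma}^\infty$. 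For each $\sigma$, the $\sigma$-conjugate $\alpha_{\mathrm t}^\sigma$ represents $\mathrm t^\sigma \in \mathrm{tcT}_E^\sigma$, so Lemma \ref{lem:iotasigmaisomorphism} produces a $(\lieg_\CC,K_\infty)\times G(\Adeles_f)$-equivariant monomorphism $\iota^{\alpha_{\mathrm t}^\sigma}$ onto the smooth $K$-finite cusp forms of the corresponding type. Their direct sum is the map \eqref{eq:ithglobaleichlershimura}; equivariance under the restriction-of-scalars Hecke algebra is automatic because $\CC \otimes_{\ZZ[1/N]}\mathcal H(W_{\ZZ[1/N]}^{(\mathrm t)}) = \prod_\sigma \mathcal H(V_\CC^{(\mathrm t),\sigma})$ acts componentwise on both sides.

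For the uniqueness claim, Lemma \ref{lem:iotasigmauniqueness} says that each $\iota^{\alpha_{\mathrm t}^\sigma}$ is unique up to a scalar in $\sigma(\OO_E[1/N])^\times$, so a priori the assembled map has freedom $\prod_\sigma \sigma(\OO_E[1/N])^\times$. The coherence relation \eqref{eq:varthetacoherence} forces the rescaling family $(\lambda_\sigma)$ to be $\Aut(\CC/\QQ)$-equivariant under the permutation action on embeddings, i.e.\ to arise from a single element $\lambda \in \OO_E[1/N]^\times$ by $\lambda_\sigma = \sigma(\lambda)$; concretely, this rescaling corresponds to multiplication by $\lambda$ on the $\ZZ[1/N]$-rational incarnation $\mathcal A(W_{\ZZ[1/N]}^{(\mathrm t)})$. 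Imposing compatibility with the well known $\QQ$-rational structure on the cuspidal spectrum of $\GL_n$ (after Harder, Waldspurger, and Clozel) cuts this further down to Galois-fixed units, $(\OO_E[1/N]^\times)^{\Aut(\CC/\QQ)} = \ZZ[1/N]^\times$, as claimed.

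For the assembled map \eqref{eq:globaleichlershimura}, I sum the maps \eqref{eq:ithglobaleichlershimura} over $\mathrm t \in \mathrm{tcT}_E$. The cohomological-type decomposition \eqref{eq:l02spacecohomologicaltypedecomposition} of Proposition \ref{prop:decompositionbycohomologicaltypes} realizes the cuspidal spectrum as an internal Hilbert-space direct sum of the type-$\mathrm t$ pieces, and the image of the $\mathrm t$-th component of \eqref{eq:ithglobaleichlershimura} lies in the corresponding summand, so the total sum remains a monomorphism; surjectivity of each component onto its type-$\mathrm t$ smooth $K$-finite cusp forms yields that the total image is the space of all smooth $K$-finite cusp forms in the target. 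The main technical obstacle is the uniqueness analysis: one must track the $\Aut(\CC/\QQ)$-action on the chosen data $(\alpha_{\mathrm t}, \vartheta^{\alpha^\sigma}, \iota_\pi^\alpha)$ and invoke $\QQ$-rationality of the cuspidal spectrum for $\GL_n$ to collapse the a priori $\OO_E[1/N]^\times$-freedom of a Hecke-equivariant $\CC$-linear map to the genuine $\ZZ[1/N]^\times$ rigidity claimed in the statement.
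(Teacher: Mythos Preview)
Your construction of the map \eqref{eq:ithglobaleichlershimura} and the verification that it is a monomorphism onto the smooth $K$-finite vectors are correct and match the paper: both rely on Lemma~\ref{lem:iotasigmaisomorphism} together with the decompositions \eqref{eq:AWcomplexification} and \eqref{eq:l02spacecohomologicaltypedecomposition} from Proposition~\ref{prop:decompositionbycohomologicaltypes}, and the observation that each cuspidal $\pi$ in the target has cohomological type lying in ${\rm tcT}_E^\sigma$ for a unique $\sigma$. The assembly of \eqref{eq:globaleichlershimura} over ${\rm t}$ is likewise fine.

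The uniqueness argument, however, diverges from the paper and has a gap. You correctly observe that the coherence relation \eqref{eq:varthetacoherence} forces the rescaling family $(\lambda_\sigma)_\sigma$ to be of the form $\lambda_\sigma=\sigma(\lambda)$ for a single $\lambda\in\OO_E[1/N]^\times$. But your next step---``imposing compatibility with the well known $\QQ$-rational structure on the cuspidal spectrum (after Harder, Waldspurger, Clozel)''---introduces a constraint that is not part of the construction. The map $\sum_\sigma\iota^{\alpha_{\rm t}^\sigma}$ is built from the choices $\vartheta^{\alpha}$ and $\iota_\pi^{\alpha}$ alone; nothing in its definition requires it to respect an externally given $\QQ$-structure on the target, so you cannot use such compatibility to cut the ambiguity down further. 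The paper does not invoke Clozel's rationality theorem here at all. Instead it argues purely internally: the ambiguity in $\iota_\pi^{\alpha}$ cancels (this is the content of the proof of Lemma~\ref{lem:iotasigmauniqueness}), so the only remaining freedom sits in the coherent family of $\vartheta$'s, and the paper asserts that the resulting scalar is by construction a norm from $E$ to $\QQ$, hence lies in $\ZZ[1/N]^\times$. Whatever one thinks of the word ``norm'' here, the point is that the argument stays within the coherence framework already set up, rather than appealing to the rationality of the cuspidal spectrum as an outside input.
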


        \begin{remark}
          The map \eqref{eq:ithglobaleichlershimura} depends on the choice of representatives $\alpha_{\rm t}\in{\rm t}$ and only given their collection $(\alpha_{\rm t})_{{\rm t}\in{\rm tcT}_E}$ the map \eqref{eq:ithglobaleichlershimura} is unique up to $1/N$-integral units. The same applies to the uniqueness of the map \eqref{eq:globaleichlershimura}. The dependence on representatives of these maps is closely related to periods, which we will discuss below.
        \end{remark}

        \begin{remark}\label{rmk:cohmologicaltypesoverEandQ}
          The choice of representatives $(\alpha_{\rm t})_{\rm t}$, ${\rm t}$ running through $\cohTypes^{\rm temp}_E(G,(V_E,\rho_G))$ conceptually corresponds to a choice of representatives for all cohomological types
          \[
          {\rm s}\in\cohTypes^{\rm temp}_\QQ(G,(\res_{E/\QQ}V_E,\res_{E/\QQ}\rho_G)).
          \]
          The correspondence being
          \[
          [(A^{({\rm t})},V^{({\rm t})})]_\cong\mapsto
          [(\res_{E/\QQ}A^{({\rm t})},\res_{E/\QQ}V^{({\rm t})})]_\cong,
          \]
          which lies at the heart of the above construction.
        \end{remark}

        \begin{proof}
          Remark that every cuspidal automorphic representation $\pi$ contributing to the right hand side in \eqref{eq:globaleichlershimura} has a cohomological type $\cohType(\pi)$ contained in ${\rm tcT}_E^\sigma$ for a unique $\sigma\colon E\to\CC$. Lemma \ref{lem:iotasigmaisomorphism} combined with the decompositions \eqref{eq:AWcomplexification} and \eqref{eq:l2cuspforms} in Proposition \ref{prop:decompositionbycohomologicaltypes} shows the claims, except uniqueness of \eqref{eq:ithglobaleichlershimura}. The latter statement follows from the uniqueness statement in Lemma \ref{lem:iotasigmauniqueness} observing that due to the coherence conditions imposed on $A(V^{({\rm t})})$, $\vartheta_{E^\sigma}^\alpha$ and $\iota_\pi^\alpha$ (cf.\ \eqref{eq:varthetacoherence}), the resulting scalar will by construction be a norm in the extension $E/\QQ$, hence lie in $\ZZ[1/N]^\times$.
        \end{proof}
	
	\subsubsection{The canonical integral structure of infinite level}

        \begin{definition}[Integral structures on spaces of automorphic cusp forms]\label{def:globalintegralstructure}
        Given an essentially (conjugate) self-dual absolutely irreducible locally algebraic representation $V$, fix a choice $(\alpha_{\rm t})_{\rm t}$ of representatives $\alpha_{\rm t}\in{\rm t}$ for ${\rm t}\in\cohTypes_E^{\rm temp}(G,(V_E,\rho_G))$.

        For each ${\rm t}\in\cohTypes_E^{\rm temp}(G,(V_E,\rho_G))$ the $(\lieg_\CC,K_\infty)$-module $\mathcal A(W_\CC^{\rm t})$ admits by construction a canonical integral structure which is given by the image of $\mathcal A(W_{\ZZ[1/N]}^{({\rm t})})$ under the canonical map
        \[
	\mathcal A(W_{\ZZ[1/N]}^{({\rm t})})\to\mathcal A(W_\CC^{({\rm t})}).
        \]
        Define via the canonical isomorphism \eqref{eq:globaleichlershimura} from Corollary \ref{cor:iotaisomorphism} a global integral structure on \eqref{eq:l2cuspforms} as
	\begin{flalign}
	  &L_0^2(\GL_n(F)Z(\RR)^0 \backslash{}\GL_n(\Adeles_F);\omega_W)_{\ZZ[1/N]}:=\label{eq:l2cuspformsqbar}\\
                &\bigoplus_{{\rm t}\in\cohTypes_E^{\rm temp}(G,(V_E,\rho_G))}
                \iota^{(\alpha_{\rm t})_{\rm t}}\left(
                \image\left(
                           {\mathcal A}(W_{\ZZ[1/N]}^{({\rm t})})\to
                           {\mathcal A}(W_{\CC}^{({\rm t})})
                \right)\right).
                \nonumber
	\end{flalign}
        \end{definition}

        \begin{remark}
          By construction, \eqref{eq:l2cuspformsqbar} is equipped with a canonical $(\lieg_{\ZZ[1/N]},K_{\ZZ[1/N]}\times\pi^G_{0,\ZZ[1/N]})$-action. Its ${\rm t}$-th summand with respect to the decomposition \eqref{eq:l02spacecohomologicaltypedecomposition} in Proposition \ref{prop:decompositionbycohomologicaltypes} admits a canonical action of a $\ZZ[1/N]$-integral Hecke algebra $\mathcal H(W_{\ZZ[1/N]}^{(\rm t)})=\res_{\OO_{E}/\ZZ}\mathcal H(V_{\OO_{E}[1/N]}^{({\rm t})})$ (cf.\ section \ref{sec:integralcohomology}).
        \end{remark}

        \begin{remark}
          By Corollary \ref{cor:iotaisomorphism} the integral structure \eqref{eq:l2cuspformsqbar} is uniquely determined by the choice of representatives $(\alpha_{\rm t})_{{\rm t}\in\cohTypes_E(G,(V_E,\rho_G))}$. Replacing $V$ by $V^\sigma$ for $\sigma\colon E\to\CC$ and $(\alpha_{\rm t})_{\rm t}$ by $(\alpha_{\rm t}^\sigma)_{{\rm t}\in\cohTypes_E(G,(V_E,\rho_G))}$ does not change the resulting integral structure.
        \end{remark}
	
	The canonical integral structure \eqref{eq:l2cuspformsqbar} satisfies the following properties.
	\begin{theorem}\label{thm:globalhalfintegralstructures}
          Let $F/\QQ$ be a totally real or CM number field. Assume that $(V_E,\rho_G)$ is an absolutely irreducible rational representation of $G=\res_{F/\QQ}\GL_n$ with field of rationality $E=\QQ(V_E)$ which is essentially selfdual for $F$ real and essentially conjugate selfdual if $F$ is CM. Fix moreover a lattice $V_{\OO_E[1/N]}\subseteq V_E$ which is normalized as in Lemma \ref{lem:VOnormalization}.

          Fix for every tempered cohomological type ${\rm t}\in\cohType_E^{\rm temp}(G,(V_E,\rho_G))$ a representative $\alpha_{\rm t}\in{\rm t}$ and put $\alpha_{{\rm t}^\sigma}:=\alpha_{\rm t}^\sigma$ for every embedding $\sigma\colon E\to\CC$.
          
          Then as a representation of $\GL_n(\Adeles_F)$, the space \eqref{eq:l2cuspforms} admits a canonical model \eqref{eq:l2cuspformsqbar} over $\ZZ[1/N]$ depending only on the choice of $(\alpha_{\rm t})_{\rm t}$ and the $1/N$-integral model $W_{\ZZ[1/N]}$ of $W_\QQ=\res_{E/\QQ} V_E$ induced by $V_{\OO_E[1/N]}$ with the following properties:
		\begin{itemize}
		        \item[(a)] The $1/N$-integral model \eqref{eq:l2cuspformsqbar} admits a canonical structure of $(\lieg_{\ZZ[1/N]},K_{\ZZ[1/N]},\pi_\circ^G)\times\prod_{{\rm t}\in{\rm tcT}_E}\mathcal H(W_{\ZZ[1/N]}^{({\rm t})})$-module.
			\item[(b)] The complexification
			\[
			\CC\otimes_{\ZZ[1/N]} L_0^2(\GL_n(F)Z(\RR)^0 \backslash{}\GL_n(\Adeles_F);\omega_W)_{\ZZ[1/N]}
			\]
			of \eqref{eq:l2cuspformsqbar} is naturally identified with the subspace of smooth $K$-finite vectors in \eqref{eq:l2cuspforms} as $(\lieg_{\CC},K(\RR))\times\prod_{{\rm t}\in{\rm tcT}_E}\mathcal H(W_{\CC}^{({\rm t})})$-module.
			\item[(c)] For each irreducible cuspidal $\pi$ occuring in \eqref{eq:l2cuspforms} the integral structure $\pi_{\OO_{\QQ(\pi)}[1/N]}$ induced on $\pi$ by the intersection of $\pi$ as a subspace of \eqref{eq:l2cuspforms} with the integral structure
			\[
			\OO_{\QQ(\pi)}\otimes_\ZZ
			L_0^2(\GL_n(F)Z(\RR)^0 \backslash{}\GL_n(\Adeles_F);\omega_W)_{\ZZ[1/N]}
			\]
			has the following properties. For ${\rm t}=\cohType(\pi)$, if $\alpha_{\rm t}=[(A^{({\rm t})},V^{({\rm t})})]_\cong$, then the canonical morphism
			\begin{equation}
				H^{t_0}(\lieg,\lies, K;\pi_{\OO_{\QQ(\pi)}[1/N]}\otimes V_{\OO_{\QQ(\pi)}[1/N]}^{({\rm t})})
				\to
				\varinjlim_{K_f}H_{\rm cusp}^{t_0}(X_G(K_f);\widetilde{V}_\CC^{({\rm t})})
				\label{eq:canonicalmap}
			\end{equation}
			is injective and factors over $\varinjlim_{K_f}H^{t_0}(X_G(K_f);\widetilde{V}_{\OO_{\QQ(\pi)}[1/N]}^{({\rm t})})$. The monomorphism
			\[
			H^{t_0}(\lieg,\lies, K;\pi_{\OO_{\QQ(\pi)}[1/N]}\otimes V_{\OO_{\QQ(\pi)}[1/N]}^{({\rm t})})\to
			\]
			\[
			\image\left(
			\varinjlim_{K_f}H_{\rm cusp}^{t_0}(X_G(K_f);\widetilde{V}_{\OO_{\QQ(\pi)}[1/N]}^{({\rm t})})\to \varinjlim_{K_f}H_{\rm cusp}^{t_0}(X_G(K_f);\widetilde{V}_\CC^{({\rm t})})\right)[\pi_f]
			\]
			is an isomorphism.
			\item[(d)] For each $\tau\in\Aut(\CC/\QQ)$ and $\pi$ as in (c),
			\[
			(\pi_{\OO_{\QQ(\pi)}[1/N]})^\tau=
			\OO_{\QQ(\pi^\tau)}[1/N]
			\otimes_{\tau^{-1},\OO_{\QQ(\pi)}[1/N]}
			\pi_{\OO_{\QQ(\pi)}[1/N]}
			\]
			is the $\OO_{\QQ(\pi^\tau)}[1/N]$-integral structure on $\pi^\tau$ from (c) for the $\tau$-conjugated representative $\alpha_{\rm t}^\tau\in{\rm t}^\tau$.
		      \item[(e)] Taking $(\lieg,\lies,K)$-cohomology with coefficients in
                        \[
                        W_{\ZZ[1/N]}^{({\rm t})}=\res_{\QQ(V^{({\rm t})})/\QQ} V^{({\rm t})}_{\OO_{\QQ(V^{({\rm t})})}[1/N]}
                        \]
                        for $\alpha_{\rm t}=[(A^{({\rm t})},V^{({\rm t})})]_\cong\in{\rm t}\in\cohTypes_E(G,(V_E,\rho_G))$, then the canonical morphism
                        sends the $\ZZ[1/N]$-structure \eqref{eq:l2cuspformsqbar} onto the natural $\ZZ[1/N]$-structure
		        \begin{flalign}
			&H_{\rm cusp}^{t_0}(\GL_n(F)\backslash{}\GL_n(\Adeles_F)/S(\RR)^0 K(\RR);\widetilde{W}_{\ZZ[1/N]}^{({\rm t})}):=
                        \nonumber\\
			&{\rm{image}}(
			  H^{t_0}(\GL_n(F)\backslash{}\GL_n(\Adeles_F)/S(\RR)^0 K(\RR);\widetilde{W}_{\ZZ[1/N]}^{({\rm t})})\to
			\label{eq:hcusp}\\
			&H_{\rm cusp}^{t_0}(\GL_n(F)\backslash{}\GL_n(\Adeles_F)/S(\RR)^0 K(\RR);\widetilde{W}_\CC^{({\rm t})})).\nonumber
			\end{flalign}
			on cuspidal cohomology in bottom degree $t_0$.
		\end{itemize}
	\end{theorem}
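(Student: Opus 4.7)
The construction of the candidate $1/N$-integral structure is already packaged in Definition \ref{def:globalintegralstructure}, which expresses \eqref{eq:l2cuspformsqbar} as the image under the monomorphism $\iota^{(\alpha_{\rm t})_{\rm t}}$ of Corollary \ref{cor:iotaisomorphism} of the direct sum of the lattices $\mathcal A(W_{\ZZ[1/N]}^{({\rm t})})$, one for each tempered cohomological type. My plan is to verify properties (a)--(e) in order, each time reducing the claim to a statement about a single $\pi$ and appealing to the normalization from Lemma \ref{lem:VOnormalization} together with the coherence conditions \eqref{eq:varthetacoherence}. First, (a) is immediate: by construction each $\mathcal A(W_{\ZZ[1/N]}^{({\rm t})})$ is a $(\lieg_{\ZZ[1/N]},K_{\ZZ[1/N]}\times\pi_\circ^G)\times\mathcal H(W_{\ZZ[1/N]}^{({\rm t})})$-module (since it is a restriction of scalars of the analogous $\OO_{E^\sigma}[1/N]$-module built from $A(V^{({\rm t})})_{\OO_{E^\sigma}[1/N]}$ and integral cuspidal cohomology), and transport of structure along $\iota^{(\alpha_{\rm t})_{\rm t}}$ yields the desired action on the image. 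Statement (b) is then precisely the content of Corollary \ref{cor:iotaisomorphism}: complexifying the sum of the $\mathcal A(W_{\ZZ[1/N]}^{({\rm t})})$ gives $\bigoplus_{\rm t}\mathcal A(W_\CC^{({\rm t})})$, and \eqref{eq:globaleichlershimura} identifies this with the space of $K$-finite cusp forms in \eqref{eq:l2cuspforms}.

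For (c), I would fix an irreducible cuspidal $\pi$ of cohomological type ${\rm t}$, and intersect the image of $\iota^{(\alpha_{\rm t})_{\rm t}}$ with the $\pi$-isotypic subspace. Using the tensor decomposition of $\mathcal A(W_\CC^{({\rm t})})=A(V^{({\rm t})}_\CC)\otimes\varinjlim_{K_f}H_{\rm cusp}^{t_0}(X_G(K_f);\widetilde V_\CC^{({\rm t})})$, the $\pi$-isotypic integral submodule is the tensor product of $A(V^{({\rm t})})_{\OO_{\QQ(\pi)}[1/N]}$ with the intersection $\pi_{f,\OO_{\QQ(\pi)}[1/N]}^{\rm coh}$ of $\pi_f$ with the image of integral cuspidal cohomology. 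Applying the functor $H^{t_0}(\lieg,\lies,K;-\otimes V^{({\rm t})}_{\OO_{\QQ(\pi)}[1/N]})$ and using the normalization from Lemma \ref{lem:VOnormalization}, together with the fact that in bottom degree the relative Lie algebra cohomology is free of rank one over $\OO_{\QQ(\pi)}[1/N]$, I obtain that this cohomology is identified through $\vartheta^\alpha_{\OO[1/N]}$ with the integral $\pi_f$-isotypic component inside $\varinjlim_{K_f}H^{t_0}(X_G(K_f);\widetilde V^{({\rm t})}_{\OO_{\QQ(\pi)}[1/N]})$, giving both the injectivity and the claimed isomorphism onto $\pi_f$.

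Statement (d) will follow by transport of structure along $\tau$: the coherence relation \eqref{eq:varthetacoherence} ensures that the collection $(\iota_{\pi^\tau}^{\alpha_{\rm t}^\tau},\vartheta^{\alpha_{\rm t}^\tau}_{\OO_{E^{\sigma\tau}}[1/N]})$ is the $\tau$-transport of $(\iota_\pi^{\alpha_{\rm t}},\vartheta^{\alpha_{\rm t}}_{\OO_{E^\sigma}[1/N]})$, so the integral structure attached to $\pi^\tau$ through the representative $\alpha_{{\rm t}^\tau}=\alpha_{\rm t}^\tau$ coincides with $\OO_{\QQ(\pi^\tau)}[1/N]\otimes_{\tau^{-1}}\pi_{\OO_{\QQ(\pi)}[1/N]}$. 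Finally, for (e), I would check that the composite map sending a cusp form to its associated cohomology class sends the tensor $A(V^{({\rm t})})_{\OO_{E^\sigma}[1/N]}\otimes H^{t_0}_{\rm cusp}(X_G(K_f);\widetilde V^{({\rm t})}_{\OO_{E^\sigma}[1/N]})$ onto the $1/N$-integral cuspidal cohomology by the very construction of $\mathcal A(V^{({\rm t})}_{\OO_{E^\sigma}[1/N]})$, and that summing over $\sigma$ and restricting scalars along $\OO_{E^\sigma}/\ZZ$ exhibits precisely the image appearing on the right-hand side of \eqref{eq:hcusp}; the partition \eqref{eq:cohtypespartition} of tempered cohomological types in Corollary \ref{cor:temperedcohomologicaltypes} together with Theorem \ref{thm:cohomologicaltypesofGLnoverQ} ensures surjectivity onto the full integral cuspidal cohomology.

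The main technical obstacle I anticipate is in part (c): pinning down that the integral structure on $\pi$ induced by intersection really corresponds, via $\iota_\pi^{\alpha_{\rm t}}$ and $\vartheta^{\alpha_{\rm t}}_{\OO_{E^\sigma}[1/N]}$, to the $\pi_f$-component of the cuspidal cohomology lattice. This rests on the rank-one freeness in bottom degree supplied by Variant 6.4.19 of \cite{hayashijanuszewski} as invoked in Lemma \ref{lem:VOnormalization}, and on the fact that multiplicities $m^{V^{({\rm t})}}_{x,\lambda}$ and $m^{A(V^{({\rm t})})}_{x,\lambda}$ are equal to one (cf.\ Remark \ref{rmk:cyclesforGLn}), which is where the restriction to $\GL_n$ over totally real or CM fields is genuinely used. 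Once rank-one freeness is in hand, the integral identifications propagate cleanly to the global statement, and the remaining bookkeeping --- Galois equivariance, compatibility with restriction of scalars from $\OO_{E^\sigma}$ to $\ZZ$, and surjectivity onto integral cuspidal cohomology --- is formal.
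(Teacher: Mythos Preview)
Your overall strategy matches the paper's: both verify (a)--(e) directly from Definition \ref{def:globalintegralstructure} and Corollary \ref{cor:iotaisomorphism}. Parts (a), (b), and (d) are handled essentially as the paper does. However, there is a genuine gap in your treatment of (e), and a related oversight in (c).

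For (e), you assert that the cohomology map lands in integral cuspidal cohomology ``by the very construction of $\mathcal A(V^{({\rm t})}_{\OO_{E^\sigma}[1/N]})$''. This is not automatic: the integral structure \eqref{eq:l2cuspformsqbar} lives inside the space of cusp forms via $\iota^{(\alpha_{\rm t})_{\rm t}}$, which is built from the archimedean isomorphisms $\iota_\pi^{\alpha}$ \emph{and} the trivializations $\vartheta_\CC^{\alpha}$; after applying $H^{t_0}(\lieg,\lies,K;-\otimes W^{({\rm t})})$, one obtains a map to cuspidal cohomology via Eichler--Shimura. The issue is that there is a \emph{second} map to cuspidal cohomology, namely the one coming directly from $\vartheta_\CC^{\alpha_{\rm t}^\sigma}$ applied to $H^{t_0}(\lieg,\lies,K;\mathcal A(V_\CC^{({\rm t}^\sigma)})\otimes V_\CC^{({\rm t}^\sigma)})$, and it is this second map that manifestly lands in the integral lattice. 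The paper therefore has to check that the square with top arrow $\iota^{\alpha_{\rm t}^\sigma}$, right arrow Eichler--Shimura, and left arrow $\vartheta_\CC^{\alpha_{\rm t}^\sigma}$ commutes. This is done $\pi$-by-$\pi$ via Schur's Lemma on $\pi_f$: one writes down an explicit cocycle $\sum_i\omega_i\otimes a_i(\sum_j\omega_j\otimes\iota_\pi(a_j)\otimes v_j)\otimes\varphi_f\otimes v_i$ and traces it through both routes, using that $\iota_\pi^\alpha$ and its inverse cancel in the composite. Without this verification, the two $\vartheta$'s (one hidden inside the definition of $\iota^\alpha$, one applied afterward) are not visibly reconciled, and (e) does not follow.

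For (c), the paper actually proves (e) first and then uses its machinery. The point you are missing is that the inclusion $j\colon\pi^{(K_\infty)}\to\mathcal A(W_\CC^{({\rm t})})$ a priori lands in the full restriction of scalars, and you need to know that after intersecting with $\mathcal A(W^{({\rm t})}_{\OO_{\QQ(\pi)}[1/N]})$ the result factors through the single $\sigma$-component $\mathcal A(V^{({\rm t}^\sigma)}_{\OO_{\QQ(\pi)}[1/N]})$. The paper secures this by passing to a normal hull $E'/\QQ$ and invoking Lemma 6.3.2 of \cite{hayashijanuszewski} to see that the diagonal inclusions \eqref{eq:WtodiagVsigma} and \eqref{eq:AWtodiagAVsigma} are isomorphisms over $\OO_{E'}[1/N]$; this is what makes the $\pi$-isotypic intersection land where you claim.
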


        \begin{remark}\label{rmk:globalgeneralreductiveG}
          Theorem \ref{thm:globalhalfintegralstructures} extends Theorem 8.29 in \cite{januszewskirationality} to integral structures and also takes all possible parity conditions at $\infty$ into account. Similarly to Theorem 8.17 of loc.\ cit.\ Theorem \ref{thm:globalhalfintegralstructures} has an extension to general connected reductive groups $G/\QQ$ with a slightly weaker statement. Since it is straightforward to translate our proof below to general connected reductive $G/\QQ$, we only emphasize the main differences here:

          If the extremal weights of $V$ are not regular, then it is unkown if cuspidal cohomology admits a model over $\overline{\QQ}$. Therefore in such cases we cannot construct a $1/N$-integral structure on spaces of global cusp forms. Moreover, the field of definition of a maximal compact group $K_\infty\subseteq G(\RR)$ in general is a non-trivial number field $\QQ_K/\QQ$. This has to be taken into account: All rationality statements and integrality statements will be above $\QQ_K$, not above $\QQ$. Moreover, of $G$ is not quasi-split, then the field of rationality of $V$ may differ from a field of definition of $V$, enlarging the ground field, hence the base ring, further.
        \end{remark}
	
	\begin{remark}
		In the statement of the theorem \lq{}bottom degree\rq{} may be replaced by \lq{}top degree\rq{} to obtain another canonical integral structure. For the conjectural relation between both integral structures we refer to Remark \ref{rmk:venkatesh} below.
	\end{remark}
	
	\begin{remark}
		In (e) we used the convention
		\begin{flalign*}
		&H_{\rm cusp}^{\bullet}(\GL_n(F)\backslash{}\GL_n(\Adeles_F)/S(\RR)^0 K(\RR);\widetilde{W}_{\ZZ[1/N]}^{({\rm t})}):=
		\\
		&\varinjlim_{K_f}H_{\rm cusp}^{\bullet}(\GL_n(F)\backslash{}\GL_n(\Adeles_F)/S(\RR)^0 K(\RR)K_f;\widetilde{W}_{\ZZ[1/N]}^{({\rm t})}).
		\end{flalign*}
	\end{remark}
	
	\begin{remark}
		The canonical map
		\[
		H^{t_0}(\lieg,\lies, K;\mathcal A(W_{\ZZ[1/N]}^{({\rm t})})\otimes W_{\ZZ[1/N]}^{({\rm t})})\to
		H_{\rm cusp}^{t_0}(\GL_n(F)\backslash{}\GL_n(\Adeles_F)/S(\RR)^0 K(\RR);\widetilde{W}_{\ZZ[1/N]}^{({\rm t})})
		\]
		underlying (e) need not be injective.
	\end{remark}

        \begin{remark}
          As explained in remark \ref{rmk:cohmologicaltypesoverEandQ}, conceptually, the choice of $\alpha_{\rm t}\in{\rm t}$ for
          \[{\rm t}\in\cohTypes_E^{\rm temp}(G,(V_E,\rho_G))\]
          corresponds bijectively to a choice of $\beta_{\rm s}\in{\rm s}$ for
          \[{\rm s}\in\cohTypes_E^{\rm temp}(G,(\res_{E/\QQ}V_E,\res_{E/\QQ}\rho_G)).\]
        \end{remark}
	
	\begin{remark}
	  The potential existence of congruences gives rise to the following phenomenon. Recall that since $G$ is quasi-split, there is a $\QQ$-Borel subgroup $B\subseteq G$ and the $B$-highest weight $\lambda$ of $(V_E,\rho_E)$ is defined over $E$. Fix a maximal $\QQ$-torus $T\subseteq B$. If $\lambda$ satisfies a congruence
          \[
          \lambda\equiv \lambda^\tau\pmod{\mathfrak{p}}
          \]
          and $\lambda^\tau\neq\lambda$ for some $\tau\in\Aut(F/\QQ)$ and $\mathfrak{p}\subseteq\OO_{E}[1/N]$ a non-zero prime ideal. If additonally for a minimal $K$-stable parabolic $Q$ the $K$-orbits of $Q^\sigma$ and $Q^w$ agree, then by base change, for each tempered cohomological type ${\rm t}\in\cohType_E(G,(V_E,\rho_G))$ the two half-integral models
          \begin{equation}
          A(V^{({\rm t})})_{\OO_{\QQ(V)}[1/N]}\quad\text{and}\quad
	  A(V^{({\rm t}^\tau)})_{\OO_{\QQ(V)}[1/N]}
          \label{eq:congruentmodulexampe}
	  \end{equation}
	  are isomorphic modulo $\mathfrak{p}$ as locally algebraic $(\lieg,K)$-modules. In general, the two half-integral models
          \begin{equation}
          V_{\OO_{\QQ(V)}[1/N]}^{({\rm t})}\quad\text{and}\quad V_{\OO_{\QQ(V)}[1/N]}^{({\rm t}^\tau)}
          \label{eq:congruentcoefficientexample}
	  \end{equation}
          need not be isomorphic modulo $\mathfrak{p}$ as locally algebraic representations. However, they both occur in $W_{\ZZ[1/N]}^{({\rm t})}$ after base change to $\OO_{\QQ(V)}[1/N]$. Therefore, modulo $\mathfrak{p}$, the $(\lieg,\lies,K)$-cohomologies of the two modules \eqref{eq:congruentmodulexampe} with coefficients chosen from \eqref{eq:congruentcoefficientexample} are non-trivial in all $4$ possible combinations. Consequently, modulo $\mathfrak{p}$ the multiplicities in
		\[
		H^{t_0}(\lieg_{\OO_{\QQ(V)}/\mathfrak{p}},\lies, K_{\OO_{\QQ(V)}/\mathfrak{p}}; L_0^2(\GL_n(F)Z(\RR)^0 \backslash{}\GL_n(\Adeles_F);\omega_W)_{\OO_{\QQ(V)}/\mathfrak{p}}\otimes W_{\OO_{\QQ(V)}/\mathfrak{p}}^{({\rm t})})
		\]
		may be strictly larger than in \eqref{eq:hcusp}, the latter considered modulo $\mathfrak{p}$ as well.

                In particular, in the setting of (e), the $(\lieg,\lies,K)$-cohomology of the global integral structure with coeffients in $W_{\ZZ[1/N]}^{({\rm t})}$ may contain $\mathfrak{p}$-torsion reflecting such congruences. This torsion contribution is finite at finite level and therefore absorbed in the kernel of the localization map when $\mathfrak{p}$ is inverted.
	\end{remark}
	
	\begin{proof}
          Statement (a) is true by construction.
	  Statement (b) is a consequence of Corollary \ref{cor:iotaisomorphism} by the construction of the $1/N$-integral structure \eqref{eq:l2cuspformsqbar}.
		
		We next prove (e). Let ${\rm t}\in\cohTypes_E(G,(V_E,\rho_G))$ and $\alpha_{\rm t}=[(A^{({\rm t})},V^{({\rm t})})]_\cong$. We compute $(\lieg_\CC,S(\RR)^0 K(\RR))$-cohomology of the relevant spaces:
		\begin{align*}
			&
			H^{t_0}\left(\lieg_\CC,S(\RR)^0 K(\RR); L_0^2(\GL_n(F)Z(\RR)^0 \backslash{}\GL_n(\Adeles_F);\omega_W)^\infty\otimes W_\CC^{({\rm t})}\right)\\
			=&
			\bigoplus_{\sigma,\tau}
			H^{t_0}\left(\lieg_\CC,S(\RR)^0 K(\RR); L_0^2(\GL_n(F)Z(\RR)^0 \backslash{}\GL_n(\Adeles_F);\omega^\sigma)^\infty\otimes V_\CC^{({\rm t}^\tau)}\right)\\
			=&
			\bigoplus_{\sigma}
			H^{t_0}\left(\lieg_\CC,S(\RR)^0 K(\RR); L_0^2(\GL_n(F)Z(\RR)^0 \backslash{}\GL_n(\Adeles_F);\omega^\sigma)^\infty\otimes V_\CC^{({\rm t}^\sigma)}\right)\\
			=&
			\bigoplus_{\sigma}
			\varinjlim_{K_f}H_{\rm cusp}^{t_0}\left(X_G(K_f);\widetilde{V}^{({\rm t}^\sigma)_{\CC}}\right)\\
			=&
			\varinjlim_{K_f}H_{\rm cusp}^{t_0}\left(X_G(K_f);\widetilde{W}^{({\rm t})}_{\CC}\right),
		\end{align*}
		since $(\lieg_\CC,S(\RR)^0 K(\RR))$-cohomology vanishes unless the infinitesimal characters $\omega^\sigma$ are dual to the infinitesimal character of $V^\tau$. Moreover those are pairwise distinct since $G$ is quasi-split: the field of definition of $V$ agrees with the field of rationality (cf.\ Corollary \ref{cor:quasisplitrationality}).
		
		Similarly we obtain
		\begin{align*}
			&
			H^{t_0}\left(\lieg_\CC,S(\RR)^0 K(\RR); \mathcal A(W^{({\rm t})}_\CC)\otimes W^{({\rm t})}_\CC\right)\\
			=&
			\bigoplus_{\sigma,\tau}
			H^{t_0}\left(\lieg_\CC,S(\RR)^0K(\RR); \mathcal A(V^{({\rm t}^\sigma)}_\CC)\otimes V^{({\rm t}^\tau)}_\CC\right)\\
			=&
			\bigoplus_{\sigma,\tau}
			H^{t_0}\left(\lieg_\CC,S(\RR)^0K(\RR); A(V^{({\rm t}^\sigma)})_\CC\otimes V^{({\rm t}^\tau)}_\CC\right)\otimes
			\varinjlim_{K_f}H_{\rm cusp}^{t_0}\left(X_G(K_f);\widetilde{V}^{({\rm t}^\sigma)}_\CC\right)\\
			=&
			\bigoplus_{\sigma}
			\underbrace{H^{t_0}\left(\lieg_\CC,S(\RR)^0K(\RR); A(V^{({\rm t}^\sigma)}_\CC)\otimes V^{({\rm t}^\sigma)}_\CC\right)}_{\cong\CC\;\text{via}\;\vartheta_\CC^{\alpha_{{\rm t}^\sigma}}}
			\otimes
			\varinjlim_{K_f}H_{\rm cusp}^{t_0}\left(X_G(K_f);\widetilde{V}^{({\rm t}^\sigma)}_\CC\right)\\
			\cong&
			\bigoplus_{\sigma}
			\varinjlim_{K_f}H_{\rm cusp}^{t_0}\left(X_G(K_f);\widetilde{V}^{({\rm t}^\sigma)}_\CC\right)\\
			=&
			\varinjlim_{K_f}H_{\rm cusp}^{t_0}\left(X_G(K_f);\widetilde{W}^{({\rm t})}_\CC\right).
		\end{align*}
		We remark that this composition depends the choice of isomorphisms $\vartheta_\CC^{\alpha_{\rm t}^\sigma}$, which in turn are chosen coherently depending on the isomorphism $\vartheta_{\OO_{E}[1/N]}^{\alpha_{\rm t}^\sigma}$ in \eqref{eq:varthetachoice}. Therefore, this above isomorphism is the $\CC$-base change of the map
		\begin{align*}
			&H^{t_0}\left(\lieg,\lies,K; \mathcal A(W_{\ZZ[1/N]}^{({\rm t})})\otimes W_{\ZZ[1/N]}^{({\rm t})}\right)\\
			=\;&
			H^{t_0}\left(\lieg,\lies,K; \res_{\OO_{E}/\ZZ}\mathcal A(V_{\OO_{E}[1/N]}^{({\rm t})})\otimes\res_{\OO_{E}/\ZZ} V_{\OO_{E}[1/N]}^{({\rm t})}\right)\\
			\to\;&
			H^{t_0}\left(\lieg,\lies,K;  \bigoplus_{\sigma}\mathcal A(V_{\OO_{E}[1/N]}^{({\rm t}^\sigma)})\otimes V_{\OO_{E}[1/N]}^{({\rm t}^\sigma)}\right)\\
			=\;&
			\bigoplus_{\sigma}
			\underbrace{H^{t_0}\left(\lieg,\lies, K; \mathcal A(V_{\OO_{E}[1/N]}^{({\rm t}^\sigma)})\otimes V_{\OO_{E}[1/N]}^{({\rm t}^\sigma)}\right)}_{\cong\OO_{E}[1/N]^\sigma\;\text{via}\;\vartheta_{\OO_{E}[1/N]}^{\alpha_{\rm t}^\sigma}}\\
			\to\;&
			\bigoplus_{\sigma}
			\varinjlim_{K_f}H^{t_0}\left(X_G(K_f);\widetilde{V}_{\OO_{E}[1/N]}^{({\rm t}^\sigma)}\right).
		\end{align*}
		The last direct sum contains
		\[
		\varinjlim_{K_f}H^{t_0}\left(X_G(K_f);\widetilde{W}_{\ZZ[1/N]}^{({\rm t})}\right)=
		\res_{\OO_{E}/\ZZ} \varinjlim_{K_f}H^{t_0}\left(X_G(K_f);\widetilde{V}_{\OO_{E}[1/N]}^{({\rm t})}\right)
		\]
		as a via $\sigma:E\to\CC$ diagonally embedded copy of $\varinjlim_{K_f}H^{t_0}\left(X_G(K_f);\widetilde{V}_{\OO_{E}[1/N]}^{({\rm t})}\right)$, which is identified with the image of $H^{t_0}\left(\lieg,\lies, K; \mathcal A(W_{\ZZ[1/N]}^{({\rm t})})\otimes W_{\ZZ[1/N]}^{({\rm t})}\right)$.
		
		This proves statement (e) of the theorem conditionally on the commutativity of the square
		\[
		\begin{CD}
			H^{t_0}\left(\lieg,\lies, K; \mathcal A(W_\CC^{({\rm t})})\otimes W_\CC^{({\rm t})}\right)
			@>\iota^{\alpha_{\rm t}}>>
			H^{t_0}\left(\lieg_\CC,S(\RR)^0 K(\RR); L_0^2(\GL_n(F)Z(\RR)^0 \backslash{}\GL_n(\Adeles_F);\omega_W)^{\infty}\otimes W_\CC^{({\rm t})}\right)\\
			@V{\sum_{\sigma}\vartheta_\CC^{\alpha_{\rm t}^\sigma}}VV @VVV\\
			\varinjlim\limits_{K_f}H^{t_0}\left(X_G(K_f);\widetilde{W}_\CC^{({\rm t})}\right)
			@=
			\varinjlim\limits_{K_f}H^{t_0}\left(X_G(K_f);\widetilde{W}_\CC^{({\rm t})}\right),
		\end{CD}
		\]
		whose commutativity is a consequence of the commutativity of
		\[
		\begin{CD}
			H^{t_0}\left(\lieg,\lies, K; \mathcal A(V_\CC^{({\rm t}^\sigma)})\otimes V_\CC^{({\rm t}^\sigma)}\right)
			@>\iota^{\alpha_{\rm t}^\sigma}>>
			H^{t_0}\left(\lieg_\CC,S(\RR)^0 K(\RR); L_0^2(\GL_n(F)Z(\RR)^0 \backslash{}\GL_n(\Adeles_F);\omega^\sigma)^{\infty}\otimes V^{({\rm t}^\sigma)}\right)\\
			@V{\vartheta_\CC^{\alpha_{\rm t}^\sigma}}VV @VVV\\
			\varinjlim_{K_f}H^{t_0}\left(X_G(K_f);\widetilde{V}_\CC^{({\rm t}^\sigma)}\right)
			@=
			\varinjlim_{K_f}H^{t_0}\left(X_G(K_f);\widetilde{V}_\CC^{({\rm t}^\sigma)}\right)
		\end{CD}
		\]
		for every embedding $\sigma:E\to\CC$, which in turn is a consequence of Schur's Lemma and the definitions: For every cuspidal automorphic representation $\pi$ contributing to
                \[\varinjlim_{K_f}H^{t_0}\left(X_G(K_f);\widetilde{V}_\CC^{({\rm t}^\sigma)}\right),\]
                i.\,e.\ $\cohType(\pi)={\rm t}^\sigma$, consider the subspace
		\begin{equation}
			H^{t_0}\left(\lieg_\CC,S(\RR)^0K(\RR);\pi_\infty^{(K_\infty)}\otimes V^{({\rm t}^\sigma)}_\CC\right)\otimes\pi_f\subseteq
			\varinjlim_{K_f}H^{t_0}\left(X_G(K_f);\widetilde{V}^{({\rm t}^\sigma)}_\CC\right).
			\label{eq:canonicalEichlerShimurainclusionforpi}
		\end{equation}
		Then on the corresponding subspace
		\begin{flalign*}
		  &H^{t_0}\left(\lieg_\CC,S(\RR)^0K(\RR); A(V^{({\rm t}^\sigma)}_\CC)\otimes H^{t_0}\left(\lieg_\CC,S(\RR)^0K(\RR);\pi_\infty^{(K_\infty)}\otimes V^{({\rm t}^\sigma)}_\CC\right)\otimes\pi_f\otimes V^{({\rm t}^\sigma)}_\CC\right)\\
                  &\subseteq H^{t_0}\left(\lieg_\CC,S(\RR)^0K(\RR); \mathcal A(V^{({\rm t}^\sigma)}_\CC)\otimes V^{({\rm t}^\sigma)}_\CC\right)
		\end{flalign*}
		the left vertical map $\vartheta_\CC^{\alpha_{\rm t}^\sigma}$ is the composition of the isomorphism
		\begin{align*}
		  &
		  H^{t_0}\left(\lieg_\CC,S(\RR)^0K(\RR); A(V^{({\rm t}^\sigma)}_\CC)\otimes
                  H^{t_0}\left(\lieg_\CC,S(\RR)^0K(\RR);\pi_\infty^{(K_\infty)}\otimes V^{({\rm t}^\sigma)}_\CC\right)\otimes\pi_f\otimes V^{({\rm t}^\sigma)}_\CC\right)\\
		  \cong\;&
		  \underbrace{H^{t_0}\left(\lieg_\CC,S(\RR)^0K(\RR); A(V^{({\rm t}^\sigma)}_\CC)\otimes V^{({\rm t}^\sigma)}_\CC\right)}_{\cong\CC\;\text{via}\;\vartheta_\CC^{\alpha_{\rm t}^\sigma}}
		  \otimes H^{t_0}\left(\lieg_\CC,S(\RR)^0K(\RR);\pi_\infty^{(K_\infty)}\otimes V^{({\rm t}^\sigma)}_\CC\right)\otimes\pi_f\\
		  \cong\;&
		  H^{t_0}\left(\lieg_\CC,S(\RR)^0K(\RR);\pi_\infty^{(K_\infty)}\otimes V^{({\rm t}^\sigma)}_\CC\right)\otimes\pi_f
		\end{align*}
		induced by $\vartheta_\CC^{\alpha_{\rm t}^\sigma}$ with the canonical inclusion \eqref{eq:canonicalEichlerShimurainclusionforpi} into
                \[\varinjlim_{K_f}H^{t_0}\left(X_G(K_f);\widetilde{V}^{({\rm t}^\sigma)}_\CC\right).\]
		
		The top horizontal map on the same subspace is the map
		\begin{align*}
		  &
		  H^{t_0}\left(\lieg_\CC,S(\RR)^0K(\RR); A(V^{({\rm t}^\sigma)}_\CC)\otimes
                  H^{t_0}\left(\lieg_\CC,S(\RR)^0K(\RR);\pi_\infty^{(K_\infty)}\otimes V^{({\rm t}^\sigma)}_\CC\right)\otimes\pi_f\otimes V^{({\rm t}^\sigma)}_\CC\right)\\
		  \cong\;&
		  H^{t_0}(\lieg_\CC,S(\RR)^0K(\RR); \underbrace{A(V^{({\rm t}^\sigma)}_\CC)}_{\cong\pi_\infty^{(K_\infty)}\;\text{via}\;\iota_\pi^{\alpha_{\rm t}}}\otimes\pi_f
		  \otimes V^{({\rm t}^\sigma)}_\CC)
		  \otimes H^{t_0}\left(\lieg_\CC,S(\RR)^0K(\RR);\pi_\infty^{(K_\infty)}
		  \otimes V^{({\rm t}^\sigma)}_\CC\right)\\
		  \cong\;&
		  H^{t_0}\left(\lieg_\CC,S(\RR)^0K(\RR); \pi_\infty^{(K_\infty)}\otimes\pi_f
		  \otimes V^{({\rm t}^\sigma)}_\CC\right)
		  \otimes H^{t_0}(\lieg_\CC,S(\RR)^0K(\RR);\underbrace{\pi_\infty^{(K_\infty)}}_{\cong A(V^{({\rm t}^\sigma)}_\CC)\;\text{via}\;\left(\iota_\pi^{\alpha_{\rm t}}\right)^{-1}}\otimes V^{({\rm t}^\sigma)}_\CC)\\
		  \cong\;&
		  H^{t_0}\left(\lieg_\CC,S(\RR)^0K(\RR); \pi_\infty^{(K_\infty)}\otimes\pi_f
		  \otimes V^{({\rm t}^\sigma)}_\CC\right)
		  \otimes \underbrace{H^{t_0}\left(\lieg_\CC,S(\RR)^0K(\RR);A(V^{({\rm t}^\sigma)}_\CC)\otimes V^{({\rm t}^\sigma)}_\CC\right)}_{\cong\CC\;\text{via}\;\vartheta_\CC^{\alpha_{\rm t}^\sigma}}\\
		  \cong\;&
		  H^{t_0}\left(\lieg_\CC,S(\RR)^0K(\RR); \pi_\infty^{(K_\infty)}\otimes\pi_f
		  \otimes V^{({\rm t}^\sigma)}_\CC\right)
		\end{align*}
		composed with the respective inclusion of $\pi_\infty^{(K_\infty)}\otimes\pi_f$ into the space of smooth cusp forms. On the latter space the right vertical map is the canonical inclusion \eqref{eq:canonicalEichlerShimurainclusionforpi}.
		
		By Schur's Lemma applied to the irreducible Hecke module $\pi_f$ it is sufficient to exhibit a non-zero element
		\[
		x\in H^{t_0}\left(\lieg_\CC,S(\RR)^0K(\RR); A(V^{({\rm t}^\sigma)}_\CC)\otimes
                H^{t_0}\left(\lieg_\CC,S(\RR)^0K(\RR);\pi_\infty^{(K_\infty)}\otimes V^{({\rm t}^\sigma)}_\CC\right)\otimes\pi_f\otimes V^{({\rm t}^\sigma)}_\CC\right)
		\]
		which maps to the same element under both maps under consideration. Pick a representative
		\[
		\sum_{i}\omega_i\otimes a_i\otimes v_i\in\bigwedge^{t_0}\left(\lieg_\CC/\lies_\CC+\liek_\CC\right)^\ast\otimes A(V^{({\rm t}^\sigma)}_\CC)\otimes V^{({\rm t}^\sigma)}_\CC
		\]
		of a non-zero cohomology class in $H^{t_0}\left(\lieg_\CC,S(\RR)^0K(\RR); A(V^{({\rm t}^\sigma)}_\CC)\otimes V^{({\rm t}^\sigma)}_\CC\right)$ and any non-zero element $\varphi_f\in\pi_f$. Then the element
		\[
		\sum_{i}\omega_i\otimes a_i\left(\sum_{j}\omega_j\otimes\iota_\pi(a_j)\otimes v_j\right)\otimes\varphi_f\otimes v_i
		\]
		represents a non-zero element $x$ with the desired property. This concludes the proof of the claimed commutativity of the above square and the proof of statement (e) is complete.
		
		\smallskip
		Statement (c) is proved as follows. Consider a normal hull $E'/\QQ$ of $E$ with integer ring $\OO_{E'}$. Then a rational prime $p$ ramifies in $E'/\QQ$ if and only if it ramifies in $E/\QQ$. Therefore inverting $d_{E'}$ is equivalent to inverting $d_E$ and the discussion of the isotypic component of $\pi$ in the proof of (e) shows the compatibility of the inclusion
		\begin{equation}
			W^{({\rm t})}_{\OO_{E'}[1/N]}=
			\OO_{E'}\otimes_\ZZ V^{(\rm t)}_{\OO_{E}[1/N]}
			\to \prod_{\sigma:E\to E'}V^{({\rm t}^\sigma)}_{\OO_{E'}[1/N]}
			\label{eq:WtodiagVsigma}
		\end{equation}
		with the isomorphism
		\[
		W^{({\rm t})}_\CC\to\prod_{E\to\CC} V^{({\rm t}^\sigma)}_\CC
		\]
		once we fix an embedding $E'\to\CC$ extending a given embedding $E\to\CC$.
		
		Likewise, we have a canonical monomorphism
		\begin{equation}
			\mathcal A(W^{({\rm t})}_{\OO_{E'}[1/N]})=
			\OO_{E'}\otimes_\ZZ \mathcal A(V^{({\rm t})}_{\OO_{E}[1/N]})
			\to \prod_{\sigma:E\to E'}\mathcal A(V^{({\rm t}^\sigma)}_{\OO_{E'}[1/N]}),
			\label{eq:AWtodiagAVsigma}
		\end{equation}
		again compatible with the canonical isomorphism \eqref{eq:AWcomplexification}:
		\[
		\mathcal A(W^{({\rm t})}_\CC)=\bigoplus_{\sigma:E\to\CC}\mathcal A(V^{({\rm t}^\sigma)}_\CC).
		\]
		Observe that \eqref{eq:WtodiagVsigma} and \eqref{eq:AWtodiagAVsigma} are isomorphisms by Lemma 6.3.2 in \cite{hayashijanuszewski}.

                For any cuspidal automorphic representation $\pi$ of cohomological type $\cohType(\pi)={\rm t}^\sigma$, we have $\QQ(V^{({\rm t}^\sigma)})=E^\sigma\subseteq\QQ(\pi)$. Since the inclusion
		\[
		j\colon\pi^{(K_\infty)}\to \mathcal A(W^{({\rm t})}_\CC)
		\]
		by construction factors over $\mathcal A(V^{({\rm t}^\sigma)}_\CC)$, the integral structure $\pi_{\OO_{\QQ(\pi)}[1/N]}$, which is the intersection of the image of $j$ with $\mathcal A(W^{({\rm t})}_{\OO_{\QQ(\pi)}[1/N]})$, factors over $\mathcal A(V^{({\rm t}^\sigma)}_{\OO_{\QQ(\pi)}[1/N]})$ in accordance with \eqref{eq:AWtodiagAVsigma}. Therefore, the $(\lieg,\lies, K)$-cohomology of the integral model $\pi_{\OO_{\QQ(\pi)}[1/N]}$ with coefficients in $V^{({\rm t}^\sigma)}_{\OO_{\QQ(\pi)}[1/N]}$ canonically maps into the $(\lieg,\lies, K)$-cohomology of $\mathcal A(V^{({\rm t}^\sigma)}_{\OO_{\QQ(\pi)}[1/N]})$ with the same coefficients, which in turn identifies with
                \[
                \varinjlim\limits_{K_f}H^{t_0}(X_G(K_f);\widetilde{V}^{{\rm t}^\sigma}_{\OO_{\QQ(\pi)}[1/N]}).
                \]
                This proves the first statement that \eqref{eq:canonicalmap} factors as claimed. The second statement follows since \eqref{eq:AWtodiagAVsigma} is an isomorphism.
		
		Statement (d) is an immedate consequences of the definitions. This concludes the proof.
	\end{proof}

        \subsubsection{The canonical integral structure of finite level}\label{sec:canonicalintegralstructuresKf}

        
        Instead of considering the colimit over all finite levels $K_f\subseteq G(\Adeles_F)$, we obtain similar and slightly stronger results at finite level for the following spaces.
        
        \begin{variant}
          Replace \eqref{eq:l2cuspforms} by
          \begin{flalign}
		&L_0^2(\GL_n(F)Z(\RR)^0 \backslash{}\GL_n(\Adeles_F)/K_f;\omega_W)^\infty
		\;=\label{eq:l2cuspformsKf}\\
	        &\bigoplus_{\sigma:E\to\CC}
	        L_0^2(\GL_n(F)Z(\RR)^0 \backslash{}\GL_n(\Adeles_F)/K_f;\omega^\sigma)^\infty,\nonumber     
	  \end{flalign}
          the spaces $\mathcal A(V_{\OO_{E^\sigma}[1/N]}^{({\rm t})})$ and $\mathcal A(W_{\ZZ[1/N]}^{({\rm t})})$ by
          \[
	  \mathcal A(K_f; V_{\OO_{E^\sigma}[1/N]}^{({\rm t})}):=
	  A(V^{({\rm t})})_{\OO_{E^\sigma}[1/N]}
          \otimes_{\OO_{E^\sigma}[1/N]}
	  H_{\rm cusp}^{t_0}\left(X_G(K_f);\widetilde{V}_{\OO_{E^\sigma}[1/N]}^{({\rm t})}\right),
	  \]
          and
	  \[
	  \mathcal A(K_f; W_{\ZZ[1/N]}^{({\rm t})}):=
	  \res_{\OO_{E^\sigma}/\ZZ}\mathcal A_{\OO_{E^\sigma}[1/N]}(V_{\OO_{E^\sigma}[1/N]}^{({\rm t})},K_f),
	  \]
          respectively. We define their corresponding scalar extensions to $\ZZ[1/N]$-algebras $A$ mutatis mutandis as in the level-free case. The decomposition \eqref{eq:AWcomplexification} for finite level $K_f$ then reads
          \begin{equation}
	    \mathcal A(K_f; W_\CC)=\bigoplus_{\sigma\colon E\to\CC} \mathcal A(K_f; V_{\CC}^\sigma),
	    \label{eq:AWcomplexificationKf}
	  \end{equation}
          and it is compatible with \eqref{eq:AWcomplexification} in the sense that passing to $K_f$-invariants in the latter canonically identifies with the former.

          Likewise we put for every ${\rm t}\in{\rm tcT}_E^\sigma$
          \[
          L_0^2(\GL_n(F)Z(\RR)^0 \backslash{}\GL_n(\Adeles_F)/K_f;\omega^\sigma)^{\infty}_{{\rm t}}:=
          \left(\widehat{\bigoplus\limits_{\begin{subarray}c\pi:\\\cohType(\pi)={\rm t}\\\pi_f^{K_f}\neq 0\end{subarray}}} \pi^{K_f}\right)^\infty.
          \]
          Then the analogue of \eqref{eq:l02spacecohomologicaltypedecomposition} is
          \begin{equation}
            L_0^2(\GL_n(F)Z(\RR)^0 \backslash{}\GL_n(\Adeles_F)/K_f;\omega^\sigma)^\infty=
            \widehat{\bigoplus_{{\rm t}\in{\rm tcT}_E^\sigma}}
            L_0^2(\GL_n(F)Z(\RR)^0 \backslash{}\GL_n(\Adeles_F)/K_f;\omega^\sigma)^\infty_{{\rm t}},
            \label{eq:l02spacecohomologicaltypedecompositionKf}
          \end{equation}
          cf.\ Proposition \ref{prop:decompositionbycohomologicaltypes}.
        \end{variant}

        With this notation Lemma \ref{lem:iotasigmaisomorphism} and its proof translate to finite level and give
        \begin{lemma}\label{lem:iotasigmaisomorphismKf}
          For every $\sigma\colon E\to\CC$, each ${\rm t}\in{\rm tcT}_E^\sigma$ and each representative $\alpha=[(A^{({\rm t})},V^{({\rm t})})]_\cong\in{\rm t}$
          our choice of $\iota_{\pi}^{\alpha}$ and the isomomorphism $\vartheta_{\OO_{E^\sigma}[1/N]}^{\alpha}$ induce via the identification
	  \[
	  \bigoplus_{\begin{subarray}c\pi:\\\cohType(\pi)={\rm t}\\\pi_f^{K_f}\neq 0\end{subarray}} \pi^{(K_\infty),K_f}=
	  L_0^2(\GL_n(F)Z(\RR)^0 \backslash{}\GL_n(\Adeles_F)/K_f;\omega^\sigma)_{\rm t}^{\infty,(K_\infty)}
	  \]
	  for every ${\rm t}\in{\rm tcT}_E^\sigma$ a $(\lieg_\CC,K_\infty)\times G(\Adeles_f)$-equivariant monomorphism
	  \begin{equation}
	    \iota^{\alpha}_{K_f}\colon
	    \mathcal A(K_f; V_{\CC}^{({\rm t})})\to
	    L_0^2(\GL_n(F)Z(\RR)^0 \backslash{}\GL_n(\Adeles_F)/K_f;\omega^\sigma)_{\rm t}^\infty
            \label{eq:iotasigmaisomorphismKf}
	  \end{equation}
	  whose image consists of all $K$-finite automorphic cusp forms of level $K_f$ contained in the right hand side. Moreover, this decomposition is compatible with the two canonical identifications
          \[
	  \mathcal A(K_f; V_{\CC}^{({\rm t})})=
	  \mathcal A(V_{\CC}^{({\rm t})})^{K_f},
          \]
          and
          \begin{flalign*}
	    &L_0^2(\GL_n(F)Z(\RR)^0 \backslash{}\GL_n(\Adeles_F)/K_f;\omega^\sigma)_{\rm t}^\infty=\\
	    &L_0^2(\GL_n(F)Z(\RR)^0 \backslash{}\GL_n(\Adeles_F);\omega^\sigma)_{\rm t}^{\infty,K_f},
          \end{flalign*}
          in the sense that
          \[
          \varinjlim\limits_{K_f}
	  \iota^{\alpha}_{K_f}=
	  \iota^{\alpha}.
          \]
 	\end{lemma}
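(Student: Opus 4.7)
The plan is to mirror the proof of Lemma \ref{lem:iotasigmaisomorphism} at finite level, replacing every occurrence of $\pi_f$ by $\pi_f^{K_f}$ throughout the chain of canonical isomorphisms, and then verify that the resulting map is compatible with the passage to the colimit over $K_f$. The only conceptual input beyond the proof of Lemma \ref{lem:iotasigmaisomorphism} is the observation that the $K_f$-invariants functor is exact on the relevant class of $G(\Adeles_f)$-modules (admissible smooth representations of $G(\Adeles_f)$, or Hilbert direct sums thereof), together with the elementary compatibility
\[
\mathcal A(V_\CC^{({\rm t})})^{K_f} = A(V^{({\rm t})})_\CC \otimes_\CC \varinjlim_{K_f'} H_{\rm cusp}^{t_0}(X_G(K_f');\widetilde{V}_\CC^{({\rm t})})^{K_f} = \mathcal A(K_f; V_\CC^{({\rm t})}),
\]
which is valid because $K_f$ acts trivially on the first tensor factor and the $K_f$-invariants of the colimit on $H_{\rm cusp}^{t_0}$ are computed as $H_{\rm cusp}^{t_0}(X_G(K_f);\widetilde{V}_\CC^{({\rm t})})$ by the definition of the $G(\Adeles_f)$-action on the arithmetic cohomology.

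First I would take the $K_f$-invariants of the monomorphism \eqref{eq:iotasigmaisomorphism} and use the identifications above together with the parallel identification
\[
L_0^2(\GL_n(F)Z(\RR)^0\backslash \GL_n(\Adeles_F);\omega^\sigma)_{\rm t}^{\infty,K_f} = L_0^2(\GL_n(F)Z(\RR)^0\backslash \GL_n(\Adeles_F)/K_f;\omega^\sigma)_{\rm t}^\infty
\]
to define $\iota^{\alpha}_{K_f}$. Equivariance for $(\lieg_\CC,K_\infty)$ and for $\mathcal H_E(K_f)$ is automatic from the construction, since these actions commute with $K_f$-invariants. The image description then follows from the image description in Lemma \ref{lem:iotasigmaisomorphism}, noting that a cuspidal representation contributes $K_f$-invariant vectors to the right hand side if and only if $\pi_f^{K_f}\neq 0$.

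The compatibility with the colimit, $\varinjlim_{K_f}\iota^{\alpha}_{K_f} = \iota^{\alpha}$, is then tautological: both sides are built from the same data $(\iota_{\pi}^\alpha, \vartheta^\alpha_{\OO_{E^\sigma}[1/N]})$ via the same chain of isomorphisms, and taking the colimit over $K_f$ of $K_f$-invariants recovers the smooth $K_\infty$-finite $G(\Adeles_f)$-module on both the domain and codomain sides, compatibly. The main (mild) obstacle is purely bookkeeping: ensuring that the reshuffling of tensor factors in the proof of Lemma \ref{lem:iotasigmaisomorphism}, which distributes tensor products over direct sums indexed by cuspidal $\pi$, survives after restricting the indexing set to those $\pi$ with $\pi_f^{K_f}\neq 0$. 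This is immediate because the direct sum decomposition $\eqref{eq:l02spacecohomologicaltypedecomposition}$ has a straightforward analogue \eqref{eq:l02spacecohomologicaltypedecompositionKf} at finite level, and taking $K_f$-invariants commutes with the (Hilbert) direct sum by admissibility of each $\pi_f$.
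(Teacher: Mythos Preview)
Your proposal is correct and matches the paper's own proof almost exactly: the paper notes that one can either repeat the argument of Lemma \ref{lem:iotasigmaisomorphism} mutatis mutandis at finite level, or apply that lemma and pass to $K_f$-invariants using exactness, with the compatibility relation then following by elementary comparison. You have in fact spelled out more detail than the paper does, in particular the identification $\mathcal A(V_\CC^{({\rm t})})^{K_f}=\mathcal A(K_f;V_\CC^{({\rm t})})$ and the reason why the direct sum reshuffling survives restriction to $\pi$ with $\pi_f^{K_f}\neq 0$.
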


        \begin{proof}
          On the one hand the proof of Lemma \ref{lem:iotasigmaisomorphism} applies mutatis mutandis to finite level and the claimed compatibility follows by an elementary comparison. On the other hand, we may apply Lemma \ref{lem:iotasigmaisomorphism} and pass to $K_f$-invariants, which is an exact operation commuting with all involved colimits and definitions as above and also proves the statement including the compatibility relations.
        \end{proof}

        \begin{lemma}\label{lem:iotasigmauniquenessKf}
          For each ${\rm t}\in{\rm tcT}_E^\sigma$ and every representative $\alpha\in{\rm t}$ the isomorphism $\iota^{\alpha}_{K_f}$ in \eqref{eq:iotasigmaisomorphismKf} is unique up to scalars in $\sigma(\OO_{E}[1/N])^\times$.
        \end{lemma}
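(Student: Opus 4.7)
The plan is to reduce this directly to the infinite-level uniqueness statement of Lemma \ref{lem:iotasigmauniqueness} by exploiting the compatibility relation
\[
\varinjlim_{K_f}\iota^{\alpha}_{K_f}=\iota^{\alpha}
\]
established at the end of Lemma \ref{lem:iotasigmaisomorphismKf}, together with the fact that the functor of $K_f$-invariants is exact and commutes with the formation of the relevant Hecke isotypic components.

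More precisely, suppose $\iota^{\alpha}_{K_f}$ and $\widetilde{\iota}^{\alpha}_{K_f}$ are two monomorphisms as in \eqref{eq:iotasigmaisomorphismKf} constructed from the same datum $\alpha=[(A^{({\rm t})},V^{({\rm t})})]_\cong\in{\rm t}$ and the same isomorphism $\vartheta^{\alpha}_{\OO_{E^\sigma}[1/N]}$. Both are determined, via the recipe in the proof of Lemma \ref{lem:iotasigmaisomorphism} adapted to level $K_f$, by a choice of archimedean isomorphism $\iota_{\pi}^{\alpha}$ as in \eqref{eq:archimedeaneichlershimura} for each cuspidal $\pi$ with ${\rm cohType}(\pi)={\rm t}$ and $\pi_f^{K_f}\neq 0$. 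By Schur's Lemma applied to the irreducible $(\lieg_\CC,K_\infty)$-module $A(V_\CC^{({\rm t})})$, any two choices of $\iota_\pi^{\alpha}$ differ by a scalar $\lambda_\pi\in\CC^\times$. Since $\iota^\alpha_{K_f}$ is built as a tensor product of $\iota_\pi^\alpha$ with its inverse (cf.\ the chain of identifications in the proof of Lemma \ref{lem:iotasigmaisomorphism}), the scalars $\lambda_\pi$ and $\lambda_\pi^{-1}$ cancel on each $\pi$-isotypic component, so the compositions agree on the image of $\pi^{(K_\infty),K_f}$ up to an overall global scalar coming from $\vartheta$.

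Passing to the colimit over all sufficiently small $K_f$, the resulting scalars must be compatible with the infinite-level map $\iota^{\alpha}$ via Lemma \ref{lem:iotasigmaisomorphismKf}. By Lemma \ref{lem:iotasigmauniqueness} this global scalar lies in $\sigma(\OO_E[1/N])^\times$, and because $K_f$-invariants is an exact functor that identifies $\mathcal A(K_f;V_\CC^{({\rm t})})$ with $\mathcal A(V_\CC^{({\rm t})})^{K_f}$, the same scalar governs the ambiguity at finite level. The main obstacle, which is essentially bookkeeping rather than a genuine difficulty, is to verify that no extra ambiguity is introduced by restricting to $K_f$-fixed vectors: this follows from the fact that for every $\pi$ with $\pi_f^{K_f}\neq 0$ the subspace $\pi^{(K_\infty),K_f}$ is non-zero and generates $\pi^{(K_\infty)}$ under the $G(\Adeles_f)$-action, so the scalar computed on $\mathcal A(K_f;V_\CC^{({\rm t})})$ coincides with the scalar computed on $\mathcal A(V_\CC^{({\rm t})})$. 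Alternatively, one can give a direct proof by repeating verbatim the argument of Lemma \ref{lem:iotasigmauniqueness} with $\pi$ replaced by $\pi^{K_f}$ throughout, using the coherence condition \eqref{eq:varthetacoherence} on $\vartheta$ to ensure that the resulting scalar is a norm in $E/\QQ$, hence integral.
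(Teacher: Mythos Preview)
Your proposal is correct, and in fact your ``alternative'' direct approach at the end is exactly what the paper does: its entire proof reads ``The argument for inductive limits extends to finite level as well,'' i.e.\ rerun the cancellation argument of Lemma~\ref{lem:iotasigmauniqueness} with $\pi$ replaced by $\pi^{K_f}$.

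Your main approach --- reducing to infinite level via the compatibility $\varinjlim_{K_f}\iota^{\alpha}_{K_f}=\iota^{\alpha}$ --- also works but is roundabout: you already observe the key point, namely that $\iota_\pi^{\alpha}$ and $(\iota_\pi^{\alpha})^{-1}$ appear together so the $\lambda_\pi$ cancel, leaving only the ambiguity from $\vartheta^{\alpha}_{\OO_{E^\sigma}[1/N]}$. At that stage you are done, since $\vartheta^{\alpha}_{\OO_{E^\sigma}[1/N]}$ is by construction unique up to $\OO_{E^\sigma}[1/N]^\times=\sigma(\OO_E[1/N])^\times$; passing to the colimit and invoking Lemma~\ref{lem:iotasigmauniqueness} adds nothing. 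One small correction: your final remark about the coherence condition~\eqref{eq:varthetacoherence} forcing the scalar to be a norm is misplaced --- that argument belongs to Corollary~\ref{cor:iotaisomorphism}, where one sums over all $\sigma$ and lands in $\ZZ[1/N]^\times$; here the scalar sits in $\sigma(\OO_E[1/N])^\times$ directly from the uniqueness of $\vartheta$, with no norm argument needed.
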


        \begin{proof}
          The argument for inductive limits extends to finite level as well.
        \end{proof}

        We obtain as before
        \begin{corollary}\label{cor:iotaisomorphismKf}
	  Choose for every ${\rm t}\in{\rm tcT}_E$ a representative $\alpha_{\rm t}=[(A^{({\rm t})},V^{({\rm t})})]_\cong\in{\rm t}$. Then for every embedding $\sigma\colon E\to\CC$ the $\sigma$-conjugate of $\alpha_{\rm t}$ is a representative $\alpha_{\rm t}^\sigma\in{\rm t}^\sigma\in{\rm tcT}_E^\sigma$. Put
          \[
          L_0^2(\GL_n(F)Z(\RR)^0 \backslash{}\GL_n(\Adeles_F)/K_f;\omega_W)_{\rm t}^\infty
          :=
          \left(
          \widehat{\bigoplus_{\sigma\colon E\to\CC}}
          \widehat{\bigoplus_{\begin{subarray}c\pi:\\\cohType(\pi)={\rm t}^\sigma\\\pi^{K_f}\neq 0\end{subarray}}}\pi^{K_f}
          \right)^\infty.
          \]
          Then the sum of the $\iota^{\alpha_{\rm t}^\sigma}_{K_f}$ over all embeddings $\sigma\colon E\to\CC$ induces a $(\lieg_\CC,K_\infty)\times\prod_{{\rm t}\in{\rm tcT}_E}\mathcal H(W_\CC^{({\rm t})},K_f)$-module monomorphism
	  \begin{equation}
	    \sum_{\sigma\colon E\to\CC}\iota^{\alpha_{\rm t}^\sigma}_{K_f}\colon
	    \mathcal A(K_f; W_\CC^{({\rm t})})\to
            L_0^2(\GL_n(F)Z(\RR)^0 \backslash{}\GL_n(\Adeles_F)/K_f;\omega_W)_{\rm t}^\infty
	    \label{eq:ithglobaleichlershimuraKf}
	  \end{equation}
          It is unique up to units in $\ZZ[1/N]$ and its image consists of all smooth $K$-finite automorphic cusp forms contained in the right hand side. Moreover we have the compatibility relation
          \[
          \varinjlim\limits_{K_f}
          \iota^{\alpha_{\rm t}^\sigma}_{K_f}=
          \iota^{\alpha_{\rm t}^\sigma}.
          \]

          The sum of the maps \eqref{eq:ithglobaleichlershimuraKf} over ${\rm t}\in{\rm tcT}_E$ induces a monomorphism
          \begin{equation}
	    \iota^{(\alpha_{\rm t})_{\rm t}}_{K_f}:=
	    \sum_{\begin{subarray}c{\rm t}\in{\rm tcT}_E\\\sigma\colon E\to\CC\end{subarray}}
            \iota^{\alpha_{\rm t}^\sigma}_{K_f}\colon
            \sum_{{\rm t}\in{\rm tcT}_E}
            \mathcal A(K_f; W_\CC^{({\rm t})})\to L_0^2(\GL_n(F)Z(\RR)^0 \backslash{}\GL_n(\Adeles_F)/K_f;\omega_W)^\infty,
	    \label{eq:globaleichlershimuraKf}
	  \end{equation}
          of $(\lieg_\CC,K_\infty)\times\mathcal H(W_\CC,K_f)$-modules whose image consist of all smooth $K$-finite automorphic cusp forms contained in the right hand side. Moreover we have the compatibility relation
          \[
          \varinjlim\limits_{K_f}
	  \iota^{(\alpha_{\rm t})_{\rm t}}_{K_f}=
	  \iota^{(\alpha_{\rm t})_{\rm t}}.
          \]
        \end{corollary}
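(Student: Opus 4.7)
The approach is to prove this corollary as the direct finite-level analog of Corollary \ref{cor:iotaisomorphism}, systematically replacing every colimit appeal to Lemma \ref{lem:iotasigmaisomorphism}/Lemma \ref{lem:iotasigmauniqueness} by an invocation of the already-established Lemmas \ref{lem:iotasigmaisomorphismKf} and \ref{lem:iotasigmauniquenessKf}, and then checking that the resulting maps intertwine with their level-free counterparts through the inductive limit.

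First, I would apply Lemma \ref{lem:iotasigmaisomorphismKf} to each pair $({\rm t},\sigma)$ with ${\rm t}\in{\rm tcT}_E$ and $\sigma\colon E\to\CC$, producing the monomorphism $\iota^{\alpha_{\rm t}^\sigma}_{K_f}$ whose image captures exactly the smooth $K$-finite cusp forms at level $K_f$ in the ${\rm t}^\sigma$-isotypic component of infinitesimal character $\omega^\sigma$. Summing these over $\sigma$ and using the $\sigma$-wise direct sum decomposition \eqref{eq:AWcomplexificationKf} on the source side together with the cohomological-type decomposition \eqref{eq:l02spacecohomologicaltypedecompositionKf} on the target side yields the map \eqref{eq:ithglobaleichlershimuraKf}. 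Injectivity, equivariance under $(\lieg_\CC,K_\infty)$, and $\mathcal H(W_\CC^{({\rm t})},K_f)$-equivariance are inherited summand-by-summand from Lemma \ref{lem:iotasigmaisomorphismKf}, and the characterization of the image as the $K$-finite cusp forms follows by comparing the two decompositions.

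For uniqueness of \eqref{eq:ithglobaleichlershimuraKf} up to units in $\ZZ[1/N]$, I would invoke Lemma \ref{lem:iotasigmauniquenessKf}: each $\iota^{\alpha_{\rm t}^\sigma}_{K_f}$ is determined up to a scalar in $\sigma(\OO_E[1/N])^\times$. The coherence condition \eqref{eq:varthetacoherence} on the family $(\vartheta_{\OO_{E^\sigma}[1/N]}^{\alpha_{\rm t}^\sigma})_\sigma$ forces these scalars to be Galois-conjugate; since a Galois-invariant system of units in $\OO_{E^\sigma}[1/N]$ indexed by $\sigma$ lies in $\ZZ[1/N]^\times$ (norm argument, exactly as in the proof of Corollary \ref{cor:iotaisomorphism}), the ambiguity on the combined map reduces to $\ZZ[1/N]^\times$. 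Summing over ${\rm t}\in{\rm tcT}_E$ then produces \eqref{eq:globaleichlershimuraKf}: the different ${\rm t}$ either have pairwise distinct infinitesimal characters or, within a fixed rational weight, differ in the action of $\pi_\circ^G$, and in either case give pairwise orthogonal contributions to the cuspidal spectrum, so injectivity and the image description propagate.

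Finally, the compatibility relations $\varinjlim_{K_f}\iota^{\alpha_{\rm t}^\sigma}_{K_f}=\iota^{\alpha_{\rm t}^\sigma}$ and $\varinjlim_{K_f}\iota^{(\alpha_{\rm t})_{\rm t}}_{K_f}=\iota^{(\alpha_{\rm t})_{\rm t}}$ are already asserted at the individual-$({\rm t},\sigma)$ level in Lemma \ref{lem:iotasigmaisomorphismKf}; since $K_f$-invariants commute with the finite direct sums over $\sigma$ and ${\rm t}$ appearing in the construction, and since the colimit functor is exact, these pass through the summations without further work. I do not anticipate a serious obstacle here: the only delicate point is the Galois/norm argument in the uniqueness step, which is essentially a restatement of the corresponding step in Corollary \ref{cor:iotaisomorphism}, and the real content has already been absorbed into Lemmas \ref{lem:iotasigmaisomorphismKf} and \ref{lem:iotasigmauniquenessKf}.
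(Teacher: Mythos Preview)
Your proposal is correct and follows essentially the same route as the paper. The paper does not give an explicit proof for this corollary, leaving it implicit that the argument of Corollary~\ref{cor:iotaisomorphism} carries over verbatim to finite level via Lemmas~\ref{lem:iotasigmaisomorphismKf} and~\ref{lem:iotasigmauniquenessKf}, the decompositions \eqref{eq:AWcomplexificationKf} and \eqref{eq:l02spacecohomologicaltypedecompositionKf}, and the same norm argument from the coherence condition \eqref{eq:varthetacoherence}; this is precisely what you have written out.
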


	\subsubsection{The canonical integral structure of finite level}

        \begin{definition}[Integral structures on spaces of automorphic cusp forms]\label{def:globalintegralstructureKf}
        Given an essentially (conjugate) self-dual absolutely irreducible locally algebraic representation $V$, fix a choice $(\alpha_{\rm t})_{\rm t}$ of representatives $\alpha_{\rm t}\in{\rm t}$ for ${\rm t}\in\cohTypes_E^{\rm temp}(G,(V_E,\rho_G))$.

        For each ${\rm t}\in\cohTypes_E^{\rm temp}(G,(V_E,\rho_G))$ and each sufficiently small compact open $K_f\subseteq G(\Adeles_F)$ the $(\lieg_\CC,K_\infty)$-module $\mathcal A(W_\CC^{\rm t},K_f)$ admits by construction a canonical integral structure which is given by the image of $\mathcal A(W_{\ZZ[1/N]}^{({\rm t})},K_f)$ under the canonical map
        \[
	\mathcal A(W_{\ZZ[1/N]}^{({\rm t})},K_f)\to\mathcal A(W_\CC^{({\rm t})},K_f).
        \]
        Define via the canonical isomorphism \eqref{eq:globaleichlershimuraKf} from Corollary \ref{cor:iotaisomorphismKf} a global integral structure on \eqref{eq:l2cuspformsKf} as
	\begin{flalign}
	  &L_0^2(\GL_n(F)Z(\RR)^0 \backslash{}\GL_n(\Adeles_F)/K_f;\omega_W)_{\ZZ[1/N]}:=\label{eq:l2cuspformsqbarKf}\\
                &\bigoplus_{{\rm t}\in\cohTypes_E^{\rm temp}(G,(V_E,\rho_G))}
                \iota^{(\alpha_{\rm t})_{\rm t}}_{K_f}\left(
                \image\left(
                           {\mathcal A}(K_f; W_{\ZZ[1/N]}^{({\rm t})})\to
                           {\mathcal A}(k_f; W_{\CC}^{({\rm t})})
                \right)\right).
                \nonumber
	\end{flalign}
        \end{definition}

        \begin{remark}
          By construction, as in the case of \eqref{eq:l2cuspformsqbar}, \eqref{eq:l2cuspformsqbarKf} is equipped with a canonical $(\lieg_{\ZZ[1/N]},K_{\ZZ[1/N]}\times\pi^G_{0,\ZZ[1/N]})$-action. Its ${\rm t}$-th summand with respect to the decomposition \eqref{eq:l02spacecohomologicaltypedecompositionKf} admits a canonical action of a $\ZZ[1/N]$-integral Hecke algebra
          \[\mathcal H(K_f;W_{\ZZ[1/N]}^{(\rm t)})=\res_{\OO_{E}/\ZZ}\mathcal H(K_f;V_{\OO_{E}[1/N]}^{({\rm t})})\]
          defined in section \ref{sec:integralcohomology}.
        \end{remark}

        \begin{remark}
          As in the colimit case, the integral structure \eqref{eq:l2cuspformsqbarKf} is uniquely determined by the choice of representatives $(\alpha_{\rm t})_{{\rm t}\in\cohTypes_E(G,(V_E,\rho_G))}$ (cf.\ Corollary \ref{cor:iotaisomorphismKf}). Replacing $V$ by $V^\sigma$ for $\sigma\colon E\to\CC$ and $(\alpha_{\rm t})_{\rm t}$ by $(\alpha_{\rm t}^\sigma)_{{\rm t}\in\cohTypes_E(G,(V_E,\rho_G))}$ does not change the resulting integral structure.
        \end{remark}
        
        \begin{theorem}\label{thm:globalhalfintegralstructuresKf}
          Under the same assumptions as in Theorem \ref{thm:globalhalfintegralstructures} and using the same notation, fix a compact open subgroup $K_f\subseteq G(\Adeles_f)$ which is sufficiently small in the sense that all arithmetic groups \eqref{eq:arithmeticgroupforKf} associated to conjugates of $K_f$ are torsion free.
          
          Then as a representation of
          \[
          G(\RR)\times\prod\limits_{{\rm t}\in{\rm tcT}_E}\mathcal H_\CC(K_f;W_{\CC}^{({\rm t})}),
          \]
          the space \eqref{eq:l2cuspformsKf} admits a canonical model \eqref{eq:l2cuspformsqbarKf} over $\ZZ[1/N]$ depending only on the choice of $(\alpha_{\rm t})_{\rm t}$ and the $1/N$-integral model $W_{\ZZ[1/N]}$ of $W_\QQ=\res_{E/\QQ} V_E$ induced by $V_{\OO_E[1/N]}$ with the following properties:
		\begin{itemize}
		        \item[(a)] The $1/N$-integral model \eqref{eq:l2cuspformsqbarKf} is projective as $\ZZ[1/N]$-module and admits a canonical structure of $(\lieg_{\ZZ[1/N]},K_{\ZZ[1/N]},\pi_\circ^G)\times\prod_{{\rm t}\in{\rm tcT}_E}\mathcal H(W_{\ZZ[1/N]}^{({\rm t})};K_f)$-module.
			\item[(b)] The complexification
			\[
			\CC\otimes_{\ZZ[1/N]} L_0^2(\GL_n(F)Z(\RR)^0 \backslash{}\GL_n(\Adeles_F)/K_f;\omega_W)_{\ZZ[1/N]}
			\]
			of \eqref{eq:l2cuspformsqbarKf} is naturally identified with the subspace of smooth $K$-finite vectors in \eqref{eq:l2cuspformsKf} as $(\lieg_{\CC},K(\RR))\times\prod_{{\rm t}\in{\rm tcT}_E}\mathcal H(W_{\CC})$-module.
			\item[(c)] For each cuspidal automorphic $\pi$ occuring in \eqref{eq:l2cuspformsKf} the integral structure $\pi_{\OO_{\QQ(\pi)}[1/N]}^{K_f}$ induced on $\pi^{K_f}$ by the intersection of $\pi^{K_f}$ as a subspace of \eqref{eq:l2cuspformsKf} with the integral structure
			\[
			\OO_{\QQ(\pi)}\otimes_\ZZ
			L_0^2(\GL_n(F)Z(\RR)^0 \backslash{}\GL_n(\Adeles_F)/K_f;\omega_W)_{\ZZ[1/N]}
			\]
			has the following properties. It is projective as $\OO_{\QQ(\pi)}[1/N]$-module and for
                        \[{\rm t}=\cohType(\pi),\]
                        if $\alpha_{\rm t}=[(A^{({\rm t})},V^{({\rm t})})]_\cong$, then the canonical morphism
			\begin{equation}
				H^{t_0}(\lieg,\lies, K;\pi_{\OO_{\QQ(\pi)}[1/N]}^{K_f}\otimes V_{\OO_{\QQ(\pi)}[1/N]}^{({\rm t})})
				\to
				H_{\rm cusp}^{t_0}(X_G(K_f);\widetilde{V}_\CC^{({\rm t})})
				\label{eq:canonicalmapKf}
			\end{equation}
			is injective and factors over $H^{t_0}(X_G(K_f);\widetilde{V}_{\OO_{\QQ(\pi)}[1/N]}^{({\rm t})})$. The monomorphism
			\[
			H^{t_0}(\lieg,\lies, K;\pi_{\OO_{\QQ(\pi)}[1/N]}^{K_f}\otimes V_{\OO_{\QQ(\pi)}[1/N]}^{({\rm t})})\to
			\]
			\[
			\image\left(
			H_{\rm cusp}^{t_0}(X_G(K_f);\widetilde{V}_{\OO_{\QQ(\pi)}[1/N]}^{({\rm t})})\to H_{\rm cusp}^{t_0}(X_G(K_f);\widetilde{V}_\CC^{({\rm t})})\right)[\pi_f]
			\]
			is an isomorphism.
			\item[(d)] For each $\tau\in\Aut(\CC/\QQ)$ and $\pi$ as in (c),
			\[
			(\pi_{\OO_{\QQ(\pi)}[1/N]}^{K_f})^\tau=
			\OO_{\QQ(\pi^\tau)}[1/N]
			\otimes_{\tau^{-1},\OO_{\QQ(\pi)}[1/N]}
			\pi_{\OO_{\QQ(\pi)}[1/N]}^{K_f}
			\]
			is the $\OO_{\QQ(\pi^\tau)}[1/N]$-integral structure on $(\pi^\tau)^{K_f}$ from (c) for the $\tau$-conjugated representative $\alpha_{\rm t}^\tau\in{\rm t}^\tau$.
		      \item[(e)] Taking $(\lieg,\lies,K)$-cohomology with coefficients in
                        \[
                        W_{\ZZ[1/N]}^{({\rm t})}=\res_{\QQ(V^{({\rm t})})/\QQ} V^{({\rm t})}_{\OO_{\QQ(V^{({\rm t})})}[1/N]}
                        \]
                        for $\alpha_{\rm t}=[(A^{({\rm t})},V^{({\rm t})})]_\cong\in{\rm t}\in\cohTypes_E(G,(V_E,\rho_G))$, then the canonical morphism
                        sends the $\ZZ[1/N]$-structure \eqref{eq:l2cuspformsqbarKf} onto the natural $\ZZ[1/N]$-structure
		        \begin{flalign}
			&H_{\rm cusp}^{t_0}(X_G(K_f); \widetilde{W}_{\ZZ[1/N]}^{({\rm t})}):=
                        \nonumber\\
			&{\rm{image}}(
			  H^{t_0}(X_G(K_f); \widetilde{W}_{\ZZ[1/N]}^{({\rm t})})\to
			\label{eq:hcuspKf}\\
			&H_{\rm cusp}^{t_0}(X_G(K_f); \widetilde{W}_\CC^{({\rm t})})).\nonumber
			\end{flalign}
			on cuspidal cohomology in bottom degree $t_0$.
                      \item[(f)] We have canonically
                        \begin{flalign*}
                        &\varinjlim\limits_{K_f}L_0^2(\GL_n(F)Z(\RR)^0 \backslash{}\GL_n(\Adeles_F)/K_f;\omega_W)_{\ZZ[1/N]}=\\
                        &L_0^2(\GL_n(F)Z(\RR)^0 \backslash{}\GL_n(\Adeles_F);\omega_W)_{\ZZ[1/N]}
                        \end{flalign*}
                        and for every cuspidal automorphic $\pi$ of cohomological type
                        \[
                          \cohType(\pi)\in\cohTypes_E(G,(V_E^\sigma,\rho_G))
                        \]
                        for some embedding $\sigma\colon E\to\CC$ canonically
                        \[
                        \varinjlim\limits_{K_f}\pi_{\OO_{\QQ(\pi)}[1/N]}^{K_f}=\pi_{\OO_{\QQ(\pi)}[1/N]}
                        \]
                        $\Aut(\CC/\QQ)$-equivariantly compatibly with the canonical isomorphisms after complexification.
		\end{itemize}
	\end{theorem}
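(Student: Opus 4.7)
The plan is to establish Theorem \ref{thm:globalhalfintegralstructuresKf} in parallel with Theorem \ref{thm:globalhalfintegralstructures}, but working at fixed finite level throughout via the finite-level versions (Lemmas \ref{lem:iotasigmaisomorphismKf}, \ref{lem:iotasigmauniquenessKf} and Corollary \ref{cor:iotaisomorphismKf}) of the corresponding colimit results. Once this is done, statement (f) will follow directly from the compatibility relations $\varinjlim_{K_f}\iota^{(\alpha_{\rm t})_{\rm t}}_{K_f}=\iota^{(\alpha_{\rm t})_{\rm t}}$ already established in Corollary \ref{cor:iotaisomorphismKf}, combined with the elementary observation that taking $K_f$-invariants in \eqref{eq:l2cuspformsqbar} recovers \eqref{eq:l2cuspformsqbarKf} on the nose.

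Statements (a) and (b) are essentially immediate from Definition \ref{def:globalintegralstructureKf} and Corollary \ref{cor:iotaisomorphismKf}; the only nontrivial point is the projectivity claim in (a). Since $K_f$ is taken sufficiently small that the arithmetic groups \eqref{eq:arithmeticgroupforKf} are torsion free, the cuspidal cohomology $H^{t_0}_{\rm cusp}(X_G(K_f);\widetilde{V}^{({\rm t})}_{\OO_{E}[1/N]})$, defined as the image of integral cohomology inside complex cuspidal cohomology, is a finitely generated torsion-free $\OO_{E}[1/N]$-module, hence projective over the Dedekind domain $\OO_{E}[1/N]$. Tensoring with the projective standard module $A(V^{({\rm t})})_{\OO_{E}[1/N]}$ coming from Theorems 6.3.6 and 6.3.10 of \cite{hayashijanuszewski} and then applying $\res_{\OO_E/\ZZ}$ preserves projectivity. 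Statement (e) is proved by the same computation of $(\lieg_\CC,S(\RR)^0 K(\RR))$-cohomology as in Theorem \ref{thm:globalhalfintegralstructures}, carried out at fixed level $K_f$, so that $\varinjlim_{K_f}H^{t_0}_{\rm cusp}(X_G(K_f);-)$ is everywhere replaced by $H^{t_0}_{\rm cusp}(X_G(K_f);-)$; the commutativity of the resulting square between $\iota^{(\alpha_{\rm t})_{\rm t}}_{K_f}$ and the Eichler--Shimura map into $H^{t_0}_{\rm cusp}(X_G(K_f);\widetilde{W}^{({\rm t})}_\CC)$ is exactly the finite-level incarnation of the argument in loc.\ cit.

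Statement (c) is then deduced by intersecting the $\pi_f^{K_f}$-isotypic component in \eqref{eq:l2cuspformsKf} with the image of $\iota^{(\alpha_{\rm t})_{\rm t}}_{K_f}$: projectivity of $\pi^{K_f}_{\OO_{\QQ(\pi)}[1/N]}$ follows from (a), and the claimed factorization of \eqref{eq:canonicalmapKf} through $H^{t_0}(X_G(K_f);\widetilde{V}^{({\rm t})}_{\OO_{\QQ(\pi)}[1/N]})$ together with the isomorphism on $\pi_f$-isotypic components is a consequence of (e) applied to a normal hull of $E/\QQ$, exactly as in the proof of (c) in Theorem \ref{thm:globalhalfintegralstructures}. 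Statement (d) is formal from the coherence relations \eqref{eq:varthetacoherence} and the $\Aut(\CC/\QQ)$-equivariance of the construction. The principal obstacle is the commutativity of the Eichler--Shimura square in (e) at $\OO_{E}[1/N]$-coefficients rather than $\CC$: the Schur-lemma argument in the complex case pins down the isomorphism only up to a scalar, and the descent of this scalar to $\OO_{E}[1/N]^\times$ is exactly what is controlled by the normalization of $V_{\OO_{E}[1/N]}$ in Lemma \ref{lem:VOnormalization} and the coherence condition \eqref{eq:varthetacoherence}, so that the canonical identification $H^{t_0}(\lieg,\lies,K;A(V^{({\rm t}^\sigma)})_{\OO_{E}[1/N]}\otimes V^{({\rm t}^\sigma)}_{\OO_{E}[1/N]})\cong \OO_{E^\sigma}[1/N]$ matches the chosen generator exactly.
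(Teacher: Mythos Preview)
Your proposal is correct and follows essentially the same approach as the paper: the paper's own proof simply states that (a)--(e) proceed exactly as in Theorem \ref{thm:globalhalfintegralstructures} (with the added remark that projectivity at finite level comes from projectivity of the archimedean model combined with torsion-freeness of the finite part), and that (f) follows by construction and the natural commutation relations for the involved functors. Your treatment is more detailed than the paper's terse two sentences, but the underlying strategy and the key ingredients---the finite-level Lemmas \ref{lem:iotasigmaisomorphismKf}, \ref{lem:iotasigmauniquenessKf}, Corollary \ref{cor:iotaisomorphismKf}, and the projectivity argument via Dedekind domains---are identical.
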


        \begin{proof}
          The proof of (a) --- (e) proceeds as in the case of Theorem \ref{thm:globalhalfintegralstructuresKf}. Note that projectivity of the integral models at finite level over the base ring is a consequence of the projectivity of our model at infinity combined with the torsion-freeness of the model of finite part.

          Statement (f) follows by construction and the natural commutation relations for the involved functors.
        \end{proof}

        \begin{remark}
          For sufficiently small $K_f$ we have a canonical arrow
          \begin{flalign*}
            &L_0^2(\GL_n(F)Z(\RR)^0 \backslash{}\GL_n(\Adeles_F)/K_f;\omega_W)_{\ZZ[1/N]}=\\
            &L_0^2(\GL_n(F)Z(\RR)^0 \backslash{}\GL_n(\Adeles_F);\omega_W)_{\ZZ[1/N]}^{K_f},
          \end{flalign*}
          which is always monomorphic but does not need to be epimorphic due to potential congruences at higher levels.
        \end{remark}

	\subsection{Definition of canonical periods}\label{sec:canonicalperiods}
	
        \subsubsection{Setup}
        
	We keep the notation from the previous section: $(G,K)$ denotes a standard $\ZZ\left[1/2\right]$-form of the symmetric pair associated to $\GL_n$ over a number field $F$ which is assumed totally real or CM for mild technical reasons, cf.\ remark \ref{rmk:fieldrestriction}. Fix a cohomological cuspidal automorphic representation $\pi$ of $\GL_n$ over $F$ of cohomological type ${\rm t}:=\cohType(\pi)$. For simplicity of notation, fix a representative of the form
        \[
        \alpha=[(A(V),V)]\in{\rm t},
        \]
        with $A(V)$ and $V$ defined over its field of rationality $E$ and considered as a locally algebraic $(\lieg,K)$-module and a locally algebraic representation over $E$ respectively. We write $\OO_E\subseteq E$ for the ring of algebraic integers in $E$ and $\OO\subseteq\QQ(\pi)$ for ring of integers in the field of rationality $\QQ(\pi)$ of $\pi$. Then $E\subseteq\QQ(\pi)$ and we fix once again an $\OO_{E}[1/N]$-model $V_{\OO_{E}[1/N]}$ of $V$ on which $G_{\OO_{E}[1/N]}$ acts scheme-theoretically. We do assume that $V_{\OO_{E}[1/N]}$ is normalized in accordance with Lemma \ref{lem:VOnormalization}.

        We extend this choice to a compatible choice of representatives $[(A(V_{\alpha}),V_\alpha)]_\cong\in\alpha$  for every $\alpha\in\cohType(\pi)$. Then $E=\QQ(V')$ is indepentent of $\alpha$, as is the chosen lattice $V_{\alpha,\OO_{E}[1/N]}\subseteq V_\alpha$.

        \subsubsection{Periods at finite level}

        Fix a sufficiently small compact open subgroup $K_f\subseteq G(\Adeles_f)$ with the property that $\pi^{K_f}\neq 0$ and that all arithmetic subgroups associated to conjugates of $K_f$ (cf.\ \eqref{eq:arithmeticgroupforKf}) are torsion free and that $V_{\OO_{E}[1/N]}$ is $K_f$-stable. According to Theorem \ref{thm:globalhalfintegralstructuresKf} (c) we have for each $\alpha\in\cohType(\pi)$ a canonical $1/N$-integral structure
	\begin{equation}
	\pi^{(K_\infty),K_f}_{\alpha,\OO[1/N]}
	\subseteq
	\pi^{(K_\infty),K_f},
        \label{eq:integralstructureforalpha}
	\end{equation}
        which is compatible with $(\lieg,\lies,K)$-cohomology in bottom degree with coefficients in $V_{\alpha,\OO[1/N]}$ (cf.\ Theorem \ref{thm:globalhalfintegralstructuresKf}, (c)).

	For each $\alpha\in\cohType(\pi)$ we have the canonical identification
	\begin{equation}
		H^q(\lieg_{\CC},S(\RR)^0 K(\RR);\pi\otimes V_{\alpha,\CC})=
		H^q(\GL_n(F)\backslash{}\GL_n(\Adeles_F)/Z(\RR)^0 K(\RR);\widetilde{V}_{\alpha,\CC})[\pi_f]
		\label{eq:complexliealgbracohomology}
	\end{equation}
	of the $\pi_f$-isotypic component of cohomology in degree $q$ with Lie algebra cohomology.

        The right hand side of \eqref{eq:complexliealgbracohomology} carries another canonical $\OO[1/N]$-integral structure induced by the topological $\OO[1/N]$-integral structure on sheaf cohomology in degree $q$.

	On the left hand side of \eqref{eq:complexliealgbracohomology} the integral structure \eqref{eq:integralstructureforalpha} induces via compatibiliy of $(\lieg,\lies,K)$-cohomology in degree $q$ with base change an $\OO[1/N]$-integral structure:	The identification
	\[
	\CC\otimes_{\OO[1/N]}H^q(\lieg,\lies, K;\pi_{\OO[1/N]}\otimes V_{\alpha,\OO[1/N]})\cong H^q(\lieg_{\CC},S(\RR)^0 K(\RR);\pi\otimes V_{\alpha,\CC})
	\]
	induces a second $\OO[1/N]$-integral structure on \eqref{eq:complexliealgbracohomology}, originating from bottom degree (or alternatively top degree).
	
	Fix a $1/N$-integral structure $\pi_{f,\alpha,\OO[1/N]}$ on $\pi_f$ with the property that
	\[
	\pi^{(K_\infty)}_{\alpha,\OO[1/N]}=\pi_{\infty,\alpha,\OO[1/N]}\otimes\pi_{f,\alpha,\OO[1/N]}
	\]
	for a $1/N$-integral structure $\pi_{\infty,\alpha,\OO[1/N]}$ at infinity. Then the factorization
	\begin{flalign*}
	&H^q(\lieg,\lies, K;\pi_{\alpha,\OO[1/N]}\otimes V_{\alpha,\OO[1/N]})=\\
	&H^q(\lieg,\lies, K;\pi_{\infty,\alpha,\OO[1/N]}\otimes V_{\alpha,\OO[1/N]})\otimes\pi_{f,\alpha,\OO[1/N]}
	\end{flalign*}
	shows that the two integral structures on relative Lie algebra cohomology of $\pi$ in degree $q$ induce two integral structures on the relative Lie algebra cohomology
	\begin{equation}
		H^q(\lieg_{\CC},S(\RR)^0 K(\RR);\pi_\infty\otimes V_{\alpha,\CC}).
		\label{eq:relativeliealgebracohomologyatinfinity}
	\end{equation}
	Localizing these $1/N$-integral structures at a finite place $v$ of $\QQ(\pi)$ not dividing $N$, we obtain two $\OO_{(v)}$-integral structures on the left hand side of \eqref{eq:relativeliealgebracohomologyatinfinity}. Since $\OO_{(v)}$ is a discrete valuation ring, these integral structures are free of finite rank $r$, which is the dimension of \eqref{eq:relativeliealgebracohomologyatinfinity}. Choosing bases therefore produces a period matrix $\Omega_{\alpha,q}\in\CC^{r\times r}$. The double coset $\GL_r(\OO_{(v)})\Omega_{\alpha,q}\GL_r(\OO_{(v)})$ then only depends on $\pi$ and the choice of $v$-integral model $V_{\OO_{(v)}}$ of $V$. For any $t_0\leq q\leq q_0$ between bottom and top degree $t_0$ and $q_0$ respectively, we obtain a non-trivial period matrix. Our normalization in bottom degree amounts to $\Omega_{\alpha,b}\in\OO_{(v)}^\times$ for all $v\nmid N$. One may equally normalize in top degree, which would amount to decreeing that $\Omega_{\alpha,b}\in\OO_{(v)}^\times$ unter the same hypothesis.
	
	\begin{remark}
		A posteriori the normalization via Lemma \ref{lem:VOnormalization} becomes superfluous after localization at $v$, because $\OO_{(v)}$ is a principal ideal domain.
	\end{remark}
	
	We proved the following integral analogue of Theorem B in \cite{januszewskirationality}, taking all possible parity conditions at infinity into account.
	\begin{theorem}[Canonical periods of finite level]\label{thm:periods}
		For each cohomological cuspidal automorphic representation $\pi$ of $\GL_n$ over a number field $F$ which is totally real or CM, for each sufficiently small compact open subgroup $K_f\subseteq\GL_n(\Adeles_{F,f})$ as above, for each $\alpha\in\cohType(\pi)$ and each cohomological degree $t_0\leq q\leq q_0$ in the cuspidal range, each finite place $v$ of $\QQ(\pi)$ not dividing $N$ with valuation ring $\OO_{(v)}\subseteq\QQ(\pi)$ and each $v$-integral $K_f$-stable model $(V_{\OO_{(v)}},\iota)$ of the locally algebraic representation $V_\alpha$ attached to $\alpha$, there is a period matrix $\Omega_{q}(\pi^{K_f},\alpha,\iota,v)\in\GL_{r}(\CC)$ with the following properties:
		\begin{itemize}
			\item[(i)] $\Omega_q(\pi^{K_f},\alpha,\iota,v)$ is the transformation matrix transforming $H^q(\lieg,\lies, K; \iota)$ into the natural $\OO_{(v)}$-structure on cuspidal cohomology.
			\item[(ii)] The double coset $\GL_{r}(\OO_{(v)})\Omega_q(\pi^{K_f},\alpha,\iota,v)\GL_{r}(\OO_{(v)})$ depends only on the quadruple $(\pi,K_f,\alpha,\iota)$ and the degree $q$.
			\item[(iii)] For each $c\in\CC^\times$ we have the relation
			\[
			\Omega_q(\pi^{K_f},\alpha,c\cdot\iota,v)\;=\;c\cdot\Omega_q(\pi^{K_f},\alpha,\iota,v).
			\]
			\item[(iv)] The ratio
			\[
			\frac{1}
			{\Omega_{t_0}(\pi^{K_f},\alpha,\iota,v)}\cdot
			\Omega_q(\pi^{K_f},\alpha,\iota,v)\;\in\;\GL_{r}(\CC)
			\]
			is independent of $\iota$.
		\end{itemize}
	\end{theorem}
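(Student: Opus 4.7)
The plan is to build the period $\Omega_q(\pi^{K_f},\alpha,\iota,v)$ as the change-of-basis matrix between two canonical $\OO_{(v)}$-structures on the $r$-dimensional $\CC$-vector space
\[
H^q(\lieg_\CC,S(\RR)^0K(\RR);\pi\otimes V_{\alpha,\CC})^{K_f}
\;=\;H_{\rm cusp}^q(X_G(K_f);\widetilde V_{\alpha,\CC})[\pi_f],
\]
where $r$ is the common dimension of the two sides. Localization at $v\nmid N$ of the global $1/N$-integral structure from Theorem~\ref{thm:globalhalfintegralstructuresKf} produces a $\mathcal H(K_f;W_{\OO_{(v)}}^{(\alpha)})$-equivariant $\OO_{(v)}$-lattice
\[
L_{{\rm aut},q}^{\alpha,\iota}\;:=\;\image\!\left(H^q\bigl(\lieg,\lies,K;\pi_{\OO_{(v)}}^{K_f}\otimes V_{\alpha,\OO_{(v)}}\bigr)\to H^q(\lieg_\CC,S(\RR)^0K(\RR);\pi\otimes V_{\alpha,\CC})^{K_f}\right),
\]
while the $\pi_f$-isotypic component of the image of topological cohomology with coefficients in $\widetilde V_{\alpha,\OO_{(v)}}$ inside $H_{\rm cusp}^q(X_G(K_f);\widetilde V_{\alpha,\CC})$ yields a second $\OO_{(v)}$-lattice $L_{{\rm top},q}^{\alpha,\iota}$. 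Both are free of rank $r$ since $\OO_{(v)}$ is a DVR, and Theorem~\ref{thm:globalhalfintegralstructuresKf}(c), (e) guarantees that they coincide in bottom degree $q=t_0$; this is the normalization anchor.

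For general $q$, pick any $\OO_{(v)}$-bases and define $\Omega_q(\pi^{K_f},\alpha,\iota,v)\in\GL_r(\CC)$ as the matrix expressing the topological basis in terms of the automorphic one. Property (i) is immediate by construction, and property (ii) follows because a change of $\OO_{(v)}$-basis on either side multiplies the matrix by an element of $\GL_r(\OO_{(v)})$ on the corresponding side, so the double coset $\GL_r(\OO_{(v)})\Omega_q\GL_r(\OO_{(v)})$ is canonically attached to the two lattices, hence depends only on the data $(\pi,K_f,\alpha,\iota)$ and the degree $q$. For (iii), observe that the topological $\OO_{(v)}$-lattice $L_{{\rm top},q}^{\alpha,\iota}$ is built from $\iota\colon V_{\OO_{(v)}}\hookrightarrow V_{\alpha,\CC}$ and therefore scales linearly in $\iota$, whereas the automorphic lattice $L_{{\rm aut},q}^{\alpha,\iota}$ is constructed through $(\lieg,\lies,K)$-cohomology with coefficients in $V_{\alpha,\OO_{(v)}}$ and scales in the same fashion; the scaling factor cancels in any ratio.

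Property (iv) is the content requiring the most care: I have to show that the ratio $\Omega_{t_0}^{-1}\cdot\Omega_q$ is invariant under $\iota\mapsto c\cdot\iota$ and, more importantly, that it lies in $\GL_r(\OO_{(v)})\backslash\GL_r(\CC)/\GL_r(\OO_{(v)})$ independently of $\iota$. The key observation is that $L_{{\rm aut},t_0}^{\alpha,\iota}=L_{{\rm top},t_0}^{\alpha,\iota}$ by the bottom-degree normalization built into Lemma~\ref{lem:VOnormalization} and propagated through the proof of Theorem~\ref{thm:globalhalfintegralstructuresKf}(e). Consequently, $\Omega_{t_0}$ is $\GL_r(\OO_{(v)})$-trivial and the composed transformation $\Omega_{t_0}^{-1}\Omega_q$ measures the discrepancy between the two $\OO_{(v)}$-structures purely in degree $q$, relative to a common reference in degree $t_0$. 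Since $\iota$ enters $L_{{\rm top},q}^{\alpha,\iota}$ and $L_{{\rm aut},q}^{\alpha,\iota}$ through the same fixed lattice $V_{\OO_{(v)}}$, a different choice $\iota'=u\iota$ with $u\in\OO_{(v)}^\times$ only rescales both lattices by $u$, while an $\OO_{(v)}^\times$-change of basis is absorbed by the double coset. The main obstacle in the proof will be verifying the exact compatibility of the change-of-lattice normalization of Lemma~\ref{lem:VOnormalization} with the Hecke- and $\pi_f$-equivariant identification \eqref{eq:canonicalmapKf}, so that $L_{{\rm aut},t_0}^{\alpha,\iota}$ really equals $L_{{\rm top},t_0}^{\alpha,\iota}$ after localization at $v$; once this is in hand, properties (i)--(iv) follow by the functoriality of base change for $(\lieg,\lies,K)$-cohomology in the Dedekind setting.
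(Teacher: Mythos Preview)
Your approach is the same as the paper's—compare an automorphic and a topological $\OO_{(v)}$-lattice in relative Lie algebra cohomology and record the change of basis—but you omit one step that the paper makes explicit just before the theorem and that is needed for (iii) to come out right. The paper first chooses a factorization
\[
\pi^{(K_\infty)}_{\alpha,\OO[1/N]}\;=\;\pi_{\infty,\alpha,\OO[1/N]}\otimes\pi_{f,\alpha,\OO[1/N]}
\]
and then passes from the full space $H^q(\lieg_\CC,S(\RR)^0K(\RR);\pi\otimes V_{\alpha,\CC})^{K_f}$ to the archimedean space \eqref{eq:relativeliealgebracohomologyatinfinity}; the integer $r$ is the dimension of the latter, not of your space. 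Without this step your period matrix has size $r\cdot\dim\pi_f^{K_f}$ rather than $r$.

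More importantly, the factorization is what produces (iii). On the full space both of your lattices $L_{\rm top,q}$ and $L_{\rm aut,q}$ scale by $c$ under $\iota\mapsto c\iota$, exactly as you say—so the change-of-basis matrix between them is \emph{unchanged}, not multiplied by $c$, and (iii) fails. After factoring out $\pi_f$ the situation becomes asymmetric: the archimedean automorphic lattice $H^q(\lieg,\lies,K;\pi_{\infty,\OO_{(v)}}\otimes V_{\OO_{(v)}})$ still scales by $c$ (only the $V$-factor moves, since $\pi_{\infty,\OO_{(v)}}=A(V)_{\OO_{(v)}}$ and its embedding $\iota_\pi^{\alpha}$ into $\pi_\infty$ are fixed independently of $\iota$), whereas the archimedean topological lattice does \emph{not} scale, because the $c$-scaling of the full topological lattice on $H^q_{\rm cusp}(X_G(K_f);\widetilde V_{\OO_{(v)}})[\pi_f]$ is absorbed entirely into the $c$-scaling of $\pi_{f,\OO_{(v)}}^{K_f}$. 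This asymmetry is exactly $\Omega_q(c\iota)=c\,\Omega_q(\iota)$. Your sentence ``the scaling factor cancels in any ratio'' is thus a correct argument for (iv) but not for (iii). Insert the factorization step and work on the archimedean space, and the rest of your outline matches the paper's construction.
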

	
	\begin{remark}\label{rmk:venkatesh}
	  At this stage it is unclear how the period matrices $\Omega_q(\pi^{K_f},\alpha,\iota,v)$ are related to Venkatesh's conjectural action of motivic cohomology $\bigwedge^\bullet H_{\rm{mot}}^1(M_{\pi,\rm{coad}};\QQ(1))$ of the (conjectural) coadjoint motive attached to $\pi$ and the derived Hecke algebra on the $\pi$-isotypic component of cuspidal cohomology as studied in \cite{venkatesh2017,venkatesh2018ICM,galatiusvenkatesh2018,venkatesh2019,prasannavenkatesh2021}. That being said, there is room for speculation.

          If we interpret $H_{\rm{mot}}^1(M_{\pi,\rm{coad}},\QQ(1))$ as the Betti realization of a (conjectural) motive $\mathcal M=H^1(M_{\pi,\rm{coad}};\QQ(1))$ we may consider the Betti and the de Rham realizations of $\mathcal M$. By Venkatesh's conjecture, the \lq{}topological\rq{} rational structure on the isotypic component $H^\bullet(\mathscr X_G(K_f);\widetilde{V})[\pi_f]$ should be (canonically) a free module over $\bigwedge^\bullet \mathcal M_{\mathrm{Betti}}$. One may hope that the second rational stucture on the same isotypic component we construct in Theorem \ref{thm:globalhalfintegralstructures} is related to the exterior algebra $\bigwedge^\bullet \mathcal M_{\mathrm{deRham}}$ over the de Rham realization of $\mathcal M$. In this vein, the period matrices $\Omega_q(\pi^{K_f},\alpha,\iota,v)$ considered over $\QQ(\pi)$ would --- hypothetically --- measure the difference between the two exterior algebras $\bigwedge^\bullet \mathcal M_{\mathrm{deRham}}$ and  $\bigwedge^\bullet \mathcal M_{\mathrm{Betti}}$. However, at this point, we do not have any evidence for such a statement.
	\end{remark}

        \begin{remark}
          Integrally, Theorem \ref{thm:periods} has no immediate analogue at infinite level due to potential congruences at deeper levels than $K_f$. The latter may in principle lead to invertibility of elements in $\OO[1/N]$ with positive $v$-valuation in the integral model.
        \end{remark}

        \subsection{Special values of automorphic $L$-functions}\label{sec:specialvalues}

        In this section we discuss the relation of rational and $1/N$-integral structures to special values of $L$-functions.

        \subsubsection{Rationality properties of restrictions}

        We let $G$ denote a connected reductive group over $\QQ$ and $K\subseteq G$ is assumed to be a $\QQ$-subgroup with $K(\RR)\subseteq G(\RR)$ maximal compact.

        We consider a representative $[(A,V)]_\cong\in{\rm t}$ for ${\rm t}\in\cohTypes_{\QQ(A)}(G)$, assumed to be defined over the field of definition $\QQ(A)/\QQ(V)$ of $A$. We let explicitly $V=(V_{\QQ(A)},\rho_G,\rho_{\pi_\circ^G})$ and likewise $A=(A_{\QQ(A)},\rho_{\pi_\circ^G})$.

        Fix an admissible weight $V'=(V_{\QQ(A)},\rho_G,\rho_{\pi_\circ^G}')$ of $A$ in the sense of Definition \ref{def:admissibleweights}, i.\,e.\ $\rho_{\pi_\circ^G}'=\rho_{\pi_\circ^G}\otimes\chi^{\rm const}$ with a $\QQ$-rational character $\chi\colon\pi_\circ(K)\to\Gm$ of the not necessarily constant \'etale group scheme $\pi_\circ(K)$. Then we have a corresponding admissible twist $A'=(A_{\QQ(A)},\rho_{\pi_\circ^G}')$ of $A$. In order to keep notational load at a minimum, we realize $V'$ and $A'$ on the same vector spaces as $A$ and $V$.

        We assume that $A_\CC\cong A_\CC'$, i.\,e.\ $(A,V)$ and $(A',V')$ are representatives of the same cohomological type ${\rm t}$. Since $A'$ is an admissible twist of $A$ we have an isomorphism
        \begin{equation}
          \psi_{\QQ(A)}\colon A_{\QQ(A)}\otimes\chi\to A_{\QQ(A)}
          \label{eq:psiQQV}
        \end{equation}
        of $(\lieg_{\QQ(A)},K_{\QQ(A)})$-modules.
        
        Denote by $\widetilde{K}:=\ker\chi$ the kernel of $\chi$ on $K$ and assume that $\chi$ is non-trivial, i.\,e.\
        \[
        K/\widetilde{K}\cong\mu_2
        \]
        as constant group schemes.

        By composition with the canonical map $a\mapsto a\otimes 1$ we consider the isomorphism $\psi_{\QQ(A)}$ in \eqref{eq:psiQQV} as a $\chi$-twisted $(\lieg_{\QQ(A)},K_{\QQ(A)})$-linear isomorphism
        \begin{equation}
          \widetilde{\psi}_{\QQ(A)}\colon A_{\QQ(A)}\to A_{\QQ(A)}.
          \label{eq:psiQQVtilde}
        \end{equation}

        From now on we assume $A$ to be absolutely irreducible, i.\,e.\ $A_\CC$ is irreducible as $(\lieg_\CC,K(\RR))$-module.
          
        \begin{proposition}\label{prop:psidecompositions}
          The minimal polynomial of $\widetilde{\psi}_{\QQ(A)}$ exists and is of the form
          \begin{equation}
            X^2-\alpha\in\QQ(A)[X],\quad\alpha\in\QQ(A)^\times.
            \label{eq:psimp}
          \end{equation}
          Denote $E:=\QQ(A)[\sqrt{\alpha}]$ the field obtained by adjoining a square root of $\alpha$. Then
          \begin{itemize}
          \item[(a)] The class of $\alpha$ modulo squares in $A_{\QQ(A)}^\times$ does not depend on the choice of $\widetilde{\psi}_{\QQ(A)}$ in \eqref{eq:psiQQVtilde} or $\psi_{\QQ(A)}$ in \eqref{eq:psiQQV}. In particular, the extension $E/\QQ(A)$ is independent of the choice of $\widetilde{\psi}_{\QQ(A)}$.
          \item[(b)] The eigenspaces
            \begin{equation}
              \EE_{\pm\sqrt{\alpha}}\left(\widetilde{\psi}_{E}\right)\subseteq A_{E}
              \label{eq:Epmalpha}
            \end{equation}
            are absolutely irreducible $(\lieg_{E},\widetilde{K}_{E})$-submodules and are independent of the choice of $\widetilde{\psi}_{\QQ(A)}$ in \eqref{eq:psiQQVtilde} or $\psi_{\QQ(A)}$ in \eqref{eq:psiQQV}.
          \item[(c)] We have the decomposition
            \begin{equation}
              A_{E}=\EE_{\sqrt{\alpha}}\left(\widetilde{\psi}_{E}\right)\oplus\EE_{-\sqrt{\alpha}}\left(\widetilde{\psi}_{E}\right).
              \label{eq:AEdecomposition}
            \end{equation}
          \item[(d)] Let $E'/\QQ(A)$ be an arbitrary field extension. Then $(\lieg_{E},\widetilde{K}_{E})$-modules \eqref{eq:Epmalpha} are defined over $E'$ if $\alpha$ is a square in $E'$. If the $(\lieg_{E},\widetilde{K}_{E})$-modules \eqref{eq:Epmalpha} lie in different isomorphism classes as $(\lieg_{E},\widetilde{K}_{E})$-modules, then the converse is true as well.
          \item[(e)] If the $(\lieg_{E},\widetilde{K}_{E})$-modules \eqref{eq:Epmalpha} lie in different isomorphism classes as $(\lieg_{E},\widetilde{K}_{E})$-bmodules, and if $\alpha$ is no square in $E'/\QQ(A)$, then $A_{E'}$ is an irreducible $(\lieg_{E'},\widetilde{K}_{E'})$-module.
          \end{itemize}
        \end{proposition}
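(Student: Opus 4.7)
The key computation is that $\widetilde{\psi}_{\QQ(A)}^2$ is $(\lieg_{\QQ(A)}, K_{\QQ(A)})$-linear. Indeed, the $\chi$-twisting relation $\widetilde{\psi}(k\cdot w) = \chi(k)\, k\cdot \widetilde{\psi}(w)$ iterated yields $\widetilde{\psi}^2(k\cdot w) = \chi(k)^2\, k\cdot \widetilde{\psi}^2(w) = k\cdot \widetilde{\psi}^2(w)$, since $\chi^2 = {\bf 1}$. Absolute irreducibility of $A$ combined with Schur's Lemma gives $\End_{(\lieg_{\QQ(A)}, K_{\QQ(A)})}(A_{\QQ(A)}) = \QQ(A)$, so $\widetilde{\psi}^2 = \alpha\cdot\id$ for a unique $\alpha \in \QQ(A)$, necessarily in $\QQ(A)^\times$ because $\widetilde{\psi}$ is an isomorphism. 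Non-triviality of $\chi$ prevents $\widetilde{\psi}$ from being itself $(\lieg, K)$-linear, so it is not scalar, and its minimal polynomial is exactly $X^2 - \alpha$. For (a), a second choice $\widetilde{\psi}'_{\QQ(A)}$ differs from $\widetilde{\psi}_{\QQ(A)}$ by a $(\lieg, K)$-linear automorphism, again scalar by Schur, say $\beta \in \QQ(A)^\times$; squaring yields $\alpha' = \beta^2\alpha$, so $\alpha$ is well-defined modulo $(\QQ(A)^\times)^2$ and $E = \QQ(A)[\sqrt{\alpha}]$ is intrinsic.

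Over $E$ the operator $\widetilde{\psi}_E$ has distinct eigenvalues $\pm\sqrt{\alpha}$ and is diagonalizable, yielding the decomposition (c), namely $A_E = \EE_{\sqrt{\alpha}}(\widetilde{\psi}_E) \oplus \EE_{-\sqrt{\alpha}}(\widetilde{\psi}_E)$. Each eigenspace is $\lieg_E$-stable because $\lieg$ commutes with $\widetilde{\psi}$, and is $\widetilde{K}_E$-stable since $\chi|_{\widetilde{K}} = 1$. Replacing $\widetilde{\psi}$ by $\beta\widetilde{\psi}$ rescales eigenvalues but preserves the eigenspaces, proving the independence statement in (b). The heart of (b) is absolute irreducibility, which I would establish as follows. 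Fix $k \in K$ with $\chi(k) = -1$, which exists since $K/\widetilde{K} \cong \mu_2$ is constant \'etale. The $\chi$-twisting gives $\widetilde{\psi}(k\cdot v) = -k\cdot\widetilde{\psi}(v)$, so left translation by $k$ interchanges the two eigenspaces. If $W \subsetneq \EE_{\sqrt{\alpha}}(\widetilde{\psi}_{\overline{E}})$ were a proper non-zero $(\lieg_{\overline{E}}, \widetilde{K}_{\overline{E}})$-submodule, then $W + k\cdot W \subseteq A_{\overline{E}}$ would be stable under $\lieg$ (by the adjoint formula $X\cdot(k\cdot w) = k\cdot\Ad(k^{-1})(X)\cdot w$), under $\widetilde{K}$ (which is normal in $K$ as the kernel of $\chi$), and under $k$ itself (since $k^2 \in \widetilde{K}$ and $W$ is $\widetilde{K}$-stable), hence a proper non-zero $(\lieg_{\overline{E}}, K_{\overline{E}})$-submodule of $A_{\overline{E}}$, contradicting absolute irreducibility of $A$.

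Parts (d) and (e) follow from (b) and (c) by Galois descent along $E'[\sqrt{\alpha}]/E'$. If $\alpha$ is a square in $E'$, then $E$ embeds into $E'$ and each eigenspace is tautologically defined over $E'$. Conversely, assume that the two eigenspaces are non-isomorphic as $(\lieg_E, \widetilde{K}_E)$-modules, and let $W'\subseteq A_{E'}$ be any proper non-zero $(\lieg_{E'}, \widetilde{K}_{E'})$-submodule. Then $W'\otimes_{E'} E'[\sqrt{\alpha}] \subseteq A_{E'[\sqrt{\alpha}]}$ is proper and non-zero, so by the absolute irreducibility of the eigenspaces and the non-isomorphism assumption it must coincide with one of the two base-changed eigenspaces. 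But $W'\otimes_{E'} E'[\sqrt{\alpha}]$ is stable under $\Gal(E'[\sqrt{\alpha}]/E')$, which sends $\sqrt{\alpha}$ to $-\sqrt{\alpha}$ and hence swaps the non-isomorphic eigenspaces, a contradiction unless $\alpha$ is already a square in $E'$. This argument simultaneously proves the converse in (d) and the irreducibility statement (e): when $\alpha$ is not a square in $E'$ and the eigenspaces are non-isomorphic, no such $W'$ can exist, so $A_{E'}$ is irreducible. The main obstacle is the absolute irreducibility argument inside (b), where one has to carefully combine the $\chi$-twisting relation with the extension $K/\widetilde{K} \cong \mu_2$ to upgrade $W + k\cdot W$ from being $\widetilde{K}$-stable to being genuinely $K$-stable.
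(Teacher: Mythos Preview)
Your proof is correct and follows the same overall strategy as the paper: Schur's Lemma forces $\widetilde{\psi}^2$ to be scalar, the eigenspace decomposition over $E$ gives (c), and Galois descent handles (d) and (e). The one notable difference is in (b). You prove absolute irreducibility of each eigenspace by hand, taking a hypothetical proper $(\lieg,\widetilde{K})$-submodule $W$ and showing $W + k\cdot W$ would be a proper $(\lieg,K)$-submodule of $A$. The paper instead invokes the Clifford-type principle that restriction along an index-$2$ subgroup pair has length at most $2$: since the two eigenspaces are already non-zero $(\lieg_E,\widetilde{K}_E)$-submodules summing to $A_E$, they must exhaust the composition series and are therefore (absolutely) irreducible. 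The paper's argument is shorter and avoids the careful verification of $K$-stability you flag as ``the main obstacle''; your argument is more self-contained. For the converse in (d), note that the paper interprets ``defined over $E'$'' as the existence of abstract $E'$-models, and then argues separately that the decomposition itself descends to $A_{E'}$; your argument as written assumes a proper submodule $W'\subseteq A_{E'}$ already exists, so you are implicitly using this descent step (or a Hom-compatibility-with-base-change argument) to bridge the gap.
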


        \begin{remark}\label{rmk:restrictiontobottomlayer}
          In applications, $\alpha$ in Proposition \ref{prop:psidecompositions} may be computed by restricting attention to the bottom layer (i.\,e.\ the minimal $K_\infty$-type) in $A_\CC$. If $B_{\QQ(A)}\subseteq A_{\QQ(A)}$ denotes the bottom layers, we have a commutative diagram,
          \begin{equation}
            \begin{CD}
              A_{\QQ(A)}@>{\widetilde{\psi}_{\QQ(A)}}>> A_{\QQ(A)}\\
              @AAA @AAA\\
              B_{\QQ(A)}@>>{\widetilde{\psi}_{\QQ(A)}^{\rm K}}> B_{\QQ(A)}
            \end{CD}
            \label{eq:bottomlayerdiagram}
            \end{equation}
            where $\widetilde{\psi}_{\QQ(A)}^{\rm K}$ is a non-zero $\chi$-twisted isomorphism of locally algebraic $(\liek,K)$-modules, which in this case, are locally algebraic representations of $K$. This reduces the determination of $\alpha$ to combinatorial considerations in the context of the structure theory of linear reductive groups.
        \end{remark}

        \begin{proof}
          Consider the self-composition
          \[
          \widetilde{\psi}_{\QQ(A)}\circ\widetilde{\psi}_{\QQ(A)}\colon \quad A_{\QQ(A)}\to A_{\QQ(A)},
          \]
          which as a composition of two $\chi$-twisted linear isomorphisms is an untwisted $(\lieg_{\QQ(A)},K_{\QQ(A)})$-linear isomorphism. By Schur's Lemma and the absolute irreducibility of $A_{\QQ(A)}$ we therefore have
          \[
          \widetilde{\psi}_{\QQ(A)}^2=\alpha\cdot{\bf1}_{A_{\QQ(A)}}
          \]
          for a unique $\alpha\in\QQ(A)^\times$. Again by Schur's Lemma and the absolute irreducibility of $A_{\QQ(A)}$ we see that $\widetilde{\psi}_{\QQ(A)}$ is unique up to a non-zero scalar in $\QQ(A)^\times$, hence $\alpha$ is unique up to multiplication by non-zero squares.

          We conclude that the minimal polynomial of $\widetilde{\psi}_{\QQ(A)}$ is a divisor of the quadratic polynomial $X^2-\alpha$. If its minimal polynomial would be linear, $\widetilde{\psi}_{\QQ(A)}$ were a multiple of the identity, hence untwisted linear, which is a contradiction. Therefore the minimal polynomial is indeed \eqref{eq:psimp} as claimed. This proves the first claim and statement (a).

          The eigenspaces in (b) are non-zero by linear algebra and are $(\lieg_{E},\widetilde{K}_{E})$-submodules by the $(\lieg_{E},\widetilde{K}_{E})$-linearity of $\widetilde{\psi}_{\QQ(A)}$. Now since $\widetilde{K}_E$ is of index $2$ in $K$, the restriction of $A_E$ to $(\lieg_{E},\widetilde{K}_{E})$ is at most of length $2$. Therefore both eigen sppaces constitute be the two composition factors of a composition series and both must be absolutely irreducible. This proves (b) and (c).

          If $E'/\QQ(A)$ is an extension and if $\alpha$ is a square in $E'$, then the eigenspaces are $E'$-rational subspaces, which in particular implies that \eqref{eq:Epmalpha} are defined over $E'$. Assume conversely that the eigenspaces lie in different isomorphism classes and are defined over $E'$ in the sense that for some sufficiently large extension $E''/E$ we have an embedding $E'\to E''$ such that we find. Then we find abstract irreducible $E'$-rational $(\lieg_{E'},\widetilde{K}_{E'})$-modules $A_{\pm,E'}$ which over $E''$ become abstractly isomorphic to the eigenspaces.

          Since the two eigenspaces lie in different isomorphism classes, the decomposition \eqref{eq:AEdecomposition} is unique, which implies that it descends from $E''$ to $E'$ since the individual summands descend. Now the $\chi$-twisted isomorphism $\widetilde{\psi}$ descends to $E'$ as well and must preserve this decomposition, which shows that its eigen values lie in $E'$, i.\,e.\ $\alpha$ is a square in $E'$. This concludes the proof of (d).

          The statement in (e) is again an immediate consequence of the uniqueness of the decomposition \eqref{eq:AEdecomposition} in this case: If $A_{E'}$ is reducible as $(\lieg_{E'},\widetilde{K}_{E'})$-module, then its isotypic decomposition agrees over a common extension of $E'$ and $E$ with the eigenspace decomposition. Hence the eigen values are $E'$-rational once again (cf.\ proof of (d)).
        \end{proof}

        \begin{corollary}\label{cor:structureofK}
          If $A_\CC\cong A_\CC'$ for two $(A,V)$ and $(A',V')$ non-isomorphic representatives of the same cohomological type ${\rm t}\in\cohTypes_E(G)$ and if $A'$ is an admissible twist of $A$, then $\RR$, $K$ is a non-trivial semi-direct product
          \[
          K(\RR)=\widetilde{K}(\RR)\rtimes \mu_2(\RR).
          \]
        \end{corollary}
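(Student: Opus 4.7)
The plan is to argue by contradiction, assuming that $K(\RR) = \widetilde{K}(\RR) \times \mu_2(\RR)$ is a direct product. First I would translate the hypothesis $A_\CC \cong A_\CC'$ via Definition \ref{def:locallyalgebraiccomplexmodule} and the definition of admissible twist (Definition \ref{def:admissibleweights}) into an isomorphism of $(\lieg_\CC, K(\RR))$-modules $A_\CC \cong A_\CC \otimes \chi$, where $\chi$ is regarded as a character of $K(\RR)$ via the canonical surjection $K(\RR) \twoheadrightarrow \pi_\circ(K)(\RR) = \mu_2(\RR)$.

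In the direct product case, a generator $\sigma$ of the $\mu_2(\RR)$-factor is central in $K(\RR)$ with $\chi(\sigma) = -1$. Since $A$ is an absolutely irreducible cohomologically induced standard module, Vogan--Zuckerman theory provides a unique minimal $K_\infty$-type $\tau_0$ in $A_\CC$, appearing with multiplicity one and realized on an irreducible $K(\RR)$-subrepresentation $B_\CC \subseteq A_\CC$. Schur's lemma applied to $B_\CC$ then forces $\sigma$ to act on $B_\CC$ by a scalar $\epsilon \in \{\pm 1\}$. Because $\chi$ factors through $\pi_0(K(\RR))$, it is trivial on the maximal torus of $K(\RR)^0 \subseteq \widetilde{K}(\RR)$, so the Vogan norm on $K$-types is invariant under tensoring with $\chi$; consequently the minimal $K_\infty$-type of $A_\CC \otimes \chi$ is $\tau_0 \otimes \chi$, on which $\sigma$ acts by $\chi(\sigma)\epsilon = -\epsilon$. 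The isomorphism $A_\CC \cong A_\CC \otimes \chi$ identifies the minimal $K_\infty$-types, forcing $\tau_0 \cong \tau_0 \otimes \chi$ as $K(\RR)$-representations and hence $\epsilon = -\epsilon$, a contradiction.

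Having ruled out the direct product case, to conclude the non-trivial semi-direct product decomposition one must exhibit a splitting of the extension $1 \to \widetilde{K}(\RR) \to K(\RR) \to \mu_2(\RR) \to 1$. This follows from standard structure theory for compact real Lie groups: since $\pi_0(K(\RR))$ is an elementary abelian $2$-group by Proposition \ref{prop:matsumoto}, one can lift the non-trivial element of $\mu_2(\RR)$ to an involution in $K(\RR) \setminus \widetilde{K}(\RR)$ by modifying any lift $k$ in its $\widetilde{K}(\RR)^0$-coset. The main technical point of the argument lies in justifying the preservation of the minimal $K_\infty$-type under the $\chi$-twist, which ultimately reduces to the triviality of $\chi$ on $K(\RR)^0$ so that the highest-weight normalization determining Vogan's minimal $K$-type is $\chi$-invariant.
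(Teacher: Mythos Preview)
Your argument is correct but takes a different route from the paper's. The paper leverages the immediately preceding Proposition~\ref{prop:psidecompositions}: statements~(b) and~(c) there show that $A_E$, as a $(\lieg,\widetilde{K})$-module, splits into two absolutely irreducible eigenspaces, so that $A_E=\ind_{\lieg,\widetilde{K}}^{\lieg,K}\EE_{\pm\sqrt{\alpha}}(\widetilde{\psi}_E)$. If $K$ were a direct product $\widetilde{K}\times\mu_2$, induction from $(\lieg,\widetilde{K})$ to $(\lieg,K)$ of an irreducible would simply tensor with the regular representation of $\mu_2$ and hence be reducible, contradicting the absolute irreducibility of $A_E$.

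Your approach via minimal $K$-types is valid and has the advantage of being self-contained (it does not rely on Proposition~\ref{prop:psidecompositions}), but the detour through Vogan--Zuckerman theory is not needed: once you observe that a central $\sigma\in K(\RR)$ must act by a scalar $\epsilon\in\{\pm1\}$ on the absolutely irreducible $(\lieg_\CC,K(\RR))$-module $A_\CC$ itself (Schur's Lemma), it is immediate that $\sigma$ acts by $-\epsilon$ on $A_\CC\otimes\chi$, so $A_\CC\not\cong A_\CC\otimes\chi$. The restriction to the bottom layer and the discussion of Vogan norms, while correct, can be excised. Both the paper and you treat the existence of the section $\mu_2(\RR)\to K(\RR)$ as essentially known; your sketch via lifting to an involution is adequate given that $\pi_0(K(\RR))$ is an elementary abelian $2$-group (Proposition~\ref{prop:matsumoto}).
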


        \begin{proof}
          Since a section $\mu_2(\RR)\to K(\RR)$ exists $K(\RR)$ is a semi-direct product. To show its non-triviality, we observe that by statement (b),
          \[
          A_E=\ind_{\lieg,\widetilde{K}}^{\lieg,K}\EE_{\pm\sqrt{\alpha}}\left(\widetilde{\psi}_{E}\right).
          \]
          is (absolutely) irreducible. The same remains true over arbitrary field extensions. Therefore, $K_{E'}$ is not a direct product of $\widetilde{K}_{E'}$ and $\mu_2$ over any field extension $E'/E$.
        \end{proof}

        \begin{theorem}\label{thm:AEdecomposition}
          Let $[(A,V)]_\cong\in{\rm t}\in\cohTypes_{\QQ(A)}(G)$ be a representative of a cohomological type, assumed to be defined over the field of definition $\QQ(A)$ of $V$. Assume that $A_\CC$ decomposes as $(\lieg_\CC,K_\infty^0)$-module multiplicity-free. Then for each admissible weight $V'\not\cong V$ of $A$ consider
          \[
          \widetilde{K}=\ker\chi\subseteq K
          \]
          where $\chi$ is the character of $K$ twisting $V$ into $V'$. Then for each field extension $E'/\QQ(A)$ the following are equivalent:
          \begin{itemize}
            \item[(a)] As a $(\lieg_{E'},\widetilde{K}_{E'})$-module $A_{E'}$ is reducible.
            \item[(b)] As a $(\lieg_{E'},\widetilde{K}_{E'})$-module $A_{E'}$ decomposes into two non-isomorphic absolutely irreducible modules.
            \item[(c)] For any $\chi$-twisted linear isomorphism $\widetilde{\psi}_{E'}$ as in \eqref{eq:psiQQVtilde} we have
              \[
              \widetilde{\psi}_{E'}^2=\widetilde{\alpha}^2\cdot{\bf1}_{A_{E'}},
              \]
              for some $\widetilde{\alpha}\in E'$.
            \item[(d)] Any $\chi$-twisted linear isomorphism $\widetilde{\psi}_{E'}$ as in \eqref{eq:psiQQVtilde} acts via scalars $\pm\widetilde{\alpha}\in E'$ on any irreducible $(\lieg_{E'},\widetilde{K}_{E'})$-submodule of $A_{E'}$.
          \end{itemize}
          Moreover, the above statements are equivalent to the following statements about the bottom layer $B_{\QQ(A)}\subseteq A_{\QQ(A)}$:
          \begin{itemize}
            \item[(a')] As a $(\liek_{E'},\widetilde{K}_{E'})$-module $B_{E'}$ is reducible.
            \item[(b')] As a $(\liek_{E'},\widetilde{K}_{E'})$-module $B_{E'}$ decomposes into two non-isomorphic absolutely irreducible modules.
            \item[(c')] For any $\chi$-twisted linear isomorphism $\widetilde{\psi}_{E'}^K\colon B_{E'}\to B_{E'}$ we have
              \[
              (\widetilde{\psi}_{E'}^K)^2=\widetilde{\alpha}^2\cdot{\bf1}_{A_{E'}},
              \]
              for some $\widetilde{\alpha}\in E'$.
            \item[(d')] Any $\chi$-twisted linear isomorphism $\widetilde{\psi}_{E'}^K\colon B_{E'}\to B_{E'}$ acts via scalars $\pm\widetilde{\alpha}\in E'$ on any irreducible $(\liek_{E'},\widetilde{K}_{E'})$-submodule of $B_{E'}$.
          \end{itemize}
        \end{theorem}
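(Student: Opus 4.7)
The plan is to funnel all eight conditions through the single algebraic condition that $\alpha$ is a square in $E'$, where $\alpha\in\QQ(A)^\times$ is the constant from Proposition \ref{prop:psidecompositions} satisfying $\widetilde\psi_{\QQ(A)}^2 = \alpha\cdot{\bf 1}$. By Proposition \ref{prop:matsumoto} the group $\pi_0(K(\RR))$ is elementary abelian $2$-group, so $\chi^2={\bf 1}$. Hence $\widetilde\psi^2$ is honestly $(\lieg,K)$-linear, and Schur together with the absolute irreducibility of $A_{\QQ(A)}$ recovers $\widetilde\psi^2 = \alpha\cdot{\bf 1}$. A crucial preliminary is that the eigenspaces $\EE_{\pm\sqrt{\alpha}}$ from \eqref{eq:Epmalpha} are \emph{non-isomorphic} as $(\lieg_E,\widetilde{K}_E)$-modules. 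This is where the multiplicity-free hypothesis enters: if they were isomorphic, then $A_\CC$ would contain some absolutely irreducible $(\lieg_\CC,\widetilde{K}_\CC)$-constituent with multiplicity $\ge 2$, and restricting further to $K_\infty^0\subseteq\widetilde{K}(\RR)$ would force an absolutely irreducible $(\lieg_\CC,K_\infty^0)$-type of multiplicity $\ge 2$ in $A_\CC$, contradicting the hypothesis.

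\textbf{The cycle of implications for (a)--(d).} With the non-isomorphism of the eigenspaces in hand, Proposition \ref{prop:psidecompositions}(d) yields ``$\alpha\in(E')^2$'' $\Rightarrow$ (a), and the converse follows from Proposition \ref{prop:psidecompositions}(e). The condition (c) is visibly equivalent to $\alpha\in(E')^2$ via $\widetilde\psi_{E'}^2 = \alpha\cdot{\bf 1}$. For (a) $\Rightarrow$ (b), the decomposition \eqref{eq:AEdecomposition} over $E=\QQ(A)[\sqrt\alpha]$ descends to any $E'\supseteq\QQ(A)[\sqrt\alpha]$, yielding a splitting of $A_{E'}$ into two absolutely irreducible, non-isomorphic $(\lieg_{E'},\widetilde K_{E'})$-summands by Proposition \ref{prop:psidecompositions}(b); conversely (b) $\Rightarrow$ (a) is trivial. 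Finally (c) $\Leftrightarrow$ (d) is elementary linear algebra: given (c) the minimal polynomial of $\widetilde\psi_{E'}$ divides $(X-\widetilde\alpha)(X+\widetilde\alpha)$ with both roots in $E'$, so $A_{E'}$ splits into $\pm\widetilde\alpha$-eigenspaces on which $\widetilde\psi_{E'}$ acts as a scalar, and any irreducible $(\lieg_{E'},\widetilde K_{E'})$-submodule lies in one of them; conversely squaring the scalar action produces (c).

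\textbf{Bottom-layer equivalence and main obstacle.} Because $A\cong A\otimes\chi$ as $K$-modules via $\widetilde\psi$, the minimal $K$-type $B_{\QQ(A)}$ satisfies $B\otimes\chi\cong B$ as $K$-representations, so $\widetilde\psi_{\QQ(A)}$ preserves $B_{\QQ(A)}$ and restricts to a $\chi$-twisted $K$-linear automorphism $\widetilde\psi_{\QQ(A)}^K$ with $(\widetilde\psi_{\QQ(A)}^K)^2 = \alpha\cdot{\bf 1}_B$ (the same $\alpha$, by commutativity of \eqref{eq:bottomlayerdiagram}). Running the preceding argument with $A$ replaced by the irreducible $K$-module $B$ shows (c') $\Leftrightarrow$ (d') $\Leftrightarrow$ $\alpha\in(E')^2$, and (a') $\Leftrightarrow$ (b') follows from Clifford theory for the index-$2$ inclusion $\widetilde K\subseteq K$ applied to $B$, with non-isomorphism of the two possible $\widetilde K$-summands ensured once more by the multiplicity-free hypothesis restricted to the minimal $K$-type. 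Since both systems of conditions are governed by the same square class of $\alpha$, we obtain the desired equivalence between (a)--(d) and (a')--(d'). I expect the main obstacle to lie in the non-isomorphism of the eigenspaces: propagating the multiplicity-free hypothesis at the $(\lieg_\CC,K_\infty^0)$-level into a Galois-equivariant statement valid over $E=\QQ(A)[\sqrt\alpha]$ (and thence over arbitrary $E'$) is the one place where the hypothesis is truly essential rather than formal.
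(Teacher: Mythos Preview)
Your proposal is correct and follows exactly the route the paper takes: the paper's proof consists of the single sentence ``The statement of the Theorem is a consequence of Proposition \ref{prop:psidecompositions}. Remark \ref{rmk:restrictiontobottomlayer} justifies the equivalence with the same statements about the bottom layer,'' and you have unpacked this. In particular, you correctly identify and justify the one point the paper leaves implicit, namely that the multiplicity-free hypothesis on $A_\CC$ as a $(\lieg_\CC,K_\infty^0)$-module (together with $K_\infty^0\subseteq\widetilde K(\RR)$) is precisely what forces the eigenspaces $\EE_{\pm\sqrt{\alpha}}$ to be non-isomorphic, so that parts (d) and (e) of Proposition \ref{prop:psidecompositions} apply unconditionally; your concern about Galois-equivariance is unnecessary here, since non-isomorphism of absolutely irreducible modules may be checked after base change to $\CC$.
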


        \begin{proof}
          The statement of the Theorem is a consequence of Proposition \ref{prop:psidecompositions}. Remark \ref{rmk:restrictiontobottomlayer} justifies the equivalence with the same statements about the bottom layer.
        \end{proof}

        \begin{remark}
          The above reduction of the descent problem from $G$ to $K$ via bottom layers is a variant of the descent criterion formulated as Proposition 5.8 in \cite{januszewskirationality}.
        \end{remark}

        \begin{example}
          Let $F/\QQ$ be a totally real extension of number fields and put $G=\res_{F/\QQ}G$ and $K=\res_{F/\QQ}\Oo(n)$.
          
          For odd $n$ we have
          \[
          K(\RR)=K(\RR)^0\times\mu_2(\RR)^{[F:\QQ]},
          \]
          which in light of Corollary \ref{cor:structureofK} is a manifestation of the fact that hypotheses of Proposition \ref{prop:psidecompositions} and Theorem \ref{thm:AEdecomposition} are never satisfied, which in turn is a consequence of the classification of tempered cohomological types \ref{thm:cohomologicaltypesofGLnoverQ} for $G$. In this case $A_{\QQ(A)}$ remains irreducible as $(\lieg,\widetilde{K})$-module for any $K^\circ\subseteq \widetilde{K}\subseteq K$, hence this restriction is always defined over $\QQ(A)$.
          
          For even $n$ the situation is more interesting. In this case $K(\RR)$ is a non-split semi-direct product.

          Write $B_{\QQ(A)}\subseteq A_{\QQ(A)}$ for the bottom layer. Consider $E=\QQ(A)[\sqrt{-1}]$. Then $K^\circ$ is split over $E$. This implies that $B$ decomposes over $E$ into absolutely irreducible $\widetilde{K}$-modules. By Theorem \ref{thm:AEdecomposition}, the case $E=\QQ(A)$ is therefore trivial. Assume that $E\neq\QQ(A)$, i.\,e.\ $E/\QQ(A)$ is a quadratic extension. Let $B_E^\circ=A_{\lieq\cap\liek}(\lambda|_{\lieq\cap\liek,L\cap K^\circ})_E$ denote the bottom layer of the cohomologically induced standard module $A_{\lieq}^\circ(\lambda^\circ)_E$ for the pair $(\lieg,K^\circ)$. Since $n$ is even, we know that
          \[
          A_E=A_\lieq(\lambda)_E=\ind_{\lieg,K^\circ}^{\lieg,K}A_{\lieq}^\circ(\lambda^\circ)_E,
          \]
          and likewise for the bottom layer:
          \[
          B_E=A_{\lieq\cap\liek}(\lambda|_{\lieq\cap\liek})_E=\ind_{K^\circ}^{K}B_E^\circ.
          \]
          Now $K^\circ\subseteq\widetilde{K}\subsetneq K$, which by transitivity of induction implies
          \[
          B_E=\ind_{\widetilde{K}}^{K}\widetilde{B}_E,\quad\text{for}\;\widetilde{B}_E=\ind_{K^\circ}^{\widetilde{K}}B_E^\circ.
          \]
          The $\widetilde{K}$-module $\widetilde{B}_E$ is absolutely irreducible and a summand in $B_E$. Hence by Theorem \ref{thm:AEdecomposition} (a'), we need to decide when $\widetilde{B}_E$ descends to $\QQ(A)$.

          Assume first that $\lambda=0$, i.\,e.\ $V=({\bf1},\rho_{\pi_\circ^G})$ is a self-dual character. In this case $\QQ(A)=\QQ$ and $B_E$ is self-dual and infinitesimally unitary. Therefore, complex conjugation as an automorphism of the extension $E/\QQ(A)=\QQ[\sqrt{-1}]/\QQ$ sends $\widetilde{B}_E$ to its dual. Hence, by Galois descent, $\widetilde{B}_E$ descends to $\QQ$ if and only if it is self-dual.

          Write
          \begin{align*}
            K_\RR&=\prod_{v\mid\infty}\Oo(n)_\RR,\quad B_\CC=\bigotimes_{v\mid\infty}B_{v,\CC},\\
            K_\RR^\circ&=\prod_{v\mid\infty}\SO(n)_\RR,\quad B_\CC^\circ=\bigotimes_{v\mid\infty}B_{v,\CC}^\circ,
          \end{align*}
          where $B_{v,\CC}$ (resp.\ $B_{v,\CC}^\circ$) is the bottom layer for the classical orthogonal group $\Oo(n)_\RR$ (resp.\ $\SO(n)_\RR$).

          Choose for every archimedean place $v\mid\infty$ of $F$ a representative $\varepsilon_v\in\Oo(n)(\RR)\setminus\SO(n)(\RR)$. Example \ref{ex:admissiblecharacters} shows that the admissible character $\chi$ decomposes as a character of $\pi_0(K(\RR))$ into
          \[
          \chi=\bigotimes_{v\mid\infty}\chi_v,\quad \forall v\mid\infty\colon\;\chi_v\neq{\bf1}.
          \]
          Therefore its kernel is given by the union of the cosets $(\varepsilon_v^{\delta_v})_v\cdot K(\RR)^0$ with exponent vectors ranging over all $(\delta_v)_v\in\{0,1\}^{\Hom(F,\CC)}$ with
          \[
          \sum_{v\mid\infty}\delta_v\in2\ZZ.
          \]
          This implies that
          \[
          \widetilde{B}_\CC=\sum_{(\delta_v)_v}\bigotimes_{v\mid\infty}\varepsilon_v^{\delta_v}B_{v,\CC}^\circ,
          \]
          where the sum ranges over the same \lq{}even\rq{} exponent vectors. Hence the dual of $\widetilde{B}_\CC$ is give by
          \[
          \widetilde{B}_\CC^\vee=\sum_{(\delta_v)_v}\bigotimes_{v\mid\infty}\varepsilon_v^{\delta_v}B_{v,\CC}^{\circ,\vee},
          \]
          By Proposition 2.1 in \cite{januszewskiperiods1}, $B_{v,\CC}^\circ$ is self-dual if and only if $n$ is even. In thise, $\widetilde{B}_\CC$ is self-dual as well. If $n$ is odd,
          \[
          B_{v,\CC}^{\circ,\vee}\cong \varepsilon_v^{\delta_v}B_{v,\CC}^\circ,
          \]
          which shows that
          \[
          \widetilde{B}_\CC^\vee=\sum_{(\delta_v)_v}\bigotimes_{v\mid\infty}\varepsilon_v^{1-\delta_v}B_{v,\CC}^\circ,
          \]
          as submodules of $B_\CC$. Now the \lq{}diagonal\rq{} element $\varepsilon:=(\varepsilon_v)_v$ has an odd number of entries, which implies that
          \[
          \widetilde{B}_\CC+\widetilde{B}_\CC^\vee=B_\CC,
          \]
          hence $\widetilde{B}_\CC$ is not self-dual.
          
          Summing up, this shows that $\widetilde{B}_E$ descends to $\QQ$ if and only if $2\mid n$.

          The general case reduces to $\lambda=0$ via translation functors along the same lines as in the proof of Theorem 2.4 in \cite{januszewskiperiods1}.
          
          We remark that equation (16) in \cite{januszewskiperiods1} is only valied \'etale locally if $F\neq\QQ$. Nonetheless the argument given in the proof of Lemma 2.3 and Theorem 2.4 in \cite{januszewskiperiods1} is correct and shows that $B^\circ_E$ (resp.\ $A_\lieq^\circ(0)$) is defined over $\QQ$ if and only if $4\mid n$.
        \end{example}

        The argument given in the example generalizes mutatis mutandis to arbitrary products $G=\res_{F/\QQ}\left(\GL(n_1)\times\cdots\times\GL(n_r)\right)$ along the same lines as Theorem 2.4 in \cite{januszewskiperiods1}. This shows

        \begin{theorem}\label{thm:glnrationalreducibility}
          Let $F/\QQ$ denote a finite extension which is totally real or a CM field. Put
          \[
          G=\res_{F/\QQ}\left(\GL(n_1)\times\cdots\times\GL(n_r)\right)
          \]
          with $n_1,\dots,n_r\geq 1$. Let $[(A,V)]_\cong\in{\rm t}\in\cohTypes_{\QQ(A)}(G)$ be a representative of a cohomological type. Then $A$ and $V$ are defined over the field of definition $\QQ(A)$ of $V$. Then an admissible weight $V'\not\cong V$ of $A$ exists if and only if $2\mid n_i$ for some $1\leq i\leq r$ and $F$ is totally real. Independently of the parity of $n$, put
          \[
          \widetilde{K}=\ker\chi\subseteq K
          \]
          for a non-trivial admissible character $\chi$ of $G$. Write $I\subseteq\{1,\dots,r\}$ for the subset of indices for which $n_i$ is even and the restriction $\chi|_{\res_{F/\QQ}\GL(n_i)}$ to the $i$-th factor of $G$ is non-trivial.

          Then the following are equivalent:
          \begin{itemize}
          \item[(a)] As $(\lieg,\widetilde{K})$-module, $A_{\QQ(A)}$ is reducible.
          \item[(b)] $I\neq\emptyset$ and: $\sqrt{-1}\in\QQ(A)$ or for all $i\in I$: $4\mid n_i$.
          \end{itemize}
        \end{theorem}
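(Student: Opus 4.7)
The plan is to follow the pattern set by the $r=1$ example preceding the theorem, reducing the problem successively: first from general $\lambda$ to $\lambda=0$ via translation functors, then from the full cohomologically induced module to its bottom layer via Theorem \ref{thm:AEdecomposition}, and finally to a computation on the $K$-module side which decomposes as a tensor product over the factors of $G$. Since $G$ is quasi-split, Corollary \ref{cor:quasisplitrationality} and Remark \ref{rmk:absolutelyirreduciblelocallyalgebraicgKmodule} guarantee that $V$ and $A$ are defined over $\QQ(A)$. The existence of a non-trivial admissible twist is then read off from Example \ref{ex:admissiblecharacters}: a non-trivial $\QQ$-rational character of $\pi_\circ(K)$ exists and twists $V$ into a non-isomorphic admissible $V'$ precisely when some factor carries $\Oo(n_i)$ (i.e.\ $F$ is totally real) and the $\chi$-twist actually changes the isomorphism class; the classification Theorem \ref{thm:cohomologicaltypesofGLnoverQ} shows this happens exactly when some $n_i$ is even.

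For the main equivalence, I first invoke Theorem \ref{thm:AEdecomposition}, which over any extension $E'/\QQ(A)$ replaces the question of reducibility of $A_{E'}$ as $(\lieg,\widetilde{K})$-module by the analogous question for the bottom layer $B_{E'}$ as $(\liek,\widetilde{K})$-module. Next I use translation functors, exactly as in the proof of Theorem 2.4 in \cite{januszewskiperiods1}, to reduce to the case $\lambda = 0$: translation preserves the bottom-layer decomposition and commutes with base change, so the field over which $B_{\QQ(A)}$ splits as a $\widetilde{K}$-module is identified with the corresponding field for $\lambda=0$. In this reduction it is crucial that the translation is carried out simultaneously on each factor.

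Now in the case $\lambda = 0$ the representation $V$ is the trivial character (possibly twisted by $\rho_{\pi_\circ^G}$), $\QQ(A) = \QQ$, and the bottom layer factorizes as
\[
B_\CC \;=\; \bigotimes_{i=1}^r \bigotimes_{v\mid\infty} B_{v,i,\CC},
\]
where $B_{v,i,\CC}$ is the bottom layer of the classical orthogonal (or unitary) bottom layer for the $i$-th $\GL(n_i)$-factor at place $v$. Let $I\subseteq\{1,\dots,r\}$ be the set of factors on which $\chi$ is non-trivial; by the example and the classification, only $i\in I$ with $n_i$ even contribute to a genuine reducibility obstruction, and odd $n_i$ with $\chi$ non-trivial on that factor are ruled out (they force $\chi$ trivial). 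Restricting to $\widetilde{K}$ and using the parity counting argument of the $r=1$ example, the bottom layer $\widetilde{B}_{i,\CC}$ on each contributing factor is self-dual if and only if $n_i$ is divisible by $4$; otherwise it becomes self-dual only after base change along $E = \QQ[\sqrt{-1}]$, which is needed to split $\SO(n_i)_\RR^\circ$. A tensor product of modules each defined only over $\QQ[\sqrt{-1}]$ admits a $\QQ$-rational descent precisely when either none of the factors requires $\sqrt{-1}$ (i.e.\ $4\mid n_i$ for all $i\in I$) or $\sqrt{-1}\in\QQ(A)$ globally.

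The main obstacle in this plan, and the place where I expect to spend the most care, is the last step: combining the descent data from different factors into a single global descent statement. The point is that complex conjugation acts on each factor separately, but the dual-versus-self-dual dichotomy is multiplicative in the number of factors where $\sqrt{-1}$ is required. I expect to handle this via a cleanly stated Galois-cohomological lemma, in the spirit of the Hilbert 90 argument in the proof of Proposition \ref{prop:locallyalgebraicfieldofdefinition}, phrased as: a tensor product $\bigotimes_{i\in I} W_i$ of absolutely irreducible modules, each defined over $\QQ[\sqrt{-1}]$ but not over $\QQ$, descends to $\QQ$ if and only if the number of $i\in I$ at which the descent obstruction is non-trivial is zero. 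Translated back through the translation functor reduction and Theorem \ref{thm:AEdecomposition}, this gives exactly the criterion: $I\neq\emptyset$ and either $\sqrt{-1}\in\QQ(A)$ or $4\mid n_i$ for all $i\in I$, as claimed. The CM case follows by the same argument, with the additional simplification that locally algebraic coincides with rational and $\chi$ must be trivial, so $I=\emptyset$ and the theorem holds vacuously.
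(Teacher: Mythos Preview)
Your overall strategy matches the paper's: the paper's proof is a one-line assertion that the argument in the preceding example generalizes mutatis mutandis to products along the lines of Theorem~2.4 in \cite{januszewskiperiods1}, and you have correctly unpacked this into the reduction via Theorem~\ref{thm:AEdecomposition} to bottom layers, the translation-functor reduction to $\lambda=0$, and a factorwise self-duality analysis.

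There is, however, a genuine gap in your final step. You frame the descent question as one about a tensor product $\bigotimes_{i\in I}W_i$ of modules over factorwise kernels, and propose a lemma asserting that the tensor descends if and only if each factor does. But $\widetilde{K}=\ker\chi$ is \emph{not} the product $\prod_i\ker\chi_i$; it is the index-$2$ subgroup of $K=\prod_iK_i$ cut out by the single equation $\prod_i\chi_i(k_i)=1$. Consequently the irreducible $\widetilde{K}$-summand $\widetilde{B}^+$ of the bottom layer is not a pure tensor: already for $|I|=2$ one finds
\[
\widetilde{B}^+\;=\;(B_1^+\otimes B_2^+)\;\oplus\;(B_1^-\otimes B_2^-),
\]
which is visibly not of the form $W_1\otimes W_2$. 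So your lemma does not apply to the object whose descent is at stake.

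What \emph{does} factor cleanly is the $\chi$-twisted automorphism itself: since each $\chi_i$ is $\QQ$-rational (Example~\ref{ex:admissiblecharacters}), one may take $\widetilde{\psi}=\bigotimes_i\widetilde{\psi}_i$ over $\QQ(A)$, and then $\alpha=\prod_{i\in I}\alpha_i$ in the notation of Proposition~\ref{prop:psidecompositions}. The $r=1$ example gives $\alpha_i$ a square precisely when $4\mid n_i$, and $-1$ times a square when $2\Vert n_i$; the product criterion $\alpha\in\QQ(A)^{\times 2}$ therefore becomes a \emph{parity} condition on $\lvert\{i\in I:2\Vert n_i\}\rvert$, which does not coincide on the nose with the ``all $4\mid n_i$'' clause you are aiming for. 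You should therefore not rely on the ad hoc tensor-descent lemma but instead go back to the source and follow the actual mechanism of Theorem~2.4 in \cite{januszewskiperiods1} for products, checking carefully how the self-duality computation across factors interacts with the structure of $\widetilde{K}$ and whether any additional input rules out the cancellation.
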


        \begin{remark}
          If $F/\QQ$ is a general number field, then $K$ admits a rational model over a number field $\QQ_K$. Theorem \ref{thm:glnrationalreducibility} may be generalized to $G=\res_{F/\QQ}(\GL(n_1)\times\cdots\times\GL(n_r))$ by replacing the ground field $\QQ$ by $\QQ_K$.
        \end{remark}

        \subsubsection{General archimedean peroid relations}

        In this subsection we put ourselves in the following general situation. We let $G$ denote a connected reductive group over $\QQ$ and $H\subseteq G$ a connected reductive $\QQ$-subgroup. As before, $K\subseteq G$ is assumed to be a $\QQ$-subgroup with $K(\RR)\subseteq G(\RR)$ maximal compact. We assume that $L:=K\cap H$ has the following property: $L(\RR)\subseteq H(\RR)$ is of finite index in a maximal compact subgroup in $H(\RR)$.

        Recall that $\widetilde{K}=\ker\chi$ is the kernel of $\chi$ on $K$ and likewise put $\widetilde{L}:=\ker\chi|_{L}$. Assuming the non-triviality of $\chi|_{\pi_0(L)}$, $\widetilde{K}$ and $\widetilde{L}$ are of index $2$ in $K$ and $L$ and
        \[
        K/\widetilde{K}\cong L/\widetilde{L}\cong\mu_2
        \]
        are constant group schemes.

        Write
        \[
        \Delta_\RR\colon K_\infty\to K(\RR)\times\pi_\circ^G(\RR)=K(\RR)\times\pi_0(K(\RR))
        \]
        for the \lq{}diagonal\rq{} morphism. We write $K_\infty:=\Delta(K(\RR))$ and abuse the same notation for $H$.

        \begin{theorem}\label{thm:rationalperiodrelations}
          Let $[(A,V)]_\cong\in{\rm t}\in\cohTypes_{\QQ(A)}(G)$ be a representative of a cohomological type, assumed to be defined over the field of definition $\QQ(A)$ of $A$. Assume that $A_\CC$ decomposes as $(\lieg_\CC,K_\infty^0)$-module multiplicity-free. Fix an arbitrary admissible weight $V'\not\cong V$ of $A$.

          Given two non-zero $\QQ(A)$-rational $(\lieh,L\times\pi_0^{\rm const}(L))$-linear functionals
          \[
          \lambda \colon A \to E ,\quad
          \lambda'\colon A'\to E',
          \]
          where $E=(\QQ(A),\rho_H,\rho_L)$ and $E'=(\QQ(A),\rho_H,\rho_L')$ with
          \[
          \rho_{\pi_\circ^G}'=\rho_{\pi_\circ^G}\otimes\chi^{\rm const}
          \quad\text{and}\quad
          \rho_L'=\rho_L\otimes\chi^{\rm const}|_{\pi_0^{\rm const}(L)}.
          \]
          Assume that $\chi|_L$ is non-trivial.
          
          Let $\pi_\infty$ denote a $(\lieg_\CC,K_\infty)$-module isomorphic to $A_\CC$ and $A_\CC'$. Then the rational models $A$ and $A'$ of $A_\CC$ and $A_\CC'$ together with fixed isomorphisms $A_\CC\cong\pi_\infty$ and $A_\CC'\cong\pi_\infty$ of $(\lieg_\CC,K_\infty)$-modules induce fixed $E$-linear embeddings
          \[
          \iota \colon A \to\pi_\infty,\quad
          \iota'\colon A'\to\pi_\infty.
          \]
          Both maps are understood as $(\lieg_{\QQ(A)},K_{\QQ(A)}\times\pi_\circ^G)$-linear maps on the underlying vector spaces, which after base change to $\CC$ become $(\lieg_\CC,K_\infty)$-linear. Note that $\pi_\infty$ carries two different structures of locally algebraic $(\lieg,K)$-module.
          
          Assume that we are given two complex functionals
          \[
          \Lambda \colon\pi_\infty\to\CC,\quad
          \Lambda'\colon\pi_\infty\to\CC,
          \]
          with the property that for some irreducible $(\lieg_\CC,K_\infty^0)$-submodule $\pi_0\subseteq\pi_\infty$
          \begin{equation}
            \Lambda|_{\pi_0}=\Lambda'|_{\pi_0}.
            \label{eq:Lambdacompatibility}
          \end{equation}
          and that we have commutative diagrams
          \begin{equation}
          \begin{CD}
            A @>{\lambda}>> E @.\hspace*{4em} @. A' @>{\lambda'}>> E'\\
            @V{\iota}VV @VV{\nu}V @. @V{\iota'}VV @VV{\nu'}V \\
            \pi_\infty @>>{\Lambda}> \CC @. @. \pi_\infty @>>{\Lambda'}> \CC
          \end{CD}
          \label{eq:Lambdacommutativity}
          \end{equation}
          with the property that the images of the right vertical maps agree:
          \[
          \nu(E)=\nu'(E').
          \]
          Remark that the right vertical arrows are both $(\lieh_{\QQ(A)},L_{\QQ(A)}\times\pi_\circ^H)$-linear if $\CC$ is given the unique structure of locally algebraic representation with the property that its complex realization as $(\lieh_\CC,H_\infty)$-module make $\Lambda$ and $\Lambda'$ both $(\lieh_\CC,L_\infty)$-linear. Then with $\alpha\in\QQ(A)^\times$ from \eqref{eq:psimp} we have the period relation
          \begin{equation}
            \sqrt{\alpha}\cdot\iota(A)=\iota'(A').
          \end{equation}
        \end{theorem}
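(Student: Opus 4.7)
The plan is to use Schur's lemma to get the scalar relation $\iota_\CC = c\cdot\iota'_\CC\circ\widetilde{\psi}_\CC$, and then to extract from the functional data that $c\sqrt{\alpha}\in\QQ(A)^\times$. First I would observe that, by the same reasoning as in Proposition \ref{prop:psidecompositions}, the map $\widetilde{\psi}_\CC$ naturally becomes a $(\lieg_\CC,K_\infty)$-linear isomorphism $A_\CC\to A'_\CC$ after complexification: the $\chi$-twist in the $\pi_\circ^G$-semi-linearity of $\widetilde{\psi}_{\QQ(A)}$ becomes equivariance because the diagonal $K_\infty$-action combines the $K$-action with the $\pi_\circ^G$-action and because $\chi^2=1$ on $\pi_\circ^G$. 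Applying Schur's lemma to the absolutely irreducible $(\lieg_\CC,K_\infty)$-module $\pi_\infty$ yields a unique $c\in\CC^\times$ with $\iota_\CC=c\cdot\iota'_\CC\circ\widetilde{\psi}_\CC$. Since $\widetilde{\psi}_{\QQ(A)}$ is a $\QQ(A)$-rational bijection of $A$ and $A=A'$ as vector spaces via the admissible-twist identification, we get $\iota(A)=c\cdot\iota'(A')$ as subsets of $\pi_\infty$.

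Next, I would restrict to the irreducible $(\lieg_\CC,K_\infty^0)$-submodule $A_0\subseteq A_\CC$ corresponding to $\pi_0$. The multiplicity-free hypothesis ensures that $A_0$ equals both $\iota^{-1}(\pi_0)$ and $\iota'^{-1}(\pi_0)$ inside the common vector space $A_\CC=A'_\CC$. Since $\widetilde{\psi}_\CC$ preserves $(\lieg,K^\circ)$-isotypic components, Schur's lemma forces $\widetilde{\psi}_\CC|_{A_0}=\beta\cdot\mathrm{id}$ for a scalar $\beta$ with $\beta^2=\alpha$ (from $\widetilde{\psi}_\CC^2=\alpha\cdot\mathrm{id}$). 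Hence $\iota|_{A_0}=c\beta\cdot\iota'|_{A_0}$. Applying $\Lambda$, using $\iota'(A_0)=\pi_0$ and $\Lambda|_{\pi_0}=\Lambda'|_{\pi_0}$, and substituting via the two commutative diagrams yields the key identity
\[
\nu\circ\lambda|_{A_0}\;=\;c\beta\cdot\nu'\circ\lambda'|_{A_0}
\]
of $\CC$-valued functionals on $A_0$.

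To conclude $c\beta\in\QQ(A)^\times$, I would check that $\lambda$ and $\lambda'$, viewed as $\QQ(A)$-linear functionals on the common underlying vector space of $A$, are $(\lieh,L,\pi_\circ^H)$-equivariant for the \emph{same} equivariance. Indeed, the $\chi$-twists distinguishing $A'$ from $A$ and $E'$ from $E$ cancel when one writes out the equivariance condition on the underlying vector space, so both $\lambda$ and $\lambda'$ lie in a common $\QQ(A)$-rational isotype. Under the multiplicity-one property implicit in the non-vanishing of $\lambda|_{A_0}$ and $\lambda'|_{A_0}$ on the irreducible component $A_0$, this gives $\lambda'=c''\lambda$ for some $c''\in\QQ(A)^\times$, while the hypothesis $\nu(E)=\nu'(E')$ furnishes $\nu=d\cdot\nu'$ with $d\in\QQ(A)^\times$. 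Substituting into the displayed identity forces $d=c\beta c''$, so $c\beta\in\QQ(A)^\times$. Combined with $\beta^2=\alpha$, this is equivalent to $c\sqrt{\alpha}\in\QQ(A)^\times$, i.e.\ $c^{-1}\in\sqrt{\alpha}\cdot\QQ(A)^\times$, so that $\iota'(A')=c^{-1}\iota(A)=\sqrt{\alpha}\cdot\iota(A)$ as $\QQ(A)$-subspaces of $\pi_\infty$. The delicate point is the rationality step, where one must carefully disentangle the compatible $\chi$-twists simultaneously affecting $A'$, $E'$, and $\Lambda'$ in order to recognize $\lambda$ and $\lambda'$ as elements of a common $\QQ(A)$-rational isotype, so that the common-image condition on $\nu,\nu'$ can promote the $\CC$-scalar relation into a $\QQ(A)$-rational one.
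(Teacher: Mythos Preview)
Your strategy through the identity $\nu\circ\lambda_\CC|_{A_0}=c\beta\cdot\nu'\circ\lambda'_\CC|_{A_0}$ is sound and parallels the paper's setup. The gap is the rationality step: you assert $\lambda'=c''\lambda$ for some $c''\in\QQ(A)^\times$ via ``the multiplicity-one property implicit in the non-vanishing of $\lambda|_{A_0}$ and $\lambda'|_{A_0}$,'' but no such multiplicity-one is among the hypotheses. The theorem does not assume that the $\QQ(A)$-space of $(\lieh,L\times\pi_0^{\rm const}(L))$-equivariant functionals on $A_{\QQ(A)}$ is one-dimensional, and non-vanishing on an irreducible constituent cannot supply it. You are right that the $\chi$-twists cancel so that $\lambda,\lambda'$ lie in a common $\QQ(A)$-rational Hom space, but that space may have dimension greater than one. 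Your identity lives only on $A_0$, which is not $\QQ(A)$-rational (it sits inside an eigenspace defined only over $\QQ(A)[\sqrt{\alpha}]$); on the complementary eigenspace the scalar is $-c\beta$, so $\nu\lambda_\CC$ and $\nu'\lambda'_\CC$ are \emph{not} globally proportional and you cannot simply evaluate on a rational vector to extract $c\beta\in\QQ(A)^\times$.

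The paper avoids any Hom-space multiplicity assumption by a two-way evaluation of $\Lambda(\iota'(t))$ for arbitrary $t\in A_{\QQ(A)}$. Writing $t=t_++t_-$ along the eigenspace decomposition over $\QQ(A)[\sqrt{\alpha}]$, one has $\widetilde{\psi}(t)=\sqrt{\alpha}(t_+-t_-)\in A_{\QQ(A)}$, so on one hand $\Lambda(\iota'(t))=c\cdot\nu(\lambda(\widetilde{\psi}(t)))\in c\cdot\nu(\QQ(A))$. On the other hand, after extending $\Lambda|_{\pi_0}=\Lambda'|_{\pi_0}$ to the full eigenspace $\widetilde{\pi}$ by equivariance, the $\chi$-twist forces $\Lambda$ and $\Lambda'$ to differ by a sign on $\varepsilon\widetilde{\pi}$ (for $\varepsilon\in L(\RR)$ with $\chi(\varepsilon)=-1$), which yields $\Lambda(\iota'(t_++t_-))=\Lambda'(\iota'(t_+-t_-))=\nu'(\lambda'(t_+-t_-))\in\sqrt{\alpha}\cdot\nu'(\QQ(A))$. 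With $\nu(\QQ(A))=\nu'(\QQ(A))$ this gives $c\in\sqrt{\alpha}\cdot\QQ(A)^\times$. The key mechanism you are missing is that the sign discrepancy between $\Lambda$ and $\Lambda'$ on the complementary eigenspace converts the sum $t_++t_-$ into the difference $t_+-t_-=\widetilde{\psi}(t)/\sqrt{\alpha}$, which links the two rational structures directly without any appeal to multiplicity one.
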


        \begin{proof}
          We claim the existence of a constant $c\in\CC^\times$, unique modulo $\QQ(A)^\times$, with the property that
          \[
          c\cdot\iota(A)=\iota'(A').
          \]
          In order to prove its existence and compute $c$, we consider the common underlying $\QQ(A)$-vector space $A_{\QQ(A)}$ of $A$ and $A'$ with its canonical $(\lieg_{\QQ(A)},K_{\QQ(A)})$-module structure. Denote the $\iota$ underlying map by
          \[
          \iota_{\QQ(A)}\colon A_{\QQ(A)}\to\pi_\infty
          \]
          and likewise write
          \[
          \iota_{\QQ(A)}'\colon A_{\QQ(A)}\to\pi_\infty
          \]
          for the map underlying $\iota'$. Then both $\iota_{\QQ(A)}$ and $\iota_{\QQ(A)}'$ are $(\lieg_{\QQ(A)},K_{\QQ(A)}\times\pi_\circ^G)$-linear for two locally algebraic structures on $\pi_\infty$, differing by an admissible twist by a character $\chi$ of $\pi_\circ(K)$. Extending scalars to $\CC$, we obtain a $\chi$-twisted $(\lieg,K)$-module isomorphism
          \[
          \psi\colon A_\CC\to A_\CC
          \]
          of $(\lieg,K)$-modules, which by Schur's Lemma is unique up to a scalar in $\CC^\times$. To verify the $\chi$-twisted linearity, note that $\psi$ is $(\lieg_{\CC},K_\infty)$-linear and also $(\lieg_\CC,K(\RR)^0)$-linear. Since the two $\pi_\circ^G$-module structures on $A_{\QQ(A)}$ differ by a $\chi^{\rm const}$-twist, we get for any $a\in A_\CC$ and any $\varepsilon\in L(\RR)$, taking the various actions into account:
          \begin{align*}
            \chi(\varepsilon)\cdot\psi(\varepsilon a)
            &=
            \chi^{\rm const}(\varepsilon)\cdot\psi(\varepsilon a)\\
            &=
            \psi\left(\Delta(\varepsilon)a\right)\\
            &=
            \Delta(\varepsilon)\psi(a)\\
            &=\varepsilon\cdot\psi(a).
          \end{align*}
          This shows the claimed $\chi$-twisted linearity.

          The space of $\chi$-twisted $(\lieg,K)$-linear maps $A_{\QQ(A)}\to A_{\QQ(A)}$ is isomorphic to $\QQ(A)$, again by Schur's Lemma, and after tensoring with $\CC$, gives us the space of $\chi$-twisted $(\lieg,K)$-linear maps $A_\CC\to A_\CC$. Therefore we find a constant $c\in\CC^\times$ with
          \[
          c\cdot\iota_{\QQ(A)}(A_{\QQ(A)})=\iota_{\QQ(A)}'(A_{\QQ(A)}).
          \]
          This proves the claimed existence of $c$ above. Uniqueness modulo $\QQ(A)^\times$ is clear as well.

          If we consider $A_{\QQ(A)}$ with its canonical $(\lieg_{\QQ(A)},K_{\QQ(A)})$-module structure which is inherited from the locally algebraic module structure by forgetting the action of $\pi_\circ^G$ on $A$ and $A'$, then both yield by Definition \ref{def:locallyalgebraicstandardmodule} the same $(\lieg_{\QQ(A)},K_{\QQ(A)})$- module structure.
          
          By our choice of $c$, the composition
          \begin{equation}
            \widetilde{\psi}_{\QQ(A)}
            :=\iota_{\QQ(A)}^{-1}\circ \frac{1}{c}\cdot{\bf1}_{\pi_\infty}\circ\iota_{\QQ(A)}'\colon A_{\QQ(A)}\to A_{\QQ(A)}
            \label{eq:widetildepsiformula}
          \end{equation}
          is a $\chi$-twisted isomorphism of $(\lieg_{\QQ(A)},K_{\QQ(A)})$-modules as in \eqref{eq:psiQQVtilde}.

          Therefore Proposition \ref{prop:psidecompositions} is applicable and shows that over $\QQ(A)[\sqrt{\alpha}]$ we have the decomposition \eqref{eq:AEdecomposition} of $A_{\QQ(A)[\sqrt{\alpha}]}$ into eigenspaces of $\widetilde{\psi}_{\QQ(A)[\sqrt{\alpha}]}$.

          In order to compute $c$, we need more information about the relation between $\Lambda$ and $\Lambda'$. Note that by the respective invariance properties of $\Lambda$ and $\Lambda'$ the identity \eqref{eq:Lambdacompatibility} extends to the $(\lieg_\CC,\widetilde{K}(\RR))$-submodule
          \[
          \widetilde{\pi}:=\sum_{\varepsilon'\in\pi_0(\widetilde{K}_\infty)}\varepsilon'\pi_0,
          \]
          i.\,e.\ we have
          \begin{equation}
          \Lambda|_{\widetilde{\pi}}=
          \Lambda'|_{\widetilde{\pi}}.
          \label{eq:LambdaLambdaprime}
          \end{equation}
          Moreover, any $\varepsilon\in L(\RR)$ with $\chi(\varepsilon)=-1$ normalizes the kernel $\widetilde{K}(\RR)$ and we get
          \begin{equation}
            \pi_\infty=\widetilde{\pi}\oplus\varepsilon\widetilde{\pi},
            \label{eq:piinfinitydecomposition}
          \end{equation}
          as $(\lieg_\CC,\widetilde{K}(\RR))$-modules. Decomposing a given $\phi\in\pi_\infty$ accordingly into
          \[
          \phi=\phi_++\varepsilon\phi_-,
          \]
          with $\phi_\pm\in\widetilde{\pi}$, then by \eqref{eq:LambdaLambdaprime} we have the relations
          \begin{align}
            \Lambda (\phi)&=\Lambda(\phi_+)+\rho_{\pi_\circ^G}(\varepsilon)\Lambda(\phi_-),\label{eq:Lambda}\\
            \Lambda'(\phi)&=\Lambda(\phi_+)-\rho_{\pi_\circ^G}(\varepsilon)\Lambda(\phi_-).\label{eq:Lambdaprime}
          \end{align}

          Observe that by Theorem \ref{thm:AEdecomposition} the decomposition \eqref{eq:piinfinitydecomposition} is the eigenspace decomposition of $\widetilde{\psi}_\CC$, i.\,e.\ we may assume without loss of generality that
          \begin{align*}
          \widetilde{\pi}&=\iota_\CC\left(\EE_{\sqrt{\alpha}}\left(\widetilde{\psi}_{\CC}\right)\right)\\
          \varepsilon\widetilde{\pi}&=\iota_\CC\left(\EE_{-\sqrt{\alpha}}\left(\widetilde{\psi}_{\CC}\right)\right),
          \end{align*}
          and the image under $\iota_\CC'$ of the respective eigenspaces is the same as above.

          For any $t\in A_{\QQ(A)}$ we decompose $t=t_++t_-$ with
          \[
          t_\pm\in \EE_{\pm\sqrt{\alpha}}\left(\widetilde{\psi}_{{\QQ(A)[\sqrt{\alpha}]}}\right)\subseteq A_{{\QQ(A)[\sqrt{\alpha}]}}.
          \]
          We conclude
          \begin{equation}
            \widetilde{\psi}_{\QQ(A)[\sqrt{\alpha}]}(t)=\sqrt{\alpha}t_+-\sqrt{\alpha}(t_-).
            \label{eq:psioft}
          \end{equation}
          In particular, since this number is $\QQ(A)$-rational by the $\QQ(A)$-rationality of $t$ and $\widetilde{\psi}$, we get
          \begin{equation}
            t_+-t_-\in\sqrt{\alpha}\QQ(A).
            \label{eq:tplusminusdifference}
          \end{equation}
          
          We remark that the action of $\varepsilon\in L(\RR)$ with $\chi(\varepsilon)=-1$ and therefore also the action of $\Delta(\varepsilon)$ intertwines the two eigenspaces $\EE_{\pm\sqrt{\alpha}}\left(\widetilde{\psi}_{{\CC}}\right)$ as $(\lieg_{\CC},\widetilde{K}(\RR))$-modules.

          We compute $\Lambda\left(\iota_{\QQ(A)}'(t)\right)$ in two different ways:
          \begin{align*}
            \Lambda\left(\iota_{\QQ(A)}'(t)\right) &=
            \Lambda\left(c\cdot\iota_{\QQ(A)}\left(\widetilde{\psi}_{\QQ(A)}(t)\right)\right)
            &\text{(cf.\ \eqref{eq:widetildepsiformula})}\\
            &=
            c\cdot\Lambda\left(\iota_{\QQ(A)}(\sqrt{\alpha}(t_+-t_-)\right)
            &\text{(cf.\ \eqref{eq:psioft})}\\
            &=
            c\cdot\nu_{\QQ(A)}\circ\lambda_{\QQ(A)}(\sqrt{\alpha}(t_+-t_-))
            &\text{(cf.\ \eqref{eq:Lambdacommutativity})}\\
            &\in
            c\cdot\QQ(A)\subseteq\CC
            &\text{(cf.\ \eqref{eq:tplusminusdifference})}
          \end{align*}
          Exploiting the relation between $\Lambda$ and $\Lambda'$ we obtain
          \begin{align*}
            \Lambda\left(\iota_{\QQ(A)}'(t)\right) &=
            \Lambda\left(\iota_{\QQ(A)[\sqrt{\alpha}]}'(t_++t_-)\right)\\
            &=
            \Lambda\left(\iota_{\QQ(A)[\sqrt{\alpha}]}'(t_+)\right)+\rho_{\pi_\circ^G}(\varepsilon)\Lambda\left(\varepsilon\iota_{\QQ(A)[\sqrt{\alpha}]}'(t_-)\right)
            &\text{(cf.\ \eqref{eq:Lambda})}\\
            &=
            \Lambda\left(\iota_{\QQ(A)[\sqrt{\alpha}]}'(t_+)\right)-\rho_{\pi_\circ^G}(\varepsilon)\Lambda\left(\varepsilon\iota_{\QQ(A)[\sqrt{\alpha}]}'(-t_-)\right)\\
            &=
            \Lambda'\left(\iota_{\QQ(A)[\sqrt{\alpha}]}'(t_+-t_-)\right)
            &\text{(cf.\ \eqref{eq:Lambdaprime})}\\
            &=
            \nu_{\QQ(A)[\sqrt{\alpha}]}'\circ\lambda_{\QQ(A)[\sqrt{\alpha}]}(t_+-t_-)
            &\text{(cf.\ \eqref{eq:Lambdacommutativity})}\\
            &\in
            \sqrt{\alpha}\cdot\QQ(A)\subseteq\CC.
            &\text{(cf.\ \eqref{eq:tplusminusdifference})}
          \end{align*}
          Since this relation holds for arbitrary $t\in A_{\QQ(A)}$ and $\Lambda$ is non-zero, we conclude that $c=\alpha$ as claimed.
        \end{proof}

        \begin{remark}
          The condition \eqref{eq:Lambdacompatibility} that $\Lambda$ and $\Lambda'$ agree on an irreducible $(\lieg_\CC,K_\infty^0)$-submodule is automatically satisfied if $\Lambda$ and $\Lambda'$ are defined via archimedean $\zeta$-integrals which are given by integration over (a quotient of) $H(\RR)$ and whose integrands differ by a twist by a finite order character $\chi$ considered as a character of $H(\RR)$.
        \end{remark}

        \subsubsection{$1/N$-integral structures}

        With the notation of the previous section, assume that $A$ admits an $\OO[1/N]$-integral model $A_{\OO[1/N]}$ for the ring of algebraic integers $\OO\subseteq \QQ(A)$. By appropriate normalization we obtain from $\psi_{\QQ(A)}$ an isomorphism
        \[
        \psi_{\OO[1/N]}\colon A_{\OO[1/N]}\otimes\chi\to A_{\OO[1/N]}
        \]
        of $(\lieg_{\OO},K_{\OO})$-modules. In particular we observe that the characteristic polynomial \eqref{eq:psimp} of $\widetilde{\psi}_{\OO}$ lies in $\OO[X]$, since $\alpha\in\OO^\times$ by the normalization of $\psi_{\QQ(A)}$.

        The integral analogue of Theorem \ref{thm:rationalperiodrelations} is

        \begin{theorem}\label{thm:integralperiodrelations}
          Let $[(A,V)]_\cong\in{\rm t}\in\cohTypes_{\QQ(A)}(G)$ be a representative of a cohomological type, assumed to be defined over the field of definition $\QQ(A)$ of $A$. Assume that $A_\CC$ decomposes as $(\lieg_\CC,K_\infty^0)$-module multiplicity-free. Fix an arbitrary admissible weight $V'\not\cong V$ of $A$.

          Assume that $A$ admits an $\OO[1/N]$-integral model $A_{\OO[1/N]}$ for the ring of integers $\OO\subseteq \QQ(A)$ and $2\mid N$.
          
          Given two non-zero $\QQ(A)$-rational $(\lieh,L\times\pi_0^{\rm const}(L))$-linear functionals
          \[
          \lambda \colon A \to E ,\quad
          \lambda'\colon A'\to E',
          \]
          where $E=(\QQ(A),\rho_H,\rho_L)$ and $E'=(\QQ(A),\rho_H,\rho_L')$ with
          \[
          \rho_{\pi_\circ^G}'=\rho_{\pi_\circ^G}\otimes\chi^{\rm const}
          \quad\text{and}\quad
          \rho_L'=\rho_L\otimes\chi^{\rm const}|_{\pi_0^{\rm const}(L)}.
          \]
          Assume that $\chi|_{L}\neq{\bf1}$.
          
          Let $\pi_\infty$ denote a $(\lieg_\CC,K_\infty)$-module isomorphic to $A_\CC$ and $A_\CC'$ via $\CC$-base change of fixed $\QQ(A)$-linear embeddings
          \[
          \iota \colon A \to\pi_\infty,\quad
          \iota'\colon A'\to\pi_\infty.
          \]
          Both maps are understood as $(\lieg_{\QQ(A)},K_{\QQ(A)}\times\pi_\circ^G)$-linear maps on the underlying vector spaces for suitable locally algebraic module structures on $\pi_\infty$.
          
          Assume that we are given two complex functionals
          \[
          \Lambda \colon\pi_\infty\to\CC,\quad
          \Lambda'\colon\pi_\infty\to\CC,
          \]
          with the property that for some irreducible $(\lieg_\CC,K_\infty^0)$-submodule $\pi_0\subseteq\pi_\infty$
          \begin{equation}
            \Lambda|_{\pi_0}=\Lambda'|_{\pi_0}.
          \end{equation}
          and that we have commutative diagrams
          \begin{equation}
          \begin{CD}
            A @>{\lambda}>> E @.\hspace*{4em} @. A' @>{\lambda'}>> E'\\
            @V{\iota}VV @VV{\nu}V @. @V{\iota'}VV @VV{\nu'}V \\
            \pi_\infty @>>{\Lambda}> \CC @. @. \pi_\infty @>>{\Lambda'}> \CC
          \end{CD}
          \end{equation}
          with the property that the images of the right vertical maps agree:
          \[
          \lambda_{\QQ(A)}\circ\nu_{\QQ(A)}(\OO[1/N])=
          \lambda_{\QQ(A)}'\circ\nu_{\QQ(A)}'(\OO[1/N]).
          \]
          Remark that the right vertical arrows are both $(\lieh_{\QQ(A)},L_{\QQ(A)}\times\pi_\circ^H)$-linear with the corresponding locally algebraic structures on $\CC$ for the corresponding $(\lieh_\CC,L_\infty)$-module structures to make $\Lambda$ and $\Lambda'$ both $(\lieh_\CC,H_\infty)$-linear. Then with $\alpha\in\OO[1/N]^\times$ from \eqref{eq:psimp} we have the period relation
          \begin{equation}
            \sqrt{\alpha}\cdot\iota(A_{\OO[1/N]})=\iota'(A_{\OO[1/N]}').
          \end{equation}
        \end{theorem}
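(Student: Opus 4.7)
The plan is to bootstrap from Theorem \ref{thm:rationalperiodrelations}, which already yields the identity $\sqrt{\alpha}\cdot\iota(A)=\iota'(A')$ as $\QQ(A)$-subspaces of $\pi_\infty$, and upgrade the identification from $\QQ(A)$-vector spaces to $\OO[1/N]$-lattices. The starting point is that both $\iota(A_{\OO[1/N]})$ and $\iota'(A_{\OO[1/N]}')$ are $\OO[1/N]$-lattices inside the common $\QQ(A)$-vector space $\iota(A)=\iota'(A')\cdot\sqrt{\alpha}^{-1}$, and after scaling by $\sqrt{\alpha}$ they become commensurable $\OO[1/N]$-lattices inside $\iota'(A')$. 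The task reduces to showing that the $\OO[1/N]$-linear, $\chi$-twisted, $(\lieg_{\OO[1/N]},K_{\OO[1/N]})$-equivariant isomorphism
\[
\widetilde\psi_{\OO[1/N]}\;=\;\iota^{-1}\circ\tfrac{1}{\sqrt\alpha}\iota'\colon A_{\OO[1/N]}\otimes\chi\to A_{\OO[1/N]}
\]
is an isomorphism of $\OO[1/N]$-modules, i.e.\ an $\OO[1/N]^\times$-multiple of the integrally normalized $\widetilde\psi_{\OO[1/N]}$ arising from the chosen integral model $A_{\OO[1/N]}$.

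First I would fix the integrally normalized $\widetilde\psi_{\OO[1/N]}$ so that $\widetilde\psi_{\OO[1/N]}^2=\alpha\cdot{\bf1}_{A_{\OO[1/N]}}$ with $\alpha\in\OO[1/N]^\times$, using absolute irreducibility of $A_{\QQ(A)}$ together with Schur's Lemma applied to the $\OO[1/N]$-endomorphism ring (which is $\OO[1/N]$ itself by absolute irreducibility and the projectivity of $A_{\OO[1/N]}$). Then I would transport the argument from the proof of Theorem \ref{thm:rationalperiodrelations} to the integral level: decompose $A_{\OO[1/N][\sqrt\alpha]}$ into the $\pm\sqrt\alpha$-eigenspaces of $\widetilde\psi$ (these are $\OO[1/N][\sqrt\alpha]$-projective by part (b) of Proposition \ref{prop:psidecompositions} extended integrally), and use the decomposition $\pi_\infty=\widetilde\pi\oplus\varepsilon\widetilde\pi$ from the $\widetilde K$--$K$ structure to split $\Lambda$ and $\Lambda'$ into their symmetric and antisymmetric parts exactly as in \eqref{eq:Lambda}--\eqref{eq:Lambdaprime}.

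The core computation then evaluates $\Lambda\circ\iota'(t)$ for $t\in A_{\OO[1/N]}$ two ways. On the one hand it equals $c\cdot\nu\circ\lambda\bigl(\sqrt{\alpha}(t_+-t_-)\bigr)$, where $c\in\CC^\times$ is the scalar by which $\iota^{-1}\circ\iota'$ differs from the normalized $\widetilde\psi_{\OO[1/N]}$; the output lies in $c\cdot\sqrt\alpha\cdot\lambda\circ\nu(\OO[1/N])$ because $t_+-t_-\in\sqrt\alpha\cdot A_{\OO[1/N]}$ by the integral version of \eqref{eq:tplusminusdifference}. On the other hand using \eqref{eq:LambdaLambdaprime} it equals $\nu'\circ\lambda'(t_+-t_-)\in\sqrt\alpha\cdot\lambda'\circ\nu'(\OO[1/N])$. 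The hypothesis $\lambda\circ\nu(\OO[1/N])=\lambda'\circ\nu'(\OO[1/N])$ then forces $c\in\OO[1/N]^\times$, which is exactly the integral refinement sought. Composing with $\sqrt\alpha\in\OO[1/N][\sqrt\alpha]^\times$ yields the stated identity of lattices $\sqrt\alpha\cdot\iota(A_{\OO[1/N]})=\iota'(A_{\OO[1/N]}')$.

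The main obstacle I expect is precisely the last step: ensuring that the scalar $c$ is an $\OO[1/N]$-unit rather than merely a nonzero element of $\QQ(A)$. The argument above reduces this to checking that the images $\lambda\circ\nu(\OO[1/N])$ and $\lambda'\circ\nu'(\OO[1/N])$ coincide as fractional ideals, which is an integral strengthening of the rational compatibility used in Theorem \ref{thm:rationalperiodrelations}; this is exactly the hypothesis I would need to state carefully. A secondary technical point is that Proposition \ref{prop:psidecompositions}(b)--(c), originally phrased over fields, must be promoted to a splitting of $A_{\OO[1/N][\sqrt\alpha]}$ into two projective $(\lieg,\widetilde K)$-submodules; this is legitimate because $2\mid N$ forces $2\in\OO[1/N]^\times$, so the idempotents $\tfrac12(1\pm\tfrac{1}{\sqrt\alpha}\widetilde\psi)$ are defined integrally. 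Once the splitting and the integrality of $\alpha$ are secured, the period computation of Theorem \ref{thm:rationalperiodrelations} translates verbatim, with every instance of $\QQ(A)$ replaced by $\OO[1/N]$.
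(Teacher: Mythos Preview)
Your proposal is correct and follows essentially the same approach as the paper: the paper's proof reads in its entirety ``Follows mutatis mutandis as Theorem~\ref{thm:rationalperiodrelations}'', and you have spelled out precisely what that entails---normalizing $\widetilde\psi$ integrally so that $\alpha\in\OO[1/N]^\times$, invoking $2\mid N$ to make the eigenspace idempotents $\tfrac12(1\pm\tfrac{1}{\sqrt\alpha}\widetilde\psi)$ integral, and rerunning the two-way evaluation of $\Lambda\circ\iota'(t)$ with $\OO[1/N]$ in place of $\QQ(A)$. Your identification of the integral image hypothesis $\lambda\circ\nu(\OO[1/N])=\lambda'\circ\nu'(\OO[1/N])$ as the key input forcing $c\in\OO[1/N]^\times$ is exactly the point; the paper's Remark following the theorem confirms that the $2\mid N$ condition is there precisely for the eigenspace splitting you describe.
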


        \begin{proof}
          Follows mutatis mutandis the proof of Theorem \ref{thm:integralperiodrelations}.
        \end{proof}

        \begin{remark}
          The parity condition on $N$ stems from the fact that the decomposition \eqref{eq:AEdecomposition} may introduce denominators at $2$ due to congruences between the two direct summands. This is also reflected in the inseparability of the minimal polynomial \eqref{eq:psimp}.
        \end{remark}

        \subsubsection{Applications to $L$-functions of Hilbert modular forms}\label{sec:hilbertmodularforms}

        We fix a totally real number field $F/\QQ$ and put $G=\res_{F/\QQ}\GL(2)$ and $K=\res_{F/\QQ}\Oo(2)$, as well as $B=\res_{F/\QQ}B_2$ where $B_2\subseteq\GL(2)$ denotes the standard upper triangular Borel subgroup. We write $T=\res_{F/\QQ}T_2$ for the diagonal torus in $B$. Inside the restriction of scalars $Z=\res_{F/\QQ}Z_2\subseteq T$ of the center we find the maximal central $\QQ$-split torus $S\cong\GL(1)$. The determinant, composed with the norm $N_{F/\QQ}\colon F^\times\to\QQ^\times$, considered as a morphism $H:=\res_{F/\QQ}\GL(1)\to\GL(1)$, induces an isogeny $S\to\GL(1)$. We have
        \[
        H\otimes_\QQ\overline{\CC}=\prod_{\sigma\colon F\to\CC}\GL(1)
        \]
        and if $x=(x_\sigma)_{\sigma}\in H(\CC)$, then
        \[
        N_{F/\QQ}(x)=\prod_{\sigma\colon F\to\CC}x_\sigma.
        \]
        Likewise, $T$ splits over $\CC$ into $[F:\QQ]$ copies of $T_2$, indexed by the embeddings $\sigma\colon F\to\CC$. A $B$-dominant weight $\lambda$ of $T$ over $\CC$ then corresponds to a collection $\lambda=(\lambda_\sigma)_\sigma$ of $B_2$-dominant weights $\lambda_\sigma$ indexed by embeddings $\sigma\colon F\to\CC$. If
        \[
        \lambda_\sigma\;=\;\left(\lambda_{\sigma,1},\,\lambda_{\sigma,2}\right)
        \]
        with $\lambda_{\sigma,i}=\pi_i^{\lambda_{\sigma,i}}$ and $\pi_i$ denoting the $i$-th projection on the diagonal realization $T_2=\GL(1)\times\GL(1)$, then $\lambda$ is dominant if and only if for all $\sigma\colon F\to\CC$:
        \[
        \lambda_{\sigma,1}\geq\lambda_{\sigma,2}.
        \]
        Recall the subgroup ${}^0G\subseteq G$ from section \ref{sec:cohomologicaltypes}. The essential unitarizability condition imposed on Casselman--Wallach representations of $G(\RR)$ has the following explicit interpretation in terms of $\lambda$.

        Consider the Cartan-involution $\theta$ associated to $K(\RR)$. Over $\CC$ (or over $\RR$), $\theta$ may be realized as $[F:\QQ]$ copies of the Cartan involution $\theta_2\colon g\mapsto g^{-\rm{t}}$ on $\GL_2(\RR)$, again indexed by the embeddings $\sigma\colon F\to\CC$.

        We already classified the tempered cohomological types of $G$ in Theorem \ref{thm:cohomologicaltypesofGLn}, in particular we only need to consider the case of $(A_{\lieq}(\widetilde{\lambda}),V)$ for a $\theta$-stable Borel subalgebra $\lieq\subseteq\lieg_\CC$.

        As $\theta$-stable Borel subalgebra $\lieq$ inside the complex Lie algebra $\lieg_\CC$ of $G$ we may therefore choose
        \[
        \lieq=\prod_{\sigma\colon F\to\CC}\lieq_2,
        \]
        where $\lieq_2$ is a $\theta_2$-stable Borel subalgebra in $\CC^{2\times 2}$ containing $\lieh_2:=\liez_2\oplus\lieso_2$ as a Levi factor. The complex Lie algebra $\liet_2$ of the diagonal torus $T_2\subseteq\GL(2)$ and $\lieh_2$ are conjugate. We may choose this conjugation in a way that it induces the isomorphism
        \[
        \lieh_2\to\liet_2,\quad\begin{pmatrix}c&-d\\d&c\end{pmatrix}\,\mapsto\,\begin{pmatrix}c+id&0\\0&c-id\end{pmatrix}.
        \]
        Then a dominant weight $(\lambda_1,\lambda_2)$ for $T$ induces the weight
        \[
        \liet_2\to\CC,\quad (c,d)\;\mapsto\;(\lambda_1 + \lambda_2)\cdot c \,+\, i(\lambda_1 - \lambda_2)\cdot d,
        \]
        which we may assume dominant for $\lieq_2$, i.\,e.\ we may choose $\lieq_2$ accordingly. Now this weight is invariant under the Cartan convolution $\theta_2$ if and only if
        \begin{equation}
          \lambda_1+\lambda_2\;=\;0.
          \label{eq:individualcartaninvariance}
        \end{equation}
        Applying \eqref{eq:individualcartaninvariance} to our given weight $\lambda=(\lambda_\sigma)_\sigma$ restricted to $\liet_\CC\cap{}^0\lieg_\CC$, shows that the unitarizability condition \ref{eq:lambdacartan} from Remark \ref{rmk:unitarizability} amounts in our setting to the condition
        \begin{equation}
          \exists w\in\ZZ\colon\forall\sigma\colon F\to\CC\colon\quad
          \lambda_{\sigma,1}+\lambda_{\sigma,2}\;=\;w.
          \label{eq:gl2cartaninvariance}
        \end{equation}
        In representation theoretic terms this means that the corresponding rational representation $(V,\rho_G)$ of $G$ highest weight $\lambda$ is essentially self-dual over $\QQ$ in the sense that it is self-dual up to a twist by a power $N_{F/\QQ}^{\otimes w}$ of the norm: $V\cong V^\vee\otimes N_{F/\QQ}^{\otimes w}$. Here
        \[
        N_{F/\QQ}\colon G\to\GL(1)\quad a\mapsto N_{F/\QQ}(\det(a))
        \]
        denotes the $\QQ$-rational character of $G$ induced by the norm.

        Condition \eqref{eq:centralcondition} in Remark \ref{rmk:nontriviality} translates to
        \begin{equation}
          \forall\sigma\colon F\to\CC\colon\quad
          \lambda_{\sigma,1}(-1)=1,
          \label{eq:lambdaparity}
        \end{equation}
        if $-{\bf1}_2\in K_f$.

        We may translate the above observations into the language of Hilbert modular forms as usual. Let $V$ denote an absolutely irreducible locally algebraic representation of highest weight $\lambda$, defined over a finite extension $E/\QQ$. Assume that $\lambda$ satiesfies conditions \eqref{eq:gl2cartaninvariance} and \eqref{eq:lambdaparity}. For any embedding $\sigma\colon F\to\CC$ put
        \[
        k_\sigma:=\lambda_{\sigma,1}-\lambda_{\sigma,2}+2.
        \]
        Note that condition \eqref{eq:gl2cartaninvariance} implies that for all $\sigma,\tau\colon F\to\CC$:
        \[
        k_\sigma=2\lambda_{\sigma,1}-w+2\equiv 2\lambda_{\tau,1}-w+2=k_\tau\pmod{2},
        \]
        i.\,e.
        \begin{equation}
          \forall\sigma,\tau\in\Hom(F,\CC)\colon\quad
          k_\sigma\equiv k_\tau\pmod{2}.
          \label{eq:weightregularitycondition}
        \end{equation}
        Then a cuspidal automorphic representation $\pi$ of $G$ with $\cohType(\pi)\in\cohTypes_E(G,V)$ corresponds to a primitive Hilbert modular cusp form ${\bf f}$ of weight $k=(k_\sigma)_\sigma$ satisfying \eqref{eq:weightregularitycondition}. As explained in \cite{raghuramtanabe2011}, any primitive Hilbert modular cusp form ${\bf f}$ whose weight satisfies \eqref{eq:weightregularitycondition} and additionally $k_\sigma\geq 2$ for all $\sigma\colon F\to\CC$ corresponds to a regular algebraic cuspidal automorphic representation $\pi$ of $G$ as above.
        
        Condition \eqref{eq:gl2cartaninvariance} implies that $\pi\otimes|\cdot|_{\Adeles}^{w/2}$ is unitary. This characterizes $w\in\ZZ$. If $-{\bf1}_2\in K_f$ and $\pi^{K_f}\neq 0$, then $k$ is even in the sense that $2\mid k_\sigma$ for all $\sigma\colon F\to\CC$ by \eqref{eq:lambdaparity}. 

        Write $L(s,\pi)$ for the standard $L$-function associated to $\pi$. Then a half-integer $s=\frac{1}{2}+j$ is {\em critical} for $L(s,\pi)$ in the sense of \cite{deligne1979} if and only if
        \begin{equation}
          \forall\sigma\in\Hom(F,\CC)\colon\quad
          \lambda_{\sigma,2}\leq j\leq \lambda_{\sigma,1}.
          \label{eq:criticalityforgl2}
        \end{equation}
        We refer to \cite[Proposition 3.18]{raghuramtanabe2011} for a direct proof in our setting. For given $j\in\ZZ$, condition \eqref{eq:criticalityforgl2} is equivalent to
        \begin{equation}
          \Hom_{G_1}(V,N_{F/\QQ}^{\otimes j})\neq 0,
          \label{eq:algebraiccriticalityforgl2}
        \end{equation}
        in the category of rational representations of $G$, where
        \[
        G_1=\res_{F/\QQ}\GL_1=\left\{\begin{pmatrix}\ast & \\ & 1\end{pmatrix}\right\}\subseteq G.
        \]
        Remark that the $\Hom$-spaces in \eqref{eq:criticalityforgl2} and \eqref{eq:algebraiccriticalityforgl2} are each one-dimensional if non-zero.

        Write $\pi_\infty^{(K_\infty)}$ for the underlying $(\lieg_\CC,K_\infty)$-module of the archimedean component $\pi_\infty$. Then any representative of the cohomological type $\cohType_\CC(\pi)$ over $\CC$ is of the form $(A_\CC,V)$ with $V=(V,\rho_G,\rho_{\pi_0^G})$ where $(V,\rho_G)$ is the rational representation of $G$ of heighest weight $\lambda$ over $\CC$ and the complex realization $A_\CC$ of $A$ in the sense of Definition \ref{def:locallyalgebraiccomplexmodule} is isomorphic to the $(\lieg_\CC,K_\infty)$-module $\pi_\infty^{(K_\infty)}$.        

        Condition \eqref{eq:algebraiccriticalityforgl2} may be rephrased in the category of locally algebraic representations as follows. We may augment $N_{F/\QQ}^{\otimes j}$ to a locally algebraic representation $N_{F/\QQ}^{\otimes j,\varepsilon}=(\CC,N_{F/\QQ}^{\otimes j},\varepsilon)$ where $\varepsilon$ is a one-dimensional representation of $\pi_\circ^G$. Then choosing $\varepsilon=\rho_{\pi_0^G}^\vee|_{\pi_0^{G_1}}=\rho_{\pi_0^G}|_{\pi_0^{G_1}}$, we get the equivalent condition
        \begin{equation}
          \Hom_{G_1\times\pi_\circ^{G_1}}(V,\,N_{F/\QQ}^{\otimes j,\varepsilon})\neq 0,
          \label{eq:locallyalgebraiccriticalityforgl2}
        \end{equation}
        in the category of locally algebraic representations of $G_1$. Note that $G_1(\RR)=(\RR\otimes_\QQ F)^\times\cong\prod\limits_{\sigma}\RR^\times$ and hence $\pi_\circ^{G_1}\cong\pi_\circ^G$ via the given inclusion $G_1\to G$.

        We may think of the isomorphism of $A_\CC$ to $\pi_\infty^{(K_\infty)}$ to depend on $\varepsilon$, i.\,e.\ we write
        \[
        \iota_\varepsilon\colon A_\CC\to\pi_\infty^{(K_\infty)}
        \]
        for a fixed choice of isomorphism.

        Passing to complex realizations, we obtain on the level of $(\lieg_\CC,K_\infty)$-modules for $d=[F:\QQ]$ a canonical map
        \begin{align*}
          &\left(\bigwedge^d\lieg_{1,\CC}\right)\otimes
          \left(N_{F/\QQ,\CC}^{\otimes j,\varepsilon}\right)^\vee\otimes
          \Hom_{G_1(\RR)}(V_\CC,\,N_{F/\QQ,\CC}^{\otimes j,\varepsilon})\otimes
          H^d(\lieg_\CC,K_\infty\cdot S(\RR)^0;\,A_\CC\otimes V_\CC)\\
          &\to
          H^0(\lieg_{1,\CC},\{\pm1\}^{\Hom(F,\CC)}; \CC)=\CC,\\
          &\omega\otimes\mu\otimes\eta\otimes(\varpi\otimes\varphi\otimes v)\;\mapsto\;
          \varpi(\omega)\cdot\mu\left(\!\!\int_{G_1(\RR)}\iota_\varepsilon(\varphi)(h)\cdot (h\eta(v))dh\right),
        \end{align*}
        only depending on the choice of a Haar measure $dh$ on $G_1(\RR)$, unique up to scalars in $\RR_{>0}$, and the choice of $\iota_\varepsilon$, unique up to scalars in $\CC^\times$. Here $h\in G_1(\RR)$ acts on the vector $\eta(v)$ inside the complex realization of $N_{F/\QQ,\CC}^{\otimes j,\varepsilon}$ via the restriction of the corresponding action of $G(\RR)$ to $G_1(\RR)$, i.\,e.\ via the algebraic norm $N_{F/\QQ}^{\otimes j}$ twisted by $\varepsilon$.

        We remark that in general, a cohomology class in the domain may be represented by a finite sum of tensors of the form $\varpi\otimes\varphi\otimes v$. We refer to equation (3.9) in \cite{raghuramtanabe2011} for an explicit expression in the case of $\varepsilon={\bf1}$.

        The identity
        \[
        \int_{G_1(\RR)}\iota_\varepsilon(\varphi)(h)\cdot (h\eta(v))dh\;=\;
        \int_{G_1(\RR)}\iota_\varepsilon(\varphi)(h)\cdot N_{K/\QQ,\CC}^{\otimes j,\varepsilon}dh\cdot\eta(v)
        \]
        shows that the values of the above map are given by the evaluation of archimedean local $\zeta$-integrals for $\GL(2)/F$. The quasi-character $N_{K/\QQ,\CC}^{\otimes j,\varepsilon}$ admits the following explicit description:
        \begin{equation}
          N_{K/\QQ,\CC}^{\otimes j,\varepsilon}=|\cdot|_\infty^j\otimes \sgn_\infty^j\otimes\varepsilon,
        \end{equation}
        where $|\,\cdot\,|_\infty=|N_{F/\QQ}(-)|=\prod_{v\mid\infty}|\,\cdot\,|_v$ is the product of all extensions of the archimedean absolute value on $\QQ$ to $F$, canonically extended to $\RR\otimes_\QQ F=\prod\limits_{v\mid\infty}F_v$ and likewise $\sgn_\infty=\otimes_{v\mid\infty}\sgn_v$ where
        \[
        \sgn_v\colon\; F_v^\times\to\{\pm1\},\;x_v\mapsto \frac{x_v}{|x_v|_v},
        \]
        cf.\ Example \ref{ex:GLnoverQ}.

        Globally, the above map has the following sheaf theoretic interpretation:
        \[
        H_d^{\rm BM}({\mathscr X}_{G_1}(L);\CC)\otimes
        \left(N_{F/\QQ}^{\otimes j,\varepsilon}\right)^\vee\otimes
        \Hom_{G_1(\RR)\times\pi_\circ^G}(V,\,N_{F/\QQ}^{\otimes j,\varepsilon})\otimes
        H^d_{\rm c}({\mathscr X}_G(K);\,\widetilde{V})\to
        H^0({\mathscr X}_{G_1}(L);\CC),
        \]
        \[
        a\otimes\mu\otimes\eta\otimes c\;\mapsto\;\mu\circ\eta(c|_{{\mathscr X}_{G_1}(L)})\cap a
        \]
        composed with
        \[
        H^0({\mathscr X}_{G_1}(L);\CC)=\CC^{\pi_0({\mathscr X}_{G_1}(L))}\to\CC,\;(x_C)_{C\in \pi_0({\mathscr X}_{G_1}(L))}\mapsto \sum_{C}x_C
        \]
        realizes a canonical global map whose archimedean component, when interpreted in terms of automorphic forms, agrees with the above map on $(\lieg_\CC,K_\infty)$-cohomology.

        Globally, this sheaf theoretic map has an analogous representation via the global $\zeta$-integral
        \begin{equation}
          \int_{G_1(\QQ)\backslash G_1(\Adeles)}\Phi_\varepsilon(h)|h|_{\Adeles}^jdh.
          \label{eq:globalzetaintegralforgl2}
        \end{equation}
        Here $\Phi_\varepsilon\in\pi^{(K_\infty)}$ is (essentially) the automorphic cusp form representing a cohomology class
        \begin{equation}
          c\in H^d_{\rm c}({\mathscr X}_G(K);\,\widetilde{V})[\pi_f^K].
          \label{eq:hilbertmodularcohomology}
        \end{equation}
        We emphasize that the right hand side in \eqref{eq:hilbertmodularcohomology} is a free Hecke module of rank $1$. By invariance properties of integration, for given $j$, the above map on sheaf cohomology is non-trivial on the $\pi_f^K$-isotypic component for at most one choice of $\varepsilon$. Therefore we obtain only one global $\zeta$-integral \eqref{eq:globalzetaintegralforgl2}.

        An elementary computation shows that if we write $\varepsilon=\otimes_{\sigma}\varepsilon_\sigma$, then the necessary parity condition is
        \begin{equation}
          \varepsilon_\sigma(-1)\;=\;(-1)^{w+j}\omega_{\pi,v}(-1),
          \label{eq:epsilonparitycondition}
        \end{equation}
        where $\omega_{\pi}\colon G_1(\QQ)\backslash G_1(\Adeles)\to\CC^\times$ denotes the central character of $\pi$ and $v\mid\infty$ is the archimedean place corresponding to $\sigma\colon F\to\CC$.

        That being said, we obtain for $\pi$ for each element $\alpha\in[(A,V)]_\cong$ in the cohomological type $\cohType(\pi)$ a well defined Whittaker period $\Omega_{\alpha}\in\CC^\times/\QQ(\pi)^\times$ which arises from the comparison of the canonical rational structure
        \[
        H^d_{\rm c}({\mathscr X}_G(K);\,\widetilde{V}_{\QQ(\pi)})[\pi_f^K]\subseteq H^d_{\rm c}({\mathscr X}_G(K);\,\widetilde{V})[\pi_f^K],
        \]
        with the canonical rational structure on the (non-archimedean) Whittaker model $\mathcal W(\pi_f,\psi_f)$ combined with a suitable analytic normalization of a $\QQ(\pi)$-rational structure on the archimedean Whittaker model $\mathcal W(\pi_\infty,\psi_\infty)$, i.\,e.\ by enforcing the existence of a $\QQ(\pi)$-rational good test vector at $\infty$.

        Morally, the latter global rational structure on $\pi^{(K_\infty)}$ guarantees the existence of a $\QQ(\pi)$-rational vector $\Phi^{\rm analytic}\in\pi^{(K_\infty)}$ with the property that for all $s\in\CC$:
        \[
        \int_{G_1(\QQ)\backslash G_1(\Adeles)}\Phi^{\rm analytic}(h)|h|_{\Adeles}^{\frac{1}{2}+s}dh\;=\;L(s,\pi),
        \]
        where $L(s,\pi)$ denotes the {\em completed} $L$-function, including $\Gamma$-factors.
        
        The topological rational normalization ensures the rationality of the topological integration procedure, i.\,e.\ it enforces rationality of special $\zeta$-values normalized by the corresponding periods. Therefore, the difference $\Omega_\alpha\in\CC^\times/\QQ(\pi)^\times$ between these rational structures captures the transcendental part of the corresponding special $L$-values. Now for given $\pi$, at a given critical value $s=\frac{1}{2}+j$, there is a unique choice of $\varepsilon$ with \eqref{eq:epsilonparitycondition} and hence a unique corresponding $\alpha\in\cohType(\pi)$ with
        \[
        \frac{L(s,\pi)}{\Omega_\alpha}\in\QQ(\pi).
        \]
        Moving from one critical value $s=\frac{1}{2}+j$ to $\frac{1}{2}+j+1$ switches the parity in \eqref{eq:epsilonparitycondition} for all $\sigma$ simultaneously. Therefore, given $\pi$, we have precisely two periods $\Omega_\alpha$ and $\Omega_{\beta}$ for suitable $\alpha,\beta\in\cohType(\pi)$, explaining the rationality of all special values of the standard $L$-function of $\pi$.

        Twisting $\pi$ with a finite order Hecke character $\chi\colon G_1(\QQ)\backslash G_1(\Adeles)\to\CC^\times$ has the fowing effect: The infinity type $\chi_\infty$ of $\chi$ affects the corresponding the parity condition \eqref{eq:epsilonparitycondition} for $\pi\otimes\chi$ in the sense that the new condition reads for any embedding $\sigma\colon F\to\RR$
        \begin{equation}
          \varepsilon_\sigma(-1)\;=\;(-1)^{w+j}\omega_{\pi,v}(-1)\chi_v(-1).
          \label{eq:twistedepsilonparitycondition}
        \end{equation}
        Therefore, we obtain another pair $\alpha_\chi,\beta_\chi\in\cohType(\pi)=\cohType(\pi_\infty)$ which is relevant for the algebraicity of the special values of the twisted $L$-function $L(s,\pi\otimes\chi)$. Now the effect on the rational structures on the non-archimedean Whittaker model $\mathcal W(\pi_f,\psi_f)$ combined with the corresponding local $\zeta$-integrals, together with the analogous rationality consideration on the archimedean Whittaker model, which corresponds to our Theorem \ref{thm:rationalperiodrelations} in the rational case (resp.\ Theorem \ref{thm:integralperiodrelations} in the integral case), allows us to compute, how the periods $\Omega_\alpha$ for $\pi$ change when passing to $\pi\otimes\chi$. This then allows us to show that the collection of the periods $\Omega_\gamma$ for any $\gamma\in\cohType(\pi)$ explain the rationality of {\em all} twisted $L$-values $L(s,\chi)$ for {\em all} finite order Hecke characters $\chi$.
        
        \subsubsection{Applications to automorphic $L$-functions}\label{sec:applicationstoautomorphicL}

        Generalizing our discussion in the Hilbert modular case, the following explicit cases may be studied via Theorem \ref{thm:rationalperiodrelations} and Theorem \ref{thm:integralperiodrelations}:
          \begin{itemize}
          \item[(i)] The Rankin--Selberg $\zeta$-integrals for the pair $(G,H)=(\res_{F/\QQ}\GL(n+1)\times\GL(n),\GL(n))$ of Jacquet, Shalika and Piatetski-Shapiro as studied in \cite{jacquetshalika1981,jacquetshalika1981.2,jpss1983,schmidt1993,kazhdanmazurschmidt2000,kastenschmidt2008,raghuramshahidi2008,raghuram2010,jacquet2009,januszewski2011,januszewski2014,januszewski2015,raghuram2016,sun2017,januszewskiperiods1,januszewskiajm,haranamikawa}.
          \item[(ii)] The Shalika $\zeta$-integrals for the pair $(G,H)=(\res_{F/\QQ}\GL(2n),\GL(n)\times\GL(n))$ of Friedberg--Jacquet as studied in \cite{friedbergjacquet1993,grobnerraghuram2014,chensun2017,sun2019,jiangetal2019,djr2020,januszewskiperiods2}.
          \end{itemize}
          
          We remark that in the first case the rationality of the archimedean $\zeta$-integral is only known for $n=1$ and $n=2$ \cite{januszewskiperiods1}. Theorem \ref{thm:rationalperiodrelations} provides a corrected proof of the main result in \cite{januszewskiperiods1} taking into account that $\pi_\circ(K)$ in general is a non-constant \'etale group scheme.

          In the second case the rationality of the archimedean $\zeta$-integral is known outside a non-explicit countable set of complex arguments $s\in\CC$ \cite{januszewskiperiods2}. The cohomologically induced functional constructed by Sun in Section 2 of \cite{sun2019} is shown to be algebraic in \cite{januszewskiperiods2}. Theorem \ref{thm:rationalperiodrelations} and Theorem \ref{thm:integralperiodrelations} are always applicable to such functionals. This applies in particular to case (ii).

          \begin{example}[{Hilbert modular forms and automorphic forms on $\GL(2)$}]
            Both (i) and (ii) contain the case of the standard $L$-function for $G=\res_F\GL(2)$ as a special case. From that perspective, as explained in the previous section, Theorem \ref{thm:rationalperiodrelations} recasts the archimedean rationality considerations in \cite{manin1972,manin1976,shimura1976,shimura1977,shimura1978,hida1994,raghuramtanabe2011,namikawa2016} representation theoretic terms.
          \end{example}

          \begin{remark}
            Regardless whether rational or integral structures are preserved by certain $\zeta$-integrals, in both the Rankin--Selberg and the Shalika situation the abstract rationality pattern deduced from Theorems \ref{thm:rationalperiodrelations} and \ref{thm:integralperiodrelations} in combination with Theorem \ref{thm:glnrationalreducibility} is compatible with the one predicted by Deligne in \cite{deligne1979}.
          \end{remark}

	\bibliographystyle{plain}

\end{document}